\definecolor{mygray}{gray}{0.85}
\newcommand{\dist}{\mathrm{dist}}
\newcommand{\Cay}{\mathrm{Cay}}
\newcounter{claimcounter}
\newenvironment{claim}{\stepcounter{claimcounter}{\noindent {\underline{\em Claim \theclaimcounter}.}}}{}
\newenvironment{claimproof}[1]{\noindent{{\em Proof.}}\space#1}{\hfill $\rule{0.40em}{0.40em}$}
\renewcommand{\leq}{\leqslant}
\renewcommand{\geq}{\geqslant}
\renewcommand{\trianglelefteq}{\trianglelefteqslant}
\newtheorem{theorem}{Theorem}[section]
\newtheorem{corollary}[theorem]{Corollary}
\newtheorem{definition}[theorem]{Definition}
\newtheorem{lemma}[theorem]{Lemma}
\newtheorem{proposition}[theorem]{Proposition}
\newtheorem{example}[theorem]{Example}
\newtheorem{fact}[theorem]{Fact}
\newtheorem{remark}[theorem]{Remark}
\newtheorem{notation}[theorem]{Notation}
\newtheorem*{theorem1.2}{Theorem~1.2}
\newtheorem*{problem1.5}{Problem~1.5}
\newtheorem*{conjecture1.4}{Conjecture~1.4}
\newtheorem*{theorem1.4}{Theorem~1.5}
\newtheorem*{theorem1.11}{Theorem~1.11}
\newtheorem*{conjecture1.6}{Conjecture~1.6}
\newtheorem*{theorem1.7}{Theorem~1.7}
\newtheorem*{conjecture1.7}{Conjecture~1.7}
\newtheorem*{conjecture1.8}{Conjecture~1.8}
\newtheorem*{conjecture1.9}{Conjecture~1.9}
\newtheorem*{conjecture1.10}{Conjecture~1.10}
\newtheorem*{corollary1.3}{Corollary~1.3}
\begin{document}

\begin{abstract} We lay the foundations of the first-order model theory of Coxeter groups. Firstly, with the exception of the $2$-spherical non-affine case (which we leave open), we characterize the superstable Coxeter groups of finite rank, which we show to be essentially the Coxeter groups of affine type.
Secondly, we characterize the Coxeter groups of finite rank which are domains, a central assumption in the theory of algebraic geometry over groups, which in many respects (e.g. $\lambda$-stability) reduces the model theory of a given Coxeter system to the model theory of its associated irreducible components.
In the second part of the paper we move to specific definability questions in right-angled Coxeter groups (RACGs) and $2$-spherical Coxeter groups. In this respect, firstly, we prove that RACGs of finite rank do not have proper elementary subgroups which are Coxeter groups, and prove further that reflection independent ones do not have proper elementary subgroups at all. Secondly, we prove that if the monoid $Sim(W, S)$ of $S$-self-similarities of $W$ is finitely generated, then $W$ is a prime model of its theory. 
Thirdly, we prove that in reflection independent RACGs  of finite rank the Coxeter elements are type-determined.
We then move to $2$-spherical Coxeter groups, proving that if $(W, S)$ is irreducible, $2$-spherical even and not affine, then $W$ is a prime model of its theory, and that if $W_{\Gamma}$ and $W_{\Theta}$ are as in the previous sentence, then $W_{\Gamma}$ is elementary equivalent to $W_{\Theta}$ if and only if $\Gamma \cong \Theta$, thus solving the elementary equivalence problem for most of the $2$-spherical Coxeter groups.
In the last part of the paper we focus on model theoretic applications of the notion of reflection length from Coxeter group theory, proving in particular that \mbox{affine Coxeter groups are not connected.}
\end{abstract}

\title{First-Order Aspects of Coxeter Groups}
\thanks{The second and third author were partially supported by European Research Council grant 338821. The second author was partially supported by project PRIN 2017 ``Mathematical Logic: models, sets, computability", prot. 2017NWTM8R. The third author was partially supported by Israel Science Foundation (ISF) grant no: 1838/19. No. 1201 on Shelah's publication list.}

\author{Bernhard M\"uhlherr}
\address{Mathematisches Institut Justus-Liebig-Universit{\"a}at Gie{\ss}en, Gie{\ss}en, Germany}

\author{Gianluca Paolini}
\address{Department of Mathematics ``Giuseppe Peano'', University of Torino, Italy.}

\author{Saharon Shelah}
\address{Einstein Institute of Mathematics,  The Hebrew University of Jerusalem, Israel \and Department of Mathematics,  Rutgers University, U.S.A.}

\date{\today}
\maketitle

\tableofcontents

\section{Introduction}

	Since the work of Sela \cite{sela_elem_equiv}, and Kharlampovich \& Myasnikov \cite{myasnikov_elem_equiv} on Tarski's problem for non-abelian free groups, the model theoretic analysis of classes of groups arising from combinatorial and geometric group theory has seen crucial advancements, famous is for example the extension of the methods employed for free groups to the analysis of the model theory of torsion-free \mbox{hyperbolic groups \cite{myasnikov_hyper, sela_hyper}.}
	
	In the present study we lay the foundations of the first-order model theory of {\em Coxeter groups}, a class of groups that arises in a multitude of ways in several areas of mathematics, such as algebra \cite{humphreys}, geometry \cite{davis} and combinatorics \cite{brenti}.  
	This area of model theory is a largely unexplored territory. In fact, at the best of our knowledge, the only known results on the first-order\footnote{See \cite{paolini_coxeter} for a model theoretic analysis of right-angled Coxeter groups in the non first-order context of abstract elementary classes.} model \mbox{theory of Coxeter groups are:}
	\begin{fact}
	\begin{enumerate}[(a)]
	\item If $W_{\Gamma}$ and $W_{\Theta}$ are two right-angled Coxeter groups of finite rank, then $W_{\Gamma}$ is elementary equivalent to $W_{\Theta}$ if and only if $\Gamma \cong \Theta$ (see \cite{casal1}).
	\item If $W$ is a right-angled Coxeter group of finite rank, then the existential (resp. positive) first-order theory of $W$ is decidable (see \cite{decidability_positive}).
\end{enumerate}
\end{fact}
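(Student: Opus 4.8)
The plan is to handle the two parts separately. For part (a) the forward direction $\Gamma \cong \Theta \Rightarrow W_{\Gamma} \equiv W_{\Theta}$ is immediate, since an isomorphism of the defining graphs induces an isomorphism of the associated groups, and isomorphic structures are elementarily equivalent. All the content lies in the converse, and the model-theoretic heart of the matter is that for \emph{finite} structures elementary equivalence coincides with isomorphism. Thus it suffices to exhibit a \emph{uniform interpretation} of the finite graph $\Gamma$ inside $W_{\Gamma}$, i.e.\ a single parameter-free interpretation scheme (the same formulas for every right-angled Coxeter system of finite rank) whose realization in $W_{\Gamma}$ is isomorphic to $\Gamma$. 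Granting such a scheme, $W_{\Gamma} \equiv W_{\Theta}$ forces the interpreted graphs to be elementarily equivalent, hence---being finite---isomorphic, so $\Gamma \cong \Theta$.

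To build the interpretation I would first recall the isomorphism-rigidity of right-angled Coxeter systems: the defining graph is a group-isomorphism invariant of $W_{\Gamma}$. This tells us \emph{what} we are trying to recover and guarantees that it is canonical. I would then recover it definably from the internal combinatorics of involutions. The natural ingredients are: the set of involutions is defined by $x^2 = 1 \wedge x \neq 1$; among these the reflections (conjugates of standard generators) should be singled out by a first-order condition on the shape of their centralizers, using that in the right-angled case the centralizer of a standard generator $s$ is the standard parabolic $\langle s \rangle \times W_{\mathrm{lk}(s)}$; and adjacency of two vertices is read off from commutation, since two standard generators commute exactly when joined by an edge while non-adjacent ones generate an infinite dihedral group. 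Quotienting the resulting definable relation to a canonical vertex set and verifying that it reproduces $\Gamma$ is the delicate point. The case $\Gamma$ complete can be checked directly: then $W_{\Gamma} \cong (\mathbb{Z}/2\mathbb{Z})^{|\Gamma|}$, elementary equivalence forces equal cardinality, and both graphs are complete of the same size.

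For part (b) the plan is to reduce decidability of the existential theory to the \emph{Diophantine problem} for $W$, namely the algorithmic solvability of finite systems of equations and inequations, which for right-angled Coxeter groups admits an effective solution. Concretely, an existential sentence is placed in prenex form with a quantifier-free matrix written as a disjunction of conjunctions of equations and inequations, and one decides consistency of each disjunct by the equation-solving algorithm adapted to the partially commutative setting. For the positive theory one removes the universal quantifiers by a Merzlyakov-style argument: a true positive $\forall\exists$ statement admits effectively constructible (``formal'') solutions of the inner existential block as words in the universally quantified variables, reducing a general positive sentence to the existential positive case already handled.

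The hard part will be, in (a), the definable isolation of the standard generators up to the rigidity of the system: the bare commuting graph on all involutions is far larger than $\Gamma$, so the crux is to cut it down by first-order centralizer conditions to a canonical copy of the vertex set, and to do so by formulas independent of the particular system. In (b) the substantive external input is the effective solution of equations over right-angled Coxeter groups, on which both the existential and (via elimination of the universal block) the positive cases ultimately rest.
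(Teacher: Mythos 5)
This Fact is not proved in the paper: both parts are quoted from the literature, (a) from \cite{casal1} and (b) from \cite{decidability_positive}, so there is no internal proof to compare against; I therefore assess your plan on its own terms and against the cited arguments. For (a) there is a genuine gap, located exactly where you say the delicate point is. Your interpretation scheme begins by singling out the reflections among the involutions by a first-order condition on the shape of their centralizers. But in a general right-angled Coxeter group of finite rank the set of reflections $S^W$ is not even invariant under change of Coxeter basis, hence a fortiori not $\emptyset$-definable: by Fact~\ref{reflection_preserving} and Theorem~\ref{interpretability_th} of this paper, canonicity and parameter-free definability of $S^W$ are each equivalent to the star-property of $\Gamma$, which fails whenever $N^*(v) \subseteq N^*(v')$ for some $v \neq v'$ (a transvection $v \mapsto v'v$ then produces a basis whose reflections differ from $S^W$). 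So no uniform centralizer condition can cut the involutions down to a canonical copy of the vertex set. The argument of \cite{casal1} avoids interpretation altogether: for each finite graph $\Gamma$ it produces a single sentence $\exists \bar{x}\, \varphi_{\Gamma}(\bar{x})$ (the formula reappears here in Lemma~\ref{bases_th}) such that $W_\Theta \models \exists \bar{x}\, \varphi_{\Gamma}(\bar{x})$ if and only if $\Gamma \cong \Theta$; the witnessing tuple is only required to exist, not to be canonical, and the substance lies in showing that any witness is a self-similar set of reflections for \emph{some} basis, which forces $\Gamma \cong \Theta$. Elementary equivalence then transfers this one sentence and the conclusion follows. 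If you insist on an interpretation-style argument, you must work with automorphism-invariant data such as conjugacy classes of involutions, which by Fact~\ref{finite_order} correspond to cliques of $\Gamma$ rather than to vertices.

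For (b) your outline --- reduce the existential theory to the effective solvability of finite systems of equations and inequations, and reduce the positive theory to the existential positive one by a Merzlyakov-style elimination of the universal block --- is consistent with what Diekert and Lohrey actually do for graph products. Be aware, however, that both external inputs you invoke (the equation-solving algorithm over partially commutative structures and the graph-product analogue of Merzlyakov's theorem) constitute essentially the entire content of \cite{decidability_positive}, so this part of your proposal is a correct high-level roadmap rather than a proof.
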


	Our first main result is a nearly complete characterization of the {\em superstable} Coxeter groups of finite rank (we leave open the non-affine $2$-spherical case). The property of superstability is one of the main diving lines in model theory, with very strong structural consequences, e.g. its negation implies the maximal number of models in every uncountable cardinality. Most abelian groups are superstable \cite{rogers} (e.g. the free ones), while e.g. non-abelian free groups are not superstable \cite{poizat}\footnote{We observe that the unsuperstability of non-abelian free groups has been known since at least the 80's (see in fact \cite{poizat} for an explicit proof of this fact). On the other hand, the stability of non-abelian free groups has been established only recently by the fundamental work of Sela \cite{sela_stability}.}.

	\begin{theorem}\label{th_char_sstab} Let $(W, S)$ be a Coxeter system of finite rank, and let $W_1 \times \cdots \times W_n$ be the corresponding decomposition of $W$ into irreducible components. Suppose further that $W$ is infinite and not non-affine $2$-spherical. Then $W$ is superstable if and only if, for every $i \in [1, n]$, $W_i$ is \mbox{either of affine type or of spherical type.}
\end{theorem}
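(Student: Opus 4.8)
The plan is to first reduce the problem to the irreducible factors and then treat the three possible types (spherical, affine, indefinite) separately. Since $W \cong W_1 \times \cdots \times W_n$ is a direct product of its components, I would begin by invoking the standard fact that a finite direct product of groups is superstable if and only if each factor is superstable, both directions being instances of the Feferman--Vaught analysis of the types realized in a finite product. This reduces the theorem to the single assertion: an irreducible infinite Coxeter group $W_i$ which, by the standing hypothesis, is not of the left-open non-affine $2$-spherical kind is superstable if and only if it is of affine type.

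For the \emph{if} direction I would use the structure theory of affine Coxeter groups: an irreducible affine $W_i$ is virtually abelian, namely $W_i \cong \mathbb{Z}^{r} \rtimes W_0$ with $W_0$ finite spherical and $\mathbb{Z}^{r}$ the (co)root lattice. Since $(\mathbb{Z}^{r},+)$ is superstable and $W_i$ is a finite extension of a definable copy of it, $W_i$ is superstable; finite (spherical) components are trivially superstable as finite groups. Closing the product then yields superstability whenever every component is spherical or affine.

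The heart of the argument is the \emph{only if} direction: if some component $W_i$ is neither spherical nor affine, then $W$, equivalently $W_i$, is not superstable. Here I would exploit the standing hypothesis, which guarantees that such a $W_i$ is not $2$-spherical, so there exist $s,t \in S_i$ with $m_{st} = \infty$ and hence an infinite dihedral standard parabolic $\langle s,t\rangle \cong D_\infty \leq W_i$. Combining this $\infty$-edge with the irreducibility and indefinite type of $W_i$, the goal is to manufacture an intrinsic, first-order definable witness of non-superstability, for instance an infinite strictly descending chain of definable subgroups each of infinite index in its predecessor, or a definable section isomorphic to a non-superstable group such as a non-abelian free group (recall that these are not superstable, see \cite{poizat}). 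The $\infty$-labelled edge is precisely what supplies the free-product and translation structure needed to iterate such a construction.

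I expect this last step to be the main obstacle: passing from the purely combinatorial presence of an infinite dihedral parabolic and indefinite growth to a genuinely definable obstruction is delicate, since superstability is not inherited by arbitrary (e.g.\ free) subgroups and parabolic retractions are not group homomorphisms. The natural tools to push it through are the definability of the reflection structure and of reflection length developed later in the paper, together with the fine geometry of the root system and the Tits cone. That the construction visibly requires an $\infty$-edge also explains why the $2$-spherical indefinite case, where no such edge exists, resists the method and is left open.
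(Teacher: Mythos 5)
Your positive direction is essentially the paper's argument, though stated too loosely: the paper does not simply say ``a finite extension of a superstable group is superstable'' (which is not obvious, since the induced structure on the finite-index copy of $\mathbb{Z}^r$ inside $W_i$ includes the conjugation action of the finite quotient), but instead interprets the whole semidirect product $\mathbb{Z}^d\rtimes_\theta Q$ in $(\mathbb{Z},+)$ with finitely many parameters, using that the $\theta(q)$ are integer matrices and hence definable; superstability then transfers through the interpretation, and finite products of superstable groups are handled at the end. Your opening reduction also overreaches: Feferman--Vaught gives ``all factors superstable $\Rightarrow$ product superstable,'' but the converse you invoke (product superstable $\Rightarrow$ each factor superstable) is not an instance of it and is not used in the paper; the negative direction is instead run directly inside $W$, so no such reduction is needed.

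The genuine gap is the one you yourself flag: you have no mechanism for turning the presence of an $\infty$-edge in a non-affine, non-spherical component into unsuperstability, and the candidate mechanisms you propose (a definable descending chain, a \emph{definable} section isomorphic to a free group) are exactly the ones that do not exist here. The paper's key idea, which your proposal is missing, is an abstract criterion (Theorem~\ref{general_criterion}) that deliberately avoids definability of the witnessing subgroup: it suffices to have a subgroup $H\leq G$ and an integer $n\geq 2$ such that $n$-th roots of nontrivial elements of $H$ all lie in $H$ and are finite in number, together with a sequence $(a_\ell)$ in $H$ whose pairwise quotients avoid products of two $n$-th powers. The tree of formulas built from the iterated words $w_{m+1}(z,\bar y)=w_m(y_m z^n,\bar y)$ then witnesses the failure of superstability. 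This is applied by locating a rank-$3$ special parabolic $P$ with an $\infty$-entry, which by Gordon--Long--Reid is virtually non-abelian free, passing to a normal finite-index free subgroup $\mathbb{F}\trianglelefteq P$, and choosing $n$ a prime exceeding both $[P:\mathbb{F}]$ and the orders of all torsion elements of $W$; the normalizer decomposition $N_W(P)=C_W(P)\times P$ then forces any $x\in W$ with $x^n\in\mathbb{F}$ to lie in $\mathbb{F}$. So the obstruction is not definable in your sense at all --- the whole point of the criterion is that $n$-purity of a free subgroup substitutes for definability --- and without this (or an equivalent device) your outline of the ``only if'' direction does not close.
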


	The negative side of Theorem~\ref{th_char_sstab} actually follows from an abstract technical criterion, Theorem~\ref{general_criterion}, which is of independent interest and whose applicability is well-beyond the present case study. In particular, it also allows us to prove:
	
	\begin{theorem}\label{pure_theorem} Let $G$ be a group. A sufficient condition for the unsuperstability of $G$ is that, for {\em some} $2 \leq n < \omega$, there exists a non-abelian free subgroup $\mathbb{F} \leq G$ which is $n$-pure in $G$, i.e. for every $x \in G$, if $x^n \in \mathbb{F}$, then $x \in \mathbb{F}$. In particular, if $G$ is a virtually non-abelian free group, then $G$ is not superstable.
\end{theorem}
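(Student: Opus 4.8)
The plan is to derive unsuperstability from the existence of an $n$-pure non-abelian free subgroup by producing the combinatorial configuration that Theorem~\ref{general_criterion} requires as input. Since I may assume that criterion, the real work is to check that an $n$-pure copy of $\mathbb{F}$ supplies whatever tree-like or definable-splitting data the criterion consumes. First I would recall that a non-abelian free group $\mathbb{F}$ of rank $\geq 2$ contains a free subgroup of rank $2$, say $\mathbb{F}_2 = \langle a, b\rangle$, and that such a group carries the canonical combinatorial object behind unsuperstability: an infinite family of cosets indexed by a tree, coming from the normal-form/Bass--Serre description of $\mathbb{F}_2$. The point of $n$-purity is to transport the relevant first-order formulas from $\mathbb{F}$ to $G$ without distortion, so that the witnessing configuration inside $\mathbb{F}$ remains a witnessing configuration inside $G$.

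\smallskip

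The key steps, in order, would be as follows. \textbf{Step 1.} Reduce to rank two: replace $\mathbb{F}$ by a rank-$2$ free subgroup $\mathbb{F}_2$; since $\mathbb{F}_2 \leq \mathbb{F}$ and $\mathbb{F}$ is $n$-pure in $G$, verify that $\mathbb{F}_2$ is still $n$-pure in $G$ (if $x^n \in \mathbb{F}_2 \subseteq \mathbb{F}$ then $x \in \mathbb{F}$, and one then argues $x \in \mathbb{F}_2$ using that roots in $\mathbb{F}$ lie in the same maximal cyclic subgroup). \textbf{Step 2.} Exhibit inside $\mathbb{F}_2$ the concrete data that Theorem~\ref{general_criterion} asks for, namely a formula $\varphi(x;\bar y)$ and parameters organized along a binary tree so that the associated partitioned family of solution sets has the branching/independence property forcing instability; here the natural choice is the word-length or prefix structure on reduced words, which is uniformly definable from the generators. \textbf{Step 3.} Use $n$-purity to pull this configuration up to $G$: the map $x \mapsto x^n$ lets one express ``$x$ lies in $\mathbb{F}_2$'' (up to $n$-th powers) by a condition on $x^n$, so the $\mathbb{F}_2$-internal definable sets are traces of $G$-definable sets, and the tree of parameters keeps its branching behaviour in $G$. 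Feeding this into Theorem~\ref{general_criterion} yields unsuperstability of $G$.

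\smallskip

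For the ``in particular'' clause, suppose $G$ is virtually non-abelian free, so it has a finite-index non-abelian free subgroup $\mathbb{F}$ with $[G:\mathbb{F}] = m < \omega$. Then for every $x \in G$ we have $x^{m!} \in \mathbb{F}$ (since $x^{[G:\mathbb{F}]}$ already lies in any normal core-type subgroup, and more carefully one passes to a finite-index \emph{normal} free subgroup, whose existence follows from taking the normal core), so $\mathbb{F}$ is $n$-pure for a suitable $n$ of the form $m!$ or the exponent of $G/\mathrm{core}(\mathbb{F})$. The clause then follows immediately from the main statement. I would state this by first extracting a finite-index normal free $\mathbb{F}' \trianglelefteq G$, setting $n = |G/\mathbb{F}'|$, and noting $x^n \in \mathbb{F}'$ for all $x$, which is exactly $n$-purity with $\mathbb{F}'$ in place of $\mathbb{F}$.

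\smallskip

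The main obstacle I anticipate is \textbf{Step 3}: making precise how an $n$-pure inclusion transfers the instability-witnessing configuration, because $n$-purity only controls membership up to $n$-th powers and does not a priori give an elementary or even a clean definable embedding of $\mathbb{F}_2$ into $G$. The delicate point is to arrange the witnessing formula in $\mathbb{F}_2$ so that it only ever refers to elements through their $n$-th powers (or through $\mathbb{F}$-membership, which $n$-purity does control), ensuring the pulled-back $G$-formula still separates the tree-branches. If Theorem~\ref{general_criterion} is stated in terms of a purely group-internal configuration (commuting/non-commuting patterns, or a definable family of cosets of a fixed subgroup) rather than requiring elementarity, then $n$-purity is exactly the hypothesis that lets the free subgroup's configuration survive the inclusion, and the argument closes.
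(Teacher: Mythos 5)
Your overall instinct at the end of the proposal is the right one: Theorem~\ref{general_criterion} is stated as a purely group-internal configuration, so the definability and formula-transfer worries of your Steps 2 and 3 evaporate. Concretely, the criterion asks for a subgroup $H \leq G$, an exponent $n$ and a bound $k$ such that (a) every $n$-th root in $G$ of a nontrivial element of $H$ lies in $H - \{e_G\}$ and there are at most $k$ such roots, and (b) there is a sequence $a_\ell \in H - \{e_G\}$ such that $a_{\ell_1}^{-1}a_{\ell_2}$ is never a product of two $n$-th powers of elements of $H$ and no finite product $a_{s_1} \cdots a_{s_k}$ is trivial. Taking $H = \mathbb{F}$, condition (a) is exactly $n$-purity combined with uniqueness of $n$-th roots in free groups (so $k = 1$). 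Your reduction to rank two is unnecessary for this, and as stated it is also slightly wrong: an arbitrary rank-two free subgroup need not inherit $n$-purity (e.g. $\langle a^2, b \rangle \leq \langle a, b \rangle$ is not $2$-pure); you would need a free factor. More importantly, your proposal never supplies condition (b), which is the actual ``properties of free groups'' content the paper invokes: an infinite sequence in $\mathbb{F}$ whose pairwise quotients avoid the set $\{x^n y^n : x, y \in \mathbb{F}\}$ and whose finite products are nontrivial. The prefix-tree and word-length apparatus of your Step 2 is aimed at a differently shaped (imagined) criterion and does not produce this data.

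The second and more serious problem is the ``in particular'' clause. You pass to a normal finite-index non-abelian free subgroup $\mathbb{F}'$ of $G$, set $n = |G/\mathbb{F}'|$, and observe that $x^n \in \mathbb{F}'$ for every $x \in G$. But that is the opposite of what $n$-purity requires: if every $x \in G$ satisfies $x^n \in \mathbb{F}'$, then any $x \notin \mathbb{F}'$ witnesses the failure of the implication $x^n \in \mathbb{F}' \Rightarrow x \in \mathbb{F}'$, so $\mathbb{F}'$ is as far from $n$-pure as possible for that $n$. The correct choice, as in the paper's proof of Lemma~\ref{negative_side}, is to take $n$ a \emph{prime strictly larger} than $[G : \mathbb{F}']$ (and larger than the orders of the torsion elements of $G$, where relevant): then $x^n \in \mathbb{F}'$ forces the order of $x\mathbb{F}'$ in the finite group $G/\mathbb{F}'$ to divide the prime $n$ while also being at most $|G/\mathbb{F}'| < n$, hence to equal $1$, i.e. $x \in \mathbb{F}'$. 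So $n$-purity holds for a suitably large prime $n$, not for the exponent of the quotient.
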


	The generality of Theorem~\ref{pure_theorem} also implies a characterization of the superstable right-angled Artin groups of finite rank (on these groups see also below):
	
	\begin{corollary}\label{Artin_theorem} Let $A_\Gamma = A$ be an Artin group, if $\Gamma$ contains a non-edge (i.e. the associated Coxeter matrix has an $\infty$ entry), then $A$ is not superstable. In particular, a right-angled Artin group is \mbox{superstable if and only if it is abelian.}
\end{corollary}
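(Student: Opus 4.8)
The plan is to invoke Theorem~\ref{pure_theorem}: it suffices to produce inside $A = A_\Gamma$ a non-abelian free subgroup that is $n$-pure for some $2 \leq n < \omega$. The natural candidate is the standard parabolic $P = \langle s, t\rangle$ attached to a non-edge $\{s,t\}$ of $\Gamma$, i.e. a pair with $m_{st} = \infty$. By van der Lek's description of standard parabolic subgroups, $P$ is canonically isomorphic to the Artin group on the full subdiagram on $\{s,t\}$; as this subdiagram carries the single label $\infty$, that Artin group is the free group $\mathbb{F}_2 = \langle s\rangle * \langle t\rangle$. Hence $P \cong \mathbb{F}_2$ is non-abelian free, and the whole problem reduces to showing that $P$ is $n$-pure in $A$.

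First I would dispose of the right-angled case, where everything is self-contained. When every finite label equals $2$ there is a retraction $\rho\colon A_\Gamma \to P$ sending $s,t$ to themselves and every other generator to $1$. This is well defined: each defining relation is a commutator $[u,v]$ with $\{u,v\}$ an edge, and since $\{s,t\}$ is a non-edge at most one of $u,v$ lies in $\{s,t\}$, so $[\rho(u),\rho(v)] = 1$ automatically. As right-angled Artin groups are bi-orderable (Duchamp--Thibon), they have unique roots, i.e. $x^n = y^n$ with $n \geq 1$ forces $x = y$. Thus for $x \in A$ with $x^n \in P$, applying the retraction gives $x^n = \rho(x^n) = \rho(x)^n$, whence $x = \rho(x) \in P$; so $P$ is $n$-pure for every $n$, and Theorem~\ref{pure_theorem} shows every non-abelian RACG-type right-angled Artin group fails to be superstable.

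For the converse direction of the ``in particular'' statement, observe that a right-angled Artin group is abelian exactly when $\Gamma$ is complete, in which case $A_\Gamma \cong \mathbb{Z}^{|S|}$ is free abelian and hence superstable by \cite{rogers}. Since a right-angled Artin group is non-abelian precisely when $\Gamma$ has a non-edge, the two directions together give that a right-angled Artin group of finite rank is superstable if and only if it is abelian.

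The general Artin case is where the main difficulty lies. Here the retraction $\rho$ need not exist: if $s$ (say) is joined to some generator $w$ by an odd-labelled edge, then collapsing $w$ to $1$ turns the braid relation between $s$ and $w$ into $s = 1$, so no homomorphism onto $P$ killing the remaining generators survives, and bi-orderability of $A_\Gamma$ is not available either. I would instead replace retraction-plus-unique-roots by the following mechanism: if $G$ is torsion-free and $\mathbb{F} \leq G$ is malnormal, then $\mathbb{F}$ is $n$-pure for every $n$. Indeed, if $x^n \in \mathbb{F}$ then either $x^n = 1$, giving $x = 1 \in \mathbb{F}$ by torsion-freeness, or $w := x^n \neq 1$, in which case $x$ centralizes its own power $w$, so $1 \neq w \in \mathbb{F} \cap x\mathbb{F}x^{-1}$ and malnormality forces $x \in \mathbb{F}$. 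It therefore suffices to know that $A_\Gamma$ is torsion-free and that the $\infty$-parabolic $P \cong \mathbb{F}_2$ is malnormal in $A_\Gamma$. Establishing this malnormality --- equivalently, the centralizer statement $C_{A_\Gamma}(w) \subseteq P$ for $w \in P \setminus \{1\}$, which is all the argument actually consumes --- is the step I expect to be hardest, since control of centralizers and normalizers of parabolic subgroups of arbitrary Artin groups is subtle and must be extracted from the structure theory of parabolic subgroups rather than from an elementary retraction.
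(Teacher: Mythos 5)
Your right-angled argument is correct and complete: the retraction $\rho\colon A_\Gamma\to P=\langle s,t\rangle$ exists precisely because all relations are commutators supported on edges and $\{s,t\}$ is a non-edge, and combining it with unique extraction of roots (from bi-orderability of RAAGs) does give that $P\cong\mathbb{F}_2$ is $n$-pure for every $n$, so Theorem~\ref{pure_theorem} applies. Note that this is a genuinely different route from the paper's, which handles the right-angled case by ``arguing as in Lemma~\ref{negative_side}'': there one takes a parabolic containing a non-edge, passes to a normal free subgroup of finite index, and proves $n$-purity only for a single large prime $n$ using the normalizer decomposition $N(P)=C(P)\times P$ and bounds on torsion. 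Your retraction-plus-unique-roots argument is cleaner and more self-contained for RAAGs, and it fully settles the ``in particular'' clause (together with the observation that the abelian case is $\mathbb{Z}^{|S|}$, hence superstable).

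The general Artin case, however, is a genuine gap in your proposal, and the specific repair you sketch cannot be carried out. First, torsion-freeness of arbitrary Artin groups is a well-known \emph{open problem} (it would follow from the $K(\pi,1)$ conjecture and is known only for special classes such as spherical, FC, or two-dimensional type), so the case $x^n=1\Rightarrow x=1$ of your purity argument cannot be invoked. Second, and more decisively, malnormality of $P=\langle s,t\rangle$ is simply \emph{false} in very common situations: if any generator $u\notin\{s,t\}$ has $m_{su}=2$, then $u$ centralizes $s$, so $e\neq s\in P\cap uPu^{-1}$ while $u\notin P$ (by van der Lek's intersection theorem for standard parabolics); such configurations occur in non-right-angled diagrams just as easily as in right-angled ones, so the malnormality hypothesis is not merely hard to verify but unavailable. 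Consequently your mechanism establishes nothing beyond the right-angled case, and some other construction of an $n$-pure non-abelian free subgroup is needed. To be fair, the paper's own proof of the general statement is a one-line appeal to Theorem~\ref{pure_theorem} with no construction of the required subgroup either, so you have not missed an argument that the paper supplies; but as written your proposal proves only the right-angled half of the corollary.
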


	Also the positive side of Theorem~\ref{th_char_sstab} follows from a stronger result:
	
	\begin{theorem}\label{decidability_affine_Coxeter} Let $(W, S)$ be an irreducible affine Coxeter group. Then $W$ is intepretable in $(\mathbb{Z}, +, 0)$ with finitely many parameters, and so $Th(W)$ is decidable.
\end{theorem}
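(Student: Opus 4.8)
The plan is to lean on the classical structure theory of irreducible affine Coxeter groups and then transport decidability across an explicit definable interpretation. Recall that if $(W,S)$ is irreducible affine, then $W$ splits as a semidirect product $W \cong W_0 \ltimes L$, where $W_0$ is a finite (spherical) Coxeter group — the associated finite Weyl group, i.e.\ the group of linear parts — and $L \cong \mathbb{Z}^n$ is the translation lattice (the (co)root lattice), on which $W_0$ acts by integer matrices via its reflection representation. Concretely, writing $W_0 = \{g_1, \dots, g_k\}$ and letting $M_1, \dots, M_k \in GL_n(\mathbb{Z})$ describe this action, the multiplication of pairs is $(g_i, v)(g_j, u) = (g_i g_j,\, v + M_i u)$. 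Thus $W$ is, up to isomorphism, a completely explicit piece of finite combinatorial data — the multiplication table of $W_0$ together with the integer matrices $M_i$ — layered on top of the free abelian group $\mathbb{Z}^n$.

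First I would set up the domain of the interpretation. Taking as parameter a fixed generator $1$ of $\mathbb{Z}$, the finite label set $\{0, 1, \dots, k-1\}$ is definable as a finite disjunction of equations $x = \underbrace{1 + \cdots + 1}_{m}$, so one may take as domain the definable set $D = \{0,\dots,k-1\} \times \mathbb{Z}^n \subseteq \mathbb{Z}^{n+1}$, using the first coordinate to index the element $g_i \in W_0$ and the remaining $n$ coordinates to record the lattice vector. No nontrivial quotient is needed, so this is a one-dimensional interpretation of arity $n+1$ with the trivial congruence.

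Next I would define the group operations on $D$ by the semidirect-product formula above. Multiplication in the finite group $W_0$ is encoded by a finite case distinction over the pairs $(i,j)$, reading the product index off the fixed multiplication table. Each matrix action $u \mapsto M_i u$ is $\mathbb{Z}$-linear with fixed integer entries, and since multiplication by any fixed integer $c$ is \emph{parameter-free} definable in $(\mathbb{Z}, +, 0)$ as iterated addition (and negation), the map $u \mapsto M_i u$ is definable, again via the finite case distinction on $i$. Assembling these ingredients yields a definable binary operation on $D$ realizing the product of $W$, and the identity and inversion are defined analogously; this exhibits $W$ as interpretable in $(\mathbb{Z}, +, 0)$ using only the finitely many parameters needed to name $1$ (hence the labels).

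Finally, decidability follows because $Th(\mathbb{Z}, +, 0)$ is decidable and the parameters are concretely nameable: a group-theoretic sentence $\varphi$ holds in $W$ if and only if its translation $\varphi^{\ast}$ holds in $(\mathbb{Z}, +, 0)$ at the chosen parameters, and since the construction with the opposite generator $-1$ produces an isomorphic interpreted structure, the truth value of $\varphi^{\ast}$ is insensitive to the choice of generator and reduces effectively to the decidable theory of $(\mathbb{Z}, +, 0)$ (equivalently, one may observe that $Th(\mathbb{Z}, +, 0, 1)$ is decidable as it coincides, on its sentences, with a reduct of Presburger arithmetic, and render the interpretation parameter-free over this expansion). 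The only genuinely substantive step is invoking the semidirect-product description and checking that the $W_0$-action preserves a lattice, so that the matrices $M_i$ land in $GL_n(\mathbb{Z})$; everything afterwards is routine definability bookkeeping. I therefore expect the main obstacle to be essentially expository — fixing the normalization of the (co)root lattice so that the action is genuinely by \emph{integer} matrices.
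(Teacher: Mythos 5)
Your proposal is correct and follows essentially the same route as the paper: both invoke the decomposition $W = N \rtimes W_0$ with $N \cong \mathbb{Z}^d$ and $W_0$ a finite Weyl group acting by matrices in $GL_d(\mathbb{Z})$, encode the finitely many elements of $W_0$ by named integer labels, define the twisted multiplication by a finite case distinction together with $\mathbb{Z}$-linear maps, and deduce decidability from the decidability of Presburger arithmetic (the paper phrases this as definability in $(\mathbb{Z},+,1,0)$). The only differences are cosmetic: the paper factors the interpretation through the intermediate structure $(\mathbb{Z}^d,+,\pi_x)_{x\in W_0}$ and takes the matrix entries as parameters, whereas you interpret in one step and note that multiplication by a fixed integer is parameter-free definable, so only the generator $1$ need be named.
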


	Our second main result concerns {\em algebraic geometry over groups}, a general theory developed in a series of papers \cite{alg_geom_over_groups1, alg_geom_over_groups2, alg_geom_over_groups3} which has important algebraic and model theoretic applications. In order to develop this involved machinery, the authors of \cite{alg_geom_over_groups1, alg_geom_over_groups2, alg_geom_over_groups3} isolate a group theoretic property, that of being a {\em domain} (cf. Definition~\ref{def_domain}), and show that under this assumption many notions from algebraic geometry can be developed in a purely group theoretic context. We prove:
	


	\begin{theorem}\label{domain_conj} Let $(W, S)$ be a Coxeter system of finite rank, and $W_1 \times \cdots \times W_n$ the corresponding decomposition of $W$ into irreducible components. Suppose that for every $i \in [1, n]$, $W_i$ is neither spherical nor affine, then the following are equivalent:
	\begin{enumerate}[(1)]
	\item $W$ is a domain;
	\item $n=1$, i.e. $(W, S)$ is irreducible.
	\end{enumerate}
\end{theorem}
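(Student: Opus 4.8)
The plan is to first reformulate the domain property purely in terms of normal subgroups, and then to feed in the geometric structure theory of irreducible non-affine Coxeter groups. Recall from Definition~\ref{def_domain} that $W$ fails to be a domain exactly when it has \emph{zero divisors}, i.e. there are nontrivial $a, b$ with $[a^{g}, b] = 1$ for all $g \in W$. The first step is the elementary observation that the largest normal subgroup of $W$ contained in $C_{W}(b)$ is its normal core, $\bigcap_{g} C_{W}(b^{g}) = C_{W}(\langle\langle b \rangle\rangle)$, where $\langle\langle b \rangle\rangle$ denotes the normal closure of $b$. Since $C_{W}(N)$ is itself normal whenever $N$ is, this yields the clean criterion: $W$ is a domain if and only if $C_{W}(N) = 1$ for every nontrivial normal subgroup $N \trianglelefteq W$, equivalently $W$ has no two nontrivial normal subgroups $N, M$ with $[N, M] = 1$ (the case $N = M$ covering the existence of a nontrivial abelian normal subgroup). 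I expect this reduction to be routine.

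Granting the criterion, the implication $(1) \Rightarrow (2)$ is immediate in contrapositive form: if $n \geq 2$ then $N := W_{1}$ and $M := W_{2} \times \cdots \times W_{n}$ are nontrivial normal subgroups (each $W_{i}$ is non-spherical, hence infinite) with $[N, M] = 1$, so $W$ is not a domain. Concretely, for $1 \neq a \in W_{1}$ and $1 \neq b \in M$ every conjugate $a^{g}$ stays in $W_{1}$ and hence commutes with $b$, exhibiting zero divisors.

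The substance is $(2) \Rightarrow (1)$: assuming $(W, S)$ irreducible with $W$ neither spherical nor affine (so $W$ is infinite, irreducible, non-affine), I must show $W$ has no two nontrivial commuting normal subgroups. Here I would invoke the geometric structure theory: such a $W$ acts properly and cocompactly on its CAT(0) Davis complex and contains a rank-one (contracting) isometry, whence, by the work of Caprace--Fujiwara and Osin, it is \emph{acylindrically hyperbolic}; moreover, the faithfulness and essentiality of the Davis-complex action force the finite radical, and hence the amenable radical, to be trivial (a nontrivial finite normal subgroup would fix the whole complex). Suppose now $N, M \trianglelefteq W$ are nontrivial with $[N, M] = 1$. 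Since the finite radical is trivial, $N$ is infinite and, by the structure theory of normal subgroups in acylindrically hyperbolic groups, contains a loxodromic element $h$; then $M \leq C_{W}(N) \leq C_{W}(h) \leq E(h)$, the maximal elementary subgroup of $h$, which is virtually cyclic because the finite radical is trivial. Thus $M$ is a normal, amenable subgroup of $W$, hence contained in the amenable radical and therefore trivial --- contradicting $M \neq 1$. This establishes the criterion, and so $W$ is a domain.

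The main obstacle is precisely this hard direction, and within it the non-formal input that irreducible non-affine Coxeter groups are acylindrically hyperbolic with trivial finite radical, together with the Dahmani--Guirardel--Osin structure theory (every infinite normal subgroup contains a loxodromic, and the centralizer of a loxodromic lies in a virtually cyclic maximal elementary subgroup). Should one prefer to keep the argument internal to Coxeter theory, the same contradiction can be reached by replacing ``loxodromic'' with a rank-one isometry extracted directly from the Davis complex, and arguing that its centralizer stabilizes the axis together with its two endpoints at infinity, forcing $M$ to fix a pair of boundary points and hence to be too small to be a nontrivial normal subgroup of an essentially acting irreducible group.
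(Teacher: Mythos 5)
Your proposal is correct, but it takes a genuinely different route from the paper on the hard direction. The reduction of the domain property to the non-existence of two commuting nontrivial normal subgroups, and the direct-product argument for $(1)\Rightarrow(2)$, match the paper. For $(2)\Rightarrow(1)$, however, the paper stays entirely inside Coxeter theory: it proves (Theorem~\ref{debt1}) that if an irreducible non-spherical $(W,S)$ admits $x \neq e \neq y$ with $[x,y^w]=e$ for all $w$, then $(W,S)$ is \emph{affine}. This is done by using the parabolic-closure machinery (Lemmas~\ref{parabclosure}, \ref{normalparabolic}, \ref{normalsubgroups}, \ref{infiniteorder}) to show $Pc_S(x)=W=Pc_S(y)$ and that $x,y$ have infinite order, then either producing a normal subgroup with nontrivial center (Lemma~\ref{centerofnormal}) or a free abelian subgroup $H=\langle x,y\rangle_W$ of rank $2$ with $Pc_S(H)=W$, and finally invoking Krammer's theorem (Theorem~\ref{Krammerthm}). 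You instead import the acylindrical hyperbolicity of irreducible non-spherical non-affine Coxeter groups (Caprace--Fujiwara rank-one isometries plus Sisto/Osin) and the Dahmani--Guirardel--Osin structure theory (infinite normal subgroups contain loxodromics, centralizers of loxodromics lie in virtually cyclic maximal elementary subgroups, the amenable radical is finite). Both work: the paper's route is self-contained within Coxeter combinatorics, needs only Krammer's thesis as external input, and yields the sharper dichotomy that commuting normal subgroups force affineness; yours is shorter once the general machinery is granted and applies well beyond Coxeter groups. One small point: your justification that the finite radical is trivial (``a nontrivial finite normal subgroup would fix the whole complex'') is loose as stated --- the clean argument is Lemma~\ref{normalparabolic} together with Lemma~\ref{parabclosure}(b), which shows directly that an irreducible non-spherical Coxeter group of finite rank has no nontrivial finite normal subgroup.
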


	By a case-by-case analysis of the irreducible affine and spherical Coxeter groups  it could be seen that $W$ is a domain if and only if $W$ is irreducible, centerless and not of affine type, but this is outside of the scope of the present paper.
	Using Theorem~\ref{domain_conj} in combination with the general results of \cite{alg_geom_over_groups3}, we then deduce:

	\begin{corollary}\label{cor1} Let $(W, S)$ be a Coxeter system of finite rank, and suppose that $(W, S)$ is irreducible and neither spherical nor affine. Then the machinery of algebraic geometry over groups \cite{alg_geom_over_groups1, alg_geom_over_groups2, alg_geom_over_groups3} can be applied to $W$.
\end{corollary}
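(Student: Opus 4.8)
The plan is to derive the corollary by feeding Theorem~\ref{domain_conj} into the foundational setup of \cite{alg_geom_over_groups3}. The guiding observation is that the whole apparatus of algebraic geometry over groups built in \cite{alg_geom_over_groups1, alg_geom_over_groups2, alg_geom_over_groups3} runs under the single standing hypothesis that the ambient group be a domain in the sense of Definition~\ref{def_domain}. Hence, once we know that $W$ is a domain, the applicability of the machinery is immediate from the general theory, requiring no further Coxeter-theoretic input.

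First I would match the hypotheses of the corollary against those of Theorem~\ref{domain_conj}. Since $(W,S)$ is assumed irreducible, its decomposition $W_1 \times \cdots \times W_n$ into irreducible components is trivial, so that $n = 1$ and $W_1 = W$; consequently the assumption that $W$ is neither spherical nor affine is precisely the hypothesis of Theorem~\ref{domain_conj} that each $W_i$ be neither spherical nor affine, now ranging over the single index $i = 1$. I would then invoke Theorem~\ref{domain_conj}, whose conclusion is the equivalence of the statements ``$W$ is a domain'' and ``$n = 1$''. As we have just recorded that $n = 1$ holds, the equivalence delivers that $W$ is a domain.

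Finally, with $W$ a domain, I would cite the general results of \cite{alg_geom_over_groups3} to the effect that the standard constructions of algebraic geometry over groups are available for every group satisfying the domain condition, and apply these to $W$ to obtain the corollary. I do not expect a genuine obstacle within the corollary itself: all of the substance is carried by Theorem~\ref{domain_conj}, whose proof is the place where irreducibility of $(W,S)$ must be exploited in order to verify the defining property of a domain, while the remaining steps amount to reconciling hypotheses and quoting the foundational framework.
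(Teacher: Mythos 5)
Your proposal is correct and matches the paper's own derivation: the paper deduces this corollary exactly by combining Theorem~\ref{domain_conj} (which, under the stated hypotheses, gives that irreducibility is equivalent to $W$ being a domain) with the general results of \cite{alg_geom_over_groups3}, which require only the domain condition. Your hypothesis-matching step (irreducibility forces $n=1$, so the equivalence yields that $W$ is a domain) is the whole content of the argument.
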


	\begin{corollary}\label{reduction_to_components} Let $(W, S)$ be a Coxeter system of finite rank, and let $ W_1 \times \cdots \times W_n$ be the corresponding decomposition of $W$ into irreducible components, and suppose that, for every $i \in [1, n]$, $W_i$ is neither spherical nor affine. Then:
	\begin{enumerate}[(i)]
	\item if $W \equiv H$, then $H$ is also a finite direct product of domains $H = H_1 \times \cdots \times H_k$, with $k = n$ and $W_i \equiv H_i$, for all $i \in [1, n]$ (after suitable ordering of factors);
	\item for every $i \in [1, n]$, $Th(W_i)$ is interpretable in $W$;
	\item $W$ is $\lambda$-stable if and only if $W_i$ is $\lambda$-stable for every $i \in [1, n]$;
	\item $Th(W)$ is decidable if and only if $Th(W_i)$ is decidable for every $i \in [1, n]$.
\end{enumerate}
\end{corollary}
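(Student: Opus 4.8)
The plan is to deduce everything from two inputs: Theorem~\ref{domain_conj}, which supplies the \emph{algebraic} structure, and the general theory of finite direct products of domains from \cite{alg_geom_over_groups3}, which supplies the \emph{model-theoretic} rigidity. First I would make the elementary but crucial observation that each factor $W_i$ is, by itself, an irreducible Coxeter system which is neither spherical nor affine; hence applying Theorem~\ref{domain_conj} to $W_i$ alone (the case $n=1$) shows that every $W_i$ is a domain in the sense of Definition~\ref{def_domain}. Consequently $W = W_1 \times \cdots \times W_n$ is a finite direct product of domains, which is precisely the setting to which the machinery of \cite{alg_geom_over_groups3} applies, as already recorded for the irreducible case in Corollary~\ref{cor1}.

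The heart of the argument is then to import the \emph{rigidity of the domain decomposition} from \cite{alg_geom_over_groups3}: for a finite direct product of domains the direct factors are uniformly first-order definable, the decomposition is canonical up to reordering, and the statement ``being a direct product of exactly $n$ domains'' is an elementary invariant. Granting this, part (i) is immediate: any $H \equiv W$ inherits the sentence(s) expressing that $W$ is a direct product of $n$ domains, so $H = H_1 \times \cdots \times H_k$ with $k=n$ and each $H_i$ a domain, and matching up the uniformly definable factors yields $W_i \equiv H_i$ after a suitable reordering. Part (ii), the interpretability of each $Th(W_i)$ in $W$, follows from the same uniform definability, realizing the $i$-th factor as a definable section (subgroup/quotient) of $W$.

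Finally, I would derive (iii) and (iv) as formal consequences of (ii) together with Feferman--Vaught type arguments. For (iii): if each $W_i$ is $\lambda$-stable then the finite product $W$ is $\lambda$-stable, since over a parameter set $A$ of cardinality $\leq \lambda$ a complete type is determined by the (finitely many) types of the coordinate projections $\pi_i(A)$ in the $W_i$ together with finite Boolean bookkeeping, bounding the number of types by $\lambda$; conversely each $W_i$ is interpretable in $W$ by (ii), and $\lambda$-stability is inherited by interpretable structures. For (iv): if each $Th(W_i)$ is decidable then the effective Feferman--Vaught theorem computes $Th(W)$ from the $Th(W_i)$, while conversely interpretability gives a computable translation of $Th(W_i)$ into $Th(W)$, so decidability of the latter transfers to the former.

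The main obstacle is neither (iii) nor (iv)---these are routine once (i) and (ii) are established---but the careful invocation of the definability and rigidity results of \cite{alg_geom_over_groups3}. Concretely, the hard part will be to verify that the notion of domain employed there agrees with Definition~\ref{def_domain}, and that their results genuinely yield both the \emph{uniform} first-order definability of the direct factors and the \emph{elementary invariance} of their number; this is exactly what powers part (i), and it is the one place where the argument relies on the full strength of the algebraic-geometry-over-groups framework rather than on soft model theory.
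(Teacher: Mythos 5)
Your proposal matches the paper's intended argument: the paper does not write out a separate proof of this corollary but derives it exactly as you do, by applying Theorem~\ref{domain_conj} (via Theorem~\ref{debt1}) to each irreducible factor to see that every $W_i$ is a (non-abelian) domain, and then invoking the results of \cite{alg_geom_over_groups3} on finite direct products of domains for the definability and rigidity of the factors, with (iii) and (iv) following as you describe. Your closing caveat about checking that the notion of domain and the uniform definability results in \cite{alg_geom_over_groups3} really deliver (i) and (ii) is exactly the load-bearing point the paper also leaves to that reference.
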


	In the second part of the paper we move to definability questions in specific classes of Coxeter groups. The first class that we consider is the class of {\em right-angled Coxeter groups} (RACGs). These groups play a central role in the theory of Coxeter groups (and related objects), and they are closely related  to the so-called right-angled Artin group (RAAGs), a class of groups whose model theoretic analysis has recently seen important advancements (see e.g. \cite{artin1, artin2}).
	
	We focus on two model theoretic problems: the analysis of elementary substructures of a given structure, and the question of primality of a given model (where a model $M$ is prime if it embeds elementarily in every model of its theory).
	
	The question of elementarity of a subgroup of a given group is a classical theme in model theory. For example, in the 50's Tarski asked if the natural embedding of the non-abelian free group $\mathbb{F}_k$ on $k$ generators into the non-abelian free group $\mathbb{F}_n$ on $n$ generator was elementary ($k \leq n < \omega$). This was settled in the positive by Sela \cite{sela_elem_equiv}, and, independently, by Kharlampovich \& Myasnikov \cite{myasnikov_elem_equiv}. Furthermore, Perin proved that $H$ is elementary in $\mathbb{F}_n$ if and only if $H$ is a free factor of $\mathbb{F}_n$ \cite{perin_el1, perin_el2}.
	
	In the following theorem we give a strong contribution to the question of determination of the elementary substructures of a right-angled Coxeter group $W$, and a full answer in the case $W$ is further assumed to be reflection independent, i.e. its set of reflections $S^W$ is independent of the choice of Coxeter basis $S$ of $W$.

	\begin{theorem}\label{el_substr} Let $W$ be a right-angled Coxeter group of finite rank. Then $W$ does not have proper elementary subgroups which are Coxeter groups. Furthermore, if $W$ is reflection independent, then the set of reflections of $W$ is definable without parameters and $W$ has {\em no} proper elementary subgroups at all.
\end{theorem}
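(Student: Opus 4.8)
The plan is to treat the two assertions separately, reducing both to the combinatorics of reflections together with Fact~1.1(a).

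For the first assertion, suppose $H \preceq W$ with $H$ a Coxeter group, say with Coxeter system $(H,T)$. Since $W$ is right-angled, all of its finite-order elements have order $1$ or $2$; in particular, for every $k \geq 3$ the sentence $\sigma_k := \neg\exists x\,(x^k = 1 \wedge \bigwedge_{1 \le j < k} x^j \neq 1)$, asserting that there is no element of order exactly $k$, holds in $W$. As $H \equiv W$, each $\sigma_k$ holds in $H$, so every finite-order element of $H$ has order $1$ or $2$. A Coxeter system carrying a label $3 \le m(s,t) < \infty$ would produce the element $st$ of order exactly $m \ge 3$, contradicting $\sigma_m$; hence $(H,T)$ is right-angled. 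One checks similarly that $H$ has finite rank, so Fact~1.1(a) applies to the two right-angled systems and yields an isomorphism of the defining graphs, whence $H \cong W$; in particular $H$ and $W$ have the same finite rank.

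It remains to upgrade $H \cong W$ to $H = W$, and this is the main obstacle: a right-angled Coxeter group can contain proper Coxeter subgroups isomorphic to itself (already $D_\infty = \langle a,b\rangle$ contains $\langle a, bab\rangle \cong D_\infty$ as an index-two subgroup), so the abstract isomorphism is not enough and the full strength of elementarity must be used. The strategy is to exploit the conjugacy combinatorics of reflections. If some reflection $r$ of $W$ lies outside $H$, I produce a first-order formula with parameters from $H$ that is witnessed in $W$ only by elements forcing the use of $r$ (or of a conjugator outside $H$), contradicting $H \preceq W$. Concretely, for suitable reflections $r_1, r_2 \in H$ the formula $\exists t\,(t^2 = 1 \wedge t r_2 t = r_1)$, expressing that $r_1$ and $r_2$ are conjugate by a reflection, can hold in $W$ while failing in $H$ precisely because the conjugating reflection is absent from $H$. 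Carrying this out uniformly, and using that the reflections generate $W$, forces $H$ to contain every reflection and hence $H = W$.

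For the second assertion I first make the reflections $\emptyset$-definable. Reflection independence guarantees that the set $R = S^W$ is independent of the Coxeter basis, hence $\mathrm{Aut}(W)$-invariant; the task is to isolate $R$ among the involutions of $W$ by a single formula. Using that in a right-angled Coxeter group every involution is conjugate to a clique product $\prod_{s \in C} s$, and computing the centralizers $C_W(x)$ of such products, I characterize the reflections (the cliques of size one) by a first-order condition on the structure of $C_W(x)$ for an involution $x$; verifying that this condition singles out exactly $R$ is the technical heart of this part. Granting $\emptyset$-definability of $R$, let $H \preceq W$ be an arbitrary elementary subgroup. Then the reflections of $H$ are exactly $R \cap H$, and the reflection-conjugacy argument of the previous paragraph applies verbatim, now without assuming $H$ to be Coxeter: a reflection of $W$ missing from $H$ yields a formula over parameters in $H$ distinguishing $H$ from $W$. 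Since $W$ is generated by its reflections, $H = W$, so $W$ has no proper elementary subgroup. I expect the definability step and this uniform ``missing reflection'' argument to be the two places requiring genuine work.
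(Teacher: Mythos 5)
There is a genuine gap at the crux of both halves. Your reduction of the first assertion to ``$H\cong W$, now upgrade to $H=W$'' is correct and matches the paper (which also invokes Fact~1.1(a) to get $H\cong W_\Gamma$ and then realizes a basis of $H$ as a self-similar set of reflections of $(W,S)$). But the upgrade is exactly where all the work lies, and your proposed mechanism --- find reflections $r_1,r_2\in H$ that are conjugate by an involution of $W$ but by no involution of $H$ --- is only asserted, never established. Nothing in the setup guarantees that such a pair exists: a proper subgroup $U=\langle\hat S\rangle_W$ generated by a self-similar set of reflections (e.g.\ $\hat S=\{s_2s_1s_2,\,s_1s_2s_1,\,s_3\}$ in the universal group of rank $3$) is itself isomorphic to $W$ and could a priori be closed under the conjugacy relations you want to test; moreover in the non-reflection-independent case ``the reflections of $W$'' is not even a basis-invariant set, so ``$H$ contains every reflection'' is not well posed. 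The paper's argument is of a completely different nature: it uses the formula $\varphi_\Gamma$ (Lemma~\ref{bases_th}) expressing that a tuple is a self-similar set of reflections with respect to \emph{some} basis, pushes the spelling equations $\hat s_i=s_{i_1}\cdots s_{i_{k_i}}s_is_{i_{k_i}}\cdots s_{i_1}$ through the isomorphism $W'\cong W$ (Proposition~\ref{muller_fact}) and back through elementarity, and then composes the resulting automorphisms $\beta,\gamma$ to conclude that $s_i\mapsto\hat s_i$ is itself an automorphism of $W$, contradicting properness. No local conjugacy test of the kind you describe appears, and I do not see how to make one work.

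The second half has the same problem plus a missing reduction. Your plan for $\emptyset$-defining the reflections via centralizers of involutions is in the right spirit (the paper's Lemma~\ref{refl_th} does exactly this, using the star-property to separate a reflection from every other involution by an element of $C_W(\,\cdot\,,2)$), and that part is fine modulo details. But from there the paper does \emph{not} rerun a conjugacy argument ``without assuming $H$ Coxeter'': it uses definability of $R(W)$ plus elementarity to show that every $a\in W'$ is a product of elements of $W'\cap R(W)$, so that $W'$ is a reflection subgroup, hence a Coxeter group by Deodhar--Dyer, and then applies the first assertion. You should replace your ``missing reflection'' step by this reduction; as written, both halves of your argument rest on an unproved (and, I believe, unprovable as stated) claim.
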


	Another classical theme in model theory is the existence of prime models, and the question of homogeneity of a given model, where a structure is said to be homogeneous if tuples realizing the same first-order types over $\emptyset$ are automorphic\footnote{The two questions are stricly related, since a prime model $M$ is such that orbits of tuples are not only type-definable over $\emptyset$, but actually first-order definable over $\emptyset$, and thus $M$ is homogenous.}. In \cite{nies} Nies proved that $\mathbb{F}_2$ is homogeneous and that the theory of non-abelian free groups does not have a prime model. Nies left open the question of homogeneity of free groups of finite rank $\geq 3$, which was solved in the positive only about 15 years later by Sklinos \& Perin \cite{homogeneity}, and, independently, by Houcine \cite{houcine}.
	
	In Theorem~\ref{th_prime_models} we connect the question of primality of a given $\mathrm{RACG}$ of finite rank $W$ to the finite generation of a certain monoid of monomorphisms of $W$: the monoid of $S$-self-similarity of $W$, denoted as $Sim(W, S)$ (cf. Definition~\ref{special_monoid}).
	
	\begin{theorem}\label{th_prime_models}
Let $W$ be a $\mathrm{RACG}$ of finite rank and $S$ a basis of $W$. Then:
	\begin{enumerate}[(1)]
	\item If the monoid $Sim(W, S)$ of special $S$-endomorphisms of $W$ is finitely generated, then $W$ is a prime model of its theory, that is, for every $n < \omega$ and any $n$-tuple $\bar{a}$ from $W$, the $\mathrm{Aut}(W)$-orbit of $\bar{a}$ is definable in $W$ without parameters.
	\item If $Sim(W, S)$ is not finitely generated, then the orbit of (any enumeration of) $S$ is not $\emptyset$-definable in $W$ without parameters by a universal formula.
	\item If $W$ is a universal Coxeter group of finite rank at least two, then the monoid $Sim(W, S)$ is {\em not} finitely generated (and so the conclusion of (2) applies).
\end{enumerate}	 
\end{theorem}

%

	Finally, we prove that right-angled Coxeter groups of finite rank manifest strong traces of homogeneity, leaving though open the problem of full homogeneity.

	\begin{theorem}\label{type_def_th}
Let $W$ be a right-angled Coxeter group of finite rank. Then if $\bar{a} \in W^n$ is such that $\langle \bar{a} \rangle_W$ contains a Coxeter element of $W$, then $\bar{a}$ is type-determined.
\end{theorem}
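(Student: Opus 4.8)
The plan is to show that any $\bar b \in W^n$ with $\mathrm{tp}(\bar b) = \mathrm{tp}(\bar a)$ lies in the $\mathrm{Aut}(W)$-orbit of $\bar a$. First I would absorb the Coxeter element into the data. Fix a group word $w$ with $c := w(\bar a)$ a Coxeter element, and fix a basis $S$ and an ordering $\pi$ so that $c = s_{\pi(1)}\cdots s_{\pi(n)}$; write $\bar s := (s_{\pi(1)},\dots,s_{\pi(n)})$. Given $\bar b$ with $\mathrm{tp}(\bar b)=\mathrm{tp}(\bar a)$, set $c' := w(\bar b)\in \langle \bar b\rangle_W$. Since $w$ is fixed, any formula true of $(\bar a,c)$ pulls back to a formula about $\bar a$ alone, so $\mathrm{tp}(\bar a,c)=\mathrm{tp}(\bar b,c')$; it thus suffices to transport $\bar a$ to $\bar b$ by an automorphism, using $c,c'$ as anchors.

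The heart of the argument is a first-order formula $\theta(x,\bar y)$, with $\bar y=(y_1,\dots,y_n)$, such that for a Coxeter element $c$ the solutions of $\theta(c,\bar y)$ in $W$ are exactly the ordered Coxeter bases of $W$ whose Coxeter element is $c$. I would assemble $\theta$ from the manifestly first-order conditions that the $y_i$ are involutions, that $(y_iy_j)^2=1$ whenever $\{s_i,s_j\}$ is an edge of $\Gamma$, and that $y_1\cdots y_n = x$, strengthened by the requirement that each $y_i$ be a reflection and that $y_1\cdots y_n = x$ be a \emph{reduced} reflection factorisation. The true basis $\bar s$ then satisfies $\theta(c,\cdot)$, and conversely any solution $\bar s'$ is a genuine basis: here I would invoke two facts from Coxeter theory, namely that a Coxeter element is \emph{essential} (it lies in no proper parabolic subgroup and has reflection length equal to $\mathrm{rank}(W)=n$), and that the reflections occurring in any reduced reflection factorisation of a Coxeter element generate $W$. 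Together these force $\langle \bar s'\rangle_W = W$, so $\bar s'$ is indeed a Coxeter basis.

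With $\theta$ in hand the model theory is immediate. Writing each $a_i = u_i(\bar s)$ as a word $u_i$ in the basis (possible since $\bar s$ generates $W$), the formula $\phi(x,\bar z) := \exists \bar y\,\big(\theta(x,\bar y)\wedge \bigwedge_i z_i = u_i(\bar y)\big)$ holds of $(c,\bar a)$. As $\phi$ is first-order and $\mathrm{tp}(\bar a,c)=\mathrm{tp}(\bar b,c')$, it also holds of $(c',\bar b)$: there is a solution $\bar s'$ of $\theta(c',\cdot)$ with $b_i = u_i(\bar s')$ for every $i$.

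Finally I would produce the automorphism. Since $\bar s'$ consists of involutions satisfying the edge relations of $\Gamma$, the assignment $s_{\pi(i)}\mapsto s_i'$ respects the Coxeter presentation and so extends to an endomorphism $\sigma$ of $W=W_\Gamma$; by the previous step $\langle \bar s'\rangle_W = W$, so $\sigma$ is surjective, and since right-angled Coxeter groups of finite rank are residually finite and hence Hopfian, $\sigma \in \mathrm{Aut}(W)$. Then $\sigma(a_i) = u_i(\bar s') = b_i$ for all $i$, i.e. $\sigma(\bar a)=\bar b$, as required; note this simultaneously forces $\bar s'$ to satisfy \emph{exactly} the relations of $\Gamma$, so that no non-edge (infinite-order) condition — which would not be first-order anyway — need enter $\theta$. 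I expect the main obstacle to be the construction of $\theta$ in the second paragraph, specifically expressing \emph{first-order}, uniformly with the parameter $c$, that the $y_i$ are reflections forming a reduced factorisation. In the reflection independent case this is handed to us by the parameter-free definability of the set of reflections from Theorem~\ref{el_substr}; in general one must define the relevant reflections locally from $c$ and control reflection length, for which the reflection-length techniques developed later in the paper are the natural tool.
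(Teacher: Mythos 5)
There is a genuine gap at the transfer step, precisely at the words ``by the previous step $\langle \bar s'\rangle_W = W$''. Your formula $\theta$ is calibrated so that solutions of $\theta(c,\cdot)$ generate $W$, but the only tools you invoke for this --- essentiality of $c$, $\ell_T(c)=n$, and the fact that the factors of a reduced reflection factorisation of a \emph{Coxeter element} generate $W$ (Hurwitz transitivity) --- concern the honest Coxeter element $c$. After transferring $\phi$ along $tp(\bar a, c)=tp(\bar b, c')$ you obtain a solution $\bar s'$ of $\theta(c',\cdot)$ where $c'=w(\bar b)$ is only known to realise the type of $c$. The statement ``every solution of $\theta(x,\cdot)$ generates $W$'' is not first-order in $x$, so it does not transfer; and knowing that $c'$ is a Coxeter element is essentially an instance of the theorem you are proving, so assuming it is circular. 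The properties that \emph{do} transfer (essentiality, reflection length $n$, reducedness of the factorisation) are insufficient: already in the infinite dihedral group $\langle s,t\rangle$ the element $(st)^2$ is essential, has reflection length $2$, and admits the reduced reflection factorisation $s\cdot(tst)$, whose factors generate the proper reflection subgroup $\langle s, tst\rangle \not\ni t$. So nothing in your argument rules out that $\bar s'$ generates a proper subgroup, in which case $\sigma$ is not surjective and no automorphism is produced.

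The paper avoids exactly this difficulty by running the argument in the contrapositive. In Lemma~\ref{lemma_for_homogeneity} one assumes $\bar b\notin Aut(W)\cdot\bar a$, observes that the resulting monomorphism is then \emph{not} surjective, encodes this failure of surjectivity by a concrete first-order formula $\chi$ true of the witnessing tuple, and transfers \emph{that} formula back to $\bar a$; this yields a non-surjective pre-special endomorphism $\alpha$ of $W$ fixing each $a_i$, hence fixing the Coxeter element $w(\bar a)\in\langle\bar a\rangle_W$, and Fact~\ref{caprace_fact} (Caprace--Minasyan: a pre-special endomorphism fixing a Coxeter element is an automorphism) gives the contradiction. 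This test-element input is the ingredient your proposal is missing, and note that Fact~\ref{caprace_fact} requires $W$ to be reflection independent --- the hypothesis under which the abstract states the result, and the same hypothesis you need to make ``$y_i$ is a reflection'' first-order via Theorem~\ref{el_substr}. Your suggestion to handle the general case by defining the relevant reflections ``locally from $c$'' via reflection length is not carried out, and cannot be as stated, since in a non-reflection-independent RACG the set of reflections is not even independent of the chosen basis. Finally, your two external inputs about Coxeter elements ($\ell_T(c)=n$ and the generation property of reduced factorisations) are true but are nontrivial theorems of dual Coxeter theory not contained in this paper and would need citation.
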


	
	We now move to another important and well-known class of Coxeter groups which shows a very different model theoretic behaviour: the {\em $2$-spherical Coxeter groups}. In this respect, relying on the fundamental results of \cite{reflec_abc_cox, intrinsic_reflections}, \mbox{we were able to prove:}
	
	\begin{theorem}\label{th_prime_2spherical} Let $(W, S)$ be an irreducible, $2$-spherical Coxeter system of finite rank. Then the set of reflections of $W$ is definable without parameters. Furthermore, if $(W, S)$ is even and not affine, then $W$ {\em is} a prime model of its theory.
\end{theorem}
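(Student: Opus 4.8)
The plan is to treat the two assertions separately: first establish that $S^W$ is parameter-free definable, then leverage this, together with reflection rigidity, to prove primality.

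For the definability of reflections I would invoke the intrinsic characterizations of reflections in $2$-spherical Coxeter groups from \cite{reflec_abc_cox, intrinsic_reflections}. The key input is that in the $2$-spherical case the set $S^W$ is recognizable purely group-theoretically, independently of the chosen basis $S$; the remaining task is to verify that the relevant characterization is expressible by a first-order formula $\rho(y)$ in the language of groups. Since reflections are involutions, $\rho(y)$ will contain the clauses $y \neq 1$ and $y^2 = 1$, and the extra content — singling out reflections among all involutions — is captured by bounded quantification over involutions together with the commuting and product relations among them (for instance, via the impossibility of decomposing $y$ as a product of two commuting involutions both distinct from $y$, suitably refined using the cited results). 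As all such conditions are first-order, this yields a parameter-free formula defining $S^W$.

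For primality, note that finite rank makes $W$ finitely generated and hence countable, so by the classical criterion it suffices to show that $W$ is atomic. I would first record the standard fact that if a generating tuple $\bar s$ of $W$ has isolated type over $\emptyset$, then $W$ is atomic: any $\bar a \in W^n$ is a tuple of words $\bar a = \bar w(\bar s)$, and if $\phi(\bar x)$ isolates $\mathrm{tp}(\bar s)$ then $\exists \bar x\,(\phi(\bar x) \wedge \bigwedge_i a_i = w_i(\bar x))$ isolates $\mathrm{tp}(\bar a)$. This reduces the whole problem to isolating the type of a fundamental tuple $\bar s$ enumerating $S$. Using $\rho$ I would write down a candidate isolating formula $\phi(\bar x)$ asserting (i) that each $x_i$ is a reflection, $\rho(x_i)$; (ii) the exact Coxeter data $(x_ix_j)^{m_{ij}} = 1$ together with $(x_ix_j)^{k} \neq 1$ for $0 < k < m_{ij}$, which is legitimate since $2$-sphericity makes every $m_{ij}$ finite; and (iii) a first-order \emph{fullness} clause, formulated over the definable reflection set, forcing $\bar x$ to be a complete simple system rather than a proper sub-configuration. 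Here the even hypothesis is what makes the diagram reconstructible and the fullness clause effective, while the non-affine hypothesis is genuinely needed, since affine systems are interpretable in $(\mathbb{Z}, +, 0)$ by Theorem~\ref{decidability_affine_Coxeter} and fail to be atomic.

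The hard part will be proving that $\phi$ genuinely isolates $\mathrm{tp}(\bar s)$, that is, that every realization of $\phi$ in every model $W' \equiv W$ has the same type as $\bar s$. Inside $W$ this is exactly reflection rigidity: by \cite{reflec_abc_cox, intrinsic_reflections} any system of reflections of an even, non-affine, irreducible $2$-spherical Coxeter group satisfying (i)--(iii) is a Coxeter basis, and all such bases are equivalent under $\mathrm{Aut}(W)$. The obstacle is that ``$\bar x$ generates $W$'' and ``$y \in \langle \bar x \rangle$'' are not first-order, so this essentially geometric, finite-rank rigidity must be transported across elementary equivalence to arbitrary, possibly highly saturated, models $W'$. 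I expect to handle this by showing that the definable reflection set of $W'$, together with (i)--(iii), forces $\bar s'$ to sit inside $W'$ as a simple system whose induced Coxeter structure matches that of $(W, S)$, and then to conclude isolation by a back-and-forth matching of reflections. Making the fullness clause (iii) strong enough to drive this argument, while keeping it first-order, is the delicate point on which the proof turns.
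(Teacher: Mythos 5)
There is a genuine gap, and it is exactly at the point you flag as ``the delicate point on which the proof turns'': your fullness clause (iii). The paper's proof needs \emph{no} such clause, and the idea you are missing is the combination of Proposition~\ref{muller_fact} with co-Hopfianness. Concretely, the isolating formula $\delta(\bar x)$ consists only of your clauses (i) and (ii): each $x_i$ is a reflection and $o(x_ix_j)=o(s_is_j)$ for $i\neq j$ (all orders finite by $2$-sphericity). Any realization $\bar a$ of $\delta$ is then a self-similar set of reflections, so by Proposition~\ref{muller_fact} --- and this is precisely where the \emph{even} hypothesis enters, not in any ``reconstructibility of the diagram'' --- the map $s_i\mapsto a_i$ extends to an isomorphism of $W$ onto $\langle \bar a\rangle_W$. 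Since irreducible, $2$-spherical, non-affine Coxeter groups of finite rank are co-Hopfian \cite{caprace_2spherical} --- this is where \emph{non-affine} enters --- that monomorphism is surjective, so $\bar a$ generates $W$ and lies in the $\mathrm{Aut}(W)$-orbit of $\bar s$ by strong rigidity (Fact~\ref{fact2_finite_cont}). Generation is thus a \emph{consequence} of (i)+(ii), not an extra condition to be encoded; trying to write a first-order fullness clause is the wrong direction and, as you note, you have no way to make it work. A second, smaller confusion: your worry about ``transporting rigidity across elementary equivalence to highly saturated $W'$'' is moot. Once every realization of $\delta$ in $W$ itself is automorphic to $\bar s$, the sentences $\forall\bar x(\delta(\bar x)\to\psi(\bar x))$ for $\psi\in\mathrm{tp}(\bar s)$ belong to $Th(W)$, so $\delta$ isolates $\mathrm{tp}(\bar s)$; isolation is verified in one model and never requires examining other models. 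Your reduction from ``the generating tuple has isolated type'' to atomicity and primality of the countable model is correct and matches the paper.

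On the first assertion, your candidate characterization of reflections (non-decomposability as a product of two commuting involutions, ``suitably refined'') is not the one that works and is left too vague to assess. The actual characterization, from Fact~\ref{fact2_finite_cont} (\cite{reflec_abc_cox}), is that the reflections are exactly the involutions $w\neq e$ with trivial finite continuation $FC(w)=\{e,w\}$, where $FC(w)$ is the intersection of the maximal finite subgroups containing $w$. The point that makes this first-order is that $W$ has only finitely many maximal spherical special $S$-parabolic subgroups $G_1,\dots,G_n$, each finite of known order $k_\ell$, so ``$y$ lies in every subgroup isomorphic to some $G_\ell$ that contains $w$'' can be written by quantifying over $k_\ell$-tuples realizing the multiplication table of $G_\ell$. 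Without this (or an equivalent) explicit mechanism, the definability claim is not established.
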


	\begin{corollary}\label{el_equivalence_corollary} Let $W_{\Gamma}$ and $W_{\Theta}$ be irreducible, $2$-spherical, even and not affine Coxeter groups, then $W_{\Gamma}$ is elementary equivalent to $W_{\Theta}$ if and only if $\Gamma \cong \Theta$.
\end{corollary}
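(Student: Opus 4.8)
The plan is to dispose of the trivial implication first and then concentrate all the work on the converse. If $\Gamma \cong \Theta$ then the labelled graphs give rise to isomorphic Coxeter systems, so $W_\Gamma \cong W_\Theta$ and in particular $W_\Gamma \equiv W_\Theta$. For the substantial direction, suppose $W_\Gamma \equiv W_\Theta$ and set $T = Th(W_\Gamma) = Th(W_\Theta)$, a complete first-order theory of which both groups are models. By Theorem~\ref{th_prime_2spherical}, each of $W_\Gamma$ and $W_\Theta$ is a prime model of its own theory, and since these theories coincide with $T$, both are prime models of $T$. Invoking the uniqueness up to isomorphism of the prime model of a complete first-order theory, I would conclude $W_\Gamma \cong W_\Theta$ as abstract groups. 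This is the soft, purely model-theoretic half of the argument, and it is where primality (rather than mere elementary equivalence) is indispensable, since elementary equivalence alone never yields isomorphism.

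The remaining and genuinely content-bearing step is to pass from an abstract group isomorphism $\varphi : W_\Gamma \to W_\Theta$ to an isomorphism of labelled diagrams $\Gamma \cong \Theta$. Here I would use the first assertion of Theorem~\ref{th_prime_2spherical}, that the set $S^W$ of reflections of an irreducible $2$-spherical Coxeter group is definable without parameters and hence invariant under every abstract isomorphism; thus $\varphi$ carries the canonical reflection set of $W_\Gamma$ bijectively onto that of $W_\Theta$. Combining this with the rigidity theory of even Coxeter systems developed in \cite{reflec_abc_cox, intrinsic_reflections}, the isomorphism type of such a group determines its labelled Coxeter diagram: one transports a Coxeter basis $S$ of $W_\Gamma$ to a generating set of reflections of $W_\Theta$ which (under the evenness hypothesis) is again a Coxeter basis, and the orders $m_{st}$ of the products of pairs of basis reflections, being isomorphism invariants, reconstruct the edge labels (including the non-edge entries $m_{st} = \infty$). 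This reading-off yields the required $\Gamma \cong \Theta$.

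I expect the main obstacle to reside entirely in this second step. The delicate point is that an arbitrary group isomorphism need not send a chosen Coxeter basis to a Coxeter basis, so one must first make the reflections canonical (supplied by Theorem~\ref{th_prime_2spherical}) and then appeal to the classification of Coxeter bases up to conjugacy available precisely in the even case. Verifying that the hypotheses of irreducibility, $2$-sphericity, evenness and non-affineness are exactly those securing this rigidity, and correctly accommodating the $m_{st} = \infty$ entries in the reconstruction of $\Gamma$, is where the care must be concentrated; the model-theoretic reduction to $W_\Gamma \cong W_\Theta$ is comparatively routine.
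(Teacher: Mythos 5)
Your proposal is correct, and it reaches the conclusion by a slightly different route than the paper. For the model-theoretic half, the paper does not invoke the general uniqueness theorem for prime models: it uses primality of $W_{\Theta}$ to obtain an elementary embedding $W_{\Theta} \preccurlyeq W_{\Gamma}$, and then applies the isolating formula $\delta_{\Gamma}(\bar{x})$ from the proof of Theorem~\ref{th_prime_2spherical} (which defines the $Aut(W_{\Gamma})$-orbit of a Coxeter basis) to locate inside $W_{\Theta}$ a tuple generating all of $W_{\Gamma}$, forcing $W_{\Theta} = W_{\Gamma}$ on the nose. Your appeal to the uniqueness up to isomorphism of the prime model of a complete theory in a countable language is a legitimate substitute (both groups are countable, so prime equals countable atomic and back-and-forth applies); it is softer and more modular, while the paper's argument in effect re-proves the relevant instance of uniqueness and yields the extra information that the elementary embedding is surjective. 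For the second half, both arguments ultimately rest on the strong rigidity statement in Fact~\ref{fact2_finite_cont}, but your presentation overcomplicates it: once you have an abstract isomorphism $\varphi: W_{\Gamma} \to W_{\Theta}$, the image $\varphi(S)$ of a Coxeter basis is automatically a Coxeter generating set of $W_{\Theta}$ of type $\Gamma$ (it satisfies the same presentation), with no need for the definability of reflections or for evenness at that point; strong rigidity of irreducible, infinite, $2$-spherical systems then conjugates $\varphi(S)$ onto the standard basis and gives $\Gamma \cong \Theta$ directly. The hypotheses of evenness and non-affineness are what secure primality in Theorem~\ref{th_prime_2spherical}, not the rigidity step. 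These are presentational inaccuracies rather than gaps; the proof goes through.
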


	In the last part of our paper we focus on model theoretic applications of the notions of reflection length, i.e. the study of $W$ with respect to the generating set $S^W$ (the set of $S$-reflections of $W$). Recently, the notion of reflection length has received the attention of several researchers in Coxeter group theory\footnote{Sometimes this area of research goes under the name of ``dual Coxeter theory''.}, see e.g. \cite{bessis, brewster, dyer_reflection, duszenko, kyle}. One of the most important results concerning this notion is that a Coxeter group of finite rank has bounded reflection length iff it is either spherical or affine \cite{duszenko, kyle}, and further that in these cases explicit bounds can be given \cite{brewster}.
	
	We observe that the unboundedness phenomenon just mentioned implies that $\aleph_0$-saturated elementary extensions of infinite non-affine Coxeter groups have ``non-standard elements'', i.e. elements with ``infinite reflection length''. On the other hand, elementary extensions of affine Coxeter groups are always generated by reflections and thus these groups behave very differently. We believe that the boundedness of reflection length in affine Coxeter groups is related to the phenomenon of superstability proved in Theorem~\ref{th_char_sstab} \mbox{(but we have no hard evidence at the moment).}

	\begin{theorem}\label{first_theorem} Let $(W, S)$ be an infinite Coxeter system of finite rank, and let $G$ be an elementary extension of $W$. Let $N_G = N = \langle g \in G: g^2 = e \rangle_G$. Then:
	\begin{enumerate}[(1)]
	\item \label{item1} $N$ is generated by $S^G$, and $N$ is a characteristic subgroup of $G$;
	\item\label{item3} if $W$ is not affine and $G$ is $\aleph_0$-saturated, then $N \neq G$;
	\item\label{item4} if $W$ is affine, then $G = N$.
\end{enumerate}
\end{theorem}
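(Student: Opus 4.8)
The plan is to derive all three clauses from the elementarity $W \preceq G$, transferring first-order statements about $W$ whose content is governed by reflection length, the key external input being the cited dichotomy that reflection length is bounded on a finite-rank Coxeter group exactly when that group is spherical or affine. Write $\mathrm{Refl}(x)$ for the formula $\bigvee_{s \in S} \exists y\,(ysy^{-1} = x)$; since $S$ is finite this is first-order with parameters $S$, and it defines $S^W$ in $W$ and $S^G$ in $G$. Every element of $S^G$ is an involution, so the inclusion $\langle S^G \rangle_G \leq N$ is immediate; and since any automorphism of $G$ preserves the set $\{g : g^2 = e\}$ it preserves the subgroup this set generates, so $N$ is characteristic. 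It thus remains to identify $N$ with $\langle S^G \rangle_G$ and to decide when $N = G$.

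For the reverse inclusion in (1), I would first prove the purely algebraic fact that in any Coxeter system of rank $n$ every involution is a product of at most $n$ reflections, and I expect this to be the main obstacle. The route I have in mind uses the classical result that every finite subgroup of $W$ is conjugate into a spherical standard parabolic $W_J$: an involution $t$ then lies in some $wW_Jw^{-1}$, Carter's lemma in the finite group $W_J$ expresses the corresponding element as a product of at most $|J| \leq n$ reflections of $W_J$ (which are reflections of $W$), and conjugating back by $w$ preserves the count. Granting this, the sentence $\forall x\,(x^2 = e \to \exists r_1 \cdots \exists r_n\,(\bigwedge_i (\mathrm{Refl}(r_i) \vee r_i = e) \wedge x = r_1 \cdots r_n))$ holds in $W$, hence in $G$, so every involution of $G$ lies in $\langle S^G \rangle_G$; as $N$ is generated by such involutions, the inclusion $N \leq \langle S^G \rangle_G$ follows, giving $N = \langle S^G \rangle_G$ and proving (1).

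For (3), affineness of $W$ gives, by the cited boundedness results, a finite $B$ with every element of $W$ a product of at most $B$ reflections; the corresponding sentence $\forall x\,\bigvee_{k=0}^{B}\exists r_1\cdots\exists r_k\,(\bigwedge_i \mathrm{Refl}(r_i) \wedge x = r_1 \cdots r_k)$ transfers to $G$, whence $G = \langle S^G \rangle_G = N$. For (2), a $W$ that is infinite and non-affine is neither spherical nor affine, so reflection length is unbounded on $W$; thus the partial type $p(x) = \{\neg\,\exists r_1 \cdots \exists r_k(\bigwedge_i \mathrm{Refl}(r_i) \wedge x = r_1 \cdots r_k) : k < \omega\}$ over the finite set $S$ is finitely satisfiable in $W \preceq G$, each fragment being witnessed by an element of sufficiently large reflection length. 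By $\aleph_0$-saturation some $g \in G$ realizes $p$; such $g$ is not a finite product of elements of the symmetric set $S^G$, so $g \notin \langle S^G \rangle_G = N$ and $N \neq G$. The remaining checks beyond the involution bound are minor: that reflections of a parabolic subgroup are reflections of $W$, and that the finiteness of $S$ makes $\mathrm{Refl}$, and hence each displayed sentence and the type $p$, genuinely first-order over the parameters $S$.
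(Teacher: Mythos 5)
Your proof is correct and follows essentially the same route as the paper: both reduce (1) to the fact that involutions of $W$ are conjugate into spherical special parabolic subgroups (hence are products of a uniformly bounded number of reflections, a first-order condition over the finite set $S$), and both derive (2) and (3) from the boundedness/unboundedness dichotomy for reflection length via elementary transfer and, for the non-affine case, realization of the ``infinite reflection length'' type by $\aleph_0$-saturation. The only cosmetic difference is in (1), where you invoke Carter's lemma to bound the number of reflections by $|S|$, whereas the paper simply writes the representative involution as a word in $S$ and conjugates letter by letter, which already yields a uniform finite bound.
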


	We then focus on definable subgroups of Coxeter groups of finite rank, showing that kernels of reflection invariant homorphisms of $W$ determine $\bigvee$-definable subgroups of $W$, and that in the case of affine Coxeter groups they actually determine first-order definable subgroups. In particular, we were able to show:

	\begin{corollary}\label{affine_corollary}  Let $(W, S)$ be an affine Coxeter system (of finite rank). Then the alternating subgroup of $(W, S)$ is definable in $W$ over $S$. In particular, the monster model of $W$ has a definable subgroup of index two and so it is {\em not} a connected group.
\end{corollary}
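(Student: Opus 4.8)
The plan is to exhibit an explicit first-order formula over $S$ defining the alternating subgroup $W^+ = \ker(\varepsilon)$, where $\varepsilon \colon W \to \{\pm 1\}$ is the sign homomorphism sending each $s \in S$ to $-1$, and then to transfer the conclusion to the monster model by elementarity. The affine hypothesis enters only to guarantee that this formula is genuinely first-order, and that is where the real content lies.

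First I would record two preliminary facts. Since $\{\pm 1\}$ is abelian, $\varepsilon$ is constant on conjugacy classes, so $\varepsilon(r) = \varepsilon(s) = -1$ for every reflection $r = w s w^{-1} \in S^W$; hence a product of $k$ reflections lies in $W^+$ exactly when $k$ is even, and $[W : W^+] = 2$. Next, the set of reflections is already definable over $S$: as $S = \{s_1, \dots, s_n\}$ is finite, $S^W = \bigcup_{i=1}^n \{v s_i v^{-1} : v \in W\}$ is a finite union of conjugacy classes, hence defined by $\mathrm{Refl}(x) := \bigvee_{i=1}^n \exists v\,(x = v s_i v^{-1})$ with parameters in $S$.

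The crucial Coxeter-theoretic input, and the main obstacle, is bounded reflection length: because $(W,S)$ is affine of finite rank, there is some $M < \omega$ such that every element of $W$ is a product of at most $M$ reflections \cite{duszenko, kyle, brewster}. Combining this with the parity observation yields that $w \in W^+$ iff $w$ is a product of an even number $k \leq M$ of reflections, so I would define $W^+$ by
$$
\varphi(x) \ :=\ \bigvee_{\substack{0 \leq k \leq M \\ k \text{ even}}}\ \exists r_1 \cdots \exists r_k \Big( \textstyle\bigwedge_{j=1}^{k} \mathrm{Refl}(r_j) \ \wedge\ x = r_1 \cdots r_k \Big),
$$
which is first-order precisely because the bound $M$ makes the disjunction finite; without affineness one obtains only the $\bigvee$-definability of the kernel of a reflection-invariant homomorphism. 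A short check that every element of reflection length an even number $\leq M$ is captured, and conversely, gives $\varphi(W) = W^+$, so the alternating subgroup is definable over $S$.

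Finally, for the connectedness statement I would argue purely by elementarity. The sentences asserting that $\varphi$ defines a proper subgroup of index $2$, namely $\varphi(e)$, $\forall x\forall y\,(\varphi(x)\wedge\varphi(y)\to\varphi(xy^{-1}))$, $\exists x\,\neg\varphi(x)$, and $\forall x\forall y\,(\neg\varphi(x)\wedge\neg\varphi(y)\to\varphi(xy))$, all hold in $W$ and hence, as $W \preceq \mathfrak{C}$ and the parameters lie in $S \subseteq W$, in the monster model $\mathfrak{C}$. Thus $\varphi(\mathfrak{C})$ is a proper definable subgroup of index $2$, whence $\mathfrak{C}^{0} \neq \mathfrak{C}$ and $W$ is not connected.
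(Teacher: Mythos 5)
Your proof is correct and follows essentially the same route as the paper: the paper's Proposition~\ref{second_theorem} produces exactly your disjunction (products of reflections whose image under the reflection-invariant homomorphism $\varepsilon$ is trivial), truncated to a finite one via bounded reflection length in the affine case, and the corollary's proof just verifies $\varepsilon(s^w)=\varepsilon(s)$. You have merely inlined that general proposition for the specific homomorphism $\varepsilon$, which is a sound specialization.
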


	We conclude the paper combining our methods with the construction from \cite{davis} establishing that right-Angled Artin groups are commensurable with right-angled Coxeter groups, showing that this construction is actually $\bigvee$-definabile. This gives a partial answer to the following question which we consider to be of independent interest: is there a way to establish a technical relation between the model theory of right-angled Artin groups and the model theory of right-angled Coxeter groups?

	
	\begin{corollary}\label{artin_theorem} For any right-angled Artin group $A$ of finite rank there exists a right-angled Coxeter system of finite rank $(W_A, S)$ such that $A$ is a normal subgroup of $W_A$, $A$ has finite index in $W_A$, and $A$ is a $\bigvee$-definable subgroup of $W_A$ over $S$.
\end{corollary}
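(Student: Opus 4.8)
The plan is to take the embedding of $A = A_\Gamma$ into a right-angled Coxeter group as a black box from the commensurability construction of \cite{davis}, and to devote the real work to the definability clause. So I would first fix the defining graph $\Gamma$ of $A$, invoke the construction of \cite{davis} to produce a right-angled Coxeter system $(W_A, S)$ of finite rank together with an embedding of $A$ as a finite-index subgroup, and record the feature of that construction that makes the rest go through: $A$ is realized as the kernel of a homomorphism $\phi \colon W_A \to Q$ onto a finite elementary abelian $2$-group, i.e.\ $A = \ker \phi$. Since $A$ is a kernel it is automatically normal, and since $\phi$ is (after restriction) surjective the index $[W_A : A] = |Q|$ is finite; this disposes of the first two assertions with no further argument.

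It then remains to show that $A$ is $\bigvee$-definable over $S$, and here I would give a direct argument. Because $A \cong A_\Gamma$ is finitely generated and $A \le W_A = \langle S \rangle$, I can fix generators $g_1, \dots, g_m$ of $A$, each of them a specific word $w_j(S)$ in the parameters $S$. For each $n < \omega$ let $D_n$ be the set of elements expressible as a product of at most $n$ of the $g_j^{\pm 1}$; this is defined over $S$ by the \emph{finite} disjunction $\bigvee_{u}(x = u)$, where $u$ ranges over the finitely many words of $g$-length at most $n$ (each such $u$ being a word in $S$). Then $A = \bigcup_{n<\omega} D_n$, which exhibits $A$ as a countable union of $S$-definable sets, i.e.\ as a $\bigvee$-definable subgroup of $W_A$ over $S$. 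I would also note that this instantiates the general principle, discussed above, that kernels of reflection-invariant homomorphisms are $\bigvee$-definable over $S$: indeed $\phi$ maps into an abelian group, hence factors through the abelianization of $W_A$ and is constant on each conjugacy class of reflections, so it is reflection-invariant in the required sense.

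I would then explain why one should not expect to improve this to plain first-order definability, and why $\bigvee$-definability is the right statement. Although $A$ has finite index in the \emph{standard} group $W_A$, in an $\aleph_0$-saturated elementary extension $G \succ W_A$ there are elements of nonstandard (``infinite'') reflection length whenever $A_\Gamma$ is non-abelian, so that $W_A$ is not affine; cf.\ Theorem~\ref{first_theorem}(\ref{item3}). Consequently the $\bigvee$-definable set $\bigcup_n D_n(G)$ of standard-length products has \emph{infinite} index in $G$, its complement is not a finite union of translates, and the usual compactness upgrade from $\bigvee$-definability to definability is unavailable. This is precisely in contrast with the affine alternating subgroup of Corollary~\ref{affine_corollary}, where boundedness of reflection length collapses the disjunction $\bigcup_n D_n$ to a finite one and thereby yields outright definability.

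The main obstacle is therefore not the soft model-theoretic part but the algebraic input: one must extract from \cite{davis} that $A_\Gamma$ genuinely embeds as a finite-index normal subgroup of a right-angled Coxeter group, and, crucially for the framework, that this subgroup is the kernel of a homomorphism onto a finite abelian group determined by reflection data, so that the reflection-invariant $\bigvee$-definability machinery applies with parameters in $S$ rather than in an uncontrolled parameter set. Once the embedding is pinned down in this form, the $\bigvee$-definability is the routine verification carried out above, and the non-definability is explained conceptually by the unboundedness of reflection length in the non-affine case.
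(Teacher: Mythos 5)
Your treatment of the first two clauses matches the paper: you invoke the Davis--Januszkiewicz construction, realize $A = A_\Gamma$ as $\ker\theta$ for the homomorphism $\theta\colon W(\Gamma^+)\to(\mathbb{Z}/2)^I$ sending $s_i, r_i \mapsto \bar r_i$, and read off normality and finite index. The problem is the argument you actually put forward for $\bigvee$-definability. Your sets $D_n$ are \emph{finite} sets of named elements, each defined by a disjunction of equalities $x = w_u(\bar s)$ with the $w_u$ concrete words in the parameters. Such a disjunction defines the same countable set $A$ in every elementary extension $G'$ of $W_A$, so while it may squeak past the letter of the paper's definition of $\bigvee$-definable (the set $A$ is still a subgroup of $G'$), it trivializes the notion: by the identical argument \emph{every} countable subgroup of $W_A$ generated by words in $S$ would be $\bigvee$-definable over $S$, and the corollary would have no content. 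The intended (and the paper's) meaning is that the disjunction should define, in each elementary extension $G$, the canonical extension of $A$, namely $\ker(\hat\theta)$ where $\hat\theta\colon N_G\to(\mathbb{Z}/2)^I$ is the extension of $\theta$ produced in Proposition~\ref{second_theorem}; this subgroup genuinely grows with $G$ and consists of the elements $x$ satisfying $\exists \bar y\, (x = s_1^{y_1}\cdots s_n^{y_n})$ for some word $s_1\cdots s_n$ in the alphabet $S$ with $\theta(s_1\cdots s_n)=e$.

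The correct argument is in fact the one you relegate to an aside: $\theta$ has abelian image, hence $\theta(s^w)=\theta(s)$ for all $s\in S$ and $w\in W_A$, so Proposition~\ref{second_theorem} applies and yields the $\bigvee$-definition of $\ker(\hat\theta)$ over $S$ by the formulas $\exists y_1\cdots\exists y_n(x = s_1^{y_1}\cdots s_n^{y_n})$ indexed by the words with $\theta(s_1\cdots s_n)=e$. That is the entirety of the paper's proof, and it should be your main argument rather than a footnote. Your closing discussion of why plain first-order definability should fail (unbounded reflection length in the non-affine case, in contrast with Corollary~\ref{affine_corollary}) is a reasonable gloss but is not needed for, and not part of, the statement.
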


\section{Model Theoretic Preliminaries}

	For a nice introduction to model theory and a detailed background on the definitions which we are about to introduce see e.g. \cite{marker}. For a text specifically devoted to the model theory of groups see e.g. the classical reference \cite{poizat_stables}. 
	
	The first-order language of group theory consists of the formulas:
\begin{enumerate}[(i)]
	\item atomic expressions of the form $\sigma(\bar{x}) = \tau(\bar{y})$, where $\sigma(\bar{x})$ and $\tau(\bar{y})$ are group theoretic terms (words) in the variable $\bar{x}$ and $\bar{y}$, respectively;
	\item the closure of the atomic formulas from (i) under $\wedge$ (``and''),  $\vee$ (``or''), $\neg$ (``not''), $\forall$ (``for all''), and $\exists$ (``there exists'').
\end{enumerate}

	The occurrence of a variable $x$ in the formula $\varphi$ is said to be free if it is not contained in a subformula of $\varphi$ which is immediately preceded by a quantifier which bounds $x$  (i.e. the symbols $\forall x$ or $\exists x$). We usually denote a first-order formula by $\varphi(\bar{x})$, $\bar{x} = (x_1, . . . , x_n)$, if the free variables which occur in $\varphi$ are among $x_1, . . . , x_n$.
	
	A formula $\varphi$ with no free variables (i.e. a formula in which every occurrence of every variable is not free) is said to be a sentence.
If $\varphi(\bar{x})$, $\bar{x} = (x_1, . . . , x_n)$, is a first-order formula and $\bar{g} \in G^n$, for $G$ a group, we say that $\varphi(\bar{g} )$ is satisfied in $G$, if the formulas $\varphi(\bar{x})$ is true in $G$ under the assignment $x_i \mapsto g_i$, for $i \in [1, n]$. This is denoted by $G \models \varphi(\bar{g})$ (where the symbol $\models$ stands for ``models'').

	\begin{definition} Let $G$ and $H$ be groups.
	\begin{enumerate}[(1)]
	\item We say that $G$ is elementary equivalent to $H$, if a sentence $\varphi$ is true in $G$ if and only if it is true in $H$, i.e. $G$ and $H$ have the same first-order theory.
	\item We say that $G$ is an elementary subgroup of $H$ if $G$ is a subgroup of $H$ and for every formula $\varphi(\bar{x})$, $\bar{x} = (x_1, . . . , x_n)$, and $\bar{g} \in G^n$ we have that:
	$$G \models \varphi(\bar{g}) \; \Leftrightarrow \; H \models \varphi(\bar{g}).$$
In this case we also say that $H$ is an elementary extension of $G$.
	\item We say that $G$ is elementary embeddable into $H$ if there exists an embedding $\alpha: G \rightarrow H$ such that $\alpha(G)$ is an elementary subgroup of $H$.
\end{enumerate}
\end{definition}

	\begin{definition} Let $G$ be a group and $X \subseteq G^n$. Given $P \subseteq G$, we say that $X$ is $P$-definable, or definable over $P$, if there exists a first-order formula $\varphi(\bar{x}, \bar{a})$, with $\bar{a} \in P^m$, such that $X = \{ \bar{g} \in G^n : G \models \varphi(\bar{g}, \bar{a}) \}$. We say that $X$ is definable if it is definable over {\em some} set of parameters (although we sometimes say ``definable with parameters''). We say that $X$ is definable without parameters if it is $\emptyset$-definable.
\end{definition}

	\begin{definition} We say that a group $G$ is a prime model of its theory if it is elementary embeddable in every group $H$ elementary equivalent to it.
\end{definition}

	\begin{definition} Let $G$ be a group and $H$ a subgroup of $G$. We say that $H$ is $\bigvee$-definable (resp. definable, or first-order definable) in $G$ if the following hold:
	\begin{enumerate}[(i)]
	\item $H$ is definable in $G$ by a countable disjunction, i.e. one of size $\leq \aleph_0$,  (resp. by a first-order formula $\varphi(x)$) with parameters from $G$;
	\item in every elementary extension $G'$ of $G$ this disjunction (resp. the first-order formula $\varphi(x)$) defines a subgroup of $G$.
	\end{enumerate}
\end{definition}

	\begin{definition}\label{connected} We say that a group $G$ is connected if it does not have a proper (first-order) definable subgroup of finite index.
\end{definition}

\begin{definition}\label{classical_interpretability} Let $M$ and $N$ be models. We say that $N$ is interpretable in $M$ over $A\subseteq M$ if for some $n < \omega$ there are:
	\begin{enumerate}[(1)]
	\item an $A$-definable subset $D$ of $M^n$;
	\item an $A$-definable equivalence relation $E$ on $D$;
	\item a bijection $\alpha: N \rightarrow D/E$ such that for every $m < \omega$ and $\emptyset$-definable subset $R$ of $N^m$ the subset of $M^{nm}$ given by:
	$$\{ (\bar{a}_1, ..., \bar{a}_m) \in (M^n)^m : (\alpha^{-1}(\bar{a}_1/E), ...,  \alpha^{-1}(\bar{a}_m/E)) \in R\}$$
is $A$-definable in $M$.
\end{enumerate}
\end{definition}

	There are many equivalent definitions of superstability (the notion occurring in Theorem~\ref{th_char_sstab}), we will give one in Section~\ref{sec_superst}, see Definition~\ref{def_unsuper}.

\section{Coxeter Groups}\label{preliminaries_sec}

\begin{definition}[Coxeter groups]\label{def_Coxeter_groups} Let $S$ be a set. A matrix $m: S \times S \rightarrow \{1, 2, . . . , \infty \}$ is called a {\em Coxeter matrix} if it satisfies:
	\begin{enumerate}[(1)]
	\item $m(s, s') = m(s' , s)$;
	\item $m(s, s') = 1 \Leftrightarrow s = s'$.
	\end{enumerate}
For such a matrix, let $S^2_{*} = \{(s, s') \in S^2 : m(s, s' ) < \infty \}$. A Coxeter matrix $m$ determines
a group $W$ with presentation:
$$
\begin{cases} \text{Generators:} \; \;  S \\
				\text{Relations:} \; \;   (ss')^{m(s,s')} = e, \text{ for all } (s, s' ) \in S^2_{*}.
\end{cases} $$
A group with a presentations as above is called a Coxeter group, and the pair $(W, S)$ is a called a Coxeter system. The rank of the Coxeter system $(W, S)$ is $|S|$.
\end{definition}

	\begin{definition}\label{def_Coxeter_graph} In the context of Definition~\ref{def_Coxeter_groups}, the Coxeter matrix $m$ can equivalently be represented by a labeled graph $\Gamma$ whose node set is $S$ and whose set of edges $E_\Gamma$ is the set of pairs $\{s, s' \}$ such that $m(s, s') < \infty$, with label $m(s, s')$. Notice that some authors consider instead the graph $\Delta$ such that $s$ and $s'$ are adjacent iff $m(s, s ) \geq 3$. In order to try to avoid confusion we refer to the first graph as the Coxeter graph of $(W, S)$ (and usually denote it with the letter $\Gamma$), and to the second graph as the Coxeter diagram of $(W, S)$ (and usually denote it with the letter $\Delta$).
\end{definition}

	\begin{definition}[Right-angled Coxeter and Artin groups] Let $m$ be a Coxeter matrix and let $W$ be the corresponding Coxeter group. We say that $W$ is right-angled if the matrix $m$ has values in the set $\{ 1, 2, \infty\}$. In this case the Coxeter graph $\Gamma$ associated to $m$ is simply thought as a graph (instead of a labeled graph), with edges corresponding to the pairs $\{ s, s' \}$ such that $m(s, s') = 2$. A right-angled Artin group is defined as in the case of right-angled Coxeter groups with the omission in the defining presentation of the requirement that generators have order~$2$.
\end{definition}

\begin{definition}\label{def_basis} Let $W$ be a Coxeter group. We say that $T \subseteq W$ is a Coxeter basis of $W$ (or a Coxeter generating set for $W$), if $(W, T)$ is a Coxeter system for $W$.
\end{definition}

\begin{fact} \label{basicsonJ} Let $(W, S)$ be a Coxeter system and $J \subseteq S$.
\begin{enumerate}[(a)]
\item $(\langle J \rangle_W, J)$ is a Coxeter system;
\item for $K \subseteq S$ we have $\langle J \rangle_W \cap \langle K \rangle_W = \langle J \cap K \rangle_W$;
\item if $t \in S - J$ normalizes $\langle J \rangle_W$, then $[t,J] = e$.
\end{enumerate}
\end{fact}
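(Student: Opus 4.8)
The plan is to treat the three items in increasing order of depth, isolating the one genuinely substantial input. Part (a), that the \emph{standard parabolic} subgroup $\langle J\rangle_W$ is again a Coxeter system with matrix $m|_{J\times J}$, is the heart of the matter; parts (b) and (c) will then follow formally from a single combinatorial device, the \emph{support} of an element.

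For (a) I would invoke the canonical geometric (Tits) representation. Let $V=\bigoplus_{s\in S}\mathbb{R}\alpha_s$, equip it with the symmetric form $B(\alpha_s,\alpha_t)=-\cos(\pi/m(s,t))$ (reading $-\cos(\pi/\infty)=-1$), and let each $s$ act by the $B$-reflection $\sigma_s(v)=v-2B(\alpha_s,v)\alpha_s$; this extends to a representation $\sigma\colon W\to GL(V)$ which is faithful by Tits' theorem. The subspace $V_J=\mathrm{span}\{\alpha_s:s\in J\}$ is stable under each $\sigma_s$ with $s\in J$, and the restricted action is exactly the geometric representation $\sigma_J$ of the abstract system $(W_J,J)$ with matrix $m|_{J\times J}$. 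Since the relations of $W_J$ hold among the generators $J$ inside $W$, there is a canonical surjection $\pi\colon W_J\twoheadrightarrow\langle J\rangle_W$, and by construction $\sigma_J=(\sigma|_{V_J})\circ\pi$. As $\sigma_J$ is faithful (Tits again, now for $W_J$), the map $\pi$ must be injective, hence an isomorphism, which is precisely (a). The one point requiring care here---and what I expect to be the main obstacle---is the faithfulness of the geometric representation; everything else in this paragraph is bookkeeping once that is granted.

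For (b) and (c) I would first record the support lemma. By Tits' solution to the word problem, any two reduced expressions of $w\in W$ differ by a sequence of braid substitutions $\underbrace{sts\cdots}_{m(s,t)}\leftrightarrow\underbrace{tst\cdots}_{m(s,t)}$, each of which preserves the set of letters occurring; hence $\mathrm{supp}(w):=\{s\in S: s\text{ occurs in some reduced word for }w\}$ is well defined. Moreover the Deletion Condition shows that any unreduced word can be shortened by erasing two letters without enlarging its letter set, so starting from an expression of $w\in\langle J\rangle_W$ in letters of $J$ one reaches a reduced word with support $\subseteq J$; thus $w\in\langle J\rangle_W\iff\mathrm{supp}(w)\subseteq J$. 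Given this, (b) is immediate: $w\in\langle J\rangle_W\cap\langle K\rangle_W$ forces $\mathrm{supp}(w)\subseteq J\cap K$, whence $w\in\langle J\cap K\rangle_W$, while the reverse inclusion is trivial.

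Finally, for (c) suppose $t\in S-J$ normalizes $\langle J\rangle_W$ and fix $s\in J$; then $tst^{-1}=tst\in\langle J\rangle_W$, and since also $tst\in\langle\{s,t\}\rangle_W$, the support lemma gives $\mathrm{supp}(tst)\subseteq J\cap\{s,t\}=\{s\}$ (as $t\notin J$), so $tst\in\langle s\rangle_W=\{e,s\}$. Now $tst=e$ gives $s=t$, which is impossible, while $tst=s$ is equivalent (multiply on the left by $t$) to $st=ts$, i.e.\ $m(s,t)=2$. Hence $m(s,t)=2$ for every $s\in J$, which is precisely $[t,J]=e$. This completes the plan; as noted, the only nontrivial ingredients are the faithful geometric representation underlying (a) and Tits' description of reduced words underlying the support lemma, both standard in Coxeter theory.
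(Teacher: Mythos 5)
Your proposal is correct, and on the only part the paper actually proves---item (c)---it coincides with the paper's argument: the paper cites (a) and (b) to Humphreys and then observes $tst \in \langle s,t \rangle_W \cap \langle J \rangle_W = \langle s \rangle_W$ by (b), which is exactly your support-lemma computation, while your proofs of (a) and (b) via the faithful geometric representation and Tits' word problem are the standard arguments behind the cited facts. One trivial slip: $tst = e$ forces $s = e$, not $s = t$, but either way it is impossible since $tst$ is conjugate to the involution $s$.
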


\begin{proof} Assertions (a) and (b) are well known (see e.g. \cite{humphreys}). Let $t$ be as in Item (c) and $s \in J$. Then $tst \in \langle s,t \rangle_W \cap \langle J \rangle_W = \langle s \rangle_W$, by Item (b), and so $st = ts$.
\end{proof}

	\begin{definition}\label{def_parabolic} Let $(W, S)$ be a Coxeter system. An {\sl $S$-parabolic subgroup} of $W$ is a subgroup $P$ of $W$ such that $P = w \langle J \rangle_W w^{-1}$
for some $w \in W$ and some $J \subseteq S$. A {\sl special $S$-parabolic subgroup} of $W$ is a subgroup $P$ of $W$ such that $P = \langle J \rangle_W$
for some $J \subseteq S$. A subset $J$
of $S$ is called spherical if $\langle J \rangle_W$ is finite.
\end{definition}

%

	\begin{definition}\label{def_irreducible} Let $(W, S)$ be a Coxeter system with Coxeter diagram $\Delta$ (recall Definition~\ref{def_Coxeter_graph}). We say that $(W, S)$ is irreducible if $\Delta$ is connected.
\end{definition}

	\begin{remark} Let $(W, S)$ be a right-angled Coxeter system with Coxeter graph $\Gamma$ (recall Def.~\ref{def_Coxeter_graph}). Then $(W, S)$ is irreducible iff the complement of $\Gamma$ is connected.
\end{remark}

	\begin{fact} Let $(W, S)$  be a Coxeter system of finite rank. Then $W$ can be written uniquely as a product $W_1 \times \cdots \times W_n$ of irreducible special $S$-parabolic subgroups of $W$ (up to changing the order of the factors $W_i$, $i \in [1, n]$). In fact, if $S_1, ..., S_n$ are the connected components of the Coxeter diagram $\Delta$, then $W_i = \langle S_i \rangle_W$.
\end{fact}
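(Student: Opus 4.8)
The plan is to produce the decomposition explicitly from the connected components of $\Delta$ and then argue that any decomposition of the allowed form must arise this way. For existence, set $W_i = \langle S_i \rangle_W$, which is a special $S$-parabolic subgroup by definition (Definition~\ref{def_parabolic}). By Fact~\ref{basicsonJ}(a) each $(W_i, S_i)$ is a Coxeter system, and its Coxeter diagram is the subgraph of $\Delta$ induced on $S_i$; since $S_i$ is a connected component of $\Delta$ this induced subgraph is connected, so $(W_i, S_i)$ is irreducible in the sense of Definition~\ref{def_irreducible}. To see that $W$ is the internal direct product of the $W_i$, I would check the three usual conditions. First, $\bigcup_i S_i = S$ gives $\langle W_1, \ldots, W_n \rangle = \langle S \rangle_W = W$, so the $W_i$ generate. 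Second, for $i \neq j$ and $s \in S_i$, $t \in S_j$, the vertices $s,t$ lie in distinct components of $\Delta$, hence are non-adjacent, hence $m(s,t) = 2$ (as $s \neq t$), i.e.\ $st = ts$; since commuting of the generating sets propagates to the generated subgroups, every element of $W_i$ commutes with every element of $W_j$. Third, for the trivial intersection I would invoke Fact~\ref{basicsonJ}(b): writing $\langle W_j : j \neq i\rangle = \langle S \setminus S_i\rangle_W$, we get
$$W_i \cap \langle W_j : j \neq i\rangle = \langle S_i\rangle_W \cap \langle S\setminus S_i\rangle_W = \langle S_i \cap (S\setminus S_i)\rangle_W = \langle \emptyset\rangle_W = \{e\}.$$
Together these yield $W = W_1 \times \cdots \times W_n$.

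For uniqueness, suppose $W = V_1 \times \cdots \times V_m$ with each $V_k = \langle J_k\rangle_W$ an irreducible special $S$-parabolic subgroup, and aim to show $\{J_1,\ldots,J_m\} = \{S_1,\ldots,S_n\}$. The direct product condition $V_k \cap V_l = \{e\}$ for $k \neq l$ combined with Fact~\ref{basicsonJ}(b) gives $\langle J_k \cap J_l\rangle_W = \{e\}$, hence $J_k \cap J_l = \emptyset$, so the $J_k$ are pairwise disjoint. Commutativity of $V_k$ with $V_l$ forces $m(s,t) = 2$ for all $s \in J_k$, $t \in J_l$ with $k \neq l$, i.e.\ there are \emph{no} edges of $\Delta$ between distinct $J_k$'s. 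Next, $\bigcup_k J_k = S$: otherwise, picking $s \in S \setminus \bigcup_k J_k$, the subgroup $\langle \bigcup_k J_k\rangle_W \subseteq \langle S\setminus\{s\}\rangle_W$ would be proper, since Fact~\ref{basicsonJ}(b) gives $\langle S\setminus\{s\}\rangle_W \cap \langle\{s\}\rangle_W = \{e\}$ and thus $s \notin \langle S\setminus\{s\}\rangle_W$, contradicting that the $V_k$ generate $W$. Finally, irreducibility of each $V_k$ means the induced subgraph $\Delta[J_k]$ is connected. Thus the $J_k$ partition $S$ into connected subsets with no edges crossing between them, which is exactly the partition of $\Delta$ into connected components; hence $m = n$ and, after reordering, $J_k = S_k$, so $V_k = W_k$.

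The step I expect to be the main obstacle is the uniqueness direction, and specifically the reliance on Fact~\ref{basicsonJ}(b) to translate the purely algebraic hypotheses (trivial pairwise intersections, and properness of proper special parabolics) into the combinatorial statements that the $J_k$ are disjoint and exhaust $S$. Once those translations are in place, the remaining content is the elementary graph-theoretic observation that a collection of pairwise-disjoint connected vertex sets covering $S$ with no edges between them must be the set of connected components of $\Delta$. The commutativity-to-no-edges correspondence and the propagation of commuting relations from generators to the generated subgroups are routine and I would state them without detailed verification.
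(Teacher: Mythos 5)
The paper states this as a background Fact and gives no proof of it (it is the standard decomposition into irreducible components, cf.\ \cite{humphreys}), so there is nothing to compare against; your argument is correct and is essentially the textbook one, with existence read off from the components of $\Delta$ and uniqueness reduced via Fact~\ref{basicsonJ}(b) to the combinatorial statement that a partition of $S$ into connected pieces with no crossing edges must be the partition into components. The only ingredients you use silently are the standard facts that for $s\neq t\in S$ the order of $st$ in $W$ is exactly $m(s,t)$ (needed both to identify the diagram of $(\langle S_i\rangle_W,S_i)$ with $\Delta[S_i]$ and to convert commutation into non-adjacency) and that the generators are nontrivial involutions (needed to get $J_k\cap J_l=\emptyset$ from $\langle J_k\cap J_l\rangle_W=\{e\}$); both are legitimate to assume at the level of this Fact.
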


	\begin{definition} Let $W$  be a Coxeter group. We say that $W$ is spherical if it is finite. We say that $W$ is affine if it is infinite and it has a representation as a discrete affine reflection group (see e.g. the classical reference \cite{humphreys} for details). 
\end{definition}

	\begin{definition} Let $(W, S)$  be a Coxeter system with Coxeter matrix $m$. We say that the Coxeter system $(W, S)$ is $2$-spherical (resp. even) if $m$ has only finite entries (resp. if $m$ has only even or infinite entries).
\end{definition}


	\begin{definition} Let $W$  be a Coxeter group of finite rank. For any subset $X$ of $W$ we define its {\sl $S$-parabolic closure}
$Pc_S(X)$ as the intersection of all the $S$-parabolic subgroups of $W$ containing $X$.
\end{definition}

	In the following lemma we collect some basic properties concerning the parabolic closure $Pc_S(X)$.
Given a group $G$ and $X \subseteq G$, we denote by $N_G(X)$ and $C_G(X)$ the normalizer of $X$ in $G$ and the centralizer of $X$ in $G$, respectively.

\begin{lemma}\label{parabclosure}
Let $W$  be a Coxeter group of finite rank. For $X \subseteq W$ we have:
\begin{enumerate}[(a)]
\item $Pc_S(X)$ is an $S$-parabolic subgroup of $W$ containing $\langle X \rangle_W$;
\item if $\langle X \rangle_W$ is finite, then $Pc_S(X)$ is a finite $S$-parabolic subgroup of $W$; in particular,
there is a spherical $J \subseteq S$ and $w \in W$ such that $Pc_S(X) = w \langle J \rangle_W w^{-1}$;
\item $N_W(\langle X \rangle_W) \leq N_W(Pc_S(X))$, and in particular $C_W(X) \leq N_W(Pc_S(X))$.
\end{enumerate}
\end{lemma}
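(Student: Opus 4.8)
The plan is to treat the three items in turn, reducing each to standard structural facts about $S$-parabolic subgroups of a finite-rank Coxeter group $W$, which I would cite as known (see e.g. \cite{humphreys, davis}) rather than reprove: (I) the intersection of two $S$-parabolic subgroups of $W$ is again an $S$-parabolic subgroup; (II) every $S$-parabolic subgroup $P$ has a well-defined rank, and $P \subseteq Q$ for $S$-parabolic $P, Q$ implies $\mathrm{rank}(P) \leq \mathrm{rank}(Q)$, with equality iff $P = Q$; and (III) every finite subgroup of $W$ is contained in a finite (equivalently, spherical) $S$-parabolic subgroup. Throughout, write $\mathcal{P}_X$ for the set of $S$-parabolic subgroups of $W$ containing $X$; this set is nonempty, since $W = \langle S \rangle_W \in \mathcal{P}_X$.

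For (a), I would run a minimal-rank argument. Using finiteness of the rank, choose $P \in \mathcal{P}_X$ of minimal rank, and claim that $P \subseteq Q$ for every $Q \in \mathcal{P}_X$, which yields $Pc_S(X) = \bigcap_{Q \in \mathcal{P}_X} Q = P$ and hence that $Pc_S(X)$ is $S$-parabolic. Indeed, by (I) the subgroup $P \cap Q$ is $S$-parabolic, it still contains $X$ and so lies in $\mathcal{P}_X$; by minimality of $\mathrm{rank}(P)$ it has rank $\geq \mathrm{rank}(P)$, while $P \cap Q \subseteq P$ forces rank $\leq \mathrm{rank}(P)$ by (II). Equality of ranks together with (II) then gives $P \cap Q = P$, i.e. $P \subseteq Q$. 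Finally $\langle X \rangle_W \subseteq Pc_S(X)$ is immediate, as $Pc_S(X)$ is a subgroup containing $X$. For (b), assume $\langle X \rangle_W$ finite. By (III) there is a finite $S$-parabolic $Q$ with $\langle X \rangle_W \subseteq Q$, so $Q \in \mathcal{P}_X$ and hence $Pc_S(X) \subseteq Q$ by (a); thus $Pc_S(X)$ is finite. Writing $Pc_S(X) = w \langle J \rangle_W w^{-1}$ as in (a), finiteness forces $\langle J \rangle_W = w^{-1} Pc_S(X) w$ finite, i.e. $J$ spherical.

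For (c), the key points are that $Pc_S$ depends only on the generated subgroup and is conjugation-equivariant. First, since an $S$-parabolic subgroup is a subgroup, it contains $X$ iff it contains $\langle X \rangle_W$; hence $\mathcal{P}_X = \mathcal{P}_{\langle X \rangle_W}$ and $Pc_S(X) = Pc_S(\langle X \rangle_W)$. Second, for $g \in W$ conjugation by $g$ sends $S$-parabolic subgroups to $S$-parabolic subgroups and restricts to a bijection $\mathcal{P}_X \to \mathcal{P}_{gXg^{-1}}$, so intersecting gives $g\,Pc_S(X)\,g^{-1} = Pc_S(gXg^{-1})$. Now take $g \in N_W(\langle X \rangle_W)$; then $\langle gXg^{-1} \rangle_W = g \langle X \rangle_W g^{-1} = \langle X \rangle_W$, and combining the two observations
\[
 g\,Pc_S(X)\,g^{-1} = Pc_S(gXg^{-1}) = Pc_S(\langle gXg^{-1} \rangle_W) = Pc_S(\langle X \rangle_W) = Pc_S(X),
\]
so $g \in N_W(Pc_S(X))$. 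This proves $N_W(\langle X \rangle_W) \leq N_W(Pc_S(X))$, and the ``in particular'' follows since $C_W(X) \leq C_W(\langle X \rangle_W) \leq N_W(\langle X \rangle_W)$.

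The main obstacle is entirely concentrated in the inputs (I)--(III): the genuinely substantive fact is (I), that the $S$-parabolic subgroups are closed under pairwise intersection, together with the rank monotonicity in (II). Once these are granted, the minimal-rank argument for (a) is immediate and parts (b) and (c) are formal. If one wished to avoid citing (I), one could instead build $Pc_S(X)$ directly from the action of $W$ on its Davis complex, realizing $S$-parabolic subgroups as stabilizers of convex subcomplexes (residues) and checking that the relevant intersections remain of this form; this is a heavier route, and the cited facts are standard.
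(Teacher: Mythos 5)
Your proof is correct and follows essentially the same route as the paper's: the paper handles (a) by citing the discussion after Proposition~2.1.4 in Krammer's thesis (which is precisely the intersection-of-parabolics plus minimal-rank argument you spell out), (b) by the same appeal to the fact that finite subgroups lie in finite $S$-parabolic subgroups, and (c) by the same observation that conjugation permutes the $S$-parabolic subgroups, so that $Pc_S$ is conjugation-equivariant and depends only on $\langle X \rangle_W$. The only difference is that you make the standard inputs (I)--(III) and the rank-monotonicity argument explicit where the paper defers to citations.
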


\begin{proof} For Item (a) we refer to the discussion in \cite{krammer} following Proposition 2.1.4.
Item (b) is a consequence of Item (a) and the well-known fact that each finite
subgroup of $W$ is contained in a finite $S$-parabolic subgroup of $W$ (see e.g. \cite[Proposition~2.87]{abramenko}). Item (c) follows from the fact that $W$ normalizes the set of its $S$-parabolic subgroups.
\end{proof}

	\begin{definition}[Reflection length]\label{reflection_length} Given a Coxeter system $(W, S)$ , we denote by $\ell_S$ the length of an element from $W$ with respect to the generating set $S$ (so the minimal length of a word in the alphabet $S$ which spells the element $w \in W$). We denote by $\ell_T$ the length of an element from $W$ with respect to the generating set $T := S^W = \{ wsw^{-1} : s \in S, \; w \in W \}$. The latter length is called {\em reflection length}.
\end{definition}

	\begin{fact}[\cite{duszenko, kyle}]\label{reflection_length_fact} Let $(W, S)$ be a Coxeter group of finite rank and $T = S^W$.
	\begin{enumerate}[(1)]
	\item If $W$ is spherical or affine, then the reflection length $\ell_T$ is bounded.
	\item If $W$ is not as in (1), then the reflection length $\ell_T$ is unbounded.
	\end{enumerate}
\end{fact}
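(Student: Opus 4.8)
The plan is to reduce to the irreducible case and then treat the three resulting regimes (spherical, affine, and infinite non-affine) by genuinely different arguments. For the reduction, write $W = W_1 \times \cdots \times W_n$ with $S = S_1 \sqcup \cdots \sqcup S_n$ as in the excerpt. Since each $W_i$ is normal, a reflection $w s w^{-1}$ with $s \in S_i$ lands in $W_i$, so $T = S^W = T_1 \sqcup \cdots \sqcup T_n$ with $T_i = (S_i)^{W_i}$; reflections from distinct factors commute and act on disjoint coordinates, whence for $w = (w_1, \dots, w_n)$ one gets the exact additivity $\ell_T(w) = \sum_{i=1}^n \ell_{T_i}(w_i)$ (the upper bound by concatenating factorizations, the lower bound by projecting a factorization of $w$ onto the $i$-th factor, where reflections from $T_j$, $j \ne i$, become trivial). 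Thus $\ell_T$ is bounded iff each $\ell_{T_i}$ is bounded, and it suffices to prove: (a) $\ell_{T_i}$ is bounded when $W_i$ is irreducible spherical or affine, and (b) $\ell_{T_i}$ is unbounded when $W_i$ is irreducible, infinite and non-affine. With this, case (1) (every component spherical or affine) yields boundedness, and case (2) (some component neither) yields unboundedness.

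For the spherical case, $W_i$ is a finite real reflection group acting on the positive-definite geometric representation $V$ of dimension $|S_i|$, and $T_i$ is exactly its set of reflections. By Carter's lemma $\ell_{T_i}(w) = \mathrm{codim}\,\mathrm{Fix}_V(w) = \mathrm{rank}(w - \mathrm{id}_V)$, so $\ell_{T_i} \le \dim V = |S_i|$ is bounded by the rank. For the affine case, realize $W_i$ as a discrete group of affine isometries of a Euclidean space $E$ of dimension $d = |S_i| - 1$, generated by the affine reflections $T_i$. Following McCammond--Petersen, one decomposes $w$ into its linear part (which lies in the finite Weyl group and is a product of at most $d$ reflections, by the spherical case applied to the linear picture) and its translational part (absorbed by boundedly many further affine reflections, controlled by the translation lattice), obtaining a uniform bound of the shape $\ell_{T_i}(w) \le 2d$. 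The delicate point here is the translation bookkeeping: parallel affine reflections compose to translations, and one must check that the required translations are always producible with boundedly many reflections lying in the actual root system of $W_i$.

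The substance of the statement is case (b), the unboundedness in the infinite non-affine irreducible case, which I would obtain via quasimorphisms. The key elementary observation is that every homogeneous quasimorphism $\phi$ on a group vanishes on involutions --- from $t^2 = e$ and homogeneity, $\phi(t) = \tfrac{1}{2}\phi(t^2) = 0$ --- so $\phi$ vanishes identically on $T = S^W$. If $D$ denotes the defect of $\phi$, then for any reflection factorization $w = t_1 \cdots t_k$ the defect inequality gives $|\phi(w)| \le \sum_i |\phi(t_i)| + (k-1)D = (k-1)D$, hence the lower bound $\ell_T(w) \ge |\phi(w)|/D$. It therefore suffices to produce a single homogeneous quasimorphism $\phi$ and an element $w_0$ with $\phi(w_0) \ne 0$: then $\ell_T(w_0^m) \ge m\,|\phi(w_0)|/D \to \infty$. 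Such quasimorphisms exist precisely in this regime: an irreducible Coxeter group that is neither spherical nor affine acts on its $\mathrm{CAT}(0)$ Davis complex with a rank-one (WPD) isometry, so it is acylindrically hyperbolic and, by Bestvina--Fujiwara, carries an infinite-dimensional space of homogeneous quasimorphisms; in particular some $\phi$ is nonzero on a hyperbolic element $w_0$. This matches the dividing line exactly: affine groups are virtually abelian with no nontrivial homogeneous quasimorphism, consistent with bounded reflection length. Alternatively one can follow Duszenko's original route, using directly that the invariant bilinear form is indefinite to exhibit a hyperbolic rank-two sub-configuration and track a displacement invariant along the powers $w_0^m$.

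I expect the main obstacle to be precisely step (b): although the quasimorphism lower bound $\ell_T(w) \ge |\phi(w)|/D$ is clean once one has $\phi$, the real work is establishing the existence of an unbounded homogeneous quasimorphism, i.e. the acylindrical hyperbolicity (equivalently, the existence of a rank-one isometry of the Davis complex) exactly under the hypothesis ``irreducible, infinite, non-affine''; this is where the non-affine indefiniteness of the form is essential and where Duszenko's theorem genuinely lives. A secondary difficulty is the explicit affine upper bound, whose honest treatment requires the McCammond--Petersen analysis of how affine reflections generate the translation lattice with uniformly bounded length.
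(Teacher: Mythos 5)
The paper does not prove this statement at all: it is imported verbatim as a Fact with the citations to Duszenko (unboundedness in the non-affine case) and McCammond--Petersen (boundedness in the affine case), so there is no in-paper argument to match yours against. Your outline is a correct reconstruction, and two points deserve comment. First, your reduction to irreducible components, with $T = T_1 \sqcup \cdots \sqcup T_n$ and the exact additivity $\ell_T(w)=\sum_i \ell_{T_i}(w_i)$ via projections, is right and is genuinely needed to square the statement's global dichotomy ``spherical or affine'' with the componentwise analysis; the spherical case via Carter's lemma is standard, and for the affine case note that if you only want \emph{some} bound (which is all the Fact asserts) you do not need the sharp McCammond--Petersen constant $2d$: writing $W_i = L \rtimes W_0$ with $L$ the (co)root translation lattice, each lattice translation is a product of $2d$ reflections in parallel walls and each element of $W_0$ of at most $d$, giving a crude but sufficient bound of roughly $3d$. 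Second, for the crux --- unboundedness in the irreducible infinite non-affine case --- your quasimorphism argument is sound: the inequality $\ell_T(w)\ge |\phi(w)|/D$ for a homogeneous quasimorphism $\phi$ (which vanishes on the involutions in $T$) is correct, and the existence of a nonzero such $\phi$ is exactly the Caprace--Fujiwara rank-one isometry theorem for the Davis complex, whose hypothesis matches the dividing line of the Fact. This packages the hard direction differently from Duszenko's original treatment, but it buys a transparent lower bound that is intrinsically global (a factorization of $w\in W'$ may use reflections of $W$ outside any chosen reflection subgroup $W'$, so a naive reduction to a hyperbolic rank-$3$ subgroup would not suffice --- your choice of an invariant defined on all of $W$ is the right move). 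The only caveat is that essentially all of the substance is outsourced to three cited theorems (Carter, McCammond--Petersen, Caprace--Fujiwara), but since the paper itself treats the entire statement as a citation, that is an entirely appropriate level of detail here.
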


	\begin{fact}[Abelianization]\label{abelianization_fact} Let $(W, S)$ be a Coxeter group of finite rank, and let $m$ be the corresponding Coxeter matrix. Let $\sim$ be the equivalence relation on $S$ defined by taking the transitive closure of the relation $s \sim s'$ if $m(s, s')$ is odd. Then:
	$$\alpha: W \rightarrow (\mathbb{Z}/2\mathbb{Z})^{|S/\sim|}: s \mapsto [s]_\sim,$$
is an homomorphism, and in fact $(\mathbb{Z}/2\mathbb{Z})^{|S/\sim|}$ is the abelianization of $W$.
\end{fact}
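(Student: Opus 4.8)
The plan is to verify directly that $\alpha$ respects the defining relations of $(W,S)$, and then to identify the target with the abelianization by abelianizing the Coxeter presentation; both steps are short presentation computations.

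First I would check that $\alpha$ is a well-defined homomorphism. Writing the target additively, let $e_{[s]_\sim}$ denote the standard basis vector of $(\mathbb{Z}/2\mathbb{Z})^{|S/\sim|}$ indexed by the $\sim$-class of $s$, so that on generators $\alpha$ sends $s \mapsto e_{[s]_\sim}$. By the universal property of the Coxeter presentation (Definition~\ref{def_Coxeter_groups}), it suffices to check that these images satisfy each relation $(ss')^{m(s,s')}=e$ with $m(s,s')<\infty$, which in additive notation reads $m(s,s')\,(e_{[s]_\sim}+e_{[s']_\sim})=0$. Since every element of $(\mathbb{Z}/2\mathbb{Z})^{|S/\sim|}$ has order dividing $2$, this holds automatically when $m(s,s')$ is even, while when $m(s,s')$ is odd it reduces to $e_{[s]_\sim}+e_{[s']_\sim}=0$, i.e.\ to $e_{[s]_\sim}=e_{[s']_\sim}$. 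But $m(s,s')$ odd forces $s\sim s'$ by the definition of $\sim$, hence $[s]_\sim=[s']_\sim$ and the relation is satisfied. Thus $\alpha$ extends to a homomorphism on all of $W$, and it is plainly surjective since the $e_{[s]_\sim}$ generate the target.

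Next I would show that $(\mathbb{Z}/2\mathbb{Z})^{|S/\sim|}$ is the abelianization, i.e.\ that $\ker\alpha=[W,W]$. Since the target is abelian, $\alpha$ factors through $W^{\mathrm{ab}}:=W/[W,W]$, so it is enough to prove that the induced map $\bar\alpha\colon W^{\mathrm{ab}}\to(\mathbb{Z}/2\mathbb{Z})^{|S/\sim|}$ is an isomorphism. For this I abelianize the presentation: $W^{\mathrm{ab}}$ is the abelian group generated by $S$ subject to the abelianized relations. The relation coming from $m(s,s)=1$ is $s^2=e$, giving $2s=0$ for each $s\in S$, so every generator has order dividing $2$. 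Consequently a relation $m(s,s')(s+s')=0$ with $m(s,s')$ even carries no information, whereas for $m(s,s')$ odd it collapses, using $2s=0$, to $s+s'=0$, that is $s=s'$. Taking the transitive closure of these identifications is precisely the passage from $S$ to $S/\sim$, so $W^{\mathrm{ab}}$ is the abelian group generated by the classes in $S/\sim$, each of order dividing $2$ and subject to no further relations; hence $W^{\mathrm{ab}}\cong(\mathbb{Z}/2\mathbb{Z})^{|S/\sim|}$ with isomorphism induced by $s\mapsto[s]_\sim$, which is exactly $\bar\alpha$.

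This is a routine computation rather than a deep argument; the only point requiring care is the bookkeeping of which relations survive abelianization, namely confirming that the even-labeled relations are genuinely redundant once $2s=0$ is imposed, and that distinct $\sim$-classes remain independent in $W^{\mathrm{ab}}$ (no hidden further collapse). The latter is guaranteed because, after imposing $2s=0$ and the odd-edge identifications, the residual presentation is literally that of a free $\mathbb{Z}/2\mathbb{Z}$-vector space on the set $S/\sim$; the surjectivity of $\alpha$ from the first step provides an independent consistency check that the $|S/\sim|$ classes indeed map onto a basis of the target.
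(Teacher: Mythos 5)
Your argument is correct and complete. The paper states Fact~\ref{abelianization_fact} without proof, treating it as standard; your two-step verification (checking the Coxeter relations land in the elementary abelian $2$-group, then abelianizing the presentation and observing that the even-labeled relations become vacuous once $2s=0$ while the odd-labeled ones impose exactly the identifications defining $\sim$) is precisely the textbook argument one would supply, and you correctly address the one point needing care, namely that no further collapse occurs because the residual presentation is that of a free $\mathbb{Z}/2\mathbb{Z}$-vector space on $S/\sim$.
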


\begin{definition}\label{def_refl} Let $(W, S)$ be a Coxeter system of finite rank.
We say that $W$ is {\em reflection independent} if $S^W = \{ wsw^{-1} : s \in S, \; w \in W \}$ is invariant under change of Coxeter basis $S$ of $W$ (i.e. if $S$ and $T$ are two such bases, then $S^W = T^W$).
\end{definition}

\subsection{Reflection Subgroups of Even Coxeter Groups}\label{even_sec}

	\begin{definition}\label{def_reflection_sbg} Let $W$ be a Coxeter group.
	\begin{enumerate}[(1)]
	\item\label{item1} If $(W, S)$ is a Coxeter system and $W' \leq W$, we say that $W'$ is an $S$-reflection subgroup of $W$ if $W' = \langle W' \cap S^W \rangle$.
	\item We say that $W' \leq W$ is a reflection subgroup of $W$ if there is a Coxeter basis $S$ of $W$ such that $W'$ is an $S$-reflection subgroup of $W$.
	\end{enumerate}
\end{definition}

\begin{definition}\label{special_monoid} Let $(W, S)$ be a Coxeter system. We say that $\alpha \in End(W)$ is  an $S$-self-similarity (or a special $S$-endomorphism) if for every $s, t \in S$ we have:
	\begin{enumerate}[(1)]
	\item $\alpha(s) \in s^W$;
	\item $o(\alpha(s) \alpha(t)) = o(st)$.
\end{enumerate}
We denote the monoid of $S$-self-similarities as $Sim(W, S)$. We denote the semigroup of $S$-self-similarities which are not automorphisms as $Sim^*(W, S)$. We say that $ U \leq W$ is an $S$-self-similar subgroup if $U = \langle \alpha(S)\rangle_W$, for some $\alpha \in Sim(W, S)$.
\end{definition}

	\begin{example} We give an example of an $\alpha \in Sim(W, S)$. Let $S = \{s_1, ..., s_n\}$, $n \geq 2$ and $W$ be the free Coxeter group with basis $S$. We define $\alpha \in Sim(W, S)$ by letting $\alpha(s_1) = s_2s_1s_2$, $\alpha(s_2) = s_1s_2s_1$ and, for $2 \leq i \leq n$, $\alpha(s_i) = s_i$.
\end{example}

	\begin{definition}\label{def_self_similar} Let $(W,S)$ be a Coxeter system. We say that $\hat{S}$ is a {\em self-similar} set of reflections of $(W,S)$ (or a self-similar set of $S$-reflections) if $\hat{S} = \{ \hat{s} : s \in S \}$ and for all $s,t \in S$ we have $\hat{s} \in s^W$ and $o(\hat{s} \hat{t}) = o(st)$. I.e., $\hat{S}$ is a {\em self-similar} set of reflections of $(W,S)$ if there is $\alpha \in Sim(W, S)$ such that $\alpha(s) = \hat{s}$, for every $s \in S$.
\end{definition}


	We now collect some facts which will be used in the proof of Proposition~\ref{muller_fact}.

	\begin{fact}\label{fact_for_even} Let $(W,S)$ be a Coxeter system of finite rank.
	\begin{enumerate}[(A)]
	\item As the geometric representation of $(W,S)$ is faithful (see e.g. \cite{humphreys}), $W$ is a finitely  generated linear group over the real numbers. It follows that $W$ is a residually finite group, and in particular that it is Hopfian.
	\item If $u, v \in S^W$ are such that $D = \langle u,v \rangle_W$ is finite, then there exist $s,t \in S$ and $w \in W$ such that $o(st)$ is finite and  $D^w \leq \langle s, t \rangle_W$.	This follows from Assertion (d) of Theorem 1.12 in \cite{humphreys} in the case where $W$ is
finite. Using Assertion (b) of Lemma \ref{parabclosure} the general case can be reduced to the spherical case.
	\item If $T \subseteq S^W$ and $U := \langle T \rangle_W$ then there exists $R \subseteq U \cap S^W$ such that $(U,R)$ is a Coxeter system and $R^U = U \cap S^W$ (Theorem (3.3) in \cite{dyer}). Moreover, if $T$ is finite, then $|R| \leq |T|$ (Assertion (i) of Corollary (3.11) in 
\cite{dyer}).
	\item If $(W,S)$ is even, then for $s \neq t \in S$ we have that $s^W \cap t^W = \emptyset$ (follows from Fact \ref{abelianization_fact}).
	\item Suppose $(W,S)$ is even and let $s, t \in S$ be such that $st$ has infinite order. Then $xy$ has infinite order for all $x \in s^W$ and $y \in t^W$. This follows from Facts (B) and (D) above.
\end{enumerate}
\end{fact}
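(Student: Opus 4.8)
The items (A)--(D) are standard facts whose indicated references carry the weight (faithfulness of the geometric representation and residual finiteness for (A); the structure of finite dihedral reflection subgroups via \cite{humphreys} together with Lemma~\ref{parabclosure} for (B); Dyer's theorem \cite{dyer} for (C); and the abelianization Fact~\ref{abelianization_fact} for (D)), so I will concentrate on the genuinely derived item (E). The plan is to argue by contradiction: assume $x \in s^W$ and $y \in t^W$ with $st$ of infinite order but $xy$ of \emph{finite} order, and extract a contradiction using only (B) and (D).

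Suppose $o(xy) < \infty$. Since $x$ and $y$ are involutions (being reflections), $D := \langle x, y \rangle_W$ is a dihedral group of order $2\,o(xy)$, hence finite; and by item (D) applied to the distinct generators $s \neq t$ we have $s^W \cap t^W = \emptyset$, so $x \neq y$ and $|D| > 2$. Applying item (B) to the pair $u = x$, $v = y$ now produces basis elements $s', t' \in S$ (the $s,t$ of (B), renamed to avoid a clash) and $w \in W$ with $o(s't') < \infty$ and $D^w \leq \langle s', t' \rangle_W =: U$. As $|D| > 2$ forces $|U| > 2$, we must have $s' \neq t'$, so $U$ is a genuine finite dihedral group. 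The conjugates $x^w \in s^W$ and $y^w \in t^W$ are then reflections of $W$ lying inside $U$.

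The decisive step is to place $x^w$ and $y^w$ among the $W$-conjugacy classes of $S$. The reflections of $W$ contained in the dihedral group $U$ are exactly the $U$-conjugates of its Coxeter generators $s'$ and $t'$, and hence each lies in $(s')^W \cup (t')^W$. Thus $x^w \in s^W \cap \big((s')^W \cup (t')^W\big)$, and since distinct elements of $S$ have disjoint $W$-conjugacy classes by item (D), this forces $s \in \{s', t'\}$; symmetrically $t \in \{s', t'\}$. Because $s \neq t$ and $s' \neq t'$, we conclude $\{s, t\} = \{s', t'\}$, so that $o(st) = o(s't') < \infty$, contradicting the hypothesis that $st$ has infinite order. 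I expect the only delicate point to be the conjugacy-class bookkeeping, i.e. justifying cleanly that every reflection of $W$ inside the finite dihedral parabolic $U$ is conjugate to one of the two generators $s', t'$ (this can be read off from item (C) applied to $T = \{s', t'\}$, or directly from the dihedral reflection picture), together with the repeated but harmless appeal to disjointness of conjugacy classes from item (D); everything else is immediate.
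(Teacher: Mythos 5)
Your proposal is correct and follows exactly the route the paper indicates: the paper disposes of (A)--(D) by the cited references and justifies (E) only with the one-line remark that it follows from (B) and (D), and your argument is precisely that derivation spelled out (contradiction via a finite dihedral $D=\langle x,y\rangle_W$, item (B) to conjugate it into a finite standard parabolic $\langle s',t'\rangle_W$, and item (D) to force $\{s,t\}=\{s',t'\}$). The one point you flag as delicate --- that every element of $S^W$ inside $\langle s',t'\rangle_W$ is a $U$-conjugate of $s'$ or $t'$ --- is handled correctly either via item (C) or, even more directly, by applying the abelianization of Fact~\ref{abelianization_fact} to rule out the central involution of the even dihedral group.
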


	\begin{proposition}\label{muller_fact} Let $(W,S)$ be an even Coxeter system of finite rank, and let $\alpha$ be a self-similarity of $(W,S)$ (cf. Definition~\ref{special_monoid}). Then $(\langle \alpha(S) \rangle_W, \alpha(S))$ is a Coxeter system and thus the map $\alpha: W \rightarrow \langle \alpha(S) \rangle_W$ is an isomorphism.
\end{proposition}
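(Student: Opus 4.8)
The plan is to show that $\alpha$ is an isomorphism onto $U := \langle \alpha(S)\rangle_W$. Writing $\hat s := \alpha(s)$, note that $\alpha$ carries the standard Coxeter relations of $(W,S)$ to the relations $\hat s^2 = e$ and $(\hat s \hat t)^{o(\hat s \hat t)} = e$, and $o(\hat s \hat t) = o(st) = m(s,t)$ by clause (2) of Definition~\ref{special_monoid}; hence the desired conclusion, that $(U,\alpha(S))$ is a Coxeter system (necessarily with matrix $m$), is precisely equivalent to the injectivity of the canonical map $\alpha\colon W \to U$. Two easy points come first: $\alpha\colon W \to U$ is a surjective homomorphism, being an endomorphism of $W$ with image $U$; and since $(W,S)$ is even, assertion (D) of Fact~\ref{fact_for_even} makes the classes $s^W$ pairwise disjoint, so the $\hat s \in s^W$ are pairwise distinct and $|\alpha(S)| = |S| =: n$. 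Thus everything reduces to proving that the surjection $\alpha$ is injective.

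The closing device will be Hopficity. If I can show that $U$ is abstractly isomorphic to $W$, say via $\phi\colon U \to W$, then $\phi\circ\alpha$ is a surjective endomorphism of $W$, hence an automorphism by assertion (A) of Fact~\ref{fact_for_even}; in particular $\alpha$ is injective and we are done. So it suffices to identify $U$, as an abstract group, with $W$.

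To analyse $U$ I would invoke Dyer's theorem, assertion (C) of Fact~\ref{fact_for_even}, applied to the finite set $T = \alpha(S) \subseteq S^W$: there is $R \subseteq U \cap S^W$ with $(U,R)$ a Coxeter system, $R^U = U \cap S^W$, and $|R| \le |\alpha(S)| = n$. To pin the rank I use abelianization. Since $(W,S)$ is even, Fact~\ref{abelianization_fact} gives $W^{\mathrm{ab}} \cong (\mathbb{Z}/2\mathbb{Z})^{n}$, and the abelianization map sends every reflection in $s^W$ to the $s$-th basis vector; it therefore separates the classes $s^W$ and carries $\alpha(S)$ to a basis. Restricting to $U$, the images of $\alpha(S)$ already generate $(\mathbb{Z}/2\mathbb{Z})^{n}$, so $U^{\mathrm{ab}}$ surjects onto $(\mathbb{Z}/2\mathbb{Z})^{n}$; comparing with the value $U^{\mathrm{ab}} \cong (\mathbb{Z}/2\mathbb{Z})^{|R/\sim|}$ furnished by Fact~\ref{abelianization_fact} applied to $(U,R)$ forces $|R| = n$ and shows that $(U,R)$ is itself even.

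The crux is to show that $\alpha(S)$ is in fact a simple system of $U$, equivalently that the Coxeter matrix of $(U,R)$ agrees with $m$; this yields $U \cong W$. I would reduce this to rank two. For each pair $s \ne t$ the dihedral reflection subgroup $\langle \hat s, \hat t\rangle_W$ is controlled by assertions (B) and (E) of Fact~\ref{fact_for_even}: when $m(s,t) < \infty$, assertion (B) conjugates it into a rank-two spherical parabolic, so the spherical dihedral theory identifies $\{\hat s, \hat t\}$ as the canonical simple system with $\hat s\hat t$ of order $m(s,t)$; when $m(s,t) = \infty$, assertion (E) forces $\hat s\hat t$ to have infinite order, so $\langle \hat s, \hat t\rangle_W$ is infinite dihedral with simple system $\{\hat s, \hat t\}$. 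The main obstacle is then the \emph{global promotion}: passing from these pairwise dihedral simple-system statements to the conclusion that $\alpha(S)$ is a simple system for all of $U$. This is exactly where the finer structure theory of reflection subgroups underlying assertion (C) must be used, together with the facts, secured above, that $\alpha(S)$ has cardinality $n = |R|$ and meets each $U$-conjugacy class of $R$ exactly once. Once $\alpha(S)$ is known to be a simple system, $(U,\alpha(S))$ is a Coxeter system with matrix $m$, whence $U \cong W$; feeding this into the Hopfian argument above (or observing directly that $\alpha$ is then a matrix-preserving surjection between two Coxeter systems carrying the same matrix on corresponding generators) shows that $\alpha$ is an isomorphism, as required.
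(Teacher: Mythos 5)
Your skeleton matches the paper's: Dyer's theorem (Fact~\ref{fact_for_even}(C)) produces a simple system $R$ for $U=\langle\alpha(S)\rangle_W$, evenness forces $|R|=|S|$ (your abelianization count is a clean alternative to the paper's argument, which instead notes that $|R|<|S|$ would put $\alpha(s),\alpha(t)$ in a common class $r^U$ and hence force $s^W=t^W$, contradicting Fact~\ref{fact_for_even}(D)), and Hopficity closes the argument once $U\cong W$ is known. But the crux is exactly the step you label ``global promotion,'' and there you stop at an acknowledgement (``this is exactly where the finer structure theory \dots must be used'') rather than an argument. Worse, your rank-two analysis is aimed at the wrong target: the statement $o(\hat s\hat t)=m(s,t)$ is clause (2) of Definition~\ref{special_monoid}, i.e.\ a \emph{hypothesis}, and the fact that $\{\hat s,\hat t\}$ is a simple system of the dihedral group it generates is automatic for any two involutions. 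What actually has to be proved is that the Coxeter matrix of $(U,R)$ — for the Dyer generators $R$, which are in general \emph{not} the elements $\alpha(s)$ — coincides with $m$ under the bijection $\beta\colon S\to R$, $\{\beta(s)\}=s^W\cap R$. Nothing in your pairwise analysis connects $\langle\hat s,\hat t\rangle$ to $R$.

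The paper fills this gap with a specific two-stage conjugation: apply Fact~\ref{fact_for_even}(B) \emph{inside the Coxeter system $(U,R)$} to conjugate $D_1=\langle\alpha(s),\alpha(t)\rangle$ into some $D_2=\langle r,v\rangle$ with $r,v\in R$ and $o(rv)<\infty$, then apply Fact~\ref{fact_for_even}(B) inside $(W,S)$ to conjugate $D_2$ into some $\langle s',t'\rangle$ with $s',t'\in S$. A bookkeeping argument with Fact~\ref{fact_for_even}(D) and the injectivity of $\beta$ pins down $\{r,v\}=\{\beta(s),\beta(t)\}$ and $\{s',t'\}=\{s,t\}$, after which the order sandwich
$$o(st)=o(\alpha(s)\alpha(t))\leq o(\beta(s)\beta(t))\leq o(st)$$
(using that a product of two reflections in a finite dihedral group has order dividing that of the generating rotation) yields $o(\beta(s)\beta(t))=o(st)$; the infinite case is immediate from Fact~\ref{fact_for_even}(E). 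This is the missing content of your proposal: without the application of Fact~\ref{fact_for_even}(B) \emph{in the system $(U,R)$}, there is no bridge from the hypothesis on $\alpha(S)$ to the presentation of $U$ on $R$, and the claim $U\cong W$ remains unproved.
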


	\begin{proof}
Let $\alpha$ be a self-similarity of $(W,S)$, put $U := \langle \alpha(S) \rangle_W$ and let $R$ be as in Fact~\ref{fact_for_even}(C) for $(W, S)$ and $U$. By Fact~\ref{fact_for_even}(C) we know that $|R| \leq |\alpha(S)|= |S|$.
We first prove by contradiction that, in fact, equality holds. Indeed, suppose that $|R| < |S|$. Note first that  $\alpha(S)
\subseteq U \cap S^W = R^U$. As $|\alpha(S)| = |S| < |R|$ there exist $t \neq s \in S$ and $r \in R$ such that
$\alpha(s),\alpha(t) \in r^U$. This implies $s^W= \alpha(s)^W = \alpha(t)^W = t^W$ and finally this yields a contradiction
to Fact~\ref{fact_for_even}(D).

\smallskip
\noindent
Thus, $|R| = |S|$ and so there is a bijection $\beta:S \rightarrow R$ such that for each $s \in S$ we have that $\{ \beta(s)\} = s^W \cap R$. Suppose now that the following holds:
\begin{equation}\tag{$\star$} \text{for all $s \neq t \in S$ we have that $o(\beta(s)\beta(t)) = o(st)$.}
\end{equation}
Then we can argue as follows: By $(\star)$ and Fact~\ref{fact_for_even}(E), the map $\beta$ extends to an homomorphism from $W$ to $U$, which is actually an isomorphism, since $(U, R)$ is a Coxeter system.  Consider now the map $\gamma: \alpha \circ \beta^{-1} : R \rightarrow \alpha(S)$. As $(U, R)$ is a Coxeter system, the map $\gamma$ extends to a surjective endomorphism of $U$, but such a map must be an isomorphism, since by Fact~\ref{fact_for_even}(A), $U$ is Hopfian.
Hence, the map $\alpha: W \rightarrow \langle \alpha(S) \rangle_W$ is an isomorphism, if $(\star)$ holds. We show that $(\star)$ holds.

\smallskip
\noindent
 To this extent, let $s \neq t \in S$ be such that $o(st)$ is finite and put $s_1 := \alpha(s)$, $t_1 := \alpha(t)$, and $D_1:= \langle s_1,t_1 \rangle_U \leq U \leq W$.
%
%
Now, by Fact~\ref{fact_for_even}(B) for the Coxeter system $(U, R)$, we can find $r \neq v \in R$ and $u \in U$ such that $o(rv)$ is finite and $D_1^u \leq D_2 :=\langle r, v \rangle_U$. By Fact~\ref{fact_for_even}(B) for the Coxeter system $(W,S)$, we can find $s' \neq t' \in S$ and $w \in W$ such that $o(s't')$ is finite and  $D_2^w \leq \langle s', t' \rangle_W$. Hence, recapitulating, we have the following situation:
$$D_1:= \langle s_1,t_1 \rangle_U, \;\; D_1^u \leq D_2 :=\langle r, v \rangle_U \;\; \text{ and } \;\; D_2^w \leq D_3 :=\langle s', t' \rangle_W.$$

\smallskip
\noindent
We want to show that $\{ s,t \} = \{ s', t' \}$ and $\{ r, v \} = \{ \beta(s), \beta(t) \}$. We show the second equality, the first is proved by an analogous argument. To this extent, let $s_1^u = s^h$ and $t_1^u = t^g$, for $h, g \in W$  (recall that $s_1 = \alpha(s)$, $t_1 = \alpha(t)$, $\alpha(s) \in s^W$ and $\alpha(t) \in t^W$). Now, $s^h \in S^W \cap U = R^U$ and so $s^h \in R^U \cap D^2 = \{ r, v \}^{D_2}$, since $D_2$ is an $R$-parabolic subgroup of $U$. Hence we have that $s^h \in r^{D_2}$ or $s^h \in v^{D_2}$ and not both, by Fact~\ref{fact_for_even}(D). Thus, we have that $r = \beta(s)$ or $v = \beta(s)$, and not both.

\smallskip
\noindent
Similarly, we see that $t^g \in r^{D_2}$ or $t^g \in v^{D_2}$ (and not both, by Fact~\ref{fact_for_even}(D)), and so $r = \beta(t)$ or $v = \beta(t)$, and not both. Suppose now that $r = \beta(s)$ and $r = \beta(t)$, then $\beta$ is not a bijection, a contradiction.  Analogously, it cannot be the case that $v = \beta(s)$ and $v = \beta(t)$. It thus follows that $\{ r, v \} = \{ \beta(s), \beta(t) \}$, as wanted.

\smallskip
\noindent
Hence, putting all together, we actually have the following situation:
$$D_1:= \langle s_1,t_1 \rangle_U, \;\; D_1^u \leq D_2 :=\langle \beta(s), \beta(t) \rangle_U \;\; \text{ and } \;\; D_2^w \leq \langle s, t \rangle_W.$$
Thus, using the fact that $D_2$ and $D_3$ are finite dihedral groups (since $(U, R)$ and $(W, S)$ are Coxeter systems, and $o(\beta(s), \beta(t)), o(s't') < \infty$) we have that:
$$o(st) = o(\alpha(s)\alpha(t)) = o(s_1t_1) = o(s_1^u t_1^u) \leq o(\beta(s)\beta(t)) = o(\beta(s)^w\beta(t)^w) \leq o(st).$$
Hence, $(\star)$ is verified (given that $s$ and $t$ were arbitrary) and the proof is complete.
\end{proof}

	\begin{remark} The following example shows that the even assumption in Proposition~\ref{muller_fact} is necessary. Let $(W, S)$ be the Coxeter system such that $S = \{ s, t, u \}$ and
$o(xy) = 7$ for all $x \neq y \in S$. Put $s' := s$, $t' := t$, $u' := sts$, and let then $S' = \{ s', t', u' \}$. Then $S'$ is a set of
self-similar reflections of $(W, S)$ and $S'$ generates a finite group $G$, and so $G$ is not isomorphic to $W$ (since $W$ is infinite).
\end{remark}

\subsection{Reflection Subgroups of RACGs}



The objective of this subsection is to prove Proposition~\ref{lemma_increasing_subgroups}, which will play a crucial role in Section~\ref{sec_prime_models}, where we will prove in particular that if $W$ is a right-angled Coxeter group of finite rank, then $W$ is a prime model of its theory iff the monoid $Sim(W, S)$ is finitely generated.

\smallskip
\noindent	
 Throughout this subsection $(W,S)$ is a right-angled Coxeter system of finite rank and $S^W:= \{ s^w : s \in S, w \in W \}$ denotes the set of its reflections.
 We shall use the notation of \cite{muhlherr}. This means that we are working with the Cayley graph $\Cay(W,S) = (W, {\bf P})$ of $(W,S)$ where ${\bf P} := \{ \{ w,ws \} \mid w \in W, s \in S \}$. Since $\Cay(W,S)$ is a Coxeter building, we shall use
the language of buildings here. Thus, the vertices of $\Cay(W,S)$ are called {\em chambers}, the edges
are called {\em panels} and a {\em gallery} is a path in $\Cay(W,S)$.
The group $W$ acts by multiplication from the left on $\Cay(W,S)$ and the stabilizer of a panel
$P = \{ w,ws \}$ is the subgroup generated by the reflection $wsw^{-1}$. The set of panels stabilized
by a reflection $t$ is denoted by ${\bf P}(t)$ and the graph $(W, {\bf P} \setminus {\bf P}(t))$
has two connected components which are called the {\em roots associated with $t$}. For a reflection $t \in S^W$ and a chamber $c \in W$ we denote the root associated with $t$
that contains $c$ by $H(t,c)$.
 The set of
{\em chambers lying at the wall of $t$} is:
$${\bf C}(t) := \cup_{P \in {\bf P}(t)} P.$$

	\begin{definition}
	\begin{enumerate}[(1)]
	\item For two chambers $c,d \in W$
we denote their distance in $\Cay(W,S)$ (i.e. the length of a minimal gallery joining them) by $\ell(c,d)$.
	\item For any two nonempty subsets $X,Y$ of $W$ we let:
$$\ell(X,Y) := \min \{ \ell(x,y) \mid x \in X, y \in Y \}$$ and for
$z \in W$, we set $\dist(z,X) := \dist(\{ z \},X)$.
	\item For $t,u \in S^W$ we let the {\em distance} between $t$ and $u$ to be:
$$\dist(t,u) := \ell({\bf C}(t),{\bf C}(u)).$$
\end{enumerate}
\end{definition}

\begin{remark} It is a basic fact that $\dist(t,u)=0$ if $[s,u] = 1$.
\end{remark}

	\begin{fact} Suppose that $o(tu) =\infty$. Then there exists a root (or halfspace) $H$ associated
with $t$ such that ${\bf C}(u) \subseteq H$. This root will be denoted by $H(t,u)$ or $H(t,u,+)$.  Furthermore, $H(t,u,-)$ denotes
the set which does not contain the wall of $u$; equivalently, $H(t,u,-)$ is the (set theoretic) complement of $H(t,u)$ in $W$.
\end{fact}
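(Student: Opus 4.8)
The plan is to deduce the statement from the classical crossing-versus-nesting dichotomy for pairs of walls in a Coxeter complex, together with one elementary bookkeeping fact about panels. Recall first that every panel $P = \{w, ws\}$ is swapped by exactly one reflection, namely $wsw^{-1}$: a reflection $r$ interchanging the two chambers of $P$ satisfies $rw = ws$, forcing $r = wsw^{-1}$. Hence for $t \neq u$ we have ${\bf P}(t) \cap {\bf P}(u) = \emptyset$, and whenever $\{c,c'\} \in {\bf P}(u)$ this edge is present in $(W, {\bf P}\setminus {\bf P}(t))$, so $c$ and $c'$ lie in the \emph{same} root of $t$. Note also that since $o(tu)=\infty$ we have $t \neq u$, and that ${\bf C}(u)$ is $u$-invariant: if $c$ is adjacent to the wall of $u$ via a panel $\{c,uc\} \in {\bf P}(u)$, then so is its partner $uc$.

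The one nontrivial input I would invoke is the standard fact (see e.g.\ \cite{abramenko} or \cite{humphreys}) that two distinct walls $M_t, M_u$ of $\Cay(W,S)$ are in exactly one of two configurations: they \emph{cross}, meaning all four intersections $H(t,\epsilon) \cap H(u,\delta)$ are nonempty, or they are \emph{nested}, meaning some root of $u$ is contained in some root of $t$; moreover they cross if and only if $o(tu) < \infty$. This is the step carrying the real geometric content, and it is where I expect the only genuine difficulty to lie were one to demand a self-contained treatment; conceptually it reflects that $\langle t,u \rangle \cong D_\infty$ acts with all of its walls parallel, so the associated roots form a nested, linearly ordered family. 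Since $o(tu)=\infty$ we are therefore in the nested case, and after relabelling signs (taking complements if necessary) we may assume $H(u,-) \subseteq H(t,+)$; set $H := H(t,+)$.

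It then remains to upgrade the nesting of a single root of $u$ to containment of all of ${\bf C}(u)$ in $H$. First, ${\bf C}(u) \cap H(u,-) \subseteq H(u,-) \subseteq H$. Next, let $c \in {\bf C}(u) \cap H(u,+)$; its partner $uc$ lies in ${\bf C}(u) \cap H(u,-) \subseteq H$ by $u$-invariance, while the panel $\{c,uc\} \in {\bf P}(u)$ together with $t \neq u$ forces $c$ and $uc$ into the same root of $t$ by the bookkeeping fact of the first paragraph, whence $c \in H$ as well. Since every chamber of ${\bf C}(u)$ lies in $H(u,+)$ or $H(u,-)$, this yields ${\bf C}(u) \subseteq H$, and I define $H(t,u) = H(t,u,+) := H$. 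This root is unique, because the two roots of $t$ are disjoint and ${\bf C}(u) \neq \emptyset$.

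Finally, $H(t,u,-)$ is declared to be the complementary root $W \setminus H(t,u) = H(t,-)$, which is the second connected component of $(W, {\bf P}\setminus {\bf P}(t))$ and hence exactly the set-theoretic complement of $H(t,u)$ in $W$. Since ${\bf C}(u) \subseteq H(t,u)$, no chamber adjacent to the wall of $u$ lies in $H(t,u,-)$, which is precisely the assertion that $H(t,u,-)$ does not contain the wall of $u$. To reiterate, the only non-formal ingredient is the crossing-versus-nesting dichotomy and its equivalence with finiteness of $o(tu)$; granting that, the remainder is the panel-uniqueness observation plus the $u$-invariance of ${\bf C}(u)$.
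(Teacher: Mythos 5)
The paper states this as a \emph{Fact} without proof, deferring implicitly to \cite{muhlherr} and the standard theory of walls and roots in Coxeter complexes, so there is no internal argument to compare yours against. Your proof is correct and well organized: the observation that a panel is stabilized by a unique reflection (hence ${\bf P}(t)\cap{\bf P}(u)=\emptyset$ for $t\neq u$, so every $u$-panel is an edge of $(W,{\bf P}\setminus{\bf P}(t))$ whose two chambers lie in a common root of $t$), the $u$-invariance of ${\bf C}(u)$, and the upgrade from $H(u,-)\subseteq H(t,+)$ to ${\bf C}(u)\subseteq H(t,+)$ are all sound; uniqueness of the root and the description of $H(t,u,-)$ as the complementary root follow as you say. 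The one caveat is that essentially all of the content of the Fact is concentrated in the crossing-versus-nesting dichotomy for a pair of walls and its equivalence with the finiteness of $o(tu)$, which you invoke as a black box; you flag this honestly, and it is indeed standard (your argument also quietly uses that the two chambers of a panel in ${\bf P}(u)$ lie in opposite roots of $u$, so that $uc\in H(u,-)$ when $c\in H(u,+)$ --- again standard wall-crossing combinatorics, worth a half-sentence). Granting that input, the derivation is complete and is the argument one would expect.
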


	\begin{definition}
	\begin{enumerate}[(1)]
	\item A {\em triangle} of $(W,S)$ is a set $T = \{ t_1,t_2,t_3 \} \subseteq S^W$ such that $o(t_it_j) = \infty$ for all $1 \leq i \neq j \leq 3$.
	\item We say that a triangle $T = \{ t_1,t_2,t_3 \}$ is {\em geometric} if $H(t_i,t_j) = H(t_i,t_k)$ whenever $\{ i,j,k \} = \{ 1,2,3 \}$.
	 \item A set $T \subseteq S^W$ of reflections of $(W,S)$ is called {\em geometric} if each triangle contained in $T$ is geometric.
\end{enumerate}
\end{definition}

	\begin{lemma}\label{lemma_geom_refle_minus} Let $t,u,v \in S^W$ be such that $o(tu) = \infty = o(tv)$ and $H(t,u) = H(t,v,-)$.
Then $o(vu) = \infty$.
\end{lemma}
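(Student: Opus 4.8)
The plan is to translate the three hypotheses into a statement about the relative position of the walls of $t,u,v$ in $\Cay(W,S)$ and then conclude by a connectedness argument. Write $A := H(t,u)$ for the root of $t$ containing ${\bf C}(u)$. The equality $H(t,u) = H(t,v,-)$ says precisely that ${\bf C}(v)$ lies in the opposite root $B := H(t,v,+) = W \setminus A$. Thus the wall of $t$ \emph{separates} the walls of $u$ and $v$: we have ${\bf C}(u) \subseteq A$ and ${\bf C}(v) \subseteq B$, where $A,B$ are the two roots associated with $t$, so that $A \cap B = \emptyset$ and $A \cup B = W$. Note in passing that $u \neq v$, since otherwise $H(t,u)=H(t,v)=H(t,v,+)$ would contradict $H(t,u)=H(t,v,-)=W\setminus H(t,v,+)$.

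The core step is to show that all of $A$ lies in a single root of $v$. First I would record that $A$ is a connected subgraph of $\Cay(W,S)$: by definition it is one of the two connected components of $(W, {\bf P}\setminus {\bf P}(t))$, hence it is connected in the full graph $(W,{\bf P})$ as well. Next, the two roots of $v$ are the connected components of $(W, {\bf P}\setminus {\bf P}(v))$, so any gallery passing from one root of $v$ to the other must traverse a panel $P \in {\bf P}(v)$, and both endpoints of $P$ lie in ${\bf C}(v)$ by definition of ${\bf C}(v)$. Since $A$ is connected and $A \cap {\bf C}(v) \subseteq A \cap B = \emptyset$, no gallery contained in $A$ can cross the wall of $v$; therefore $A$, and a fortiori ${\bf C}(u)\subseteq A$, is contained entirely in one root of $v$.

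To finish I would invoke the wall-crossing characterization of the order of a product of two reflections: for $u \neq v$ one has $o(uv)=\infty$ exactly when the walls of $u$ and $v$ do not cross, equivalently when ${\bf C}(u)$ is contained in a single root of $v$. This is the converse of the Fact recorded above the lemma; in the right-angled setting one has $o(uv)\in\{2,\infty\}$, and $o(uv)=2$ forces the walls to cross, which in turn forces ${\bf C}(u)$ to meet \emph{both} roots of $v$. Having just shown that ${\bf C}(u)$ meets only one root of $v$, we conclude $o(uv)=o(vu)=\infty$, as wanted. The only delicate points, and where I would be most careful, are the two structural facts feeding the connectedness step, namely that a root of $t$ is genuinely connected in $\Cay(W,S)$ and that crossing the wall of $v$ genuinely forces passage through a pair of chambers in ${\bf C}(v)$; together with verifying the direction of the wall-crossing characterization used in the last step (the excerpt states only one implication), these constitute essentially all the content, after which the argument is purely combinatorial.
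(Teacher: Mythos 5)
Your argument is correct, but it takes a genuinely different route from the paper, which does not prove this lemma at all: the proof given there is a one-line citation of Assertion (i) of Lemma 2.5 in \cite{muhlherr}. What you supply instead is a self-contained wall-and-root argument, and its steps all go through. The two structural facts you flag are immediate from the definitions in this subsection: a root of $t$ is by definition a connected component of $(W,{\bf P}\setminus{\bf P}(t))$, hence any two of its chambers are joined by a gallery staying inside it; and a gallery passing from one root of $v$ to the other must use a panel $P\in{\bf P}(v)$, both of whose chambers lie in ${\bf C}(v)=\cup_{P\in{\bf P}(v)}P$. The only place where you should actually write something down is the converse wall-crossing criterion at the end, and it closes with tools already in the paper: by the Finite Order Theorem (Fact~\ref{finite_order}) every element of finite order in a RACG has order $2$, so $o(uv)\in\{1,2,\infty\}$, and $o(uv)=1$ is excluded since you showed $u\neq v$. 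If $o(uv)=2$, then $u$ and $v$ commute, so $v\cdot{\bf P}(u)={\bf P}(vuv^{-1})={\bf P}(u)$ and hence $v$ stabilizes ${\bf C}(u)$ setwise; since $v$ interchanges its own two roots, the nonempty $v$-invariant set ${\bf C}(u)$ would have to meet both of them, contradicting the containment in a single root that you established. Hence $o(uv)=\infty$. With that short paragraph added your proof is complete, and it has the merit of keeping the argument internal to the present setting rather than deferring to the cited source.
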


	\begin{proof} This is Assertion (i) of Lemma 2.5 in \cite{muhlherr}.
\end{proof}

	\begin{lemma}\label{lemma_geom_refle} For $T \subseteq S^W$ the following assertions are equivalent:
\begin{enumerate}[(i)]
	\item $T$ is geometric;
	\item if $t,u,v \in T$ are such that $o(tu) = \infty = o(tv)$, then $H(t,u) = H(t,v)$.
\end{enumerate}
\end{lemma}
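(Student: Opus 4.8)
The plan is to prove the two implications separately; the direction $(ii) \Rightarrow (i)$ is immediate, and only $(i) \Rightarrow (ii)$ requires a small extra observation.

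For $(ii) \Rightarrow (i)$, let $\{t_1, t_2, t_3\} \subseteq T$ be a triangle. For each choice of $\{i,j,k\} = \{1,2,3\}$ we have $o(t_i t_j) = \infty = o(t_i t_k)$ by definition of a triangle, so $(ii)$ applied with $t = t_i$, $u = t_j$, $v = t_k$ gives $H(t_i, t_j) = H(t_i, t_k)$. As this holds for every apex $t_i$, the triangle is geometric; since every triangle contained in $T$ arises this way, $T$ is geometric.

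For $(i) \Rightarrow (ii)$, fix $t,u,v \in T$ with $o(tu) = \infty = o(tv)$; we must show $H(t,u) = H(t,v)$. The key structural point is that there are exactly two roots associated with $t$, so $H(t,u)$ and $H(t,v)$ are either equal or mutually complementary, the latter being precisely the assertion $H(t,u) = H(t,v,-)$. If $u = v$ the claim is trivial, so assume $u \neq v$ and distinguish two cases. If $o(uv) = \infty$, then $\{t,u,v\}$ is a triangle contained in $T$, hence geometric by $(i)$, and taking $t$ as apex yields $H(t,u) = H(t,v)$. If instead $o(uv) < \infty$, I would apply the contrapositive of Lemma~\ref{lemma_geom_refle_minus}: that lemma states that $H(t,u) = H(t,v,-)$ forces $o(uv) = \infty$, so from $o(uv) < \infty$ we get $H(t,u) \neq H(t,v,-)$, which by the dichotomy above means exactly $H(t,u) = H(t,v)$.

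The only genuinely substantive point is that condition $(ii)$ looks a priori stronger than the geometricity of triangles: a triangle requires all three pairwise products to have infinite order, whereas $(ii)$ also ranges over the configuration $o(uv) < \infty$. What makes the equivalence go through without effort is that this extra configuration is already settled by the previously established Lemma~\ref{lemma_geom_refle_minus} (independently of $(i)$), so the hypothesis $(i)$ is needed only in the triangle subcase. I therefore expect no real obstacle; the one thing to get right is to route the finite-order case through Lemma~\ref{lemma_geom_refle_minus} rather than redoing a wall-crossing argument, although a direct argument — that the walls of two commuting reflections must cross and so cannot lie in opposite roots of $t$ — would serve equally well.
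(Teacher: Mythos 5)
Your proof is correct and follows exactly the route the paper intends: its entire proof of Lemma~\ref{lemma_geom_refle} is the single line ``This follows from Lemma~\ref{lemma_geom_refle_minus}'', and your case split --- triangles handled by hypothesis (i), the finite-order configuration handled by the contrapositive of Lemma~\ref{lemma_geom_refle_minus} together with the two-root dichotomy --- is the intended unpacking of that remark. No issues.
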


	\begin{proof} This follows from Lemma~\ref{lemma_geom_refle_minus}.
\end{proof}

\begin{definition}\label{def_fund_domain}
Let $X \subseteq W$. Then we call $X$ {\em convex} if it is a convex subset in the metric space $(W,\ell)$. If $U \leq W$
is a subgroup of $W$, then it acts on $\Cay(W,S)$ by left multiplication and we call $X$ a {\em fundamental domain}
for $U$ when:
\begin{enumerate}[(i)]
	\item $W = \cup_{u \in U} uX$;
	\item $uX \cap X \neq \emptyset$ implies $u = e_U$, for all $u \in U$.
	\end{enumerate}
\end{definition}

	\begin{proposition}\label{prop_mu_2.6} Let $T \subseteq S^W$ be a geometric set of reflections and $U:= \langle T \rangle_W$.
\begin{enumerate}[(a)]
	\item There exists a family of roots $(H_t)_{t \in T}$ such that $H_t$ is a root associated
with $t$ for each $t \in T$ and such that $H_t = H(t,u)$ whenever $t,u \in T$ and $o(tu) = \infty$.
	\item $(U,T)$ is a Coxeter system and if $(H_t)_{t \in T}$ is as in item (a), then
$D := \cap_{t \in T} H_t$ is a convex fundamental domain for the action of $U$ on $W$.
	\item $T^U = U \cap S^W$ and:
	$$T = \{ r \in S^W : |P \cap D| = 1 \mbox{ for some panel $P$ on the wall associated with } r \}.$$
\end{enumerate}
\end{proposition}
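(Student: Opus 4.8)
The plan is to deduce all three assertions from a single coherent orientation of the walls, so I would begin with (a). Fix $t \in T$. If some $u \in T$ satisfies $o(tu) = \infty$, set $H_t := H(t,u)$; by Lemma~\ref{lemma_geom_refle} the geometricity of $T$ guarantees that $H(t,u)$ is independent of the choice of such $u$, so $H_t$ is well defined and the required identity $H_t = H(t,u)$ holds for every $u \in T$ with $o(tu) = \infty$ by construction. If $t$ has no such partner in $T$, I would pick for $H_t$ either of the two roots of $t$, the stated condition being vacuous. This settles (a).

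For (b) I would first note that each root $H_t$ is a convex subset of the metric space $(W,\ell)$ (a standard property of half-spaces in a Coxeter system), so $D = \bigcap_{t \in T} H_t$ is convex. The real content is that $U$ acts on $W$ with $D$ as a strict fundamental domain and that $(U,T)$ is a Coxeter system; I would obtain both by a folding argument. For a chamber $c$ put $\Phi(c) := \{ t \in T : c \notin H_t \}$. The first observation is that the elements of $\Phi(c)$ pairwise commute: if $t,u \in \Phi(c)$ had $o(tu) = \infty$, then by the coherence from (a) the wall of $u$ lies in $H_t$ and the wall of $t$ lies in $H_u$, and one checks that the negative root of $t$, which contains $c$, is then contained in $H_u$, contradicting $u \in \Phi(c)$. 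Since pairwise commuting reflections generate an elementary abelian $2$-group, and such subgroups of a finite-rank Coxeter group have bounded rank, $\Phi(c)$ is finite for every $c$.

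The second, and main, step is the descent: if $\Phi(c) \neq \emptyset$ I would choose $t \in \Phi(c)$ whose wall is nearest to $c$ and show $\Phi(tc) \subsetneq \Phi(c)$. Reflecting across the wall of $t$ does not disturb the side of any $u$ commuting with $t$ (the walls cross and $t$ fixes both roots of $u$), while the minimality of the distance, combined with the coherence from (a) and Lemma~\ref{lemma_geom_refle_minus}, rules out that a wall $u$ with $o(tu) = \infty$ is flipped to the wrong side; hence $t$ is removed from $\Phi$ and nothing new is added. Iterating the descent carries any chamber by an element of $U$ into $D$, which simultaneously proves $D \neq \emptyset$ and gives the covering $W = \bigcup_{u \in U} uD$; convexity of $D$ together with a count of the walls separating $D$ from $uD$ then yields $uD \cap D \neq \emptyset \Rightarrow u = e$. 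I expect this descent, specifically verifying that folding across the nearest wrong wall creates no new inversions, to be the principal obstacle, since it is exactly where the geometric hypothesis is consumed.

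Finally, for the Coxeter claim and for (c) I would invoke Dyer's theorem, Fact~\ref{fact_for_even}(C): there is a canonical $R \subseteq U \cap S^W$ with $(U,R)$ a Coxeter system and $R^U = U \cap S^W$. The fundamental domain $D$ just produced is bounded exactly by the walls of the elements of $T$, each such wall carrying a panel with one chamber in $D$ and its $t$-image outside, and this is precisely the property characterizing the canonical generators; hence $R = T$, so $(U,T)$ is a Coxeter system with fundamental domain $D$, and $T^U = R^U = U \cap S^W$. The displayed description of $T$ in (c) then reads off directly: a reflection $r \in S^W$ admits a panel $P$ on its wall with $|P \cap D| = 1$ if and only if its wall bounds $D$, i.e. separates a chamber of $D$ from an adjacent chamber in a nontrivial translate $uD$, which by the strict-fundamental-domain property happens exactly when $r \in T$.
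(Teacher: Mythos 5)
The paper gives no argument here at all---it quotes the statement verbatim from \cite[Proposition~2.6]{muhlherr}---so your reconstruction has to stand on its own. Part (a), the convexity of $D$, and your analysis of $\Phi(c) := \{ t \in T : c \notin H_t \}$ (pairwise commutativity, hence finiteness) are all correct. But the descent step, which you rightly single out as the crux, is false as stated: reflecting $c$ in the nearest wall of $\Phi(c)$ \emph{can} flip a reflection $u \in T$ with $o(tu)=\infty$ to the wrong side, so $\Phi(tc) \subsetneq \Phi(c)$ fails. Take $W = \langle s_1,s_2\rangle$ infinite dihedral, $t = s_1$, $u = s_2s_1s_2$, $T = \{t,u\}$ (vacuously geometric). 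Then ${\bf C}(t) = \{e,s_1\}$, ${\bf C}(u) = \{s_2,s_2s_1\}$, $H_t = H(t,u)$ and $H_u = H(u,t)$ are the roots containing $e$, and $D = \{e,s_2\}$. For $c = s_1s_2s_1$ one has $\Phi(c) = \{t\}$, so $t$ is trivially the reflection with nearest wall; yet $tc = s_2s_1 \notin H_u$, so $\Phi(tc) = \{u\}$, which is not contained in $\Phi(c)\setminus\{t\} = \emptyset$. The geometric reason is that a chamber lying in $H_t^-$ \emph{deeper} than the reflected wall $t({\bf C}(u))$ is carried past ${\bf C}(u)$. Minimality over $\Phi(c)$ cannot exclude this: tracking a minimal gallery from $c$ to $tc$ shows that the obstruction is the reflection $tut$, whose wall is indeed closer to $c$ than ${\bf C}(t)$---but $tut$ lies in $U \cap S^W$, not necessarily in $T$, so it is invisible to a minimization carried out over $\Phi(c) \subseteq T$. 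Hence $|\Phi(\cdot)|$ is not a valid termination measure, and since the covering $W = \bigcup_{u\in U}uD$, the identification $R = T$, and all of part (c) rest on this descent, the gap propagates through the rest of the proof.

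The strategy is salvageable, but with a different measure: in the example the quantity that decreases is the distance from $c$ to the relevant wall (equivalently, the number of walls of reflections of $U \cap S^W$ separating $c$ from a fixed base chamber), and controlling \emph{that} is exactly the content of the Deodhar--Dyer--H\'ee argument which this paper records separately as Proposition~\ref{hee_prop}. A cleaner route within the paper's own toolkit is to invoke Proposition~\ref{hee_prop} first to obtain the fundamental domain $D_U(c)$ together with the geometric generating set $R_U(c)$, and only then use geometricity of $T$ (via Lemma~\ref{lemma_geom_refle} and an analogue of Proposition~\ref{prop_conjugate_geometric}) to identify $\bigcap_{t\in T}H_t$ with $D_U(c)$ and $T$ with $R_U(c)$; your final paragraph deriving (c) from Dyer's theorem would then go through essentially as written.
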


	\begin{proof} This is essentially the content of \cite[Proposition~2.6]{muhlherr}.
\end{proof}

	\begin{lemma}\label{lemma_2.6+} Let $T \subseteq S^W$ be a geometric set of reflections, $U = \langle T \rangle_W$, $D$ be as in item (b) of Proposition~\ref{prop_mu_2.6},  $c \in D$ and $r \in S^W \cap U \; (=T^U)$. Then:
\begin{enumerate}[(a)]
	\item If $r$ is not in $T$, then there exists $u \in U$ such that $\ell(c,{\bf C}(uru^{-1})) < \ell(c,{\bf C}(r))$.
	\item For each $t \in T$ we have $\ell(c,{\bf C}(t^u)) \geq \ell(c,{\bf C}(t))$ for all $u \in U$.
	\item In (b) equality holds if and only if $t^u = t$.
\end{enumerate}
\end{lemma}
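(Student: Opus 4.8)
The plan is to prove (a) by a gallery-shortening argument and then to deduce (b) and (c) from (a) by minimizing the distance over the $U$-orbit of $t$, using that distinct elements of $T$ are not conjugate in $U$. Throughout I write $M_t$ for the wall of a reflection $t$ and use Proposition~\ref{prop_mu_2.6}: $(U,T)$ is a Coxeter system, $D=\bigcap_{t\in T}H_t$ is a convex fundamental domain for the action of $U$ on $W$, and $T=\{r\in S^W:|P\cap D|=1\text{ for some panel }P\text{ on }M_r\}$.

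For (a), fix $r\in T^U\setminus T$ and let $d\in{\bf C}(r)$ realize $\ell(c,{\bf C}(r))$, with a minimal gallery $c=c_0,\dots,c_k=d$ of length $k=\ell(c,{\bf C}(r))$. First I claim ${\bf C}(r)\cap D=\emptyset$: if some $d'\in{\bf C}(r)\cap D$, let $P$ be the panel on $M_r$ containing $d'$; its other chamber $rd'$ is not in $D$, since a panel on $M_r$ cannot lie entirely in $D$ (otherwise $rD\cap D\neq\emptyset$ with $r\neq e$, contradicting the fundamental domain property of Proposition~\ref{prop_mu_2.6}(b)). Hence $|P\cap D|=1$, so Proposition~\ref{prop_mu_2.6}(c) would force $r\in T$, a contradiction. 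In particular $d\notin D=\bigcap_{t\in T}H_t$, so $d\notin H_t$ for some $t\in T$, while $c\in D\subseteq H_t$; thus the minimal gallery crosses $M_t$ exactly once, say through the panel $\{c_i,c_{i+1}\}\in{\bf P}(t)$, whence $tc_i=c_{i+1}$. Reflecting the tail by the automorphism $t$ of $\Cay(W,S)$, the sequence $c_0,\dots,c_i,tc_{i+2},tc_{i+3},\dots,tc_k$ is a gallery from $c$ to $td$ of length $k-1$ (using $tc_{i+1}=c_i$). Since $t\,{\bf C}(r)={\bf C}(trt)={\bf C}(r^t)$ with $r^t:=trt\in U\cap S^W=T^U$, we get $\ell(c,{\bf C}(r^t))\leq k-1<\ell(c,{\bf C}(r))$, so $u=t$ works.

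For (b) and (c), fix $t\in T$ and set $O=\{t^u:u\in U\}\subseteq T^U$. The function $u\mapsto\ell(c,{\bf C}(t^u))$ takes values in $\mathbb{N}$, hence attains a minimum at some $r_0\in O$. By part (a) no element of $T^U\setminus T$ is a minimizer (it admits a strictly closer $U$-conjugate, still in $O$), so $r_0\in T\cap O$. Now I invoke that distinct elements of $T$ are not $U$-conjugate: the reflection subgroup $(U,T)$ is even because it is right-angled—indeed any finite dihedral subgroup $\langle t',t''\rangle_W$ with $t',t''\in S^W$ is conjugate into a spherical, hence (being a finite right-angled Coxeter group) elementary abelian, parabolic subgroup by Lemma~\ref{parabclosure}(b), forcing $o(t't'')\in\{2,\infty\}$. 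Thus Fact~\ref{fact_for_even}(D) applied to $(U,T)$ gives $t'^U\cap t''^U=\emptyset$ for distinct $t',t''\in T$, so $T\cap O=\{t\}$ and $r_0=t$. This yields $\ell(c,{\bf C}(t))=\min_{u\in U}\ell(c,{\bf C}(t^u))$, which is (b). If $\ell(c,{\bf C}(t^u))=\ell(c,{\bf C}(t))$, then $t^u$ is also a minimizer, so $t^u\in T\cap O=\{t\}$, i.e. $t^u=t$; the converse being trivial, this gives (c).

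The main obstacle is the last ingredient of (b) and (c): pinning the minimizer down to $t$ itself rather than merely to the set $T$. This requires that no two distinct simple reflections of $(U,T)$ are $U$-conjugate, which rests on the evenness (in fact right-angledness) of the reflection subgroup $(U,T)$; by contrast the geometric shortening step for (a) is routine once one observes ${\bf C}(r)\cap D=\emptyset$.
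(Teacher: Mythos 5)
Your proof is correct. For item (a) you follow essentially the same route as the paper: the paper folds the gallery at the first panel where it leaves $D$, identifying the folding reflection as an element of $T$ via Proposition~\ref{prop_mu_2.6}(c), whereas you first prove ${\bf C}(r)\cap D=\emptyset$ (a point the paper leaves implicit) and then fold across a wall $M_t$ with $t\in T$ separating $c$ from the endpoint; these are two phrasings of the same shortening argument. Where you genuinely diverge is in items (b) and (c): the paper sketches an induction on $\ell_T(u)$, repeating the gallery analysis at each step, while you instead take the minimizer of $u\mapsto\ell(c,{\bf C}(t^u))$ over the $U$-orbit, use (a) to force the minimizer into $T$, and then pin it down to $t$ itself by showing that distinct elements of $T$ are never $U$-conjugate. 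Your route has the merit of isolating exactly where the right-angled hypothesis enters: the non-conjugacy of distinct elements of $T$ rests on $(U,T)$ being even (via Lemma~\ref{parabclosure}(b) and Fact~\ref{fact_for_even}(D)), and without some such input item (c) would actually fail (e.g.\ for a dihedral reflection subgroup of order $6$, where two distinct simple reflections are conjugate and equidistant from the fundamental chamber). The paper's terse sketch does not surface this dependence, so your argument is both complete and more transparent on this point; the only cosmetic caveat is that Facts~\ref{abelianization_fact} and \ref{fact_for_even}(D) are stated for finite rank, whereas $T$ need not be finite a priori, but the abelianization argument they rest on is rank-independent.
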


	\begin{proof} Let $k := \ell(c,{\bf C}(r))$ and let $\gamma = (c=c_0,\ldots,c_k)$ be a minimal gallery from $c$ to ${\bf C}(r)$.
As $r$ is not in $T$ by assumption,
there exists $0 \leq m < k$ such that, for all $0 \leq i \leq m$, $c_i \in D$ and $c_{m+1} \notin D$.
It follows that the unique reflection $u$ associated with the panel $\{ c_m,c_{m+1} \}$ is in $T$.
Now, $\gamma_1 := (c_0,\ldots,c_m) = (u(c_{m+1}),u(c_{m+2}),\ldots,u(c_k))$ (where $u(g)$ is in the sense of Definition~\ref{def_fund_domain}, i.e. $u(g) = ug$) is a gallery from $c$ to ${\bf C}(uru)$
of length $k-1$. This yields item (a).
Finally, parts (b) and (c) follow by an argument similar to the one given in the proof of part (a) by using induction on $\ell_T(u)$,
where $\ell_T$ denotes the length function on $U$ with respect to the generating set $T$. This concludes the proof of the lemma.

\end{proof}

	\begin{proposition}\label{prop_conjugate_geometric} Let $T \subseteq S^W$ be a geometric set of reflections and $U := \langle T \rangle_W$.
If $R$ is a geometric set of reflections such that $U = \langle R \rangle_W$, then
for some $u \in U$:
$$R = T^u = \{ utu^{-1} \mid t \in T \}.$$
\end{proposition}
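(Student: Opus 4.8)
The plan is to characterize both $T$ and $R$ intrinsically, as the sets of ``distance-minimal'' reflections in their $U$-conjugacy classes relative to a base chamber, and then to transport one characterization onto the other by the isometry of $\Cay(W,S)$ supplied by the fundamental-domain property. First I would extract the following consequence of Lemma~\ref{lemma_2.6+}. Fix any $c \in D$ and set
$$M(c) := \{ \rho \in U \cap S^W : \ell(c,\mathbf{C}(\rho)) \leq \ell(c,\mathbf{C}(u\rho u^{-1})) \text{ for all } u \in U \}.$$
Then $M(c) = T$: if $t \in T$, part (b) of Lemma~\ref{lemma_2.6+} gives $\ell(c,\mathbf{C}(t^u)) \geq \ell(c,\mathbf{C}(t))$ for every $u \in U$, so $t \in M(c)$; conversely, if $\rho \in (U \cap S^W) \setminus T$, then part (a) produces some $u \in U$ with $\ell(c,\mathbf{C}(u\rho u^{-1})) < \ell(c,\mathbf{C}(\rho))$, so $\rho \notin M(c)$. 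I stress that this holds for \emph{every} choice of $c \in D$, since Lemma~\ref{lemma_2.6+} is stated for arbitrary $c \in D$.

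Next I would apply the same machinery to $R$. Since $R$ is a geometric set of reflections with $\langle R \rangle_W = U$, Proposition~\ref{prop_mu_2.6} applies to $R$, yielding a convex fundamental domain $D_R$ for the action of $U$ and the identity $R^U = U \cap S^W = T^U$; in particular $R \subseteq U \cap S^W$. Running the argument of the previous paragraph with $(R, D_R)$ in place of $(T, D)$ shows that, for every $c' \in D_R$,
$$R = \{ \rho \in U \cap S^W : \ell(c',\mathbf{C}(\rho)) \leq \ell(c',\mathbf{C}(u\rho u^{-1})) \text{ for all } u \in U \}.$$

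Finally I would connect the two base chambers. Fix $c' \in D_R$. Because $D$ is a fundamental domain for $U$ (condition (i) of Definition~\ref{def_fund_domain}), there is $u^* \in U$ with $\hat{c} := u^* c' \in D$. Left multiplication by $u^*$ is an isometry of $\Cay(W,S)$ carrying the wall of a reflection $\rho$ to the wall of $u^*\rho u^{*-1}$, i.e. $u^*\mathbf{C}(\rho) = \mathbf{C}(u^*\rho u^{*-1})$, whence $\ell(\hat{c},\mathbf{C}(\rho)) = \ell(c',\mathbf{C}(u^{*-1}\rho u^*))$ for all reflections $\rho$. Substituting this into the defining inequalities for $M(\hat{c})$ and reindexing the quantifier via $u \mapsto u^{*-1}u u^*$ (a permutation of $U$) shows that $\rho \in M(\hat{c})$ if and only if $u^{*-1}\rho u^*$ lies in the minimizer set attached to $c'$, that is, if and only if $u^{*-1}\rho u^* \in R$. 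Since $\hat{c} \in D$ gives $M(\hat{c}) = T$, this reads $T = u^* R u^{*-1}$, equivalently $R = \{ u\, t\, u^{-1} : t \in T \} = T^u$ with $u = u^{*-1} \in U$, as required.

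I expect the main obstacle to be bookkeeping rather than conceptual. One must verify carefully that the characterization $M(c) = T$ is genuinely available for \emph{every} $c \in D$ (so that it can be invoked at the translated chamber $\hat{c}$), and that the conjugation and reindexing in the isometry step are tracked with the correct handedness, so that the final conjugator indeed lands in $U$ and in the orientation demanded by the statement. The two geometric inputs used along the way — the existence of $u^*$ from the fundamental-domain property, and the fact that left translation conjugates walls — are standard facts about $\Cay(W,S)$ and should require no separate argument.
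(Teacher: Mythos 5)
Your proof is correct, and it takes a genuinely different route from the paper's. The paper also starts from the convex fundamental domains $D = \cap_{t \in T} H_t$ and $E = \cap_{r \in R} H_r$ supplied by Proposition~\ref{prop_mu_2.6}, but its key step is a Claim that two such domains which intersect must coincide: if $c \in D \cap E$ and $d \in D \setminus E$, a minimal gallery from $c$ to $d$ stays in $D$ by convexity yet must cross the wall of some $r \in R \subseteq U$, so two adjacent chambers of the gallery lie in the same $U$-orbit, contradicting the fundamental-domain property; hence $D = E$, and $T = R$ then follows from the characterization of $T$ in item (c) of Proposition~\ref{prop_mu_2.6}. The general case is obtained by translating $D$ by some $u \in U$ so that $uD$ meets $E$. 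You instead never prove such a claim: you extract from Lemma~\ref{lemma_2.6+}(a),(b) the intrinsic characterization $M(c) = T$ of $T$ as the set of distance-minimizers in $U \cap S^W$ relative to any $c \in D$ (valid since that lemma is stated for arbitrary $c \in D$, and since $U \cap S^W = T^U = R^U$ does not depend on the chosen geometric generating set), apply the same characterization to $(R, D_R)$, and transport one onto the other via the isometry $c' \mapsto u^*c'$ together with the equivariance $g\,\mathbf{C}(\rho) = \mathbf{C}(g\rho g^{-1})$. Your conjugation bookkeeping (the reindexing $u \mapsto u^{*-1}uu^{*}$ and the identification $u = u^{*-1}$) is carried out with the correct handedness. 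What your route buys is that the translation step is completely explicit, whereas the paper's final sentence implicitly invokes its Claim for the translated pair $(T^u, R)$; what it costs is that the statement one proves is slightly weaker (the paper's Claim also yields $D = E$ when the domains meet). Both arguments ultimately rest on the same geometric inputs, namely Proposition~\ref{prop_mu_2.6} and elementary properties of the $U$-action on $\Cay(W,S)$.
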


	\begin{proof} Let $(H_t)_{t \in T}$ and $D := \cap_{t \in T} H_t$ be as in  Proposition~\ref{prop_mu_2.6};
similarly, let $(H_{r})_{r \in R}$ and $E := \cap_{r \in R} H_r$ be as in Proposition~\ref{prop_mu_2.6}.
Then $D$ and $E$ are fundamental domains for the action of $U$ on $W$ by item (b) of Proposition~\ref{prop_mu_2.6}.

	\begin{claim}\label{subclaim} If $D \cap E \neq \emptyset$, then $D = E$ and  $T = R$.
\end{claim}

\begin{claimproof} Let $c \in D \cap E$ and let $d \in D$. Suppose that $d \in D$ is not in $D \cap E$ and that
$\gamma = (c=c_0,\ldots,c_k=d)$ is a minimal gallery from $c$ to $d$. Then $c_i \in D$ for all $0 \leq i \leq k$
because $D$ is convex (cf. Fact~\ref{prop_mu_2.6}(b)).
As $d$ is not in $E$, the gallery $\gamma$ crosses a wall associated to a reflection $r \in R$.
Thus there exists $1 \leq i \leq k$ such that $r(c_i) = c_{i-1}$. As $r \in R \subseteq U$ this yields a contradiction,
because $c_{i-1}$ and $c_i$ are both elements of $D$ and the latter is a fundamental domain for the action of $U$ on $W$.
This shows that $D \subseteq E$ and by symmetry we also have that $E \subseteq D$. Now it follows from item (c) of
Proposition \ref{prop_mu_2.6} that $T=R$.
\end{claimproof}

\smallskip
\noindent
As $D$ and $E$ are fundamental domains for the action of $U$ on $W$ there exists
$u \in U$ such that $u(D) \cap E \neq \emptyset$ and so by the claim above we are done.
\end{proof}

	\begin{definition}\label{def_DU(c)} Let $U \leq W$ be a reflection subgroup of $(W,S)$ and $T := U \cap S^W$.
The graph $\Gamma_U$ is defined by $\Gamma_U := (W,{\bf P} \setminus (\cup_{t \in T} {\bf P}(t)))$.
For each $c \in W$ we define
the set $D_U(c) \subseteq W$ to be the connected component of the graph $\Gamma_U$ containing the chamber $c$.
Furthermore, we set:
$$R_U(c) := \{ r \in S^W \mid |P \cap D_U(c)| = 1 \mbox{ for some  }P \in {\bf P}(r) \}.$$
\end{definition}


	\begin{proposition}\label{hee_prop} Let $U \leq W$ be a reflection subgroup of $(W,S)$, let $c \in W$ be a chamber and put  $D :=D_U(c)$ and $R:=R_U(c)$ (as in Definition~\ref{def_DU(c)}). Then $R$ is a geometric set of reflections, $U=\langle R \rangle_W$ and $D$ is a fundamental
domain for the action of $U$ on $W$ (by left multiplication). Moreover, we have
$U \cap S^W = R^U$.
\end{proposition}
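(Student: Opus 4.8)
The plan is to run the construction of Proposition~\ref{prop_mu_2.6} in reverse: there one starts from a geometric set $T$, builds the convex fundamental domain $\cap_{t}H_t$, and recovers $T$ as its bounding reflections; here I start from an arbitrary reflection subgroup $U$, produce the bounding reflections $R=R_U(c)$ of the component $D=D_U(c)$, and show they form a geometric generating set whose associated fundamental domain is exactly $D$. Write $T:=U\cap S^W$. Since $D$ is a connected component of the graph $\Gamma_U$ obtained from $\Cay(W,S)$ by deleting every panel lying on a wall of a reflection of $T$, no $T$-wall separates two chambers of $D$; hence for each $t\in T$ the set $D$ lies entirely inside one of the two roots of $t$. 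For $r\in R$ let $H_r$ denote the root of $r$ containing $D$. First I would record that $R\subseteq T$: if $|P\cap D|=1$ for some $P\in{\bf P}(r)$, then $P$ was deleted from $\Gamma_U$, so $P\in{\bf P}(t)$ for some $t\in T$, and since a panel determines its wall uniquely, $r=t\in T$.

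Next I would establish geometricity of $R$ together with the fact that $(H_r)_{r\in R}$ is a family of the kind produced by Proposition~\ref{prop_mu_2.6}(a). Take $r,u\in R$ with $o(ru)=\infty$. By the basic fact recalled above, there is a unique root $H(r,u)$ of $r$ with ${\bf C}(u)\subseteq H(r,u)$. Since $u\in R$, choose a panel $\{x_u,y_u\}\in{\bf P}(u)$ with $x_u\in D$; then $x_u\in{\bf C}(u)\cap D\subseteq H(r,u)\cap H_r$, which forces $H(r,u)=H_r$. Applying this to a third $v\in R$ with $o(rv)=\infty$ gives $H(r,u)=H_r=H(r,v)$, so by Lemma~\ref{lemma_geom_refle} the set $R$ is geometric; moreover the same equalities show that $(H_r)_{r\in R}$ satisfies the defining condition of Proposition~\ref{prop_mu_2.6}(a).

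I would then feed the geometric set $R$, with $U':=\langle R\rangle_W$ and the family $(H_r)$, into Proposition~\ref{prop_mu_2.6}: $(U',R)$ is a Coxeter system, $R^{U'}=U'\cap S^W$, and $D'':=\cap_{r\in R}H_r$ is a convex fundamental domain for $U'$. The key identification is $D=D''$. The inclusion $D\subseteq D''$ is immediate from $D\subseteq H_r$ for all $r$. For the reverse, suppose $z\in D''\setminus D$ and take a minimal gallery from $c$ to $z$; as $z\notin D$ it must cross a $T$-wall, and at the first such crossing the panel has one vertex in $D$ and one outside, so the associated reflection $r$ lies in $R$. That wall separates $c\in H_r$ from $z$, whence $z\notin H_r\supseteq D''$, a contradiction. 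Thus $D=D''$ is a fundamental domain, for now only for $U'$.

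The main obstacle is the last point, namely upgrading $\langle R\rangle_W=U'$ to $U'=U$: a priori $U'$ is only a reflection subgroup contained in $U$, and one must exclude that $U$ is strictly larger. Here the strict (tiling) fundamental-domain property of $D$ for $U'$ does the work. Given $t\in T$, pick $P=\{x,y\}\in{\bf P}(t)$ and, using surjectivity $W=\bigcup_{u'\in U'}u'D$, write $x\in u'D$ for some $u'\in U'$; then $t_0:=u'^{-1}tu'\in T$ and $u'^{-1}P\in{\bf P}(t_0)$ has its vertex $u'^{-1}x$ in $D$, while the other vertex lies across the $T$-wall of $t_0$ and hence outside $D$. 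Therefore $t_0\in R$ and $t\in R^{U'}$. This yields $T\subseteq R^{U'}=U'\cap S^W\subseteq T$, so $U'\cap S^W=T$ and $U=\langle T\rangle_W\subseteq U'$, giving $U=U'=\langle R\rangle_W$. Consequently $D=D''$ is a fundamental domain for $U$ and $U\cap S^W=R^{U'}=R^U$, completing the proof. Besides this generation step, the one delicate verification is that the family $(H_r)$ of roots containing $D$ genuinely qualifies as a Proposition~\ref{prop_mu_2.6}(a) family, which is exactly what the computation $H(r,u)=H_r$ in the second paragraph guarantees.
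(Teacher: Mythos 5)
Your argument is correct, but it is worth noting that the paper does not actually prove this proposition at all: it simply attributes the result to Deodhar, Dyer and H\'ee and cites \cite{deo, dyer, hee}. What you have written is therefore a genuine, essentially self-contained derivation of the statement from Proposition~\ref{prop_mu_2.6} (which the paper in turn cites to \cite{muhlherr}), and the overall architecture is sound: the observation that $D$ lies in a single root of every $t\in U\cap S^W$ gives both $R\subseteq U\cap S^W$ and, via the chamber $x_u\in {\bf C}(u)\cap D$, the identity $H(r,u)=H_r$ that feeds Lemma~\ref{lemma_geom_refle} and item (a) of Proposition~\ref{prop_mu_2.6}; the identification $D=\cap_{r\in R}H_r$ by tracking the first $T$-wall crossed by a minimal gallery is the standard convexity argument; and the final tiling step, which conjugates an arbitrary $t\in T$ into a reflection whose wall bounds $D$ and thereby forces $T\subseteq R^{U'}$ and $U=U'$, is exactly the point that distinguishes this statement from Proposition~\ref{prop_mu_2.6} and is handled correctly. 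The only external inputs you rely on beyond what the paper records explicitly are standard facts about the Coxeter building $\Cay(W,S)$ (a panel determines its reflection uniquely, the two roots of a wall are its complementary ``sides'', and a minimal gallery crosses each wall at most once); since the paper explicitly adopts the language of buildings, this is a reasonable level of rigor, and your write-up buys the reader a proof where the paper offers only a pointer to the literature.
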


	\begin{proof} This is a geometric version of a result that have been obtained independently
by Deodhar and Dyer (cf. \cite{deo, dyer}). In its present form it is due to H\'ee (cf. \cite{hee}).
\end{proof}

	\begin{corollary} Let $T \subseteq S^W$ and $U = \langle T \rangle_W$. Then the following hold.
\begin{enumerate}[(a)]
\item There is a geometric set $R$ of reflections such that the $\langle R \rangle_W = U$
and all such sets are conjugate in $U$.
\item If $T$ is a finite set and  $R$ is as in item (a), then $|R| \leq |T|$.
\end{enumerate}
\end{corollary}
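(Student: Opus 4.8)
The plan is to read off both assertions from the structural results established in this subsection, treating (a) as essentially a packaging of Propositions~\ref{hee_prop} and~\ref{prop_conjugate_geometric}, and (b) as a combination of (a) with the cardinality bound recorded in Fact~\ref{fact_for_even}(C).

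First I would observe that $U = \langle T \rangle_W$ is a reflection subgroup of $(W,S)$ in the sense of Definition~\ref{def_reflection_sbg}: since $T \subseteq S^W$ we have $T \subseteq U \cap S^W$, and hence $U = \langle T \rangle_W \leq \langle U \cap S^W \rangle_W \leq U$, so that $U = \langle U \cap S^W \rangle_W$. This makes Proposition~\ref{hee_prop} applicable. For the existence part of (a) I would fix a base chamber, say $c = e$, and set $R := R_U(e)$ and $D := D_U(e)$; Proposition~\ref{hee_prop} then gives at once that $R$ is a geometric set of reflections with $\langle R \rangle_W = U$ (and that $D$ is a fundamental domain with $U \cap S^W = R^U$, a fact that will be used in (b)). For the conjugacy part, let $R_1$ and $R_2$ be two geometric sets of reflections with $\langle R_1 \rangle_W = \langle R_2 \rangle_W = U$; Proposition~\ref{prop_conjugate_geometric}, applied to the geometric set $R_1$ (in the role of the $T$ there) and the geometric set $R_2$ (in the role of the $R$ there), yields $u \in U$ with $R_2 = R_1^u$. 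Hence all such sets are conjugate in $U$, proving (a).

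For (b), the key point is that, by (a), all geometric sets of reflections generating $U$ are conjugate, and conjugation is a cardinality-preserving bijection, so it suffices to bound $|R|$ for the single geometric set $R = R_U(e)$ produced above. Now $R$ is exactly the canonical set of Coxeter generators of the reflection subgroup $U$ furnished by the Deodhar--Dyer theorem (this is the content of Proposition~\ref{hee_prop}, which is H\'ee's geometric formulation of that theorem): indeed $(U,R)$ is a Coxeter system with $R^U = U \cap S^W$. This is the same object as the set $R$ appearing in Fact~\ref{fact_for_even}(C). Since $T$ is assumed finite, Fact~\ref{fact_for_even}(C) (citing Corollary~(3.11)(i) of \cite{dyer}) gives $|R| \leq |T|$, which is precisely the desired bound.

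The only genuine subtlety, and the step I would be most careful about, is the identification in (b) of the geometric set $R = R_U(e)$ with the canonical generating set $R$ appearing in Fact~\ref{fact_for_even}(C), so that the bound $|R| \leq |T|$ transfers. If one prefers to avoid appealing to this identification directly, the alternative is to note that the set $R'$ of Fact~\ref{fact_for_even}(C) satisfies the same defining properties as a canonical geometric set---namely $(U,R')$ is a Coxeter system with $(R')^U = U \cap S^W$ and $\langle R' \rangle_W = U$---and is therefore itself geometric; part (a) then forces $R' = R^u$ for some $u \in U$, whence $|R| = |R'| \leq |T|$. Either way the entire argument is a bookkeeping of results already in hand, with no new computation required.
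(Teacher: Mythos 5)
Your proof is correct and follows essentially the same route as the paper: item (a) is obtained from Proposition~\ref{hee_prop} (for existence, via $R_U(c)$) together with Proposition~\ref{prop_conjugate_geometric} (for conjugacy), and item (b) is the cardinality refinement from Corollary~3.11 of \cite{dyer}, which is exactly what the paper cites. Your additional bookkeeping (checking that $U$ is a reflection subgroup, and identifying the geometric set with Dyer's canonical generating set so the bound transfers) only makes explicit what the paper leaves implicit.
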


	\begin{proof} Item (a) is a consequence of the previous proposition and item (b)
is a refinement which follows from Corollary 3.11 in \cite{dyer}.
\end{proof}

	\begin{lemma}\label{lemma_RU} Let $U \leq W$ be a reflection subgroup of $(W,S)$. Then:
\begin{enumerate}[(a)]
\item If $r \in U \cap S^W$ and $c \in {\bf C}(r)$, then $r \in R_U(c)$.
\item Suppose $V \leq W$ is a reflection subgroup of $(W,S)$ such that $V \subseteq U$.
If $r \in V \cap S^W$ and $c \in {\bf C}(r)$, then $r \in R_V(c)$, $r \in R_U(c)$ and $D_U(c) \subseteq D_V(c)$.
\item $D_U(c) = D_V(c)$ if and only if $V = U$.
\end{enumerate}
\end{lemma}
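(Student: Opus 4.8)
The plan is to prove (a) by a wall-crossing argument, then obtain (b) by combining (a) with the elementary monotonicity of connected components under edge-deletion, and finally settle (c) by observing that $R_U(c)$ is completely determined by $D_U(c)$ and invoking Proposition~\ref{hee_prop}.

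For (a), set $T := U \cap S^W$. Since $c \in {\bf C}(r)$, there is a panel $P \in {\bf P}(r)$ with $c \in P$; as $r$ swaps the two chambers of every panel it stabilizes, we have $P = \{c, rc\}$. Because $r \in T$, every panel in ${\bf P}(r)$ is among the edges deleted in passing from $\Cay(W,S)$ to $\Gamma_U$, so $\Gamma_U$ is a subgraph of $(W, {\bf P} \setminus {\bf P}(r))$. By the very definition of the roots associated with $r$, the graph $(W, {\bf P} \setminus {\bf P}(r))$ has the two roots $H(r,c)$ and $H(r,rc)$ as its connected components, and $c, rc$ lie in distinct ones. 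Hence $c$ and $rc$ cannot be joined by a gallery in the sparser graph $\Gamma_U$, so $rc \notin D_U(c)$ while $c \in D_U(c)$; thus $|P \cap D_U(c)| = 1$ and $r \in R_U(c)$.

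For (b), the first assertion $r \in R_V(c)$ is just (a) applied to $V$, and the second assertion $r \in R_U(c)$ is (a) applied to $U$, using $r \in V \cap S^W \subseteq U \cap S^W$ (which holds since $V \subseteq U$). For the inclusion $D_U(c) \subseteq D_V(c)$, I would note that $V \subseteq U$ gives $T_V := V \cap S^W \subseteq U \cap S^W =: T_U$, whence $\cup_{t \in T_V} {\bf P}(t) \subseteq \cup_{t \in T_U} {\bf P}(t)$, and therefore $\Gamma_U$ is a subgraph of $\Gamma_V$. Consequently the connected component of $c$ in $\Gamma_U$ is contained in the connected component of $c$ in $\Gamma_V$, which is exactly $D_U(c) \subseteq D_V(c)$.

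For (c), the implication $V = U \Rightarrow D_U(c) = D_V(c)$ is immediate from Definition~\ref{def_DU(c)}. Conversely, suppose $D_U(c) = D_V(c)$. Since $R_U(c)$ and $R_V(c)$ are defined by one and the same formula applied to this common set, we obtain $R_U(c) = R_V(c)$, and then Proposition~\ref{hee_prop} yields $U = \langle R_U(c) \rangle_W = \langle R_V(c) \rangle_W = V$. The only genuinely nontrivial ingredient is the wall-crossing fact used in (a)---that the two roots of $r$ are precisely the connected components of $(W, {\bf P} \setminus {\bf P}(r))$---but this is built into the definition of roots recalled at the start of the subsection, so I expect no serious obstacle; the remainder is bookkeeping about subgraphs and their connected components.
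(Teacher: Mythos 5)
Your proposal is correct and follows essentially the same route as the paper: parts (b) and (c) are argued identically (monotonicity of $\Gamma_U$ under $V\subseteq U$, and $R_U(c)=R_V(c)$ plus Proposition~\ref{hee_prop}), and part (a) likewise reduces to showing $rc\notin D_U(c)$ for the panel $P=\{c,rc\}$. The only (harmless) divergence is in justifying that last point: the paper invokes the convexity of $D_U(c)$, whereas you use the standard wall-separation fact that $c$ and $rc$ lie in the two distinct roots associated with $r$, so that $\Gamma_U$, being a subgraph of $(W,{\bf P}\setminus{\bf P}(r))$, cannot connect them.
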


	\begin{proof} Let $P \in {\bf P}(r)$ be the unique panel contained in the wall of $r$ that contains $c$
and let $d = r(c)$. Then $P = \{ c,d \}$ and $d \notin D_U(c)$ because $D_U(c)$ is convex and
$P$ is not an edge of the graph $(W, {\bf P} \setminus (\cup_{t \in U \cap S^W} {\bf P}(t)))$.
It follows that $|P \cap D_U(c)| = 1$ and hence $r \in R_U(c)$. This yields item (a).

\smallskip
\noindent
Concerning item (b), let $r \in V \cap S^W$ and $c \in {\bf C}(r)$. Then it follows from item (a) that $r \in R_V(c)$.
As $V \subseteq U$, we have $S^W \cap V \subseteq S^W \cap U$. Thus $r \in U \cap S^W$ and again by
item (a) we have $r \in R_U(c)$. As $S^W \cap V \subseteq S^W \cap U$ we have
$\cup_{t \in V \cap S^W} {\bf P}(t) \subseteq \cup_{t \in U \cap S^W} {\bf P}(t)$ and
therefore $D_U(c) \subseteq D_V(c)$.

\smallskip
\noindent
Concerning item (c), if $V = U$, then we have $D_U(c) = D_V(c)$ by definition. For the other direction, suppose
$D_U(c) = D_V(c)$, then, again by definition, we have $R_U(c) = R_V(c)$ and
it follows from Proposition~\ref{hee_prop} \mbox{that $V = \langle R_V(c) \rangle_W = \langle R_U(c) \rangle_W = U$.}
\end{proof}

	\begin{proposition}\label{first_prop_self_sim} Let $\alpha$ be a self-similarity of $(W,S)$ and $U := \langle \alpha(S) \rangle_W$. Then:
\begin{enumerate}[(a)]
\item There exists a self-similarity
$\beta$ of $(W,S)$ such that $\langle \beta(S) \rangle_W = U$ and $\beta(S)$ is geometric.
\item If $\gamma$ is a self-similarity such that $\langle \gamma(S) \rangle_W = U$
and $\gamma(S)$ is geometric, then there exists $u \in U$ such that $\beta(S)^u = \gamma(S)$.
\end{enumerate}
\end{proposition}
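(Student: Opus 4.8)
The plan is to dispatch part (b) immediately and to build the self-similarity of part (a) by matching $\alpha(S)$ against a \emph{geometric} generating set of $U$ extracted from the H\'ee--Deodhar--Dyer machinery. For (b) there is nothing new to do: both $\beta(S)$ and $\gamma(S)$ are geometric sets of reflections with $\langle \beta(S)\rangle_W = U = \langle \gamma(S)\rangle_W$, so Proposition~\ref{prop_conjugate_geometric} applies verbatim and produces $u\in U$ with $\gamma(S)=\beta(S)^u$.

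For (a) I would first record the structural facts. Since $(W,S)$ is right-angled it is in particular even, so by Proposition~\ref{muller_fact} the map $\alpha\colon (W,S)\to(U,\alpha(S))$ is an isomorphism of Coxeter systems; thus $(U,\alpha(S))$ is a right-angled Coxeter system and $\alpha(S)$ is a Coxeter basis of $U$ made of reflections. Fixing a chamber $c$ and setting $R:=R_U(c)$, Proposition~\ref{hee_prop} gives that $R$ is geometric, $\langle R\rangle_W=U$, and $R^U=U\cap S^W$, while $(U,R)$ is a right-angled Coxeter system by Proposition~\ref{prop_mu_2.6}. As $\alpha(s)\in s^W\subseteq S^W$ and $\alpha(s)\in U$, each $\alpha(s)$ lies in $U\cap S^W=R^U$, hence is $U$-conjugate to a \emph{unique} $r_s\in R$, uniqueness holding because distinct elements of the even system $(U,R)$ are non-conjugate by Fact~\ref{fact_for_even}(D). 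The assignment $s\mapsto r_s$ is injective, since $r_s=r_t$ forces $\alpha(s),\alpha(t)$ to be $U$-conjugate, hence $s^W=t^W$, hence $s=t$ by Fact~\ref{fact_for_even}(D) for $(W,S)$; together with the bound $|R|\le|\alpha(S)|=|S|$ from the corollary following Proposition~\ref{hee_prop} this makes $s\mapsto r_s$ a bijection $S\to R$. Since $r_s\in\alpha(s)^W=s^W$, it defines a candidate $\beta$ with $\beta(s):=r_s$, for which $\beta(S)=R$ is geometric and generates $U$; so condition (1) of Definition~\ref{special_monoid} holds by construction.

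The main obstacle is condition (2), i.e.\ $o(\beta(s)\beta(t))=o(st)$ for $s\neq t$ — equivalently, that the conjugacy matching $s\mapsto r_s$ transports the commuting relation of $\alpha(S)$ to that of $R$. This is \emph{not} automatic (commuting is not preserved by passing to arbitrary conjugates of reflections), so the proof must exploit that $\alpha(S)$ and $R$ are right-angled Coxeter bases of the \emph{same} group $U$. The device I would use is the standard retraction of a right-angled Coxeter system onto a standard parabolic: for a Coxeter basis $B$ of $U$ and $J\subseteq B$, the map fixing $J$ and sending $B\setminus J$ to $e$ is a well-defined homomorphism $U\to\langle J\rangle_W$, because the only defining relations are $r^2=e$ and the commutations. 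If $o(\alpha(s)\alpha(t))=2$, apply the retraction attached to $R$ with $J=\{r_s,r_t\}$: it carries $\alpha(s),\alpha(t)$ to commuting reflections lying in the $\langle r_s,r_t\rangle$-classes of $r_s$ and $r_t$ respectively; were $o(r_sr_t)=\infty$, then $\langle r_s,r_t\rangle$ would be infinite dihedral, where $r_s\not\sim r_t$ (Fact~\ref{fact_for_even}(D)) and distinct reflections never commute, a contradiction. Hence $o(r_sr_t)=2$. The reverse implication is symmetric, applying the retraction attached to $\alpha(S)$ with $J=\{\alpha(s),\alpha(t)\}$ to the reflections $r_s,r_t$ inside the infinite dihedral group $\langle\alpha(s),\alpha(t)\rangle$. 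Thus $\beta$ preserves all orders, $\beta\in Sim(W,S)$, and $\beta(S)=R$ is the desired geometric self-similar set.

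The only point requiring care is the legitimacy of these retractions for the reflection subgroup $U$: one needs that reflection subgroups of a RACG are again right-angled, so that their presentations involve only $r^2$ and commutation relations. This holds because any dihedral reflection subgroup of a RACG is either $(\mathbb{Z}/2)^2$ or infinite dihedral, so the product of two distinct generating reflections has order $2$ or $\infty$; with this in hand the retraction argument is routine, and the remainder of (a) is bookkeeping with Fact~\ref{fact_for_even}(C),(D) and Propositions~\ref{hee_prop} and~\ref{prop_mu_2.6}.
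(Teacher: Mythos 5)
Your proof is correct, and part (b) is handled exactly as in the paper (a direct appeal to Proposition~\ref{prop_conjugate_geometric}). For part (a) the paper only says ``argue as in the proof of Proposition~\ref{muller_fact}'', and your construction of $\beta$ indeed follows that skeleton: take the geometric set $R=R_U(c)$ from Proposition~\ref{hee_prop}, use $R^U=U\cap S^W$ together with Fact~\ref{fact_for_even}(D) and the cardinality bound to get a conjugacy-matching bijection $s\mapsto r_s$, and then verify the order condition $(\star)$. Where you genuinely diverge is in the verification of $(\star)$: the paper's template (in Proposition~\ref{muller_fact}) runs the two reflections through Fact~\ref{fact_for_even}(B) twice, conjugating the finite dihedral subgroup $\langle\alpha(s),\alpha(t)\rangle$ first into a standard rank-two parabolic of $(U,R)$ and then into one of $(W,S)$, and closes with a sandwich of order inequalities; you instead exploit the right-angled presentation directly, using the retractions of $(U,R)$ and $(U,\alpha(S))$ onto their rank-two standard parabolics and the fact that distinct reflections in an infinite dihedral group never commute. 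Your device is more elementary and self-contained in the RACG setting (it avoids Fact~\ref{fact_for_even}(B) entirely), at the cost of being specific to presentations with only squaring and commutation relations, whereas the paper's Fact-(B) argument is the one that survives in the general even case of Proposition~\ref{muller_fact}. The supporting points you flag --- that $(U,R)$ and $(U,\alpha(S))$ are again right-angled because every dihedral reflection subgroup of a RACG is either $(\mathbb{Z}/2)^2$ or infinite dihedral, and that this forces $o(xy)\in\{1,2,\infty\}$ for reflections --- are exactly the right ones, so the argument goes through.
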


	\begin{proof} Item (a) can be proved by arguments that are similar to those given in the proof of Proposition~\ref{muller_fact}. Assertion (b) follows from (a) by Proposition~\ref{prop_conjugate_geometric}.
\end{proof}

	\begin{definition}\label{def_complexity_self_similarity} Let $\alpha$ be a self-similarity of $(W,S)$ and let $\beta$ be as in item (a)
of Proposition~\ref{first_prop_self_sim}. The {\em complexity of $\alpha$} is defined to be
the matrix $\Delta(\alpha) := (\dist(\beta(s),\beta(s'))_{s,s' \in S}$ (notice that this is well defined by (b) of Proposition~\ref{first_prop_self_sim}).
For any two square matrices $A = (a_{ss'})_{s,s' \in S}, B=(b_{ss'})_{s,s' \in S}$ with entries
in the natural numbers, we put $A \leq B$ if $a_{ss'} \leq b_{ss'}$ for all $s,s' \in S$;
we put $A < B$ if $A \leq B$ and if there exist $s,s' \in S$ such that $a_{ss'} < b_{ss'}$.
\end{definition}

	\begin{definition} A self-similarity of $(W,S)$ is called {\sl geometric} if $\alpha(S)$ is a geometric set of reflections.
\end{definition}

	\begin{lemma}\label{lemma10} Let $\alpha$ and $\beta$ be geometric self-similarities of $(W,S)$. Let also $U := \langle \alpha(S) \rangle_W$, $V:= \langle \beta(S) \rangle_W$ and suppose that $V \subseteq U$. Then:
\begin{enumerate}[(1)]
	\item $\Delta(\alpha) \leq \Delta(\beta)$;
	\item $\Delta(\alpha) = \Delta(\beta)$ iff $U = V$.
\end{enumerate}
\end{lemma}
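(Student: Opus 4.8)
The plan is to evaluate both complexities at a single common chamber and then reduce the matrix inequality to a statement about walls. First I would fix an arbitrary chamber $c$ and work with the canonical geometric generating sets $R_U(c)$ and $R_V(c)$ from Proposition~\ref{hee_prop}. Since $\alpha$ is already geometric, $\Delta(\alpha)_{s,s'}=\dist(\alpha(s),\alpha(s'))$ by the well-definedness noted in Definition~\ref{def_complexity_self_similarity}; and because $\alpha(S)$ and $R_U(c)$ are geometric sets of reflections generating the same group $U$, Proposition~\ref{prop_conjugate_geometric} (with Proposition~\ref{first_prop_self_sim}(b)) makes them $U$-conjugate, so — $W$ acting on $\Cay(W,S)$ by isometries and $\dist$ being defined through $\ell$ — all pairwise wall-distances are preserved and $\Delta(\alpha)_{s,s'}=\dist(r^U_s,r^U_{s'})$, where $r^U_s$ is the unique element of $R_U(c)$ in the class $s^W$. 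Uniqueness holds because a right-angled system is even, so $s^W\cap t^W=\emptyset$ for $s\neq t$ by Fact~\ref{fact_for_even}(D). The same applies to $\beta$, giving $\Delta(\beta)_{s,s'}=\dist(r^V_s,r^V_{s'})$ at the same $c$, while $V\subseteq U$ yields $D_U(c)\subseteq D_V(c)$ by Lemma~\ref{lemma_RU}(b).

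Next I would discard the trivial entries. As $\alpha,\beta$ are self-similarities, $o(\alpha(s)\alpha(s'))=o(ss')=o(\beta(s)\beta(s'))$, and whenever $o(ss')=2$ the two reflections commute, so their walls cross and the entry is $0$ on both sides. It therefore suffices to treat pairs $s\neq s'$ with $o(ss')=\infty$, and for these I would use the standard fact that $\dist(t,t')$ equals the number of walls strictly separating $\mathbf{C}(t)$ from $\mathbf{C}(t')$, namely the walls crossed by a minimal gallery between the nearest chambers on the two walls.

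The heart of the argument — and the step I expect to be the main obstacle — is the monotonicity claim: every wall separating $r^U_s$ from $r^U_{s'}$ also separates $r^V_s$ from $r^V_{s'}$, which gives $\Delta(\alpha)_{s,s'}\le\Delta(\beta)_{s,s'}$ and hence part (1). To prove it I would first observe $r^V_s\in V\cap S^W\subseteq U\cap S^W=(R_U(c))^U$ by Proposition~\ref{hee_prop}, so $r^V_s=u\,r^U_s\,u^{-1}$ for some $u\in U$ (the class $s^W$ being preserved), and likewise for $s'$; Lemma~\ref{lemma_2.6+}(b) then forces $\ell(c,\mathbf{C}(r^V_s))\ge\ell(c,\mathbf{C}(r^U_s))$, so the $V$-walls lie no nearer to $c$ than the $U$-walls. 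Using the coherent root family $(H_t)_{t\in R_U(c)}$ with $D_U(c)=\bigcap_t H_t$ from Proposition~\ref{prop_mu_2.6}(a), together with the convexity of $D_U(c)\subseteq D_V(c)$, I would upgrade this distance inequality to the nesting of half-spaces $H^+(r^U_s)\subseteq H^+(r^V_s)$ and $H^+(r^U_{s'})\subseteq H^+(r^V_{s'})$ (writing $H^+(t)$ for the root of $t$ containing $c$), whence the slab $H^+(r^U_s)\cap H^+(r^U_{s'})$ between the $U$-walls lies inside the slab between the $V$-walls and the separating-wall inclusion follows. The delicate point is exactly this upgrade: in rank $\ge 3$ a conjugate wall in the same class could a priori sit ``sideways'' rather than strictly beyond $r^U_s$, and ruling this out is where convexity of the two fundamental domains must be combined with root-coherence. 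Alternatively, one can bypass explicit nesting by folding a minimal gallery realizing $\dist(r^V_s,r^V_{s'})$ into the slab between the $U$-walls using reflections of $U$, exactly as in the proof of Lemma~\ref{lemma_2.6+}(a), so that its length cannot increase.

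Finally, for part (2): if $U=V$ then $D_U(c)=D_V(c)$ by Lemma~\ref{lemma_RU}(c), so $R_U(c)=R_V(c)$ and $\Delta(\alpha)=\Delta(\beta)$. Conversely, assuming $\Delta(\alpha)=\Delta(\beta)$, I would feed equality back into the monotonicity of the previous paragraph: for each pair with $o(ss')=\infty$ the two nested slabs carry the same number of internal separating walls, so the inclusion of separating-wall sets is an equality, which forces the bounding walls to coincide, $r^U_s=r^V_s$ and $r^U_{s'}=r^V_{s'}$ (the classes $s^W,(s')^W$ being disjoint). Any $s$ not occurring in such a pair commutes with every other generator, hence is central and satisfies $s^W=\{s\}$, so there $r^U_s=r^V_s=s$ automatically. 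Thus $r^U_s=r^V_s$ for all $s$, giving $R_U(c)=R_V(c)$, hence $D_U(c)=D_V(c)$ and $U=\langle R_U(c)\rangle_W=\langle R_V(c)\rangle_W=V$ by Lemma~\ref{lemma_RU}(c). Combining the two directions yields part (2).
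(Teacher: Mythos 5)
Your reduction to the canonical generating sets $R_U(c)$, $R_V(c)$ at a common chamber $c$ is fine (conjugation by a single $u\in U$ is an isometry of $\Cay(W,S)$, so it preserves all entries of the matrix), but the proof then stands or falls on the half-space nesting $H^+(r^U_s)\subseteq H^+(r^V_s)$, and this is precisely the step you do not prove. The distance inequality $\ell(c,{\bf C}(r^V_s))\geq\ell(c,{\bf C}(r^U_s))$ from Lemma~\ref{lemma_2.6+}(b) does not imply nesting: for a canonical generator $t\in R_U(c)$ and a general conjugate $t^u$ with $u\in U$ the analogous inclusion $H(t,c)\subseteq H(t^u,c)$ is simply false (already in the universal Coxeter group of rank $3$ with $c=e$, $t=a$, $u=b$: the chamber $bab$ lies in $H(a,e)$ but not in $H(bab,e)$), so any proof must exploit that $r^V_s$ is itself a canonical generator of the smaller subgroup $V$ at $c$ -- and that argument is exactly what is missing. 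You flag this yourself as ``the main obstacle'', and the fallback you offer (folding a minimal gallery between the two $V$-walls by reflections of $U$) is not ``exactly as in'' Lemma~\ref{lemma_2.6+}(a): there the gallery has a fixed endpoint $c\in D$, whereas here \emph{both} endpoints move under the piecewise folding, and you would still have to show that they land on the walls of $r^U_s$ and $r^U_{s'}$ respectively. Since part (2) is obtained by ``feeding equality back'' into the same mechanism, it inherits the gap; the identification $\dist(t,t')=$ number of separating walls and the slab-inclusion step are likewise only asserted.

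The paper avoids the entire issue by not working at a single chamber for all pairs. For each fixed pair $(s,s')$ it first conjugates by an element of $U$ so that $\alpha(s)=\beta(s)=:r$ (possible since $\beta(s)\in V\cap s^W\subseteq U\cap s^W=\alpha(s)^U$), then chooses $c$ to be the initial chamber of a minimal gallery from ${\bf C}(r)$ to ${\bf C}(\beta(s'))$, and normalizes $\alpha(S)=R_U(c)$, $\beta(S)=R_V(c)$ by conjugations fixing $r$ (uniqueness of the representative of $s^W$ in each geometric set makes $r$ a fixed point of these conjugations). Then
$\dist(\alpha(s),\alpha(s'))\leq\ell(c,{\bf C}(\alpha(s')))\leq\ell(c,{\bf C}(\beta(s')))=\dist(\beta(s),\beta(s'))$
is immediate from Lemma~\ref{lemma_2.6+}(b), with the equality case (c) giving $\alpha(s')=\beta(s')$ and hence part (2). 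No wall-counting or root nesting is needed. If you want to salvage your global picture you would need to prove the nesting of canonical half-spaces for $V\leq U$ as a separate lemma; as written, the argument is incomplete at its central step.
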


	\begin{proof} Note first that $\beta(s) \in V \cap s^W \subseteq U \cap s^W = \alpha(s)^U$
for all $s \in S$.
Let $s \in S$. By conjugating with an element in $U$, we can assume that $\alpha(s) = \beta(s)$, we denote this
reflection by $r$.
Let $s' \in S$, let $\gamma = (c_0,\ldots,c_k)$ be a minimal gallery from ${\bf C}(r)$ to ${\bf C}(\beta(s'))$
and put $c:=c_0$. We set $E:=D_V(c)$ and observe that $r \in R_V(c)$ by item (a) of Lemma~\ref{lemma_RU}.
As $R_V(c)$ and $\beta(S)$ are geometric and $\langle R_V(c) \rangle_W = \langle \beta(S) \rangle_W$, there exists $v \in V$
such that $\beta(S)^v = R_V(c)$ by Proposition~\ref{prop_conjugate_geometric}
and we have $r^v = r$ because $r$ is the only element in $\beta(S)$ and $R_V(c)$ that is in $s^W$.
Similarly, we have $\alpha(S)^u = R_U(c)$ and $r^u = r$ for some $u \in U$.
Thus, we may assume that $\alpha(S) = R_U(c)$ and $\beta(S) = R_V(c)$.
As $\beta(s') \in \alpha(s')^U$, by (b) of Lemma~\ref{lemma_2.6+}:
$$\ell(c,{\bf C}(\alpha(s'))) \leq \ell(c,{\bf C}(\beta(s'))) \leq \ell({\bf C}(\beta(s)), {\bf C}(\beta(s'))) = \dist(\beta(s),\beta(s'))$$
and that equality holds iff $\alpha(s') = \beta(s')$.

\smallskip
\noindent
As $\dist(\alpha(s),\alpha(s')) \leq \dist(c,\alpha(s'))$
this shows that $\Delta(\alpha) \leq \Delta(\beta)$ and that $U = V$ if equality holds.
That $U=V$ implies $\Delta(\alpha) = \Delta(\beta)$ follows from Proposition~\ref{prop_conjugate_geometric}.
\end{proof}

	\begin{lemma} Let $\alpha$ and $\beta$ be self-similarities of $(W,S)$. Then
$\Delta(\alpha \circ \beta) \geq \Delta(\alpha)$ and the inequality is strict if $\beta$ is not an automorphism of $W$.
\end{lemma}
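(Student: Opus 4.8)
The plan is to reduce the statement to Lemma~\ref{lemma10}, combined with the order-preservation and Hopfian properties recorded in Fact~\ref{fact_for_even}. The crucial preliminary observation is that a right-angled Coxeter system is in particular \emph{even} (its off-diagonal entries lie in $\{2,\infty\}$), so Proposition~\ref{muller_fact} applies to every self-similarity of $(W,S)$: each of $\alpha$ and $\beta$ restricts to an \emph{isomorphism} onto its image $\langle\alpha(S)\rangle_W$, respectively $\langle\beta(S)\rangle_W$. In particular $\alpha$ and $\beta$ are injective and preserve the order of every element of $W$.

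First I would check that $\alpha\circ\beta$ is itself a self-similarity. For $s\in S$, writing $\beta(s)=wsw^{-1}$ and $\alpha(s)=vsv^{-1}$ one gets $\alpha(\beta(s))=\alpha(w)vsv^{-1}\alpha(w)^{-1}\in s^W$, which gives condition (1) of Definition~\ref{special_monoid}. For condition (2), since $\alpha$ is an isomorphism onto $\langle\alpha(S)\rangle_W$ it preserves orders, so $o(\alpha(\beta(s))\alpha(\beta(t)))=o(\alpha(\beta(s)\beta(t)))=o(\beta(s)\beta(t))=o(st)$, where the last equality uses that $\beta$ is a self-similarity. Hence $\alpha\circ\beta\in Sim(W,S)$ and $\Delta(\alpha\circ\beta)$ is defined.

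Next, set $U:=\langle\alpha(S)\rangle_W$ and $V:=\langle(\alpha\circ\beta)(S)\rangle_W$. Since $\alpha$ is a homomorphism we have $V=\langle\alpha(\beta(S))\rangle_W=\alpha(\langle\beta(S)\rangle_W)$, and as $\langle\beta(S)\rangle_W\leq W$ this yields $V=\alpha(\langle\beta(S)\rangle_W)\subseteq\alpha(W)=U$. By Proposition~\ref{first_prop_self_sim}(a) I would pass to geometric self-similarities representing these same images: a geometric $\alpha'$ with $\langle\alpha'(S)\rangle_W=U$ and a geometric $\gamma'$ with $\langle\gamma'(S)\rangle_W=V$, noting that by Definition~\ref{def_complexity_self_similarity} one has $\Delta(\alpha')=\Delta(\alpha)$ and $\Delta(\gamma')=\Delta(\alpha\circ\beta)$. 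Applying Lemma~\ref{lemma10} with $\alpha'$ in the role of its first self-similarity (image $U$) and $\gamma'$ in the role of its second (image $V$), and using the containment $V\subseteq U$, gives $\Delta(\alpha)=\Delta(\alpha')\leq\Delta(\gamma')=\Delta(\alpha\circ\beta)$, together with the criterion that equality holds if and only if $U=V$.

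Finally I would identify when equality occurs. By injectivity of $\alpha$, the equality $U=V$, i.e. $\alpha(W)=\alpha(\langle\beta(S)\rangle_W)$, holds if and only if $\langle\beta(S)\rangle_W=W$, that is, if and only if $\beta$ is surjective; and a surjective endomorphism of $W$ is an automorphism because $W$ is Hopfian (Fact~\ref{fact_for_even}(A)). Hence $\Delta(\alpha\circ\beta)=\Delta(\alpha)$ precisely when $\beta$ is an automorphism, which is exactly the strict-inequality claim. I expect the only genuinely delicate point to be the bookkeeping around the geometric representatives — ensuring that the containment $V\subseteq U$ is preserved when passing to $\alpha'$ and $\gamma'$ so that Lemma~\ref{lemma10} applies verbatim; everything else is a direct consequence of the isomorphism and Hopfian properties already established.
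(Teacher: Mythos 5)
Your proposal is correct and follows essentially the same route as the paper: introduce $U=\langle\alpha(S)\rangle_W$ and $V=\langle\alpha(\beta(S))\rangle_W$, observe $V\subseteq U$, pass to geometric representatives via Proposition~\ref{first_prop_self_sim} (which generate the \emph{same} subgroups $U$ and $V$, so the containment is automatic rather than delicate), apply Lemma~\ref{lemma10}, and conclude that equality forces $U=V$, which by injectivity of self-similarities (equivalently, your Hopfian argument) happens only when $\beta$ is an automorphism. The extra verification that $\alpha\circ\beta\in Sim(W,S)$ is a harmless addition the paper leaves implicit.
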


	\begin{proof} Let $U := \langle \alpha(S) \rangle_W$ and $V := \langle \alpha(\beta(S)) \rangle_W$. Then $V \subseteq U$.
Let $R$ be a geometric set of reflections such that $\langle R \rangle_W = U$
and let $T$ be a geometric set of reflections such that $\langle T \rangle_W = V$.
Thus we have unique self-similarities $\alpha',\gamma$ of $(W,S)$ such that $R = \alpha'(S)$
and $\gamma(S) = T$. By definition we have $\Delta(\alpha) = \Delta(\alpha')$ and
$\Delta(\alpha \circ \beta) = \Delta(\gamma)$. As $V \subseteq U$, it follows by Lemma~\ref{lemma10} that $\Delta(\gamma) \geq \Delta(\alpha')$ and that equality holds if and only if $U = V$.
This shows that $\Delta(\alpha \circ \beta) \geq \Delta(\alpha)$ and that
equality holds if and only if $U = V$. Suppose now that $\beta$ is not an automorphism. Since each self-similarity is injective,
it follows that $\beta(W) \neq W$. As $\alpha$ is also injective, it follows
\mbox{that $V = \alpha(\beta(W)) \neq \alpha(W) = U$, and so we are done.}
\end{proof}

	\begin{proposition}\label{lemma_increasing_subgroups} Let $(W, S)$ be a right-angled Coxeter system of finite rank. For every $S$-self-similar subgroup $U$  of $W$ there exists $n < \omega$ such that if $U = V_0 < \cdots < V_\alpha = W$ is a proper chain of $S$-self-similar subgroups of $W$, then $\alpha \leq n$.
\end{proposition}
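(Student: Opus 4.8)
The plan is to use the complexity matrix $\Delta$ from Definition~\ref{def_complexity_self_similarity} as a monotone, bounded invariant along the chain, thereby reducing the statement to a finiteness count. First I would record that the complexity depends only on the subgroup generated: for any $S$-self-similar subgroup $V = \langle \alpha(S) \rangle_W$ I write $\Delta(V) := \Delta(\alpha)$, which is well defined by Proposition~\ref{first_prop_self_sim}(b). By Proposition~\ref{first_prop_self_sim}(a) each such $V$ is generated by a \emph{geometric} self-similarity $\beta_V$, and then $\Delta(V) = (\dist(\beta_V(s),\beta_V(s')))_{s,s' \in S}$ is an $S \times S$ matrix whose entries are finite non-negative integers (the Cayley graph is connected, so all the distances $\dist(\cdot,\cdot)$ are finite).

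Given a proper chain $U = V_0 < V_1 < \cdots < V_\alpha = W$ of $S$-self-similar subgroups, I would choose geometric self-similarities $\beta_i$ with $\langle \beta_i(S) \rangle_W = V_i$ for each $i$, as above. For each $i < \alpha$ we have $V_i \subseteq V_{i+1}$ with $V_i \neq V_{i+1}$, so Lemma~\ref{lemma10}, applied to the geometric self-similarities $\beta_{i+1}$ and $\beta_i$ (the former generating the \emph{larger} group), yields $\Delta(V_{i+1}) \leq \Delta(V_i)$ with equality excluded, i.e. $\Delta(V_{i+1}) < \Delta(V_i)$ in the entrywise order of Definition~\ref{def_complexity_self_similarity}. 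Hence $\Delta(U) = \Delta(V_0) > \Delta(V_1) > \cdots > \Delta(V_\alpha) = \Delta(W)$ is a strictly decreasing chain of matrices with non-negative integer entries.

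Finally I would count: at each of the $\alpha$ steps at least one entry drops by a positive integer while no entry increases, so the sum of all entries strictly decreases at each step. Setting $n := \sum_{s,s' \in S} \Delta(U)_{s,s'}$ --- a finite number determined by $U$ alone --- and using that the entries of $\Delta(W)$ are non-negative, I obtain $\alpha \leq \sum_{s,s'} \Delta(U)_{s,s'} - \sum_{s,s'} \Delta(W)_{s,s'} \leq n$, which is exactly the desired bound. The conceptual crux is entirely contained in Lemma~\ref{lemma10} (strict monotonicity of complexity under proper inclusion of self-similar subgroups), which is already available; what remains here is only the bookkeeping. The one point that must be handled with care is the \emph{direction} of monotonicity --- a larger self-similar subgroup has smaller complexity --- so that the chain, read from $U$ up to $W$, gives a strictly decreasing sequence of matrices bounded below by $0$, and hence of length controlled by the fixed matrix $\Delta(U)$.
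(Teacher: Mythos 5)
Your proof is correct and is essentially the paper's own argument: the paper's proof consists of the single observation that the complexity matrix of Definition~\ref{def_complexity_self_similarity} strictly decreases along such a chain, and you have simply filled in the bookkeeping (well-definedness of $\Delta$ on subgroups via Proposition~\ref{first_prop_self_sim}(b), strict entrywise decrease via Lemma~\ref{lemma10}, and the bound $n = \sum_{s,s'}\Delta(U)_{s,s'}$, which is the same count as in Corollary~\ref{product_of_indecomposables_pre}). You also get the direction of monotonicity right --- larger subgroup, smaller complexity --- which is the one point where one could slip.
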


	\begin{proof} This follows from the fact that the complexity (in the sense of Definition~\ref{def_complexity_self_similarity}) of the corresponding self-similarities decreases strictly along such a chain.
\end{proof}

	\begin{definition} Let $\alpha \in Sim(W, S)$. We say that $\alpha$ is {\em proper} if $\alpha \notin Aut(W)$. We say that $\alpha$ is {\em decomposable} if there are proper $\beta, \gamma \in Sim(W, S)$ such that $\alpha = \beta \circ \gamma$. Finally, $\alpha$ is called irreducible if it is proper and not decomposable.
\end{definition}

	\begin{corollary}\label{product_of_indecomposables_pre} Let $f_1, ..., f_k$ be proper self-similarities, $\alpha = f_1 \circ \cdots \circ f_k$, and let $\Delta(\alpha) = (d_{ss'})_{s, s' \in S}$ be the matrix associated to $\alpha$ from Definition~\ref{def_complexity_self_similarity}, then:
$$k \leq \sum_{s, s' \in S} d_{ss'}.$$
\end{corollary}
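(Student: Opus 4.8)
The plan is to read off the corollary as a telescoping consequence of the lemma immediately preceding it, namely that for self-similarities $\alpha,\beta$ one has $\Delta(\alpha \circ \beta) \geq \Delta(\alpha)$, with strict inequality whenever $\beta$ is proper. Together with the trivial remark that every entry of a complexity matrix is a non-negative integer, this monotonicity is really all that is needed; the bound will come from counting how many times a strictly increasing chain of non-negative integers can grow.

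First I would introduce the partial products $h_0 := \mathrm{id}_W$ and $h_j := f_1 \circ \cdots \circ f_j$ for $1 \leq j \leq k$, so that $h_k = \alpha$ and $h_j = h_{j-1} \circ f_j$. Since $Sim(W,S)$ is a monoid (Definition~\ref{special_monoid}), each $h_j$ is again a self-similarity, so its complexity $\Delta(h_j)$ is well defined by Definition~\ref{def_complexity_self_similarity}. Applying the preceding lemma with outer factor $h_{j-1}$ and inner factor $f_j$, and using that each $f_j$ is proper, I obtain the strict chain
$$\Delta(h_0) < \Delta(h_1) < \cdots < \Delta(h_k) = \Delta(\alpha)$$
in the partial order on integer matrices of Definition~\ref{def_complexity_self_similarity}.

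The second step passes from the matrix order to the sum of entries. By definition each entry of a complexity matrix is a distance $\dist(\cdot,\cdot) = \ell(\mathbf{C}(\cdot),\mathbf{C}(\cdot))$, hence a non-negative integer, and $A < B$ in the matrix order forces $\sum_{s,s'} a_{ss'} < \sum_{s,s'} b_{ss'}$, the gap being at least $1$ because the entries are integers. Setting $N_j := \sum_{s,s' \in S} \Delta(h_j)_{ss'}$, the chain above yields a strictly increasing sequence of non-negative integers $N_0 < N_1 < \cdots < N_k$ with $N_0 \geq 0$. As there are $k$ strict increases, $N_k \geq N_0 + k \geq k$, which is exactly the desired inequality $k \leq \sum_{s,s' \in S} d_{ss'}$ since $N_k = \sum_{s,s'} d_{ss'}$.

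I do not anticipate a real obstacle here, since the hard work is already packaged in the preceding lemma. The only two points deserving a line of care are that $\mathrm{id}_W$ is a legitimate self-similarity (so the chain may be anchored at $j=0$ and the first strict inequality $\Delta(h_0) < \Delta(h_1)$ follows from the lemma applied to $\mathrm{id}_W \circ f_1$, using that $f_1$ is proper), and that the strict matrix order genuinely raises the integer sum by at least one at each step. One could instead anchor the chain at $h_1 = f_1$ and invoke $\Delta(f_1) > 0$, but this itself reduces to the lemma applied to $\mathrm{id}_W \circ f_1$, so anchoring at the identity is the cleaner route.
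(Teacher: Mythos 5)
Your argument is correct and is exactly the one the paper intends: the corollary is stated without proof immediately after the lemma $\Delta(\alpha\circ\beta)\geq\Delta(\alpha)$ (strict when $\beta$ is proper), and iterating that lemma along the partial products, then converting the strict matrix inequalities into unit increments of the integer entry-sum, is the whole content. The only detail you add beyond the paper is anchoring the chain at $\mathrm{id}_W$ (a legitimate self-similarity with complexity matrix zero), which is fine.
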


\subsection{Word Combinatorics for RACGs}

\begin{definition} Let $(W, S)$ be a right-angled Coxeter system.
	\begin{enumerate}[(1)]
	\item A word $w$ in the alphabet $S$ is a sequence $(s_1, ..., s_k)$ \mbox{with $s_i \neq s_{i+1} \in S$, $i \in [1, k)$.}
	\item We denote words simply as $s_1 \cdots s_k$ instead of $(s_1, ..., s_k)$.
	\item We call each $s_i$ a syllable of the word $s_1 \cdots s_k$.
	\item We say that the word $s_1 \cdots s_k$ spells the element $g \in W$ if $W \models g = s_1 \cdots s_k$.
	\item By convention, the empty word spells the identity element $e$.
	\item The length $\ell(w)$ of the word $w = s_1 \cdots s_n$ is the natural number $n$ (so, in particular, the length of the empty word is $0$).
	\end{enumerate}
\end{definition}

\begin{definition} Let $(W, S)$ be a right-angled Coxeter system.
	\begin{enumerate}[(1)]

	\item We say that the word $w$ is reduced if there is no word with fewer syllables which spells the same element of $W$.
	\item We say that the word $w$ is a normal form for $g \in W$ if $w$ spells $g$ and $w$ is reduced.
	\item We say that the word $w = s_1 \cdots s_k$ is cyclically reduced if $w = e$ or $s_1 \neq s_k$.
	\item We say that $g \in W$ is cyclically reduced if $g$ is spelled by a cyclically reduced word.
\end{enumerate}
\end{definition}

	Notice that the definition of $\ell_S$ from Definition~\ref{reflection_length} is consistent with the following definition of $\ell_S$ (i.e. the one in Definition~\ref{def_length}(ii)).

	\begin{definition}\label{def_length} Let $(W_{\Gamma}, S)$ be a right-angled Coxeter system and let $g \in W_\Gamma$ (so $\Gamma = (S, E_\Gamma)$ is the corresponding Coxeter graph). We define:
	\begin{enumerate}[(1)]
	\item $sp_S(g) = sp(g) = \{ s \in S : s \text{ is a syllable of a normal form for } g \}$;
	\item $\ell_S(g) = \ell(g)= |w|$, for $w$ a normal form for $g$;
	\item $lk_S(g) = lk(g) = \{ s \in S : s E_{\Gamma} t \text{ for every } t \in sp(g) \}$.
\end{enumerate}
\end{definition}



	The following notation is justfied by Fact~\ref{pre_centr_fact}, which is stated soon after.

	\begin{notation}\label{roots_notation} Let $W$ be an irreducible right-angled Coxeter group and $g \in W$.
\begin{enumerate}[(1)]
	\item We denote by $o(g)$ the order of $g$.
	\item If $o(g)$ is infinite, write $g = r^n$ with $1 \leq n < \omega$ maximal.
	\item If $o(g)$ is finite, write $g = r^n$ with $1 \leq n < o(g)$ maximal.
	\item We let $\sqrt{g} = r$, and we call $r$ a root of $g$.
\end{enumerate}
\end{notation}

	\begin{fact}[{\cite[Theorem 3.2]{corr}}]\label{pre_centr_fact} Let $W$ be an irreducible right-angled Coxeter group. Then roots are unique, and so Notation~\ref{roots_notation} is well-defined.
\end{fact}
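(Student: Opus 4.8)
The plan is to prove the statement for elements of infinite order, the finite order case being immediate: every nontrivial finite order element of a right-angled Coxeter group lies in a finite special parabolic subgroup, which (being a finite right-angled Coxeter group) is an elementary abelian $2$-group, so it has order $2$; thus in Notation~\ref{roots_notation}(3) the only admissible exponent is $n=1$ and $\sqrt{g}=g$. I would then reduce the infinite order case to \emph{cyclically reduced} elements. Since conjugation by any $w\in W$ is an automorphism preserving order and the relation $g=r^{n}$, the map $r\mapsto wrw^{-1}$ is a bijection between the roots of $g$ and those of $wgw^{-1}$ preserving the maximal exponent $n$; hence it is enough to prove uniqueness for one cyclically reduced representative of each conjugacy class, and uniqueness for $g$ then follows even though the root itself need not be cyclically reduced.

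So fix a cyclically reduced $g$ of infinite order and work inside the special parabolic $\langle sp_{S}(g)\rangle_{W}$ on its support (Definition~\ref{def_length}). Using the canonical decomposition of a special parabolic into its irreducible components, write $\langle sp_{S}(g)\rangle_{W}=P_{1}\times\cdots\times P_{m}$ with $g=g_{1}\cdots g_{m}$, the $g_{i}\in P_{i}$ pairwise commuting. Any root $r$ of $g$ centralizes $g$, and for cyclically reduced $g$ one has the centralizer description $C_{W}(g)=\langle\sqrt{g}\rangle\times\langle lk_{S}(g)\rangle_{W}$, which localizes $r$ inside $\langle sp_{S}(g)\cup lk_{S}(g)\rangle_{W}$; consistently with Lemma~\ref{parabclosure}(c) this places $r$ in (a parabolic closely tied to) $Pc_{S}(g)$. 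Accordingly I would write $r=r_{1}\cdots r_{m}\cdot z$ with $r_{i}^{n}=g_{i}$ in each factor and $z\in\langle lk_{S}(g)\rangle_{W}$, reducing the problem to a factorwise statement plus the control of the link contribution $z$.

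For an infinite order factor $g_{i}$ inside the \emph{irreducible} parabolic $P_{i}$ I would pin down $r_{i}$ by a translation-length argument on the Davis complex, on which $P_{i}$ acts properly and cocompactly by isometries of a CAT(0) space: $g_{i}$ is hyperbolic with positive translation length $\tau$, every root $r_{i}$ is again hyperbolic with the same axis and translation length $\tau/n$, and translation along the axis is a homomorphism on $C_{P_{i}}(g_{i})$; two roots therefore differ by an element translating trivially along the axis. Here I would invoke the triviality of the center of an irreducible infinite Coxeter group, together with the Hopfian/residual finiteness properties recorded in Fact~\ref{fact_for_even}(A), to conclude that such a difference is trivial, so that $r_{i}$ is unique; a purely combinatorial alternative is to use that a cyclically reduced infinite order element of irreducible support is \emph{straight} and admits a well-defined primitive cyclic word, whose uniqueness directly yields uniqueness of $r_{i}$.

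The main obstacle is exactly the link contribution $z\in\langle lk_{S}(g)\rangle_{W}$: a generator of $lk_{S}(g)$ is an involution commuting with $g$, and multiplying a root by such an involution does not change an even power, so a priori $\sqrt{g}$ is well-defined only modulo $\langle lk_{S}(g)\rangle_{W}$ (and likewise any spherical direct factor $P_{i}$ in the support decomposition produces spurious roots). The crux of the argument is to eliminate this ambiguity, and this is precisely where the irreducibility hypothesis must do its work: one has to show, using irreducibility (hence centerlessness) together with the rigidity of the decomposition $g=g_{1}\cdots g_{m}$ detected by the abelianization of Fact~\ref{abelianization_fact}, that $lk_{S}(g)$ and the spherical factors contribute nothing for the elements at hand. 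I expect this bookkeeping over the link and the finite factors, rather than the geometry of a single infinite order factor, to be the delicate part of the proof.
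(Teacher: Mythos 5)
The paper does not actually prove this statement: it is quoted from \cite[Theorem~3.2]{corr}, so there is no internal argument to compare yours with, and your attempt has to be judged on its own terms. Your overall architecture (conjugate to a cyclically reduced element, split $\langle sp_S(g)\rangle_W$ into irreducible factors, treat the infinite-order factors separately, then control the link) is indeed the skeleton of the argument in \cite{corr} and in Barkauskas \cite{bark}. But there are two problems. First, a circularity: Fact~\ref{centralizer} is the other half of the very same Theorem~3.2 of \cite{corr}, and its statement already involves $\sqrt{g}$; to avoid assuming what you are proving you may only use the weaker containment $C_W(g)\leq\langle sp_S(g)\cup lk_S(g)\rangle_W$, which itself requires proof. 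Similarly, in your treatment of an infinite-order factor $g_i$, knowing that two roots differ by an element of zero translation length along the axis does not finish the argument --- such an element need not be trivial --- and neither centerlessness nor Hopficity closes this gap; what is really needed is that $C_{P_i}(g_i)$ is infinite cyclic, which is again the centralizer theorem for the pure factor.

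Second, and fatally, the step you yourself single out as the crux --- showing that $lk_S(g)$ ``contributes nothing'' --- cannot be carried out, because irreducibility of $W$ does not make the link ambiguity vanish. Take $S=\{a,c,s,d\}$ with $\Gamma$-edges $\{a,s\}$ and $\{c,s\}$ only; the complement of $\Gamma$ is connected, so $W_\Gamma$ is irreducible. For $g=(ac)^2$ the maximal exponent is $n=2$, yet both $r=ac$ and $r'=acs$ satisfy $r^2=r'^2=g$ (since $s$ commutes with $a$ and with $c$), so $g$ has two distinct maximal-exponent roots. Hence the literal reading of Notation~\ref{roots_notation} together with this Fact is not provable as stated; what \cite[Theorem~3.2]{corr} actually gives is uniqueness of the root of each non-spherical \emph{pure factor} $g_i$ --- whose support is an irreducible, non-complete subgraph, so that the link taken \emph{inside} $\langle sp_S(g_i)\rangle_W$ is empty --- and $\sqrt{g}$ is then the canonical element assembled from these factor roots: a distinguished choice, not the unique element $r$ with $g=r^n$ and $n$ maximal. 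Any correct write-up must prove uniqueness at the level of the pure factors and then \emph{define} $\sqrt{g}$ from the decomposition, rather than hope to exclude the link. Your finite-order case is fine modulo the slip that a torsion element is only \emph{conjugate into} a finite special parabolic subgroup; Fact~\ref{finite_order} gives order~$2$ directly.
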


	Let $W$ be a right-angled Coxeter group of finite rank. Since $W$ decomposes as the direct product of its irreducible components and for $k = hgh^{-1} \in W$, with $g$ cyclically reduced, we have that the centralizer $C_W(k) = hC_W(g)h^{-1}$, in the next theorem we can assume w.l.o.g. that $W$ is irreducible and $g$ is cyclically reduced.

	\begin{fact}[Centralizer Theorem {\cite[Theorem 3.2]{corr}} and \cite{bark}]\label{centralizer} Let $(W, S)$ be an irreducible right-angled Coxeter system. Let $g \in W$ be a cyclically reduced element. Then the centralizer $C_W(g)$ of $g$ in $W$ is $\langle \sqrt{g} \rangle_W \times \langle lk(g)\rangle_W$.
\end{fact}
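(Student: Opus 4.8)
The plan is to prove the two inclusions separately; the inclusion ``$\supseteq$'' is routine and ``$\subseteq$'' carries all the content. Throughout I write $r = \sqrt{g}$ and $J = sp(g)$, and I first record the structural facts that make the statement sensible: since $g$ is cyclically reduced so is its root $r$, whence $sp(r) = sp(g) = J$; and directly from the definition of $lk(g)$ one has $J \cap lk(g) = \emptyset$, with every element of $lk(g)$ commuting with every syllable of $g$ and hence with both $r$ and $g$.

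For ``$\supseteq$'', each element of $\langle lk(g)\rangle_W$ commutes with every syllable of $g$ and so with $g$, while $r$ commutes with $g = r^{n}$; thus $\langle r \rangle_W$ and $\langle lk(g)\rangle_W$ both centralize $g$ and commute elementwise. Since $\langle r\rangle_W \leq \langle J \rangle_W$, Fact~\ref{basicsonJ}(b) gives $\langle J\rangle_W \cap \langle lk(g)\rangle_W = \langle J \cap lk(g)\rangle_W = \{e\}$, so their product is an internal direct product $\langle r\rangle_W \times \langle lk(g)\rangle_W \leq C_W(g)$.

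For ``$\subseteq$'', let $h \in C_W(g)$. First I localize $h$ to a parabolic: for $g$ cyclically reduced the $S$-parabolic closure is the special parabolic $Pc_S(g) = \langle J\rangle_W$, so Lemma~\ref{parabclosure}(c) gives $h \in C_W(g) \leq N_W(\langle J\rangle_W)$. I would then use Fact~\ref{basicsonJ}(c) to control this normalizer: a generator of $S \setminus J$ contributing to a normalizing element must commute with all of $J$, i.e. lie in $lk(g)$, so one may split $h = p\cdot c$ with $p \in \langle J\rangle_W$ and $c \in \langle lk(g)\rangle_W$. As $c$ already centralizes $g$, the element $p = hc^{-1}$ lies in $C_{\langle J\rangle_W}(g)$, and the whole statement reduces to proving $C_{\langle J\rangle_W}(g) = \langle r\rangle_W$, i.e. to the case where $g$ has full support.

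This base case is the heart of the matter and the step I expect to be the main obstacle. Here $g$ is an infinite-order element with full support in the irreducible right-angled Coxeter group $\langle J \rangle_W$, so no generator is orthogonal to all of $g$ and $g$ behaves as a genuine translation. Combinatorially I would argue that a conjugator $p$ of $g$ must carry the bi-infinite periodic word $\cdots r\,r\,r\cdots$ to itself, and hence, after replacing $p$ by a coset representative of minimal $S$-length modulo $\langle g\rangle_W$, realizes a shift of that word by a whole number of periods; the uniqueness of roots in Fact~\ref{pre_centr_fact} (which is exactly where irreducibility enters) then forces $p \in \langle r\rangle_W$. A cleaner way to make the shift argument rigorous is geometric: $\langle J\rangle_W$ acts properly and cocompactly by isometries on its $\mathrm{CAT}(0)$ Davis complex, $g$ is a hyperbolic isometry, and $C(g)$ preserves $\mathrm{Min}(g)$; by the Flat Torus Theorem $\mathrm{Min}(g)$ splits isometrically as $\mathbb{R} \times Y$, and because nothing is orthogonal to the full support the factor $Y$ is bounded, forcing $C(g)$ to embed in the isometry group of a line and thus (with root-uniqueness pinning the generator) to be the infinite cyclic group $\langle r\rangle_W$. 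Combining this base case with the reduction yields $h \in \langle r\rangle_W \times \langle lk(g)\rangle_W$. Finally, a cyclically reduced element of finite order lies in a spherical (clique) parabolic, where the centralizer is computed by a direct calculation, so the infinite-order analysis above is the substantive one.
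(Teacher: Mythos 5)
The paper does not actually prove this statement: it is imported verbatim as a Fact, with the proof delegated to \cite{corr} and \cite{bark}, so there is no in-paper argument to measure yours against and your attempt has to stand on its own. The outer layers of what you propose are reasonable for RACGs --- the inclusion ``$\supseteq$'', the identification $Pc_S(g)=\langle sp(g)\rangle_W$ for cyclically reduced $g$, and the passage $C_W(g)\leq N_W(\langle J\rangle_W)$ via Lemma~\ref{parabclosure}(c) --- although note that Fact~\ref{basicsonJ}(c) only controls single generators $t\in S-J$ that normalize $\langle J\rangle_W$, not arbitrary normalizing elements, so the splitting $N_W(\langle J\rangle_W)=\langle J\rangle_W\times\langle lk(g)\rangle_W$ still needs a genuine (known, but not one-line) argument.

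The real gap is your base case, and it is not a missing detail but a false step: the claim $C_{\langle J\rangle_W}(g)=\langle\sqrt{g}\rangle_W$ for $g$ of full support $J$ fails whenever the non-commutation graph induced on $J$ is disconnected, and irreducibility of $(W,S)$ does \emph{not} pass to $(\langle J\rangle_W,J)$. Concretely, take $S=\{a,b,c,d,e\}$ with $\Gamma$-edges exactly $ac,ad,bc,bd$; the complement of $\Gamma$ is connected, so $W$ is irreducible, and $g=abcd$ is reduced and cyclically reduced with $lk(g)=\emptyset$, yet $ab$ and $cd$ commute with each other and with $g$, so $C_W(g)\supseteq\langle ab\rangle_W\times\langle cd\rangle_W\cong\mathbb{Z}^2$, which cannot equal the cyclic group $\langle\sqrt{g}\rangle_W$. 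Your Flat Torus argument breaks exactly here: $\mathrm{Min}(g)$ contains a $2$-flat and the factor $Y$ is unbounded; ``no generator is orthogonal to all of $sp(g)$'' does not prevent $sp(g)$ from splitting into two mutually commuting blocks. (The finite-order case has the same defect: with edges only $ab$ on $S=\{a,b,c\}$, the involution $g=ab$ is cyclically reduced with $lk(g)=\emptyset$ and $\sqrt{g}=g$, but $a\in C_W(g)\setminus\langle g\rangle_W$.) This is in fact a flaw in the statement as transcribed in the paper: the theorems of Barkauskas and Corredor--Guti\'errez first decompose a cyclically reduced $g$ into pure factors $g_1\cdots g_k$ along the connected components of the non-commutation graph on $sp(g)$ and conclude $C_W(g)=\langle\sqrt{g_1}\rangle_W\times\cdots\times\langle\sqrt{g_k}\rangle_W\times\langle lk(g)\rangle_W$; the displayed formula is the case $k=1$. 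Any correct proof must either assume $k=1$ (equivalently, that $\langle sp(g)\rangle_W$ is irreducible) or carry this decomposition through; your argument, which tries to reach the $k=1$ conclusion unconditionally, cannot be repaired without it.
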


\begin{fact}[Finite Order Theorem {\cite[Proposition 1.2]{casal1}}]\label{finite_order} Let $(W, S)$ be a right-angled Coxeter system and $k \in W$. Then $k$ has finite order if and only if $k$ has order $2$ if and only if $k = hgh^{-1}$, with $g$ cyclically reduced and $sp(g)$ inducing a clique of $\Gamma$ (i.e. for every $s, t \in sp(g)$ we have $sE_\Gamma t$).
\end{fact}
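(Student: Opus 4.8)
The plan is to establish the two substantive implications of the chain, namely that a cyclically reduced element with clique support has order at most two, and conversely that every finite-order element is conjugate to such an element; the remaining implication (\emph{order} $2$ $\Rightarrow$ \emph{finite order}) is immediate, and the identity is the degenerate case, with $sp(e) = \emptyset$ vacuously inducing a clique. Throughout I would exploit that the order of an element, and each of the three conditions, is invariant under the replacement $k \leftrightarrow hgh^{-1}$, so that it suffices to pin down one convenient representative of the conjugacy class of $k$.

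First I would prove that if $g$ is cyclically reduced and $sp(g)$ induces a clique of $\Gamma$, then $g^2 = e$. Indeed, since $sp(g)$ is a clique, every two syllables of $g$ commute, so any two occurrences of a single generator $s$ in a normal form of $g$ could be slid together through the intervening (commuting) letters and cancelled via $s^2 = e$, contradicting reducedness; hence no generator occurs twice. Thus $g = s_{i_1} \cdots s_{i_m}$ is a product of \emph{distinct}, pairwise commuting involutions, whence $g^2 = e$ and $g$ (and so any conjugate $k = hgh^{-1}$) has order at most two.

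For the converse I would route the argument through finite parabolic subgroups. If $k$ has finite order then $\langle k \rangle_W$ is finite, so by Lemma~\ref{parabclosure}(b) the parabolic closure $Pc_S(k)$ is a finite $S$-parabolic subgroup $w\langle J \rangle_W w^{-1}$ with $J \subseteq S$ spherical. In the right-angled setting $J$ is spherical if and only if it induces a clique: a non-edge $\{s,t\} \subseteq J$ would embed $\langle s,t\rangle_W \cong D_\infty$ in $\langle J \rangle_W$, while a clique yields $\langle J \rangle_W \cong (\mathbb{Z}/2\mathbb{Z})^{|J|}$. Setting $g := w^{-1} k w \in \langle J \rangle_W$, the inclusion of the special parabolic $(\langle J\rangle_W, J)$ into $(W,S)$ is $\ell_S$-preserving (a standard fact, with $(\langle J\rangle_W, J)$ a Coxeter system by Fact~\ref{basicsonJ}(a)), so a normal form of $g$ uses only letters of $J$ and hence $sp(g) \subseteq J$ induces a clique; being a product of distinct commuting generators, $g$ is moreover cyclically reduced. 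Taking $h := w$ then exhibits $k = h g h^{-1}$ in the required form and simultaneously re-proves $k^2 = e$.

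The only delicate inputs are the two imported structural facts: that a finite-order element lies in a finite parabolic, which is supplied directly by Lemma~\ref{parabclosure}(b), and the $\ell_S$-convexity of special parabolics yielding $sp(g) \subseteq J$. I expect the latter, together with the (routine) verification that a product of distinct commuting generators is genuinely cyclically reduced, to be the only points requiring care; everything else reduces to the elementary clique computations above.
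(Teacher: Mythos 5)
You should first note that the paper contains no proof of Fact~\ref{finite_order} to compare against: it is imported verbatim from \cite[Proposition~1.2]{casal1} as a black box. Judged on its own merits, your argument is correct, and it is a clean, essentially self-contained proof. The forward direction (clique support plus reducedness forces each generator to occur exactly once, hence $g$ is a product of distinct pairwise commuting involutions and $g^2 = e$) is exactly right, and your slide-and-cancel argument is sound provided one takes a repetition of $s$ with no intervening occurrence of $s$, which is automatic for an innermost pair. Your converse is the more interesting choice: rather than manipulating normal forms directly as in \cite{casal1}, you route through parabolic closures — finite order gives a finite cyclic subgroup, Lemma~\ref{parabclosure}(b) places $k$ in $w\langle J\rangle_W w^{-1}$ with $J$ spherical, and in the right-angled setting spherical is equivalent to being a clique (a non-edge gives $D_\infty$, a clique gives $(\mathbb{Z}/2\mathbb{Z})^{|J|}$). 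Combined with the standard fact that special parabolics are $\ell_S$-convex, so that $sp(w^{-1}kw) \subseteq J$, this is structurally cleaner than the combinatorial proof and reuses machinery the paper already sets up. Your degenerate reading of the statement (the identity, with $sp(e)=\emptyset$, and ``order at most two'') is also the correct reading, since strictly $e$ has order $1$.

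One caveat you should patch: Lemma~\ref{parabclosure}, and indeed the paper's definition of $Pc_S$, is stated only for Coxeter groups of \emph{finite rank}, whereas Fact~\ref{finite_order} carries no rank restriction — and the paper genuinely uses it at arbitrary cardinality, e.g.\ in the proof of Lemma~\ref{refl_th}. The repair is one line: a finite-order $k$ has finite support, so $k \in \langle J_0 \rangle_W$ for the finite set $J_0 := sp_S(k)$; by Fact~\ref{basicsonJ}(a), $(\langle J_0 \rangle_W, J_0)$ is a right-angled Coxeter system of finite rank whose inclusion into $W$ preserves $\ell_S$ and normal forms, so your whole argument can be run inside $\langle J_0 \rangle_W$ and the resulting conjugating element $h$, cyclically reduced $g$, and clique $sp(g)$ transfer to $W$ unchanged. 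With that reduction inserted, the proof is complete in the stated generality.
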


\subsection{Automorphism Groups of RACGs}\label{auto_sec}

	\begin{fact}[{\cite[Th{\'e}or{\`e}me 2]{castella}}]\label{rigidity} Let $W$ be a right-angled Coxeter group. Let $S$ and $T$ be two Coxeter bases of $W$, then there exists $\alpha \in Aut(W)$ such that $\alpha(S) = T$.
\end{fact}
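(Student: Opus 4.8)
The plan is to reduce the statement to the combinatorial claim that the two Coxeter graphs $\Gamma_S$ and $\Gamma_T$ of $(W,S)$ and $(W,T)$ are isomorphic as graphs, and then to \emph{lift} such a graph isomorphism to an automorphism of $W$. First I would fix the rank. Since both systems are right-angled, every off-diagonal entry of the two Coxeter matrices lies in $\{2,\infty\}$, so the equivalence relation $\sim$ of Fact~\ref{abelianization_fact} is trivial for each; hence computing $W^{ab}$ from $S$ yields $(\mathbb{Z}/2\mathbb{Z})^{|S|}$ while computing it from $T$ yields $(\mathbb{Z}/2\mathbb{Z})^{|T|}$. As $W^{ab}$ is an invariant of $W$, this forces $|S|=|T|=:n$, and writing $\pi\colon W\to V:=W^{ab}\cong\mathbb{F}_2^{\,n}$, both $\pi(S)$ and $\pi(T)$ are $\mathbb{F}_2$-bases of $V$.

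The heart of the matter is to produce a bijection $\phi\colon S\to T$ that is an isomorphism of commutation graphs, i.e. with $\{s,s'\}\in E_{\Gamma_S}$ if and only if $\{\phi(s),\phi(s')\}\in E_{\Gamma_T}$ — recalling that for a right-angled system an edge is exactly a commuting pair, equivalently a pair whose product has order two. To organize this I would first isolate the Coxeter generators among all involutions of $W$ in an intrinsically available way: by Fact~\ref{finite_order} every involution is conjugate to a clique-supported cyclically reduced element, and by the Centralizer Theorem (Fact~\ref{centralizer}, applied inside each irreducible factor) a generator $s$ satisfies $C_W(s)=\langle s\rangle_W\times\langle lk_S(s)\rangle_W$ with $lk_S(s)=\{t\in S:tE_{\Gamma_S}s\}$, so that the neighbourhood of $s$ is recorded by its centralizer and a commuting pair of generators generates a finite $(\mathbb{Z}/2\mathbb{Z})^2$ whereas a non-commuting pair generates an infinite dihedral group. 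I would then match the two bases using this finite-versus-infinite dichotomy together with the vector-space data $\pi(S),\pi(T)\subseteq V$, and check that the matching preserves adjacency. \textbf{This is the main obstacle}: it is precisely the rigidity of right-angled Coxeter groups, and making the cross-basis identification canonical — rather than merely asserting an abstract isomorphism $\Gamma_S\cong\Gamma_T$ — is the subtle point, and it is exactly here that Castella's theorem does the substantive work.

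Finally I would lift $\phi$ to an automorphism. Since $\phi$ is a graph isomorphism, the elements $\phi(s)\in T$ are involutions with $o(\phi(s)\phi(s'))=o(ss')$ for all $s,s'\in S$, so the assignment $s\mapsto\phi(s)$ satisfies all defining relations of the Coxeter presentation of $(W,S)$ and therefore extends to an endomorphism $\alpha\colon W\to W$. Its image contains $T$ and hence equals $W$, so $\alpha$ is surjective; as $W$ is a finitely generated linear, hence residually finite and Hopfian group by Fact~\ref{fact_for_even}(A), a surjective endomorphism must be an automorphism. By construction $\alpha(S)=\phi(S)=T$, which is the desired conclusion.
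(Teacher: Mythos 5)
The paper does not prove this statement at all: it is quoted verbatim as Castella's Th\'eor\`eme~2, so there is no internal argument to compare yours against. Measured as a standalone proof, your proposal has a genuine gap exactly where you flag it. The outer structure is fine: the abelianization argument via Fact~\ref{abelianization_fact} does force $|S|=|T|$, and the lifting step at the end is correct --- a bijection $\phi\colon S\to T$ preserving the commutation relation extends to a surjective endomorphism (only the defining relations of the source presentation need to hold for the images), which is an automorphism by Hopficity; in fact you do not even need Hopficity, since $\phi^{-1}$ lifts to an endomorphism $\beta$ with $\beta\circ\alpha=\mathrm{id}_W$, which also covers the infinite-rank case where Fact~\ref{fact_for_even}(A) is unavailable. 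But the middle step --- producing \emph{any} adjacency-preserving bijection $S\to T$ --- is the entire content of the theorem (rigidity of right-angled Coxeter groups, due to Radcliffe and Castella), and your paragraph on it is a description of intent, not an argument. The centralizer and finite-versus-infinite-dihedral observations do not by themselves rule out the problematic configurations, e.g.\ that some $t\in T$ is conjugate to a product of several pairwise-commuting elements of $S$ (a clique involution in the sense of Fact~\ref{finite_order}) rather than to a single $s\in S$; eliminating this possibility and then matching links is where the real work lies, and no amount of massaging the image of $S$ and $T$ in $W^{\mathrm{ab}}$ substitutes for it.

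One smaller correction: you describe the obstacle as making the identification ``canonical'' rather than ``merely abstract,'' but that is backwards. An abstract graph isomorphism $\Gamma_S\cong\Gamma_T$ is all the lifting step needs; canonicity is irrelevant. The hard part is that such an isomorphism exists at all. Since the paper simply cites \cite{castella} for this, the honest conclusion is that your proposal reduces the Fact to itself plus routine bookkeeping.
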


	A fundamental result of Tits \cite{tits} gives an explicit description of $Aut(W_\Gamma)$ as a semidirect product of two subgroups of $Aut(W_\Gamma)$, namely $Spe(W_\Gamma)$ and $F(\Gamma)$.

	\begin{definition}\label{spe_def} Let $W$ be a right-angled Coxeter group. We denote by $Spe(W)$ the set of automorphisms $\alpha \in Aut(W)$ such that for every involution $h \in W$ there exists $g \in W$ such that:
	$$\alpha(h) = ghg^{-1}.$$
\end{definition}

	\begin{definition}\label{F(Gamma)} Let $\Gamma$ be a graph. We think of the set of finite subsets of $\Gamma$ as a $GF(2)$-vector space (the field with $2$ elements) $V(\Gamma) = (\mathcal{P}_{fin}(\Gamma), \triangle, \cdot)$ by letting:
\begin{enumerate}[(1)]
\item $S_1 \triangle S_2 = (S_1 - S_2) \cup (S_2 - S_1)$;
\item $0 \cdot S = \emptyset$;
\item $1 \cdot S = S$.
\end{enumerate}
We denote by $F(\Gamma)$ the set of linear automorphisms of $V(\Gamma)$ which send finite cliques of $\Gamma$ to finite cliques of $\Gamma$.
\end{definition}

	\begin{remark} Notice that $F(\Gamma)$ is naturally seen as a subgroup of $Aut(W_\Gamma)$ by letting, for $\alpha \in F(\Gamma)$, $\beta_{\alpha}$ be the map such that for every $s \in \Gamma$ we have
	$$\beta_{\alpha}(s) = \prod_{t \in \alpha(s)} t.$$
Abusing notation, we might write $\beta_{\alpha}$ simply as $\alpha$. When we want to stress that $\Gamma = (S, E)$, i.e. we want to make explicit that $S$ is the domain of $\Gamma$, we write $\Gamma$ as $\Gamma_S$. Also, given a basis $S$ of $W$, we denote by $\Gamma_S$ the associated Coxeter graph.
\end{remark}

	\begin{definition} Given a group $G$, a subgroup $H$ of $G$, and a normal subgroup $N$ of $G$, we write $G = N \rtimes H$ when $G = NH$ and $N \cap H = \{e\}$.
\end{definition}

	\begin{fact}[Tits \cite{tits}]\label{Tits} Let $\Gamma$ be a graph. Then: $$Aut(W_\Gamma) = Spe(W_\Gamma) \rtimes F(\Gamma).$$
\end{fact}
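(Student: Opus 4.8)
The plan is to exhibit the claimed decomposition as a \emph{split short exact sequence}
$$1 \longrightarrow Spe(W_\Gamma) \longrightarrow Aut(W_\Gamma) \overset{\rho}{\longrightarrow} F(\Gamma) \longrightarrow 1,$$
where $\rho$ is the map induced by passing to the abelianization and the splitting is the natural embedding $\alpha \mapsto \beta_\alpha$ of $F(\Gamma)$ into $Aut(W_\Gamma)$ recorded in the Remark following Definition~\ref{F(Gamma)}. Granting that $\beta_\alpha$ is a genuine automorphism and that $\alpha \mapsto \beta_\alpha$ is an injective homomorphism (as that Remark asserts), the whole statement reduces to identifying $\ker\rho$ with $Spe(W_\Gamma)$ and checking that $\alpha \mapsto \beta_\alpha$ is a section of $\rho$.

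First I would set up a dictionary between conjugacy classes of involutions and cliques of $\Gamma$. By Fact~\ref{finite_order}, every nontrivial finite-order element of $W_\Gamma$ is an involution conjugate to $\prod_{s\in C}s$ for a unique nonempty clique $C$ (the support of its cyclically reduced core, all of whose syllables pairwise commute and hence occur at most once). Since a right-angled Coxeter matrix has no odd off-diagonal entries, Fact~\ref{abelianization_fact} identifies $W_{\Gamma}^{\mathrm{ab}}$ with $V(\Gamma)$ via $s \mapsto \{s\}$, sending the class of $\prod_{s\in C}s$ to the vector $C$. As abelianization is a conjugacy invariant, two clique-involutions are conjugate if and only if their cliques coincide; thus nonempty cliques of $\Gamma$ biject with nontrivial conjugacy classes of involutions, compatibly with the support map into $V(\Gamma)$. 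Now I would define $\rho(\phi) := \bar\phi$, the linear automorphism of $W_{\Gamma}^{\mathrm{ab}} = V(\Gamma)$ induced by $\phi$. Because $\phi$ carries involutions to involutions and respects conjugacy, the dictionary shows that $\bar\phi$ permutes the finite cliques of $\Gamma$, so $\bar\phi \in F(\Gamma)$ and $\rho$ is a well-defined homomorphism. Its kernel is exactly $Spe(W_\Gamma)$: the condition $\phi \in Spe(W_\Gamma)$ says $\phi(h)$ is conjugate to $h$ for every involution $h$, which via the support bijection is precisely the assertion that $\bar\phi$ fixes every clique vector — and since the singletons already span $V(\Gamma)$, this is equivalent to $\bar\phi = \mathrm{id}$. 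In particular $Spe(W_\Gamma)$ is normal, being a kernel. For the splitting I would compute $\rho(\beta_\alpha)$ directly: $\beta_\alpha(s) = \prod_{t\in\alpha(s)}t$ abelianizes to $\alpha(\{s\})$, so $\rho(\beta_\alpha) = \alpha$; hence $\alpha \mapsto \beta_\alpha$ is a section, its image meets $\ker\rho = Spe(W_\Gamma)$ trivially, and every $\phi$ factors as $\phi = \big(\phi\,\beta_{\rho(\phi)}^{-1}\big)\,\beta_{\rho(\phi)}$ with the first factor in $Spe(W_\Gamma)$. This yields $Aut(W_\Gamma) = Spe(W_\Gamma) \rtimes F(\Gamma)$.

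The routine parts (that $\beta_\alpha$ is an automorphism, already granted, and that $\rho$ is multiplicative) carry no real content. I expect the only genuine difficulty to be the clique/conjugacy dictionary of the first step: one must verify that the support of an involution is a \emph{complete} conjugacy invariant and that it is read off faithfully by abelianization, so that the a priori merely set-theoretic permutation action of $Aut(W_\Gamma)$ on involution classes coincides with the linear action of $F(\Gamma)$ on $V(\Gamma)$. Once this is secured, the identification $\ker\rho = Spe(W_\Gamma)$ and the explicit section make the semidirect-product decomposition formal.
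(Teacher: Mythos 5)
Your argument is correct, but note that the paper offers no proof of this statement at all: it is recorded as a Fact and attributed to Tits's article, so there is nothing internal to compare against. Your reconstruction — realizing $Aut(W_\Gamma)$ as a split extension of $F(\Gamma)$ by $Spe(W_\Gamma)$ via the map $\rho$ induced on the abelianization $W_\Gamma^{\mathrm{ab}} \cong V(\Gamma)$ — is sound, and it is essentially the mechanism behind Tits's original proof. The key steps all check out: Fact~\ref{finite_order} plus Fact~\ref{abelianization_fact} do give that nonempty finite cliques biject with nontrivial conjugacy classes of involutions (a reduced word whose support is a clique is the product of its distinct pairwise-commuting syllables, and abelianization separates distinct cliques), so $\bar\phi$ permutes finite cliques and lies in $F(\Gamma)$; the identification $\ker\rho = Spe(W_\Gamma)$ via spanning by singletons is right; and $\rho(\beta_\alpha)=\alpha$ gives the splitting. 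The one place where you lean on the paper is the Remark asserting that $\beta_\alpha$ is a genuine automorphism; that is where a nontrivial verification hides (one must check that $\alpha(s)\cup\alpha(s')$ is a clique whenever $s,s'$ are adjacent, which does follow from $\alpha(s)$, $\alpha(s')$ and $\alpha(s)\triangle\alpha(s')$ all being cliques, and that $\beta_{\alpha}\circ\beta_{\alpha'}=\beta_{\alpha\alpha'}$ so that $\beta_{\alpha^{-1}}$ inverts $\beta_\alpha$). Since the paper itself states that Remark without proof, granting it is a defensible division of labour, but a fully self-contained write-up would have to supply it.
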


\begin{notation}\label{star} Let $\Gamma = (\Gamma, E_\Gamma)$ be a graph. For $v \in \Gamma$, we let:
\begin{enumerate}[(1)]
\item $N(v) = \left\{ v' \in \Gamma : v E_{\Gamma} v' \right\}$;
\item $N^*(v) = N(v) \cup \left\{ v \right\}$.
\end{enumerate}
\end{notation}

	\begin{definition}\label{partial_conj} Let $\Gamma$ be a graph, $s \in \Gamma$ and $C$ a union of connected components of $\Gamma - N^*(s)$. We define an automorphism (cf. Fact \ref{partial_conj_fact}) $\pi_{(s, C)}$ of $W_{\Gamma}$ as follows:
$$\begin{cases} \pi_{(s, C)}(t) = sts \;\;\;\; \text{ if } t \in C \\
			  \pi_{(s, C)}(t) = t \;\;\;\;\;\;\; \text{ otherwise. }
\end{cases} $$
Automorphisms of the form $\pi_{(s, C)}$ are called {\em partial conjugations}.
\end{definition}

	\begin{fact}[\cite{muhlherr}]\label{partial_conj_fact} Let $\Gamma$ be a graph, then the partial conjugations (cf. Def. \ref{partial_conj}) are automorphisms of $W_{\Gamma}$ and, if $\Gamma$ is {\em finite}, then $Spe(W_\Gamma)$ is generated by them.
\end{fact}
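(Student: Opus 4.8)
The plan is to treat the two assertions separately. For the first, that each $\pi_{(s,C)}$ is an automorphism, I would verify directly that the prescribed assignment on generators respects the defining relations of $W_\Gamma$ and then observe that $\pi_{(s,C)}$ is an involution in $\mathrm{End}(W_\Gamma)$, hence bijective. For the second, that the $\pi_{(s,C)}$ generate $Spe(W_\Gamma)$ when $\Gamma$ is finite, I would run a peak-reduction argument: starting from an arbitrary $\alpha \in Spe(W_\Gamma)$, I would produce a partial conjugation that strictly decreases a suitable complexity of $\alpha$, and then induct.

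For the first assertion, since $W_\Gamma$ is presented by the relations $t^2 = e$ ($t \in S$) and $tu = ut$ (whenever $t E_\Gamma u$), it suffices to check that $\pi := \pi_{(s,C)}$ preserves these. As $\pi(t) \in \{t, sts\}$ is always an involution, the order-two relations hold. For a commuting pair $t E_\Gamma u$ I would distinguish cases by membership in $C$: if $t,u \notin C$ or $t,u \in C$ the claim is immediate (in the latter case $sts \cdot sus = s(tu)s = s(ut)s = sus \cdot sts$). The mixed case $t \in C$, $u \notin C$ is where the hypothesis on $C$ enters: since $t \in C \subseteq \Gamma - N^*(s)$, the neighbour $u$ of $t$ cannot equal $s$ (else $t \in N(s) \subseteq N^*(s)$) and cannot lie in $\Gamma - N^*(s)$ (else $u$ would lie in the same connected component of $\Gamma - N^*(s)$ as $t$, hence in $C$); thus $u \in N(s)$, so $u$ commutes with both $s$ and $t$, and a short computation gives $u(sts) = s(tu)s = (sts)u$. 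Hence $\pi$ is a well-defined endomorphism, and $\pi^2 = \mathrm{id}$ on generators (using $\pi(s)=s$ and $\pi(sts) = s(sts)s = t$), so $\pi^2 = \mathrm{id}$ and $\pi \in Aut(W_\Gamma)$. The same case analysis shows $\pi$ sends each generator, and then by the clique normal form of Fact~\ref{finite_order} each involution, to a conjugate of itself, so in fact $\pi \in Spe(W_\Gamma)$, which is what makes the generation statement internally consistent.

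For the generation statement I would fix $\alpha \in Spe(W_\Gamma)$ and record that, by definition, $\alpha(s) = g_s s g_s^{-1}$ is a reflection for every $s \in S$; I would set up the complexity $\lVert \alpha \rVert := \sum_{s \in S} \ell_S(\alpha(s))$, which attains its minimum $|S|$ exactly at $\mathrm{id}$, and argue that if $\alpha \neq \mathrm{id}$ there is a partial conjugation $\pi$ with $\lVert \pi \circ \alpha \rVert < \lVert \alpha \rVert$. Induction on $\lVert \alpha \rVert$ then writes $\pi_n \circ \cdots \circ \pi_1 \circ \alpha = \mathrm{id}$, whence $\alpha = \pi_1 \circ \cdots \circ \pi_n$ since each $\pi_i$ equals its own inverse. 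Concretely one selects a generator $r$ occurring at an end of a reduced word for some nontrivial $g_s$ and uses it to peel off a localized conjugation; the geometric language of roots and walls set up earlier ($H(t,c)$, $\mathbf{C}(t)$, together with the fundamental-domain and conjugacy statements of Proposition~\ref{prop_mu_2.6} and Proposition~\ref{prop_conjugate_geometric}) is the natural engine for making this choice canonical. The main obstacle is precisely this reduction step: one must show that a single $\pi_{(r,C)}$ can be chosen whose component $C$ simultaneously shortens $\alpha(s)$ for the relevant generators without lengthening any of the others, which requires controlling how the walls of the reflections $\alpha(s)$ are separated by the wall of $r$ and amounts to a careful peak-reduction bookkeeping carried out across all of $S$ at once.
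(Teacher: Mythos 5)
You should first note that the paper does not prove this statement at all: it is imported as a Fact with a citation to \cite{muhlherr}, so the benchmark is M\"uhlherr's argument. Your treatment of the first assertion is correct and complete: checking the relations $t^2=e$ and $tu=ut$, with the key observation in the mixed case that a neighbour $u \notin C$ of some $t \in C$ can be neither $s$ nor an element of $\Gamma - N^*(s)$ and hence lies in $N(s)$, together with $\pi_{(s,C)}^2 = \mathrm{id}$, is exactly the standard verification; your further remark that $\pi_{(s,C)} \in Spe(W_\Gamma)$ (via the clique normal form of Fact~\ref{finite_order}) is a genuine and necessary point, since otherwise the generation statement would not even make sense.

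The generation statement, however, is where all the content lies, and there your proposal has a genuine gap: everything is delegated to the unproved claim that for every $\alpha \in Spe(W_\Gamma)$ with $\alpha \neq \mathrm{id}$ there exists a \emph{single} partial conjugation $\pi$ with $\sum_{s \in S}\ell_S((\pi\circ\alpha)(s)) < \sum_{s \in S}\ell_S(\alpha(s))$, and you yourself flag this as ``the main obstacle'' without resolving it. That claim \emph{is} the theorem: it is the peak-reduction lemma whose proof constitutes the substance of \cite{muhlherr}, so asserting it leaves the second half of the Fact unestablished. Worse, it is not clear the greedy statement is true in the naive form you give: in Whitehead-style arguments one must typically cope with plateaus where no single elementary move strictly decreases a word-length sum, and M\"uhlherr's actual proof (whose machinery this very paper reproduces in Section~3.2) is organized differently. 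There one shows that partial conjugations transform $\alpha(S)$ into a \emph{geometric} set of reflections, the decreasing complexity being the wall-distance data of Definition~\ref{def_complexity_self_similarity} controlled via Lemma~\ref{lemma_2.6+}, not the total $S$-length; then Proposition~\ref{prop_conjugate_geometric}, applied with $U = W$ and using that $S$ itself is geometric, shows the resulting geometric generating set equals $S^w$ for some $w \in W$. Since conjugation by $s$ is the partial conjugation $\pi_{(s,\,\Gamma - N^*(s))}$, inner automorphisms are products of partial conjugations, and finally $\alpha(s) \in s^W$ together with Fact~\ref{abelianization_fact} (distinct generators of a RACG are never conjugate) forces the residual automorphism to fix $S$ pointwise. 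Until your reduction lemma is actually proved, or replaced by this geometric route, your argument establishes only the easy half of the statement.
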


	\begin{remark}\label{remark_involutory} Notice that the partial conjugations $\pi_{(s, C)}$ are involutory automorphism, i.e. they have order $2$. Hence, when $\Gamma$ is finite, $Spe(W_\Gamma)$ is generated by finitely  many involutory automorphisms. This will be relevant in Section~\ref{sec_prime_models}.
\end{remark}


	\begin{definition}\label{star_prop} Let $\Gamma$ be a graph.
	\begin{enumerate}[(1)]
	\item We say that $\Gamma$ has the {\em star-property} if for every $v \neq v' \in \Gamma$ we have that $N^*(v) \not\subseteq N^*(v')$ (cf. Notation \ref{star}).
	\item We say that $\Gamma$ is {\em star-connected} if for every $v \in \Gamma$ we have that $\Gamma - N^*(v)$ is connected (cf. Notation \ref{star}).
\end{enumerate}
\end{definition}

	\begin{fact}[{\cite[Commentaire 3]{castella}}]\label{reflection_preserving} Let $\Gamma$ be a graph. The following are equivalent:
	\begin{enumerate}[(1)]
	\item $F(\Gamma) = Aut(\Gamma)$ (cf. Definition \ref{F(Gamma)});
	\item $W_{\Gamma}$ is reflection independent (cf. Definition \ref{def_refl}));
	\item $\Gamma$ has the star-property (cf. Definition \ref{star_prop}(1)).
\end{enumerate}
\end{fact}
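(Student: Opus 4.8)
The plan is to prove the two equivalences $(1)\Leftrightarrow(2)$ and $(1)\Leftrightarrow(3)$ separately, the second being the substantive one. Throughout I use that $Aut(\Gamma)\subseteq F(\Gamma)$ (a graph automorphism permutes the vertex basis of $V(\Gamma)$ and preserves cliques in both directions), together with the following \emph{reflection test}: for $\alpha\in F(\Gamma)$, the automorphism $\beta_\alpha$ satisfies $\beta_\alpha(S)\subseteq S^W$ if and only if $\alpha\in Aut(\Gamma)$. I would deduce this from the abelianization of Fact~\ref{abelianization_fact}: since $W_\Gamma$ is right-angled the relation $\sim$ is trivial, so $\mathrm{ab}\colon W_\Gamma\to(\mathbb{Z}/2\mathbb{Z})^{S}$ sends each reflection $wsw^{-1}$ to the weight-one vector $[s]_\sim$, whereas $\beta_\alpha(s)=\prod_{t\in\alpha(s)}t$ has image of Hamming weight $|\alpha(s)|$; hence $\beta_\alpha(s)\in S^W$ forces $|\alpha(s)|=1$ for every $s\in S$, and a $GF(2)$-linear automorphism carrying the vertex basis bijectively onto itself while preserving cliques in both directions is a graph automorphism. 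In that case $\beta_\alpha$ in fact permutes $S^W$ setwise.

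For $(1)\Leftrightarrow(2)$ I would combine Tits' decomposition $Aut(W_\Gamma)=Spe(W_\Gamma)\rtimes F(\Gamma)$ (Fact~\ref{Tits}) with rigidity (Fact~\ref{rigidity}). First note that $Spe(W_\Gamma)$ always preserves $S^W$ setwise: by Definition~\ref{spe_def} every $\sigma\in Spe(W_\Gamma)$ sends each involution, in particular each reflection, to a conjugate of itself, hence to a reflection, and applying this to $\sigma^{-1}$ gives $\sigma(S^W)=S^W$. Writing an arbitrary automorphism as $\sigma\circ\beta_\alpha$ with $\sigma\in Spe(W_\Gamma)$ and $\alpha\in F(\Gamma)$, the reflection test then yields that $\sigma\circ\beta_\alpha$ preserves $S^W$ if and only if $\alpha\in Aut(\Gamma)$. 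On the other hand, by Fact~\ref{rigidity} every Coxeter basis $T$ of $W_\Gamma$ has the form $\gamma(S)$ for some $\gamma\in Aut(W_\Gamma)$, and $\gamma(S^W)=(\gamma(S))^W=T^W$; thus $W_\Gamma$ is reflection independent (Definition~\ref{def_refl}) if and only if every automorphism fixes $S^W$ setwise, i.e.\ if and only if every $\alpha\in F(\Gamma)$ lies in $Aut(\Gamma)$, i.e.\ if and only if $F(\Gamma)=Aut(\Gamma)$. This is precisely $(2)\Leftrightarrow(1)$.

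For $(1)\Leftrightarrow(3)$, the easy half is $\neg(3)\Rightarrow\neg(1)$: if $N^*(v)\subseteq N^*(v')$ for distinct vertices $v,v'$ (Notation~\ref{star}), define the $GF(2)$-linear automorphism $\tau$ of $V(\Gamma)$ by $\tau(v)=\{v,v'\}$ and $\tau(x)=\{x\}$ for $x\neq v$. Then $\tau^2=\mathrm{id}$, and for a clique $K$ one computes $\tau(K)=K$ when $v\notin K$ and $\tau(K)=K\triangle\{v'\}$ when $v\in K$; in the only nontrivial case ($v\in K$, $v'\notin K$) one has $K\subseteq N^*(v)\subseteq N^*(v')$, so $K\cup\{v'\}$ is again a clique. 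Hence $\tau\in F(\Gamma)$, while $|\tau(v)|=2$ gives $\tau\notin Aut(\Gamma)$, so $F(\Gamma)\neq Aut(\Gamma)$. The reverse implication $(3)\Rightarrow(1)$ is the crux and the expected main obstacle: assuming the star-property (Definition~\ref{star_prop}), I must show every $\alpha\in F(\Gamma)$ sends each vertex to a single vertex. The clean route is the structural description of $F(\Gamma)$ as generated by $Aut(\Gamma)$ together with the dominated transvections $\tau_{v\to v'}$, which (by the computation above and its converse) belong to $F(\Gamma)$ exactly when $N^*(v)\subseteq N^*(v')$; under the star-property there are no such nontrivial dominations, so the only generators are graph automorphisms and $F(\Gamma)=Aut(\Gamma)$. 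Establishing this generation statement—equivalently, extracting a nontrivial domination directly from any $\alpha\in F(\Gamma)$ with some $|\alpha(v)|\geq2$, by comparing $\alpha(v)$ with the images of the edges at $v$ and exploiting that $\alpha^{-1}\in F(\Gamma)$—is where the real work lies, and is the part I would ultimately draw from Castella's analysis refining Tits' theorem.
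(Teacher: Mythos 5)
The paper offers no internal proof of this statement: it is imported verbatim from Castella's \emph{Commentaire 3}, so there is no argument of the authors' to compare yours against. Judged as a self-contained proof, your proposal is sound where it is complete. The reflection test via Fact~\ref{abelianization_fact} is correct (in the right-angled case $\sim$ is trivial, so a reflection has abelianization of weight one while $\beta_\alpha(s)$ has weight $|\alpha(s)|$), the derivation of $(1)\Leftrightarrow(2)$ from Fact~\ref{Tits} and Fact~\ref{rigidity} is correct (including the observation that $Spe(W_\Gamma)$ is closed under inversion and preserves $S^W$), and the verification that a domination $N^*(v)\subseteq N^*(v')$ yields a transvection in $F(\Gamma)\setminus Aut(\Gamma)$ is correct, giving $\neg(3)\Rightarrow\neg(1)$.

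The genuine gap is exactly where you flag it: $(3)\Rightarrow(1)$. The assertion that $F(\Gamma)$ is generated by $Aut(\Gamma)$ together with the dominated transvections --- equivalently, that from any $\alpha\in F(\Gamma)$ with some $|\alpha(v)|\geq 2$ one can extract a nontrivial domination --- is not a routine computation; it is the substantive content of Castella's result (the $GF(2)$ analogue of Laurence--Servatius-type generation theorems), and your sketch of ``comparing $\alpha(v)$ with the images of the edges at $v$ and exploiting $\alpha^{-1}\in F(\Gamma)$'' does not yet produce the required pair $v,v'$ with $N^*(v)\subseteq N^*(v')$. Since you explicitly defer this step to Castella, your proposal is best read not as a proof of the Fact but as a correct reduction of it to that one structural statement about $F(\Gamma)$; to stand on its own it would need that generation argument written out.
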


\begin{fact}[{\cite[Comm. 3]{castella}}]\label{spe_inn} Let $\Gamma$ be a {\em finite} graph. {\mbox The following are equivalent:}
	\begin{enumerate}[(1)]
	\item $Spe(W_{\Gamma}) = Inn(W_{\Gamma})$ (cf. Definition \ref{spe_def});
	\item $\Gamma$ is star-connected (cf. Definition \ref{star_prop}(2)).
\end{enumerate}
\end{fact}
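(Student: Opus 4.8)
The plan is to exploit Fact~\ref{partial_conj_fact}, which tells us that for finite $\Gamma$ the group $Spe(W_\Gamma)$ is generated by the partial conjugations $\pi_{(s,C)}$. The key preliminary observation is that the inner automorphism $c_s \colon w \mapsto sws$ by a generator $s \in S$ is \emph{itself} a partial conjugation: since $sts = t$ holds exactly when $t \in N^*(s)$ (i.e.\ $t = s$ or $t$ is adjacent to $s$), we have $c_s = \pi_{(s,\Gamma - N^*(s))}$, the partial conjugation in which $C$ is taken to be \emph{all} of $\Gamma - N^*(s)$. As $Inn(W_\Gamma) = \langle c_s : s \in S\rangle$, this shows $Inn(W_\Gamma) \leq Spe(W_\Gamma)$ and pinpoints the inner automorphisms among the partial conjugations.

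For the implication $(2) \Rightarrow (1)$ I would argue directly. If $\Gamma$ is star-connected, then for every $s$ the graph $\Gamma - N^*(s)$ is connected, so its only unions of connected components are $\emptyset$ and the whole of $\Gamma - N^*(s)$. Hence the only partial conjugations based at $s$ are $\pi_{(s,\emptyset)} = \mathrm{id}$ and $\pi_{(s,\Gamma - N^*(s))} = c_s$. Since by Fact~\ref{partial_conj_fact} the partial conjugations generate $Spe(W_\Gamma)$, we obtain $Spe(W_\Gamma) = \langle c_s : s \in S\rangle = Inn(W_\Gamma)$.

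For the converse I would prove the contrapositive. Suppose $\Gamma$ is not star-connected, and fix $s$ such that $\Gamma - N^*(s)$ is disconnected; choose a \emph{proper} nonempty union $C$ of its connected components and set $\pi := \pi_{(s,C)} \in Spe(W_\Gamma)$. I claim $\pi \notin Inn(W_\Gamma)$, which yields $Spe(W_\Gamma) \neq Inn(W_\Gamma)$. Assume toward a contradiction that $\pi = c_g$ for some $g \in W_\Gamma$, and pick $a \in C$ together with $b$ in a component of $\Gamma - N^*(s)$ not contained in $C$ (possible since $C$ is proper and nonempty). Then $a,b,s$ are pairwise non-adjacent in $\Gamma$: neither $a$ nor $b$ lies in $N^*(s)$, and $a,b$ lie in distinct components of $\Gamma - N^*(s)$ and so are non-adjacent. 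Consequently the standard parabolic $\langle s,a,b\rangle_W$ is the universal Coxeter group of rank three. Now consider the retraction homomorphism $\rho\colon W_\Gamma \to \langle s,a,b\rangle_W$ fixing $s,a,b$ and sending every other generator to $e$ (well defined on the right-angled presentation). Applying $\rho$ to the identities $gsg^{-1}=s$, $gbg^{-1}=b$ and $gag^{-1}=sas$ coming from $c_g = \pi$ shows that $\bar g := \rho(g)$ centralizes $s$ and $b$ and conjugates $a$ to $sas$ inside the free product $\langle s\rangle * \langle a\rangle * \langle b\rangle$. But in a free product the centralizer of a nontrivial element of a factor lies in that factor, so $\bar g \in \{e,s\} \cap \{e,b\} = \{e\}$; then $a = \bar g\, a\, \bar g^{-1} = sas$ forces $a$ and $s$ to commute, contradicting their non-adjacency.

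The main obstacle is precisely this last direction, namely showing that a proper partial conjugation is not inner. The device I would rely on to overcome it is the retraction onto the rank-three parabolic $\langle s,a,b\rangle_W$, which reduces the global statement in $W_\Gamma$ to a transparent computation in a free product of three copies of $\mathbb{Z}/2\mathbb{Z}$, where centralizers of the involutory generators are completely understood. One should verify that $\rho$ is a genuine homomorphism (immediate from the right-angled relations, as every defining relation either survives in the induced subgraph on $\{s,a,b\}$ or collapses to a triviality) and that $a,b,s$ are indeed pairwise non-adjacent, which is exactly the information supplied by the failure of star-connectivity at $s$.
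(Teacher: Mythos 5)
Your argument is correct. Note first that the paper does not prove this statement at all: it is quoted as a Fact with a citation to Castella's \emph{Commentaire 3}, so there is no internal proof to compare against. What you have written is a self-contained derivation from Fact~\ref{partial_conj_fact} (M\"uhlherr's theorem that the partial conjugations generate $Spe(W_\Gamma)$ for finite $\Gamma$), which is essentially the standard route in the literature. The identification $c_s = \pi_{(s,\Gamma - N^*(s))}$ and the resulting proof of $(2)\Rightarrow(1)$ are exactly right. For $(1)\Rightarrow(2)$, the only genuinely nontrivial point is that a proper partial conjugation $\pi_{(s,C)}$ with $\emptyset \neq C \subsetneq \Gamma - N^*(s)$ is not inner, and your retraction onto the standard parabolic $\langle s,a,b\rangle_W \cong \mathbb{Z}/2 * \mathbb{Z}/2 * \mathbb{Z}/2$ settles this cleanly: the retraction is a homomorphism because no two of $s,a,b$ are adjacent (so every right-angled relation either survives or trivializes), centralizers of the factor generators in the free product are the factors themselves, hence $\rho(g)=e$, and $a = sas$ then contradicts $o(sa)=\infty$. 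All the hypotheses you need ($a,b \notin N^*(s)$, $a,b$ in distinct components so non-adjacent) are exactly what the failure of star-connectivity provides. This is a complete and correct proof of the cited fact.
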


\subsection{$2$-Spherical Coxeter Groups}

	\begin{definition}[\cite{reflec_abc_cox}]\label{def_finite_cont} Let $(W, S)$ be a Coxeter system of finite rank and let $w \in W$ be of finite order. We define the finite continuation of $w$, denoted as $FC(w)$, to be the intersection of all the maximal finite subgroups of $W$ containing $w$.
\end{definition}

	\begin{fact}[{\cite[Lemma~9.3]{intrinsic_reflections}}]\label{fact_finite_cont} Let $(W, S)$ be a Coxeter system of finite rank and let $w \in W$ be of finite order. Then $FC(w)$ is well-defined and it is the intersection of all the maximal spherical subgroups of $W$ containing $w$.
\end{fact}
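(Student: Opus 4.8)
The plan is to reduce the statement to a single structural input, namely that in a Coxeter group every finite subgroup is contained in a finite $S$-parabolic subgroup (the fact recorded via \cite[Proposition~2.87]{abramenko} in the proof of Lemma~\ref{parabclosure}(b)). I read ``maximal spherical subgroup'' as ``maximal finite $S$-parabolic subgroup'' (a finite $S$-parabolic $w\langle J\rangle_W w^{-1}$ with $J$ spherical, maximal among such), so the content to be proved is precisely that the maximal finite subgroups of $W$ are exactly its maximal finite $S$-parabolic subgroups; once this identification is established, the two intersections defining $FC(w)$ coincide termwise.

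First I would dispatch well-definedness, where finite rank is essential. Since $|S| < \omega$, there are only finitely many spherical subsets $J \subseteq S$, hence the orders $|\langle J\rangle_W|$ are bounded by some $N < \omega$; as every finite subgroup of $W$ lies in a conjugate of some $\langle J\rangle_W$ and conjugation preserves order, all finite subgroups of $W$ have order $\leq N$. Because $w$ has finite order, the family of finite subgroups of $W$ containing $w$ is nonempty (it contains $\langle w\rangle_W$), and among them one of maximal order, say $M$, is attained; any finite subgroup strictly containing $M$ would still contain $w$ and have larger order, so $M$ is a maximal finite subgroup containing $w$. Thus the family over which $FC(w)$ is defined is nonempty.

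Then I would prove the identification of the two families by a short maximality argument in both directions. For one inclusion, given a maximal finite subgroup $M$, the structural input places $M$ inside a finite $S$-parabolic subgroup $P$; finiteness of $P$ together with maximality of $M$ forces $M = P$, and $P$ must then be maximal among finite $S$-parabolics, since a strictly larger finite parabolic would strictly contain the maximal finite subgroup $M$. For the converse, if $P$ is a maximal finite $S$-parabolic and $P \subsetneq M$ with $M$ finite, then $M$ lies in a finite $S$-parabolic $P' \supsetneq P$, contradicting maximality; hence $P$ is itself a maximal finite subgroup. Intersecting over those members of each (now equal) family that contain $w$ yields $FC(w) = \bigcap \{ P : P \text{ is a maximal spherical } S\text{-parabolic subgroup with } w \in P \}$, as desired.

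I expect no serious obstacle here: the only genuinely nontrivial ingredient is the ``every finite subgroup lies in a finite parabolic'' fact, which is granted. The one point to handle with care is the role of finite rank, which is exactly what bounds the orders of finite subgroups and thereby guarantees that a given finite-order element lies in some maximal finite subgroup; in infinite rank the existence-of-maxima step would require a separate chain-condition argument rather than the crude ``subgroup of maximal order'' device used above.
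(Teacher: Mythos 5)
Your argument is correct, but note that the paper itself offers no proof of Fact~\ref{fact_finite_cont}: it is imported wholesale from \cite[Lemma~9.3]{intrinsic_reflections}, so there is no internal argument to compare against. What you have written is essentially the standard derivation from the one structural input you isolate (every finite subgroup lies in a finite $S$-parabolic subgroup, the same ingredient the paper invokes in Lemma~\ref{parabclosure}(b)): the bounded-order argument for the existence of a maximal finite subgroup containing $w$ is sound, and the two-way maximality argument correctly identifies the maximal finite subgroups with the maximal finite $S$-parabolic subgroups, after which the two intersections in Definition~\ref{def_finite_cont} and in the statement range over the same family. Your reading of ``maximal spherical subgroup'' as ``maximal finite $S$-parabolic subgroup'' (conjugates of standard spherical parabolics included) is the only tenable one --- restricting to special parabolics would already fail for $sts$ in the infinite dihedral group, where the intersection would be over an empty family --- and it matches how the fact is used in the proof of Lemma~\ref{reflection_indep_2spherical}. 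Your closing remark about finite rank is also well placed: it is exactly what bounds the orders of finite subgroups and hence guarantees that the family defining $FC(w)$ is nonempty. The only cosmetic point is that you could have phrased the existence step via the containment $M \leq P$ with $P$ finite parabolic directly (so that a maximal finite subgroup is found among the finitely many conjugacy types of finite parabolics over $w$), but the subgroup-of-maximal-order device is equally valid.
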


	\begin{fact}[{\cite[Main Result and Theorem~1]{reflec_abc_cox}}]\label{fact2_finite_cont} Let $(W, S)$ be an irreducible, infinite, $2$-spherical Coxeter system of finite rank. Then $W$ is reflection-independent and the set of reflection is exactly the set of involutions of $W$ such that $FC(w) = \{ e, w \}$. Furthermore, if $R \subseteq W$ is such that $(W, R$) is a Coxeter system, then there exists $w \in W$ such that $R^w = S$ (i.e., in the terminology of \cite{reflec_abc_cox},  $W$ is strongly rigid).
\end{fact}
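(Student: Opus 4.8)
The plan is to deduce all three assertions from a single intrinsic, basis-free description of the reflections, namely
$$S^W = \{\, w \in W : w \neq e,\ w^2 = e,\ FC(w) = \{e,w\}\,\}. \tag{$\dagger$}$$
Once $(\dagger)$ is established, reflection-independence is immediate: the right-hand side is defined purely in terms of the abstract group $W$ (involutions together with the finite continuation, the latter being intrinsic by Fact~\ref{fact_finite_cont} since it only involves finite subgroups), so for any two Coxeter bases $S,T$ of $W$ both $S^W$ and $T^W$ equal this same set, whence $S^W = T^W$. Thus the first half of the statement reduces to the two inclusions in $(\dagger)$. Throughout I use that, by Lemma~\ref{parabclosure}(b), every finite subgroup of $W$ — in particular every maximal finite (spherical) subgroup, being maximal inside the finite parabolic containing it — is a finite $S$-parabolic subgroup; hence by Fact~\ref{fact_finite_cont} the finite continuation $FC(w)$ is an intersection of finite parabolics, and therefore is itself a (finite) parabolic, with $Pc_S(w) \subseteq FC(w)$ because each parabolic in the intersection contains $w$ and hence its parabolic closure.

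For the inclusion ``$FC(w)=\{e,w\} \Rightarrow w \in S^W$'' I argue contrapositively. Let $w$ be an involution that is not a reflection. By Richardson's classification of involutions, $w$ is conjugate to the longest element $w_J$ of a finite standard parabolic $\langle J\rangle_W$, with $w_J$ central in $\langle J\rangle_W$ and of full support $J$; since $w$ is not a reflection, $|J|\geq 2$. Full support gives $Pc_S(w_J) = \langle J\rangle_W$, a finite Coxeter group of rank $\geq 2$, so $|Pc_S(w)| = |\langle J\rangle_W| \geq 4$. Combined with $Pc_S(w) \subseteq FC(w)$ from the previous paragraph, this yields $FC(w) \supsetneq \{e,w\}$, as required. (Here the $2$-spherical hypothesis is not yet needed; it enters decisively only in the converse.)

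The reverse inclusion ``$w \in S^W \Rightarrow FC(w) = \{e,w\}$'' is where the hypotheses irreducible, infinite and $2$-spherical are all used, and I expect it to be the main obstacle. Fix a reflection and, conjugating, assume it is $s \in S$. By the reduction above $FC(s)$ is a finite parabolic containing $s$, so it suffices to rule out $FC(s) \supsetneq \langle s\rangle$. Geometrically, the maximal finite parabolics containing $s$ are the stabilizers of the special vertices of the Davis complex lying on the wall $H_s$ of $s$, and $FC(s)$ is their common fixator; I must show this fixator is exactly $\langle s\rangle$. The mechanism is that, because $W$ is infinite and irreducible, the wall $H_s$ is unbounded and extends in every direction, while $2$-sphericality guarantees that every rank-$2$ parabolic through $s$ is finite, producing an abundance of such special vertices adjacent to $H_s$. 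Concretely, given any $g \notin \langle s\rangle$ one wants to exhibit a single maximal finite parabolic $M \ni s$ with $g \notin M$, which forces $g \notin FC(s)$; constructing such an $M$ — equivalently, showing that no rank-$\geq 2$ finite parabolic can lie inside every maximal finite parabolic on the wall of $s$ — is the crux, and will require the detailed wall and chamber combinatorics of \cite{reflec_abc_cox, intrinsic_reflections}.

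Finally, for strong rigidity, let $R \subseteq W$ be any Coxeter basis. By reflection-independence (just proved) we have $R \subseteq R^W = S^W$, so $R$ is a set of reflections that both generates $W$ and forms a simple system. By the Deodhar--Dyer theory of reflection subgroups (the self-contained even and right-angled instances appear as Proposition~\ref{prop_conjugate_geometric} and Proposition~\ref{hee_prop}, with the general $2$-spherical analogue available in the literature), the canonical generating set of $W$, viewed as a reflection subgroup of itself, is determined up to conjugacy by a chamber of the Davis complex, and $W$ acts simply transitively on chambers; thus any simple system of reflections generating $W$ is $W$-conjugate to $S$, yielding $w \in W$ with $R^w = S$. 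The one point needing the $2$-spherical hypothesis here is that the abstract Coxeter basis $R$ is \emph{geometric}, i.e.\ that its reflections genuinely bound a chamber rather than forming a twisted simple system; establishing this geometricity is the analogue, for the rigidity statement, of the obstacle identified in the previous paragraph.
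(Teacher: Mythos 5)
Your proposal does not actually prove this statement, and it is worth being clear about why: the paper itself offers no proof either, importing it as a Fact from Franzsen--Howlett--M\"uhlherr \cite{reflec_abc_cox} (with \cite{intrinsic_reflections}), precisely because both the triviality of the finite continuation of a reflection and strong rigidity \emph{are} the main theorems of that long paper. Your easy inclusion is correct and self-contained: by Lemma~\ref{parabclosure}(b) every maximal finite subgroup is a finite $S$-parabolic, so $Pc_S(w) \subseteq FC(w)$ via Fact~\ref{fact_finite_cont}, and a non-reflection involution has parabolic closure of rank at least $2$, hence $|FC(w)| \geq 4$ (you do not even need Richardson here: $Pc_S(w)$ of rank $1$ forces $w$ to be a reflection outright). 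But the two hard assertions are exactly where you stop. You yourself flag that showing $FC(s) = \{e, s\}$ for a reflection ``will require the detailed wall and chamber combinatorics of \cite{reflec_abc_cox, intrinsic_reflections}'', and likewise that the geometricity of an arbitrary Coxeter basis --- the crux of strong rigidity --- is an unresolved ``obstacle''. Deferring precisely those steps to the cited papers reduces the Fact to itself: no independent argument is supplied for any of the content that makes the theorem hard (FHM's analysis of how the maximal spherical subgroups meeting a given wall intersect, where irreducibility, infiniteness and $2$-sphericality all enter quantitatively).

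There is also a subtler gap in the step you call immediate. Granting $(\dagger)$, what you would have proved is $S^W = X$ for the \emph{given} system $(W,S)$, where $X$ is the intrinsic set of involutions with trivial finite continuation. For a second basis $T$, your contrapositive argument (which is genuinely basis-free) yields only $X \subseteq T^W$, hence $S^W \subseteq T^W$; the reverse inclusion $T^W \subseteq X$ needs the hard direction for $(W,T)$, i.e.\ that $(W,T)$ is again irreducible, infinite and $2$-spherical --- and neither irreducibility nor $2$-sphericality is obviously invariant under change of Coxeter basis (even the rank can differ between bases of a Coxeter group in general, e.g.\ the dihedral group of order $12$). One could repair this via strong rigidity, since $R^w = S$ immediately gives $R^W = S^W$; but your strong-rigidity argument opens by invoking reflection-independence (``$R \subseteq R^W = S^W$''), so as written the two halves lean on each other circularly. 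In short: the skeleton and the easy half are sound, but the statement remains unproved, which is consistent with the paper's decision to cite it rather than prove it.
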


\section{Superstability in Coxeter Groups}\label{sec_superst}

	In this section we prove Theorems~\ref{th_char_sstab} and \ref{pure_theorem}, and their corollaries. Section~\ref{sec_negative_side} will be concerned with sufficient conditions for unsuperstability for a given group $G$, while in Section~\ref{sec_positive_side} we will prove that affine Coxeter groups are superstable.

\subsection{The Negative Side}\label{sec_negative_side}

	\begin{notation}\label{notation_strings} Given $(y_i : i < \omega)$ and $n < \omega$, let $\bar{y}_{[n)}= (y_i : i < n)$. Also, given $\{i_0, ..., i_{k-1} \} = I \subseteq \{ 0, ..., n-1 \}$ we let $\bar{y}_{I} = (y_{i_\ell} : \ell < k)$. The consistency of the two notations is given letting $\{0, ..., n-1 \} = [0, n) = [n)$.
\end{notation}

	\begin{notation} Given $\eta, \nu \in \omega^{<\omega}$, we write $\nu \triangleleft\eta$ if $\eta$ extends $\nu$, and $\nu \trianglelefteq \eta$ if $\eta$ extends $\nu$ or $\eta = \nu$. Also, we identify the number $n < \omega$ with the set $\{0, ..., n-1\}$.
\end{notation}

	\begin{definition}\label{def_unsuper} We say that the first-order theory $T$ is {\em not} superstable if:
\begin{enumerate}[(a)]
	\item there are formulas $\varphi_n(x, \bar{y}_{[k_n)})$, for $n < \omega$ and $k_n = k(n) \geq n$;
	\item there is $M \models T$;
	\item\label{c} there are $b_{\eta} \in M$, for $\eta \in \omega^{<\omega}$;
	\item\label{d} there are $\bar{a}_\nu \in M^{k(n)}$, for $\nu \in \omega^n$;
	\item for $\nu \in \omega^n$ and $\eta \in \omega^{<\omega}$, $M \models \varphi_n(b_{\eta}, \bar{a}_\nu)$, if $\nu \triangleleft \eta$;
	\item\label{f} there is $m(n) < \omega$ such that if $\nu \in \omega^n$, $k, j < \omega$, and $\eta = \nu^\frown (k, j)$, then:
	$$|\{i < \omega: M \models \varphi_{n+1}(b_\eta, \bar{a}_{\nu^{\frown} (i)}) \} | \leq m(n).$$
\end{enumerate}
\end{definition}

	There are many equivalent definitions of superstability, we use the above for convenience, a more easy to understand definition of superstability uses types: $T$ is said to be superstable if it is $\kappa$-stable for every $\kappa \geq 2^{\aleph_0}$ (see e.g. \cite[pg. 172]{marker}), where a theory $T$ is said to be $\kappa$-stable when for every $M \models T$ and $A \subseteq M$ with $|A| = \kappa$ we have that the number of finitary types over $A$ is of size $\kappa$ (see e.g. \cite[pg. 135]{marker}). A canonical example of a superstable structure is the abelian group~$\mathbb{Z}$.

	\begin{remark}\label{rmk_superst} {\rm In Definition~\ref{def_unsuper}(\ref{f}) we can take $m(n) = 1$, for every $n < \omega$.
\newline [Why? Without loss of generality $M$ is $\aleph_1$-saturated, and so it is enough to find $(\bar{a}_\eta, b_\eta : \eta \in \omega^{ \leq n+2})$, for every $n < \omega$. To this extent, let $\nu \in \omega^{n}$ and consider the function $f_\nu: \omega^3 \rightarrow \{ 0, 1\}$ such that $f_\nu(k, j, i) = 1$, if $k = i$ or $M \not \models \varphi_{n+1}(b_{\nu^{\frown} (k, j)}, \bar{a}_{\nu^{\frown} (i)})$, and $f_\nu(k, j, i) = 0$ otherwise. By the Infinite Ramsey Theorem there is an infinite $f$-homogeneous subset of $\omega$, and by clause (\ref{f}) of Definition~\ref{def_unsuper} this set has to have color $1$, and so we can conclude easily.]}
\end{remark}

	\begin{remark}\label{rmk_superst+} {\rm In Definition~\ref{def_unsuper}(\ref{c}-\ref{d}) we can restrict to $\eta$'s and $\nu$'s in the set:
$$\mathrm{inc}_{<\omega}(\omega) = \{ \sigma \in \omega^{<\omega} : \sigma(0) < \sigma (1) < \cdots < \sigma(|\sigma|-1) \}.$$
We denote the subset of $\mathrm{inc}_{<\omega}(\omega)$ consisting of the sequences with $\{0, ..., m-1\}$ as domain by $\mathrm{inc}_{m}(\omega)$ -- this notation will be used in the proof of Theorem~\ref{general_criterion}.}
\end{remark}

	\begin{theorem}\label{general_criterion} Let $G$ be a group. A sufficient condition for the unsuperstability of $G$ is that there is a subgroup $H \subseteq G$ (not necessarily definable), $2 \leq n < \omega$, and $1 \leq k < \omega$ such that:
	\begin{enumerate}[(a)]
	\item if $a \in H - \{ e_G \}$, then $X_a := \{ x \in G : x^n = a\} \subseteq H - \{e_G\}$, and $|X_a| \leq k$;
	\item there are $a_\ell \in H - \{e_G \}$, for $\ell < \omega$, such that:
	\begin{enumerate}[(i)]
	\item  $\ell_1 \neq \ell_2$ implies $a^{-1}_{\ell_1} a_{\ell_2} \notin \{ x^ny^n : x, y \in H \}$;
	\item for every $(s_1, ..., s_k) \in \omega^{<\omega}$ we have $a_{s_1} \cdots a_{s_k} \neq e_G$.
\end{enumerate}
\end{enumerate}
\end{theorem}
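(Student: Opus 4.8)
The plan is to verify Definition~\ref{def_unsuper} directly for $T = Th(G)$, taking $M := G$. Two preliminary reductions streamline the construction: by Remark~\ref{rmk_superst+} we may index the tree by strictly increasing sequences (so $b_\eta$ and $\bar a_\nu$ need only be defined for $\eta,\nu \in \mathrm{inc}_{<\omega}(\omega)$), and by Remark~\ref{rmk_superst} it suffices to produce a splitting bound $m(n)$ that is merely \emph{finite}, since any finite bound improves to $1$ by the Ramsey argument given there. For the tree itself I would set, for an increasing $\eta = (i_0,\dots,i_{m-1})$, the element $b_\eta := a_{i_0} a_{i_1} \cdots a_{i_{m-1}}$, and for $\nu = (i_0,\dots,i_{n-1})$ the parameter tuple $\bar a_\nu := (a_{i_0},\dots,a_{i_{n-1}})$, so that $k(n) = n$. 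Clause (b)(ii) is exactly what makes this assignment nondegenerate: no product of the chosen $a_\ell$ collapses to $e_G$, so the $b_\eta$ genuinely fan out along an $\omega$-branching tree and each branch stays infinite.

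The formulas $\varphi_n(x, y_0,\dots,y_{n-1})$ should express that $x$ admits a factorization extending the prefix $y_0 \cdots y_{n-1}$ in which the passage from the prefix to the next coordinate is \emph{certified} by a relation built from $n$-th powers, the precise shape being engineered so that the requirements (e) and (f) of Definition~\ref{def_unsuper} pull in clauses (a) and (b)(i) respectively. For (e) --- that $\varphi_n(b_\eta, \bar a_\nu)$ holds whenever $\nu \triangleleft \eta$ --- one checks that the intended witnesses may be taken trivially (the auxiliary $n$-th powers equal to $e_G$), so that the factorization reduces to the evident identity $b_\eta = (a_{i_0}\cdots a_{i_{n-1}})\cdot(a_{i_n}\cdots a_{i_{m-1}})$ and holds for tails of every length. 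The content is in (f): fixing $\eta = \nu^{\frown}(k,j)$ with $|\nu| = n$, I would show that $\varphi_{n+1}(b_\eta, \bar a_{\nu^{\frown}(i)})$ forces the last coordinate $a_i$ to be an $n$-th root of a single element determined by $b_\eta$ and $\nu$ --- here clause (a) applies, since any such root lies in $H - \{e_G\}$ and there are at most $k$ of them, yielding $m(n) \le k$ --- while clause (b)(i), via the non-representability of $a_i^{-1} a_k$ as a product of two $n$-th powers for $i \ne k$, eliminates the spurious solutions coming from sibling branches and pins the genuine one to $i = k$.

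The main obstacle is precisely the simultaneous realizability of (e) and (f): the formula must be \emph{loose} enough that every initial segment of every branch satisfies it regardless of the length of the remaining tail, yet \emph{rigid} enough that, at a single splitting node, only boundedly many of the infinitely many siblings do. Reconciling these is the technical heart of the argument, and it is exactly where the interplay of the three hypotheses is indispensable: (a) converts the rigidity into a counting bound on $n$-th roots, (b)(i) guarantees that the certifying $n$-th power relation cannot hold accidentally across distinct branches, and (b)(ii) keeps the tree from degenerating. Once (e) and (f) hold with $m(n) \le k$, the two Remarks close the argument --- Remark~\ref{rmk_superst} sharpens the bound to $1$ and Remark~\ref{rmk_superst+} legitimises the increasing-sequence indexing --- so every clause of Definition~\ref{def_unsuper} is met and $G$ is not superstable.
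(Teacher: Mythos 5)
Your proposal correctly identifies the target (Definition~\ref{def_unsuper} via Remarks~\ref{rmk_superst} and \ref{rmk_superst+}) and the intended division of labour among the hypotheses --- (a) for counting roots, (b)(i) for separating siblings, (b)(ii) for non-degeneracy --- but it has a genuine gap: the formulas $\varphi_n$ are never constructed, and the partial choices you do commit to are incompatible with any construction that could work. You set $b_\eta := a_{i_0}\cdots a_{i_{m-1}}$ and assert that clause (e) can be verified by taking the auxiliary $n$-th powers ``equal to $e_G$''. But a formula $\varphi_n(x,\bar y)$ whose certifying $n$-th powers may be trivial asserts nothing beyond ``$x = y_0\cdots y_{n-1}\cdot w$ for some $w$'', which every group element satisfies for every choice of parameters; clause (f) would then fail outright, since all infinitely many siblings $\bar a_{\nu^\frown(i)}$ would satisfy $\varphi_{n+1}(b_\eta,\cdot)$. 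The tension you flag as ``the technical heart of the argument'' is precisely the content of the theorem, and deferring it leaves the proof unfinished.

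The paper resolves this tension with a specific nested construction that your plain-product choice of $b_\eta$ cannot reproduce. One defines words $w_0(z)=z$ and $w_{m+1}(z,\bar y_{[m+1)})=w_m(y_m z^n,\bar y_{[m)})$, sets $b_\eta := w_m(a_{\eta(m)},\bar a_{\eta\restriction m})$ (so the branch tip enters through an $m$-fold nest of $n$-th powers, not as a factor of a flat product), and takes $\varphi_m(x,\bar y_{[m)})$ to be $\exists z\,(x=w_m(z,\bar y_{[m)})\wedge\bigwedge_{\ell\leq m}(w_\ell(z,\bar y_{[m-\ell,m)}))^n\neq e)$ --- note the conjuncts force the relevant $n$-th powers to be \emph{nonidentity}, the opposite of your trivial-witness suggestion. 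Clause (e) then follows from the syntactic identity $w_{m+1}(z,\bar y_{[m+1)})=w_m(y_mz^n,\bar y_{[m)})$, which gives $\varphi_{m+1}\vdash\varphi_m$; clause (f) follows by showing inductively, using hypothesis (a), that the witness set $C^\eta_b=\{c: b=w_{|\eta|}(c,\bar a_\eta)\wedge\ldots\}$ lies in $H-\{e_G\}$ and has at most $k^{|\eta|}$ elements, and then using (b)(i) to show that two siblings $i\neq j$ sharing a value $a_ic_i^n=a_jc_j^n$ in $C^\nu_{b_\eta}$ would yield $a_j^{-1}a_i=c_j^nc_i^{-n}$ with $c_i,c_j\in H$, a contradiction. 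Without the nesting, the hypothesis that $n$-th roots of elements of $H$ stay in $H$ has no purchase on the witnesses, and no bounded splitting can be extracted.
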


	\begin{proof} Let $n$ and $k$ be as in the statement of the theorem. By induction on $m < \omega$, we define the group word $w_m(z, y_{[m)})$ (recall Notation~\ref{notation_strings}) as follows:
	\begin{enumerate}[(i)]
	\item $w_0(z) = z$;
	\item $w_{m+1}(z, \bar{y}_{[m+1)}) = w_m(y_mz^n, \bar{y}_{[m)})$.
	\end{enumerate}
	Notice that if $m < \omega$ is such that $m = m_1 + m_2$, then:
	$$w_m(x, \bar{y}_{[m)}) = w_{m_2}(w_{m_1}(x, \bar{y}_{[m_1)}), \bar{y}_{[m_1, m)}).$$
Let now $\varphi_m(x, \bar{y}_{[m)})$ be the formula:
	\begin{equation}\label{equation1} \exists z(x = w_m(z, \bar{y}_{[m)})\wedge \bigwedge_{\ell \leq m} (w_\ell(z, \bar{y}_{[m-\ell, m)}))^n \neq e),
\tag*{$(\star)_1$}  \end{equation}
(clearly for $m = 0$, the set $[m, m)$ is simply $\emptyset$ and so $w_0(z, y_{[m, m)}) = w_0(z) = z$.)
	Notice that:
	\begin{equation}\label{equation2}
	\varphi_{m+1}(x, \bar{y}_{[m+1)}) \vdash \varphi_{m}(x, \bar{y}_{[m)}) \vdash \cdots \tag*{$(\star)_2$}  \end{equation}
	We claim that $(\varphi_{m}(x, \bar{y}_{[m)}) : m < \omega)$ is a witness for the unsuperstability of $G$, referring here to Definition~\ref{def_unsuper} (cf. also Remarks~\ref{rmk_superst}~and~\ref{rmk_superst+}).

\smallskip
\noindent Let $a_\ell \in H - \{e_G \}$, for $\ell < \omega$, be as in the statement of the theorem. Now, for $m < \omega$ and $\nu \in \mathrm{inc}_m(\omega)$ (cf. Remark~\ref{rmk_superst+}), let:
	\begin{equation}\label{equation3} \bar{a}_\nu = (a_{\nu(\ell)} : \ell < m) \in H^m
\tag*{$(\star)_3$}. \end{equation}
For $m < \omega$ and $\eta \in \mathrm{inc}_{m+1}(\omega)$, let:
\begin{equation}\label{equation4} b_\eta = w_m(a_{\eta(m)}, \bar{a}_{\eta \restriction m}) \in H - \{e_G\} \;\; (\text{by clause (b)(ii)})
\tag*{$(\star)_4$}. \end{equation}
Clearly clauses (a)-(d) of Definition~\ref{def_unsuper} hold. Furthermore, we have:
\begin{equation}\label{equation5}
\eta \in \mathrm{inc}_{m+1}(\omega) \;\; \Rightarrow \;\; M \models \varphi_m(b_\eta, \bar{a}_{\eta\restriction m}).
\tag*{$(\star)_5$} \end{equation}
[Why? The satisfaction of the first conjunct of the formula $\varphi_m(b_\eta, \bar{a}_{\eta\restriction m})$ is ensured by the choice of $b_{\eta}$, while the second conjunct is by clause (b)(ii) of the theorem.]
More strongly, we have:
\begin{equation}\label{equation6} \eta \in \mathrm{inc}_{k}(\omega), \; \nu \triangleleft \eta \;\; \Rightarrow \;\; M \models \varphi_{|\nu|}(b_\eta, \bar{a}_{\nu}).
\tag*{$(\star)_6$} \end{equation}
[Why? By $(\star)_2$ and $(\star)_5$.]
\newline Further, we have that if $b \in H - \{e_G\}$ and $\eta \in \mathrm{inc}_{m}(\omega)$, then letting:
\begin{equation}\label{equation7} C^\eta_b := \{ c \in G : G \models b = w_{m}(c, \bar{a}_{\eta}) \wedge \bigwedge_{\ell \leq m} (w_\ell(c, \bar{y}_{[m-\ell, m)}))^n \neq e \},
\tag*{$(\star)_7$} \end{equation}
we have:
\begin{equation}\label{equation8} C^\eta_b \subseteq H - \{e_G\}\; \text{ and } \; |C^\eta_b| \leq k^m.
\tag*{$(\star)_8$} \end{equation}
[Why? We prove this by induction on $m = |\eta|$ using clause (a) of the theorem. For $m = 0$ this is obvious. For $m = \ell +1$, let $\nu = \eta \restriction \ell$. Recall that by inductive hypothesis we have that $|C^\nu_b| \leq k^\ell$. Further, clearly, $\eta \in \mathrm{inc}_{\ell+1}(\omega)$. Now, if $G \models b = w_{\ell+1}(c, \bar{a}_\eta)$, then, letting $d_c = a_{\eta(\ell)}c^n$, we have $G \models b = w_\ell(d_c, a_{\eta \restriction \ell})$, and thus $d_c \in C^\nu_\ell \subseteq H - \{ e_G \}$ (by inductive hypothesis). That is, we have a function $c \mapsto d_c$ from $C^\eta_b$ into $C^\nu_b$. Hence, it suffices to prove that for each $d \in C^\nu_{b}$ we have:
$$\mathcal{D}_d := \{ c \in C^\eta_b : d_c = d\} \subseteq H - \{e_G\} \; \text{ and } \; |\mathcal{D}_d| \leq k,$$
since then we would have:
	$$|C^\eta_b| \leq |C^\nu_b|k \leq k^\ell k = k^{\ell+1} = k^m.$$
Let then $d \in C^\nu_{b}$ and $c \in \mathcal{D}_d$. Since $a_{\eta(\ell)}c^n = d_c = d\in H - \{ e_G \}$ we have that $c^n = a^{-1}_{\eta(\ell)}d_c \in H$ (recall that $H$ is a subgroup), and by the choice of $c$ we have that $c^n \neq e_G$ (cf. $(\star)_7$, second conjunct, $m=0$). Thus, by clause (a) of the theorem, we have that $c \in H - \{ e_G \}$.
Thus, $\mathcal{D}_d \subseteq \{ x \in G : x^n = a^{-1}_{\eta(\ell)}d \} = X_d$, and by clause~(a) of the statement of the theorem we have that $|X_d| \leq k$, and so $|\mathcal{D}_d| \leq k$.]
\newline Finally, we have that if $\nu \in \omega^m$, $k, j < \omega$, and $\eta = \nu^\frown (k, j)$, then:
\begin{equation}\label{equation9}
|\{i < \omega: G \models \varphi_{m+1}(b_\eta, \bar{a}_{\nu^{\frown} (i)}) \} | \leq k^m.
\tag*{$(\star)_9$} \end{equation}
[Why? Let $\mathcal{U}^m_\eta := \{i < \omega: G \models \varphi_{m+1}(b_\eta, \bar{a}_{\nu^{\frown} (i)}) \}$, and for every $i \in \mathcal{U}^m_\eta$, choose $c_i \in G$ such that:
$$G \models b_\eta = w_{m+1}(c_i, \bar{a}_{\nu^{\frown} (i)}) \wedge \bigwedge_{\ell \leq m+1} (w_\ell(c_i, \bar{y}_{[(m+1)-\ell, m+1)}))^n \neq e.$$
Note that, for $i \in \mathcal{U}^m_\eta$, $c_i \in H - \{e_G\}$, since $b_\eta \neq e_G$ (cf. \ref{equation4}) and $c_i \in C^{\nu^{\frown} (i)}_{b_\eta}$, and, by $(\star)_7$, we have that  $C^{\nu^{\frown} (i)}_{b_\eta} \subseteq H - \{ e_G \}$. Further, for $i \in \mathcal{U}^m_\eta$, we have:
$$G \models b_\eta = w_{m}(a_{i}c_i^n, \bar{a}_\nu),$$
and so $a_{i}c_i^n \in C^\nu_{b_\eta}$ (recall that $\nu^{\frown} (i)(m) = i$). For the sake of contradiction, assume that $|\mathcal{U}^m_\eta| > k^m$. By $(\star)_7$ we have that
$|C^\nu_{b_\eta}| \leq k^m$, and so for some $i \neq j \in \mathcal{U}^m_\eta$ we have $a_ic_i^n = a_jc_j^n$. Thus, we have:
\begin{equation}\label{equation10}
a_j^{-1}a_i = (c_j)^n(c^{-1}_i)^n.
\tag*{$(\star)_{10}$} \end{equation}
But then, since $c_i, c_j \in H$ (as observed above), the conclusion $(\star)_{10}$ is in contradiction with clause (a)(i) of the statement of the theorem. Thus, $(\star)_{9}$ holds.]
\newline Hence, by \ref{equation6} and \ref{equation9}, conditions (e) and (f) from Def.~\ref{def_unsuper} are also satisfied.
\end{proof}

	\begin{lemma}\label{negative_side} Let $(W, S)$ be a Coxeter system of finite rank which is not non-affine $2$-spherical, and assume that $W$ is infinite. If $(W, S)$ fails the condition of Theorem~\ref{th_char_sstab}, then $W$ is not superstable (cf. Definition~\ref{def_unsuper}).
\end{lemma}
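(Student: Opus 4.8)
The plan is to reduce the statement to Theorem~\ref{pure_theorem} by producing, inside $W$, a non-abelian free subgroup that is $n$-pure for a well-chosen $n$; the non-superstability of $W$ then follows at once. First I would unwind the hypotheses. That $(W,S)$ fails the condition of Theorem~\ref{th_char_sstab} means that some irreducible component $W_{i_0}$ is neither spherical nor affine, hence infinite and of indefinite (non-affine) type. The assumption that $W$ is not non-affine $2$-spherical forces $W_{i_0}$ not to be $2$-spherical, so there are $s,t\in S_{i_0}$ with $m(s,t)=\infty$, giving a copy of $D_\infty=\langle s,t\rangle_W$ inside $W_{i_0}$; and since an irreducible, infinite, non-affine Coxeter group is of indefinite type, $W_{i_0}$ contains a non-abelian free subgroup.

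Next I would set up the reduction to this single component. As $W$ has finite rank its finite subgroups have bounded order: each lies in a finite $S$-parabolic subgroup by Lemma~\ref{parabclosure}(b), and there are only finitely many special parabolics up to conjugacy; let $M<\omega$ bound the order of every torsion element of $W$ and fix a prime $n>M$. I claim that a non-abelian free subgroup $\mathbb{F}\leq W_{i_0}$ which is $n$-pure in $W_{i_0}$ is automatically $n$-pure in $W$. Indeed, writing $W=W_1\times\cdots\times W_m$ and taking $x=(x_1,\dots,x_m)$ with $x^n\in\mathbb{F}\subseteq W_{i_0}$, for each $j\neq i_0$ we get $x_j^n=e$; since $n$ is a prime exceeding every torsion order, this forces $x_j=e$. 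Hence $x=x_{i_0}\in W_{i_0}$ with $x_{i_0}^n\in\mathbb{F}$, and $n$-purity in $W_{i_0}$ gives $x\in\mathbb{F}$. Applying Theorem~\ref{pure_theorem} to $G=W$ with this $\mathbb{F}$ then yields that $W$ is not superstable. (Equivalently one could feed $\mathbb{F}$ with $k=1$ into Theorem~\ref{general_criterion}.)

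It remains to build a non-abelian free $\mathbb{F}\leq W_{i_0}$ that is $n$-pure in $W_{i_0}$, which is the heart of the matter. Here I would use the action of $W_{i_0}$ on its CAT(0) Davis complex: being of indefinite type, $W_{i_0}$ carries two hyperbolic isometries $g,h$ with independent axes, and a ping-pong argument on the visual boundary makes $\mathbb{F}:=\langle g^N,h^N\rangle_W$ free of rank $2$ for large $N$. For $n$-purity one argues geometrically: if $x^n\in\mathbb{F}\setminus\{e\}$ then $x$ has infinite order and, commuting with the hyperbolic element $x^n$, must preserve its axis; the ping-pong dynamics together with the torsion-freeness of $\mathbb{F}$ and the choice of $n$ then pin $x$ down to an element of $\mathbb{F}$.

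The main obstacle is precisely this last purity (root-closedness) step, rather than the bare existence of a free subgroup: controlling every $n$-th root in $W_{i_0}$ requires analysing the axis stabilizers of the chosen hyperbolic elements and a malnormality-type property of $\mathbb{F}$, and it is exactly the non-affineness of $W_{i_0}$ that makes this geometry available---the very input missing in the $2$-spherical non-affine case that the theorem leaves open.
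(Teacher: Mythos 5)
Your overall strategy coincides with the paper's: reduce to Theorem~\ref{pure_theorem} (equivalently Theorem~\ref{general_criterion}) by exhibiting a non-abelian free subgroup $\mathbb{F} \leq W$ that is $n$-pure for a prime $n$ exceeding the orders of all torsion elements, and your reduction from $W$ to the bad component $W_{i_0}$ via the direct-product decomposition is correct (and arguably cleaner than what the paper writes). The problem is that the one step you yourself flag as ``the heart of the matter'' --- actually producing a non-abelian free subgroup that is $n$-pure --- is not proved, and the sketch you give does not close. Knowing that $x$ commutes with the hyperbolic isometry $x^n$ and hence preserves its axis (or min-set) does not confine $x$ to $\mathbb{F}$: the centralizer of an infinite-order element of a Coxeter group can be much larger than the cyclic group it generates, a root of an element of $\mathbb{F}$ need not respect any ping-pong table chosen for $g^N, h^N$, and neither ``torsion-freeness of $\mathbb{F}$'' nor ``the choice of $n$'' enters your argument in any way you have made precise. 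As written, this is a statement of the difficulty rather than a resolution of it.

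The paper closes exactly this gap by a much more local and elementary device, which is the idea missing from your proposal. Since the relevant component is neither affine nor $2$-spherical, one picks a rank-$3$ special $S$-parabolic subgroup $P$ whose Coxeter graph contains a non-edge; by the result of Gordon, Long and Reid cited there, such a $P$ is virtually non-abelian free. One then takes $\mathbb{F}$ non-abelian free, normal and of finite index $t$ in $P$, and chooses the prime $n$ larger than both $t$ and all torsion orders in $W$. Now $w^n \in \mathbb{F}$ first forces $w \in P$ (using the splitting $N_W(P) = C_W(P) \times P$ and the fact that $n$ exceeds every torsion order, so the $C_W(P)$-coordinate of $w$ is trivial), and then forces $w \in \mathbb{F}$ because the image of $w$ in the finite group $P/\mathbb{F}$ has order dividing the prime $n > t = |P/\mathbb{F}|$. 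No CAT(0) geometry, axis stabilizers, or malnormality is needed: the purity comes entirely from arithmetic in a finite quotient. (A smaller, shared imprecision: your inference that $W_{i_0}$ itself fails to be $2$-spherical from the global hypothesis that $W$ is not non-affine $2$-spherical is not automatic when $W$ is reducible, since another component could be responsible for the infinite entry; the paper's own proof makes the same silent reduction to the irreducible, non-$2$-spherical case, and in your argument the non-edge is in any case never used.)
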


	\begin{proof} Let $(W, S)$ be of finite rank, irreducible,  infinite, not of affine type and not $2$-spherical. By Theorem~\ref{general_criterion} it suffices to find a non-abelian free subgroup $\mathbb{F} \leq W$ and $n < \omega$ such that for every $x \in W$, if $x^n \in \mathbb{F}$, then $x \in \mathbb{F}$. Now, since $W$ is not $2$-spherical and not affine, we can find a special parabolic subgroup $P$ of $W$ of rank $3$ such that its associated graph contains a non-edge. Then, by \cite[Theorem~1]{gordon}, we know that $P$ is virtually a non-abelian free group. Let then $\mathbb{F} \leq P$ be such that the index $[P : \mathbb{F}] = t < \omega$. Let now $n$ be a prime number bigger than $c$, where:
$$c = max\{ t, max\{ |B| : \text{$B$ a spherical special $S$-parabolic subgroup of $W$} \} \}.$$
(Recall that $W$ is of finite rank and so this $c$ is well-defined.) Without loss of generality we can assume that $\mathbb{F}$ is normal in $P$ (if not, replace $\mathbb{F}$ with $\bigcap \{ g \mathbb{F}g^{-1} : g \in P \}$, which is still non-abelian free and of finite index in $P$). We claim that $n$ is as wanted, i.e. for every $x \in W$, if $x^n \in \mathbb{F}$, then $x \in \mathbb{F}$. To this extent, let $w \in W$ be such that $w^n \in \mathbb{F}$. First of all we claim that $w \in P$. Since $w^n \in P$, we have that $w^n \in N_W(P) = C_W(P) \times P$ (see e.g. \cite[Lemma~2.2]{caprace_normalizer}), Let then $\pi$ be the canonical homomorphism mapping $N_W(P)$ onto $C_W(P)$. Then obviously $\pi(w)^n = e$, and so $\pi(w) = e$, since the order of $\pi(w)$ divides $n$, which is a prime number, and $n$ is bigger than all the orders of finite elements from $W$, by the choice of $n$. Hence, by the nature of $\pi$, we can conclude that $w \in P$, as claimed above. We now show that $w$ is actually in $\mathbb{F}$. Since by assumption $w^n \in \mathbb{F}$ we have that $P/\mathbb{F} \models (wF)^n = e$ (recall that we assume that $\mathbb{F}$ is normal in $P$), but then the order of $wF$ in $P/\mathbb{F}$ divides a prime number which is bigger than all the orders of elements from $P/\mathbb{F}$ (since $n > [P : \mathbb{F}]$). Hence, $P/\mathbb{F} \models wF = e$, that is $w \in \mathbb{F}$, as wanted.
\end{proof}

	\begin{proof}[Proof of Theorem~\ref{pure_theorem}] This is by  Theorem~\ref{general_criterion} and properties of free groups. The claim about virtually non-abelian free groups is proved as in the proof of Lemma~\ref{negative_side}.
\end{proof}

	\begin{proof}[Proof of Corollary~\ref{Artin_theorem}] This follows from Theorem \ref{pure_theorem}.
\end{proof}

\subsection{The Positive Side}\label{sec_positive_side}

	\begin{fact}[{\cite[Proposition~2, pg. 146]{brown}}]\label{fact_affine} Let $(W, S)$ be an irreducible affine Coxeter group.
Then there exists $N \trianglelefteq W$ and $W_0 \leq W$ such that:
\begin{enumerate}[(1)]
	\item $W = N \rtimes W_0$;
	\item $N \cong \mathbb{Z}^d$, for some $1 \leq d < \omega$;
	\item $W_0$ is a Weyl group (and so, in particular, a finite Coxeter group).
\end{enumerate}
\end{fact}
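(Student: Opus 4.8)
The plan is to exploit the defining geometric realization of an irreducible affine Coxeter group. By hypothesis $W$ acts faithfully as a discrete affine reflection group on a Euclidean space, which after fixing coordinates we may take to be $\mathbb{R}^d$ for the appropriate $d \geq 1$. Every element of $W$ is then an affine isometry $v \mapsto Av + b$ with $A \in O(d)$, and the assignment $\lambda \colon W \to O(d)$, $(A, b) \mapsto A$, is a group homomorphism (the ``linear part'' or derivative map). I would set $N := \ker \lambda$, so that $N$ consists exactly of the translations contained in $W$, and put $W_0 := \lambda(W)$ for the point group.

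First I would establish that $N \cong \mathbb{Z}^d$. Since $N \trianglelefteq W$ is the kernel of $\lambda$, normality is automatic, and since $N$ is identified with a subgroup of the translation group $(\mathbb{R}^d, +)$, it is abelian. Discreteness of the $W$-action forces $N$ to be a discrete subgroup of $\mathbb{R}^d$, hence a lattice, so $N \cong \mathbb{Z}^r$ for some $r \leq d$. That $r = d$ --- i.e. that the translations span $\mathbb{R}^d$ --- follows from irreducibility together with the infinitude of $W$: were the translation lattice of lower rank, the action would either fail to be cocompact or would force a splitting of the ambient space, contradicting irreducibility of the associated root system.

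Next I would show $W_0$ is finite and realize it as a complement inside $W$. Because the $W$-action is discrete with a compact fundamental domain (an alcove), the point group $W_0 = \lambda(W)$ is finite; being generated by the linear parts of the reflections in $W$, it is a finite reflection group, i.e. a Weyl group. To obtain an actual subgroup of $W$ isomorphic to $W_0$, I would fix a \emph{special} point $p \in \mathbb{R}^d$ --- a vertex whose $W$-stabilizer surjects onto $W_0$ under $\lambda$, such special vertices always existing in an irreducible affine Weyl group --- and set $W_0' := \mathrm{Stab}_W(p)$. The restriction $\lambda|_{W_0'}$ is injective, since an affine isometry with trivial linear part fixing $p$ is the identity, and it is surjective onto $W_0$ by the choice of $p$; hence $W_0' \cong W_0$ is the desired finite Weyl group.

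Finally I would verify the decomposition $W = N \rtimes W_0'$. We have $N \cap W_0' = \{ e \}$ because a translation fixing the point $p$ must be trivial, and $N W_0' = W$ because for arbitrary $w \in W$ there is $w_0 \in W_0'$ with $\lambda(w) = \lambda(w_0)$, whence $w w_0^{-1} \in \ker \lambda = N$. Together with the already-established normality of $N$, this yields the claimed semidirect product with $N \cong \mathbb{Z}^d$ and $W_0'$ a Weyl group. The step demanding the most care is the lattice claim $N \cong \mathbb{Z}^d$ together with the existence of a special vertex realizing the full point group; both are instances of the classical structure theory of crystallographic (Bieberbach) groups specialized to affine Weyl groups, and are precisely where the irreducibility and discreteness hypotheses are indispensable.
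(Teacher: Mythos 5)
The paper gives no proof of this statement at all---it is quoted as a Fact with a citation to \cite[Proposition~2, pg.~146]{brown}---and your argument is precisely the standard one from that source: the linear-part homomorphism $\lambda$, the translation lattice $N=\ker\lambda$, and the stabilizer of a special point as a complement isomorphic to the finite point group. Your proposal is correct and takes essentially the same approach; the one step you state somewhat loosely, that $N$ has full rank $d$, is most cleanly justified by noting that the $\mathbb{R}$-span of $N$ is a nonzero subspace invariant under the irreducibly acting finite point group, hence all of $\mathbb{R}^d$.
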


	Thus, in light of Fact~\ref{fact_affine}, we show:

	\begin{proposition}\label{prop_semidirect} Let $Q$ be a finite group, $1 \leq d < \omega$, and $\theta: Q \rightarrow Aut(\mathbb{Z}^d)$ an homomorphism. Then the semidirect product $G :=\mathbb{Z}^d \rtimes_\theta Q$ is interpretable in $\mathbb{Z}$ with finitely many parameters, and so, in particular, the group $G$ is superstable.
\end{proposition}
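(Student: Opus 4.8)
The plan is to realize $G$ directly on a definable subset of a Cartesian power of $\mathbb{Z}$ and to exhibit its multiplication as a definable operation, thereby producing an interpretation in the sense of Definition~\ref{classical_interpretability}. Write $m := |Q|$ and fix an enumeration $Q = \{q_0, \ldots, q_{m-1}\}$ with $q_0 = e_Q$. I take as parameters the integers $0, 1, \ldots, m-1$, set $n := d+1$, and let the domain of the interpretation be
$$ D := \mathbb{Z}^d \times \{0, 1, \ldots, m-1\} \subseteq \mathbb{Z}^{d+1}, $$
which is definable over these parameters (the last coordinate is pinned down by a finite disjunction of equalities). I take the equivalence relation $E$ to be equality, so that $D/E = D$, and I let $\alpha \colon G \to D$ be the obvious bijection $(v, q_i) \mapsto (v, i)$.

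It then suffices to check that the graph of multiplication of $G$ pulls back along $\alpha$ to a subset of $(\mathbb{Z}^{d+1})^3$ definable over the chosen parameters; together with the analogous (and easier) statements for the identity $e_G = (0, q_0)$ and for inversion, this yields the interpretation via the routine translation of group-theoretic formulas into formulas over $(\mathbb{Z}, +, 0)$. Recall that multiplication in $G$ is $(v, q_i)(v', q_j) = (v + \theta(q_i)(v'),\, q_i q_j)$. Each $\theta(q_i) \in Aut(\mathbb{Z}^d) = GL_d(\mathbb{Z})$ is given by a fixed integer matrix $A_i$, and since multiplication by a fixed integer $c$ is expressible in $(\mathbb{Z}, +, 0)$ (as a repeated sum, using the $\emptyset$-definable map $x \mapsto -x$ for $c < 0$), the map $v' \mapsto A_i v'$ is $\emptyset$-definable for each $i$. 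Likewise the multiplication table of $Q$ is a finite function $(i,j) \mapsto \mu(i,j)$ whose graph is definable over $0, \ldots, m-1$ by a finite disjunction of equalities. Performing a finite case split on the $Q$-coordinates of the two arguments, the relation
$$ \{((v, i),(v', j),(w, k)) : w = v + A_i v' \text{ and } k = \mu(i, j)\} $$
is definable over the parameters, and this is precisely the sought pullback.

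Finally, superstability is immediate: naming the finitely many parameters yields the structure $(\mathbb{Z}, +, 0, 0, 1, \ldots, m-1)$, which is superstable because $(\mathbb{Z}, +, 0)$ is superstable (the canonical example, cf. the discussion following Definition~\ref{def_unsuper}) and naming finitely many constants preserves superstability. Since $G$ is interpretable without parameters in this structure, and superstability is inherited under interpretability, $G$ is superstable.

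I expect no deep obstacle here; the only point demanding care is the bookkeeping in assembling the twisted multiplication, i.e.\ verifying that the $Q$-action maps and the finite group $Q$ are genuinely definable. Once these are in hand, the group operation of $G$ is a finite combination of definable pieces, so the verification of the interpretability conditions is mechanical.
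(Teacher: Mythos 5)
Your proposal is correct and follows essentially the same route as the paper: the paper merely splits the construction into two lemmas (first interpreting $G$ in the auxiliary structure $(\mathbb{Z}^d, +, \pi_x)_{x \in Q}$ with integer indices for $Q$ as parameters, and then interpreting that structure in $(\mathbb{Z},+,0)$ via the integer-matrix representation of $Aut(\mathbb{Z}^d)$), whereas you assemble the same ingredients --- finitely many integer parameters encoding $Q$, definability of fixed integer linear maps by repeated addition, and a finite case split for the twisted multiplication --- into a single direct interpretation. The superstability conclusion is drawn identically in both arguments.
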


	We prove two lemmas from which Proposition~\ref{prop_semidirect} follows.

	\begin{lemma}\label{lemma_inter} In the context of Proposition~\ref{prop_semidirect}, the group $G$ is interpretable with finitely many parameters in the structure $M = (\mathbb{Z}^d, +,  \pi_x)_{x \in Q}$, where $\pi_x := \theta(x)$. 
\end{lemma}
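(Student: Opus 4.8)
The plan is to encode an element $(v,q)$ of the semidirect product $G = \mathbb{Z}^d \rtimes_\theta Q$ by the pair $(v, c_q) \in M^2$, reserving pairwise distinct parameters to name the finitely many elements of $Q$ inside $\mathbb{Z}^d$. Concretely, I would enumerate $Q = \{q_1, \dots, q_k\}$ with $q_1 = e_Q$ and fix pairwise distinct parameters $c_1, \dots, c_k \in \mathbb{Z}^d$, writing $\bar c = (c_1, \dots, c_k)$; since $Q$ is finite this is a finite set of parameters, as required. Taking $n = 2$ in Definition~\ref{classical_interpretability}, I would let the domain be the $\bar c$-definable set $D = \{(x,y) \in M^2 : \bigvee_{i \le k} y = c_i\}$, let the equivalence relation $E$ be equality on $D$ (which is $\emptyset$-definable), and let the bijection $\alpha : G \to D/E = D$ be $(v, q_i) \mapsto (v, c_i)$. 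That $\alpha$ is a bijection is immediate from the distinctness of the $c_i$ together with the uniqueness of the expression $(v, q_i)$ of an element of $G$.

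Second, I would pull back the basic operations of the group language of $G$ to $\bar c$-definable relations on $M$, exploiting that each twist $\theta(q_i)$ is \emph{already} the function symbol $\pi_{q_i}$ of $M$ and that the multiplication table of $Q$ is a fixed finite datum. Let $\mu$ and $\iota$ be the functions on $\{1, \dots, k\}$ determined by $q_i q_j = q_{\mu(i,j)}$ and $q_i^{-1} = q_{\iota(i)}$. Writing each $D$-variable as a pair $\bar x_s = (x_s, y_s)$, the graph of multiplication on $G$ transports under $\alpha$ to the relation defined by the finite disjunction
\[
\bigvee_{i, j \le k}\Bigl( y_1 = c_i \wedge y_2 = c_j \wedge y_3 = c_{\mu(i,j)} \wedge x_3 = x_1 + \pi_{q_i}(x_2) \Bigr),
\]
which faithfully reflects the semidirect-product rule $(v, q_i)(w, q_j) = (v + \theta(q_i)(w),\, q_i q_j)$. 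Similarly the identity $(0, e_Q)$ transports to the $\emptyset$-definable point $(0, c_1)$ — the additive zero $0$ is $\emptyset$-definable in $(\mathbb{Z}^d, +)$ as the unique $x$ with $x + x = x$ — and inversion transports to the relation defined by $\bigvee_{i \le k} (y_1 = c_i \wedge y_2 = c_{\iota(i)} \wedge x_2 = -\pi_{q_{\iota(i)}}(x_1))$, matching $(v, q_i)^{-1} = (-\theta(q_i^{-1})(v),\, q_i^{-1})$. All of these are formulas in the language of $M$ with parameters from $\bar c$.

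Finally, I would invoke the standard fact that, once the domain $D$, the equivalence $E$, and the pullbacks of the basic symbols $(\cdot,\ {}^{-1},\ e)$ of the language of $G$ are $\bar c$-definable, every $\emptyset$-definable subset of $G^m$ has $\bar c$-definable pullback: this follows by induction on formulas, Boolean combinations presenting no difficulty and quantifiers being harmlessly relativized to the $\bar c$-definable set $D$. This yields exactly the interpretation of $G$ in $M$ over the finite parameter set $\bar c$ demanded by Definition~\ref{classical_interpretability}. The single point requiring care — and the real content of the lemma — is the finiteness of $Q$: it is precisely what makes both the parameter set $\{c_1, \dots, c_k\}$ and the disjunctions encoding the multiplication and inversion tables finite, and hence genuinely first-order. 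The role of the auxiliary structure $M$, rather than $(\mathbb{Z}^d,+)$ alone, is to render the twists $\theta(q)$ available as definable functions, so that the normal factor and the quotient action can be spliced together inside a single first-order formula.
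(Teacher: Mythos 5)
Your proposal is correct and follows essentially the same route as the paper: encode $(v,q)$ as a pair whose second coordinate is one of finitely many parameters naming the elements of $Q$, then define the semidirect-product multiplication by a finite disjunction over the multiplication table of $Q$, using the fact that the twists $\theta(q)$ are available as the function symbols $\pi_q$ of $M$. Your write-up is somewhat more explicit than the paper's (spelling out the domain, the inversion formula, and the induction on formulas), but the underlying interpretation is the same.
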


	\begin{proof} Let $G = (G, \cdot_G) = (\mathbb{Z}^d \times Q, \cdot_G)$, and enumerate $\{ \pi_x : x \in Q \}$ as $(t_0, ..., t_{n-1})$. Let also $\mathbb{Z}^d = N$. We represent $N \times Q$ as the collection of pairs $(a, \hat{t}_i)$ with $a \in N$ and $\hat{t}_i =(i, \underbrace{0, ..., 0}_{d-1})$, for $i < n$, thus using the elements $\{ (i, \underbrace{0, ..., 0}_{d-1}) : i < n \}$ as parameters. For the rest of the proof we do not distinguish between the elements $t_i$ and $\hat{t}_i$.
We are left to show that the product:
	$$(a_1, t_1) \cdot_G (a_2, t_2) = (a_3, t_3) = (a_1t_1(a_2), t_1t_2) \in N \times Q,$$
is definable in $M$.  Now, $Q \models t_3 = t_1t_2$ is clearly definable in $M$, since $Q$ is finite. We then conclude observing that $N \models a_3 = a_1t_1(a_2)$ is also definable in $M$, because the function symbols $t_\ell$, for $\ell < n$, are part of the signature of the structure~$M$.
\end{proof}

	\begin{fact}\label{matrix} Let $1 \leq d < \omega$, then $Aut(\mathbb{Z}^d)$ is the group of $d \times d$ invertible $\mathbb{Z}$-matrices.
\end{fact}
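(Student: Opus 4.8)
The plan is to establish the standard identification $End(\mathbb{Z}^d) \cong M_d(\mathbb{Z})$ and then read off the unit groups on both sides. First I would recall that, since $\mathbb{Z}^d$ is free as a $\mathbb{Z}$-module with standard basis $e_1, \ldots, e_d$, every \emph{group} endomorphism $\varphi$ of $\mathbb{Z}^d$ is automatically $\mathbb{Z}$-linear (additivity forces $\varphi(n x) = n \varphi(x)$ for all $n \in \mathbb{Z}$), and hence is completely determined by the values $\varphi(e_1), \ldots, \varphi(e_d) \in \mathbb{Z}^d$. Writing each $\varphi(e_j)$ as a column of integers produces a matrix $A_\varphi \in M_d(\mathbb{Z})$, and conversely every $A \in M_d(\mathbb{Z})$ arises this way, since the $e_j$ may be sent to arbitrary elements of $\mathbb{Z}^d$.

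Next I would check that $\varphi \mapsto A_\varphi$ is a monoid isomorphism from $(End(\mathbb{Z}^d), \circ)$ onto $(M_d(\mathbb{Z}), \cdot)$: it is a bijection by the previous step, it sends the identity endomorphism to the identity matrix, and it carries composition to matrix multiplication via the usual computation $A_{\psi \circ \varphi} = A_\psi A_\varphi$ verified on basis vectors.

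Then the automorphism group $Aut(\mathbb{Z}^d)$, being exactly the group of units of the monoid $End(\mathbb{Z}^d)$, corresponds under this isomorphism to the group of units of $M_d(\mathbb{Z})$, which by definition is the group of $d \times d$ integer matrices invertible over $\mathbb{Z}$. The only point deserving care is that an automorphism of $\mathbb{Z}^d$ is a \emph{bijective} group homomorphism, so its set-theoretic inverse is again a group homomorphism and therefore corresponds to an integer matrix; this is what guarantees that $A_\varphi$ is invertible inside $M_d(\mathbb{Z})$ rather than merely inside $M_d(\mathbb{Q})$. I do not expect a genuine obstacle, since the statement is a special case of the elementary fact that $Aut(R^d) = GL_d(R)$ for a commutative ring $R$; the one subtlety worth flagging is precisely this distinction between invertibility over $\mathbb{Z}$ and over $\mathbb{Q}$ (equivalently, that the relevant matrices are exactly those of determinant $\pm 1$).
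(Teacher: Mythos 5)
Your argument is correct and complete; the paper states this as a standard Fact without proof, and your proof is precisely the standard one (group endomorphisms of a free abelian group are $\mathbb{Z}$-linear, $End(\mathbb{Z}^d)\cong M_d(\mathbb{Z})$ as monoids, units correspond to units). You rightly flag the one subtle point, namely that invertibility must hold over $\mathbb{Z}$ rather than $\mathbb{Q}$, which is exactly where a careless reading could go wrong.
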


	\begin{lemma}\label{lemma_sstab} The structure $M = (\mathbb{Z}^d, +,  \pi_x)_{x \in Q}$ from Lemma~\ref{lemma_inter} is interpretable with finitely many parameters in the abelian group $\mathbb{Z}$.
\end{lemma}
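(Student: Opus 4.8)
The plan is to give an explicit interpretation of $M$ in $(\mathbb{Z}, +, 0)$ in the sense of Definition~\ref{classical_interpretability}, taking $n = d$, the $\emptyset$-definable set $D := \mathbb{Z}^d$, the equivalence relation $E$ equal to equality, and $\alpha$ the identity map $M \to \mathbb{Z}^d = D/E$. It then remains to verify that the graphs of the primitive operations of $M$ --- the group operation $+$ on $\mathbb{Z}^d$ and each unary operation $\pi_x$ for $x \in Q$ --- are definable in $(\mathbb{Z}, +, 0)$. In fact no parameters will be needed, so the resulting interpretation is a fortiori one with finitely many parameters.

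First, addition on $\mathbb{Z}^d$ is coordinatewise, so its graph $\{ (\bar{u}, \bar{v}, \bar{w}) \in (\mathbb{Z}^d)^3 : \bigwedge_{i < d} w_i = u_i + v_i \}$ is quantifier-free definable in $(\mathbb{Z}, +, 0)$. Second, by Fact~\ref{matrix} each $\pi_x = \theta(x)$ lies in $Aut(\mathbb{Z}^d)$ and is therefore given by an invertible matrix $A_x = (a^x_{ij})_{i, j < d}$ with entries in $\mathbb{Z}$, its action being $\pi_x(\bar{v})_i = \sum_{j < d} a^x_{ij} v_j$. The key point is that, for each \emph{fixed} integer $a$, the map $z \mapsto a \cdot z$ is $\emptyset$-definable in $(\mathbb{Z}, +, 0)$: for $a \geq 0$ it is given by the group term $z + \cdots + z$ ($a$ summands), and for $a < 0$ one precomposes with the definable inversion $z \mapsto -z$ (the unique $w$ with $z + w = 0$). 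Since $Q$ is finite and each matrix $A_x$ has finitely many fixed integer entries, each coordinate $\sum_{j < d} a^x_{ij} v_j$ is a concrete group term, and hence the graph of $\pi_x$ is definable.

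This exhibits $+$ and all the $\pi_x$ as parameter-free definable operations, so $M$ is interpretable in $(\mathbb{Z}, +, 0)$, completing the proof; combined with Lemma~\ref{lemma_inter}, this yields Proposition~\ref{prop_semidirect}. The only point requiring care is that definability of multiplication by a matrix entry relies crucially on the entry being a fixed integer, so that it unfolds into an explicit term: there is no uniformly definable multiplication in $(\mathbb{Z}, +, 0)$ (indeed $(\mathbb{Z}, +, 0)$ is superstable precisely because its definable sets are so restricted), but the finiteness of $Q$ guarantees that we only ever invoke finitely many such fixed scalars, so no genuine multiplication is needed.
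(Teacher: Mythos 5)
Your proof is correct and follows essentially the same route as the paper: the domain is $\mathbb{Z}^d$, addition is interpreted coordinatewise, and each $\pi_x$ is handled via its integer matrix from Fact~\ref{matrix}. The only (welcome) refinement is your observation that multiplication by a fixed integer entry unfolds into a group term, so the interpretation is in fact parameter-free, whereas the paper formally carries the matrix entries as parameters; either way the statement ``with finitely many parameters'' is established.
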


	\begin{proof} Let $(\pi_0, ..., \pi_{n-1})$ enumerate $\{ \pi_x : x \in Q\}$. By Fact~\ref{matrix}, for every $\ell < n$, $\pi_\ell$ can be represented as the invertible $\mathbb{Z}$-matrix $(a^\ell_{i, j})_{i, j < d}$. We now show that we can interpret $M$ in $\mathbb{Z}$ with the set of parameters $A = \{a^\ell_{i, j} : \ell < n, i < d, j < d \}$. The domain of $M$ under the interpretation is naturally the set $\mathbb{Z}^d$. The additive group structure of $M$ is defined coordinate-wise. We are then left to show that the function symbols $\pi_\ell$'s are definable in $\mathbb{Z}$ with parameters from $A$. To this extent, let $\ell < n$, and $\bar{b} = (b_0, ..., b_{d-1}) \in \mathbb{Z}^d$. Then, letting $(c_0, ..., c_{d-1}) = \pi_\ell(\bar{b})$, we have:
	$$c_i = \sum_{j < d} a^\ell_{i, j} b_j,$$
which is clearly definable in $\mathbb{Z}$ over $A$.
\end{proof}

	\begin{proof}[Proof of Proposition~\ref{prop_semidirect}] By Lemmas~\ref{lemma_inter} and \ref{lemma_sstab} (the fact that the interpretation uses finitely parameters is not a problem, see e.g. \cite[pg. 287]{poizat_book}).
\end{proof}

	\begin{proof}[Proof of Theorem~\ref{th_char_sstab}] The fact that the condition is necessary is by Lemma~\ref{negative_side}. The fact that the condition is sufficient is by Fact~\ref{fact_affine}, Proposition~\ref{prop_semidirect} and the fact that finite direct products of superstable groups are superstable.
\end{proof}

	\begin{proof}[Proof of Theorem~\ref{Artin_theorem}] If the right-angled Artin group $A$ is not abelian, then argue as in Lemma~\ref{negative_side}. We are then left with the \mbox{case $A \cong \bigoplus_{\beta < \alpha}\mathbb{Z}$, and so we are done.}
\end{proof}

	\begin{proof}[Proof of Theorem~\ref{decidability_affine_Coxeter}] The only thing which is left to show is the decidability claim, but this is clear simply observing the two following facts:
	\begin{enumerate}[(1)]
	\item any expansion of the abelian group $\mathbb{Z}$ with finitely many constants is decidable (this structure is definable in $(\mathbb{Z}, +, 1, 0)$, which is well-known to be decidable);
	\item if a structure $M$ is $\emptyset$-intepretable into a structure $N$, then the decidability of $Th(N)$ implies the decidability of $Th(M)$.
\end{enumerate}	
\end{proof}

\section{Which Coxeter Groups are Domains?}\label{sec_domains}

	In this section we prove Theorem~\ref{domain_conj}.

\subsection{Preparatory Work}

	In this section we lay the preparatory work towards a proof of Theorem~\ref{domain_conj}. We invite the reader to recall the terminology from Section~\ref{preliminaries_sec}. Also, given a group $G$ and $A, B \subseteq G$ we let $[A, B] = \{a^{-1}b^{-1}ab : a \in A, b \in B\}$.
	
	\begin{lemma} \label{normalparabolic} Let $W$ be a Coxeter group of finite rank.
Let $N$ be a normal subgroup of $W$. Then $Pc_S(N) = \langle J \rangle_W$ for some $J \subseteq S$ such that $[J,S - J] = e$.
\end{lemma}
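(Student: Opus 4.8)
The plan is to identify $Pc_S(N)$ as a \emph{normal} parabolic subgroup, then upgrade ``normal parabolic'' to ``special parabolic'', and finally read off the commutation condition $[J,S-J]=e$ directly from Fact~\ref{basicsonJ}(c). Throughout, note that since $N$ is a subgroup we have $\langle N \rangle_W = N$.

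First I would apply Lemma~\ref{parabclosure}(a) to write $Pc_S(N) = w\langle J \rangle_W w^{-1}$ for some $w \in W$ and some $J \subseteq S$. The first genuine step is to show that $P := Pc_S(N)$ is normal in $W$. Because $N \trianglelefteq W$, we have $N_W(\langle N \rangle_W) = N_W(N) = W$, so Lemma~\ref{parabclosure}(c) gives $W = N_W(N) \leq N_W(Pc_S(N))$; that is, every element of $W$ normalizes $P$, so $P \trianglelefteq W$.

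The step one might expect to be the main obstacle is showing that a normal parabolic subgroup is automatically special (i.e.\ of the form $\langle J \rangle_W$ rather than merely a conjugate), but this collapses to a one-line conjugation argument: from $P = w\langle J\rangle_W w^{-1}$ and the normality of $P$ we get $P = w^{-1}Pw = w^{-1}\bigl(w\langle J\rangle_W w^{-1}\bigr)w = \langle J\rangle_W$. Hence $Pc_S(N) = \langle J\rangle_W$ is already special, and there is no separate ``type computation'' to perform.

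Finally, to obtain the commutation condition, I would argue locally at each generator: for each $t \in S-J$, the element $t$ normalizes the normal subgroup $\langle J \rangle_W$, so Fact~\ref{basicsonJ}(c) yields $[t,J]=e$. Letting $t$ range over $S-J$ gives $[J,S-J]=e$, which completes the proof. In summary, the only care needed is to chain Lemma~\ref{parabclosure}(a) and (c) correctly (using $\langle N\rangle_W=N$) and to apply the conjugation trick; the anticipated difficulty in step three evaporates, so no serious obstacle remains.
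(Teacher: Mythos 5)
Your proposal is correct and follows essentially the same route as the paper's proof: normality of $Pc_S(N)$ via Lemma~\ref{parabclosure}(c), the conjugation trick $P = w^{-1}Pw = \langle J\rangle_W$ to see the parabolic closure is special, and Fact~\ref{basicsonJ}(c) applied to each $t \in S-J$ for the commutation condition. No issues.
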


	\begin{proof} Let $M := Pc_S(N)$. As $W \leq N_W(N)$ we have $W \leq N_W(M)$ by Item (c) of Lemma \ref{parabclosure},
and therefore $M$ is a normal subgroup of $W$. As $M$ is an $S$-parabolic subgroup of $W$, we have
$M = w \langle J \rangle_W w^{-1}$ for some $w \in W$ and some $J \subseteq S$. As $M$ is a normal subgroup of $W$
we have $M = w^{-1}Mw = \langle J \rangle_W$. As $M$ is normal, we have $t\langle J \rangle_W t = \langle J \rangle_W$
for each $t \in S$ and so $[S - J,J] = e$ by Lemma~\ref{basicsonJ}(c).
\end{proof}

\begin{lemma} \label{MPWlemma} Let $(W, S)$ be a Coxeter system of finite rank.
Let $t \in S$, let $K \subseteq S$ be the irreducible component of $(W,S)$ containing $t$, let $J := S - \{ t \}$
and let $U := N_W(\langle J \rangle_W)$. If $U \neq \langle J \rangle_W$, then $K$ is a spherical subset of $S$.
In particular, if $(W,S)$ is irreducible and $U \neq \langle J \rangle_W$, then $(W,S)$ is spherical.
\end{lemma}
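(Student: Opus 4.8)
The plan is to first reduce to the irreducible case and then prove the ``in particular'' clause, from which the general statement follows formally. Write $W = \langle K \rangle_W \times \langle S - K \rangle_W$ according to the decomposition of $W$ into irreducible components (here $K$ is the component containing $t$, and the two factors commute elementwise and meet trivially). Since $J = (K - \{t\}) \cup (S - K)$, we have $\langle J \rangle_W = \langle K - \{t\} \rangle_W \times \langle S - K \rangle_W$, and the normaliser of a subgroup of the form $A' \times B$ inside $A \times B$ is $N_A(A') \times B$. Hence $U = N_{\langle K \rangle_W}(\langle K - \{t\} \rangle_W) \times \langle S - K \rangle_W$, so $U \neq \langle J \rangle_W$ if and only if $\langle K - \{t\} \rangle_W$ fails to be self-normalising inside $\langle K \rangle_W$. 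Thus it suffices to treat the irreducible system $(\langle K \rangle_W, K)$ with the maximal subset $K - \{t\}$, i.e.\ to prove exactly the ``in particular'' statement: for $(W,S)$ irreducible with $J = S - \{t\}$, if $U \neq \langle J \rangle_W$ then $W$ is finite.

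So assume $(W,S)$ is irreducible and $U \supsetneq \langle J \rangle_W$. Pick $w_0 \in U - \langle J \rangle_W$ and replace it by the minimal-length element $w$ of the coset $w_0 \langle J \rangle_W$; then $w \in U - \langle J \rangle_W$ still, and $\ell_S(ws) > \ell_S(w)$ for every $s \in J$. As $w \neq e$ it has a right descent, which can only be $t$. Since $w$ is minimal in its coset and normalises $\langle J \rangle_W$, it is minimal in the double coset $\langle J \rangle_W\, w\, \langle J \rangle_W = \langle J \rangle_W\, w$, and the standard theory of minimal double-coset representatives yields $w J w^{-1} = J$; a symmetric argument shows $t$ is also the unique left descent. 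In terms of the geometric representation this says $w\alpha_s = \alpha_{\sigma(s)}$ for $s \in J$ (where $\sigma$ is the induced diagram automorphism of $\langle J \rangle_W$), while $w\alpha_t$ is a negative root $-\beta$ with $\beta \in \Phi^+$ of positive $\alpha_t$-coefficient (because $wtw^{-1} \notin \langle J \rangle_W$, as $\langle J, wtw^{-1}\rangle_W = w\langle S\rangle_W w^{-1} = W$).

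Granting for the moment that $\langle J \rangle_W$ is finite, I would finish as follows. Let $w_J$ be its longest element, so $w_J$ sends every positive root of $\langle J \rangle_W$ to a negative one and fixes $\alpha_t$ modulo $V_J := \mathrm{span}\{\alpha_s : s \in J\}$ (hence preserves the $\alpha_t$-coefficient). Put $u := w_J w$. For $s \in J$ we get $u\alpha_s = w_J \alpha_{\sigma(s)} < 0$, while $u\alpha_t = -w_J \beta$, and since $w_J\beta$ still has positive $\alpha_t$-coefficient it is a positive root, so $u\alpha_t < 0$. Thus $u$ sends every simple root to a negative root; as inversion sets are biconvex and the smallest biconvex set containing all simple roots is $\Phi^+$, we get $\mathrm{Inv}(u) = \Phi^+$, which is therefore finite, and $W$ is finite. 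This is the clean finiteness criterion: existence of a longest element.

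The hard part is exactly the bracketed hypothesis: ruling out that such a normalising $w$ exists over an \emph{infinite} maximal parabolic $\langle J \rangle_W$ (equivalently, that $w$ merely realises a diagram automorphism of an infinite $\langle J \rangle_W$). For the maximal subset $J = S - \{t\}$ this is where I would invoke Howlett's structure theorem for normalisers of parabolic subgroups, $N_W(\langle J \rangle_W) = \langle J \rangle_W \rtimes N_J$: since $|S - J| = 1$, the only possible generator of the complement $N_J$ is $w_S w_J$, whose mere existence already requires $\langle S \rangle_W = W$ to be finite. Hence $U \neq \langle J \rangle_W$ forces $W$ finite (and, a posteriori, $\langle J \rangle_W$ finite, legitimising the previous paragraph). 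Feeding this back through the reduction of the first paragraph shows that $K$ is spherical, which completes the proof.
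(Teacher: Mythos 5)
Your proposal is correct in substance but follows a genuinely different route from the paper. The paper's proof is a five-line appeal to the theory of descent in buildings: it regards $\langle J \rangle_W$-cosets as $\Gamma$-chambers in the Coxeter building, observes that $U \neq \langle J \rangle_W$ produces a second $\Gamma$-chamber parallel to the standard one, and quotes \cite[Propositions~21.3 and~21.50]{descent} to conclude directly that the component $K$ is spherical (no reduction to the irreducible case is needed there). You instead reduce to the irreducible case by hand -- the direct-product computation $N_{A\times B}(A'\times B)=N_A(A')\times B$ is correct and clean -- and then work with minimal coset representatives and the root system. Your middle paragraph (the element $u=w_Jw$ with $\mathrm{Inv}(u)=\Phi^+$) is correct but, as you note yourself, logically superfluous once the last paragraph is in place.

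The one point you must tighten is the citation that carries the entire weight of the lemma. ``Howlett's structure theorem'' in its original form (Howlett, 1980) concerns \emph{finite} Coxeter groups, and your description of the putative generator as $w_Sw_J$ presupposes longest elements; invoked literally, this would be circular, since finiteness of $W$ is exactly what is to be proved. The result you actually need is the Brink--Howlett normalizer theorem for arbitrary Coxeter systems of finite rank (Invent.\ Math.\ 136 (1999)): $N_W(W_J)=W_J\rtimes N_J$ with $N_J$ generated by elementary moves $\nu(s,K)$, each of which is defined only when the irreducible component of $K\cup\{s\}$ containing $s$ is spherical. For $J=S\setminus\{t\}$ and $W$ irreducible that component is all of $S$, so no move exists unless $W$ is finite, and $N_J$ is trivial -- this is the correct form of your argument. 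Alternatively, you could avoid the citation altogether: from your own normalization ($w\alpha_j=\alpha_{\sigma(j)}$ for $j\in J$ and $w\alpha_t=-\beta$ with $\beta$ of positive $\alpha_t$-coefficient) one checks that $w$ sends every root in $\Phi^+\setminus\Phi_J^+$ to a negative root, so $N(w)=\Phi^+\setminus\Phi_J^+$ is finite of cardinality $\ell(w)$; since distinct minimal coset representatives for $W/W_J$ have distinct inversion sets contained in $\Phi^+\setminus\Phi_J^+$, the index $[W:W_J]$ is finite, and a proper standard parabolic of an infinite irreducible Coxeter group never has finite index. That would make your proof self-contained where it currently leans on an imprecisely identified theorem.
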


\begin{proof} We put $\Gamma := \langle J \rangle_W$ and remark that $\Gamma \leq U$. The group $\Gamma$
acts on the Coxeter building $\Sigma(W,S)$ and $R := \langle J \rangle_W \subseteq W$ is a $\Gamma$-chamber
(in the sense of \cite[Definition~22.2]{descent}. As $U = N_W(\Gamma)$, the group $U$ acts on the set of $\Gamma$-chambers.
If $U \neq \Gamma = Stab_W(R)$, then there exists a $\Gamma$-chamber $T \neq R$. By \cite[Proposition~21.3]{descent},
$T$ is parallel to $R$ (in the sense of \cite[Definition~21.7]{descent}), and therefore it follows from \cite[Proposition~21.50]{descent} that $K$ is spherical.
\end{proof}

\begin{lemma} \label{normalsubgroups}
Let $(W,S)$ be a Coxeter system of finite rank, and suppose that $(W, S)$ is irreducible and non-spherical. Let $N$ be a normal subgroup of $W$ and let $J \subseteq S$
be such that $\emptyset \neq J \neq S$. If $N \leq N_W(\langle J \rangle_W)$, then $|N| = 1$.
\end{lemma}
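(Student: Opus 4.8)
The plan is to feed the normal subgroup $N$ into Lemma~\ref{normalparabolic} and then exploit irreducibility. Applying that lemma to $N$ gives $Pc_S(N)=\langle K\rangle_W$ with $[K,S-K]=e$; since $m(k,s)=2$ for all $k\in K$ and $s\in S-K$ means there is no edge of the Coxeter diagram between $K$ and $S-K$, the set $K$ is a union of connected components of the diagram, so by irreducibility $K=\emptyset$ or $K=S$. If $K=\emptyset$ then $N\leq Pc_S(N)=\langle\emptyset\rangle_W=\{e\}$ (using $Pc_S(N)\supseteq\langle N\rangle_W=N$ by Lemma~\ref{parabclosure}(a)), and we are done. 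So the whole content is to rule out the case $Pc_S(N)=W$, which I would treat by contradiction, assuming $N\neq\{e\}$.

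First I would try to show that $N$ centralizes $\langle J\rangle_W$. Since $N$ normalizes $\langle J\rangle_W$ and is normal in $W$, every commutator $[n,x]$ with $n\in N$, $x\in\langle J\rangle_W$ lies in $N\cap\langle J\rangle_W$; write $N_0:=N\cap\langle J\rangle_W$. If $N_0=\{e\}$, then $N$ centralizes $\langle J\rangle_W$, hence (being normal) centralizes every conjugate $w\langle J\rangle_W w^{-1}$, and therefore centralizes the reflection subgroup $P:=\langle\,\bigcup_{s\in J}s^W\,\rangle_W$, which is the normal closure of $\langle J\rangle_W$. Thus $N\leq C_W(P)$. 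Now $C_W(P)$ is itself a normal subgroup, so by Lemma~\ref{normalparabolic} and irreducibility $Pc_S(C_W(P))\in\{\{e\},W\}$; on the other hand $Pc_S(P)=W$ (again by Lemma~\ref{normalparabolic}, as $P\neq\{e\}$), so an element centralizing $P$ stabilizes the wall of every reflection of $P$, and these walls are not confined to any proper residue. Hence, by faithfulness of the geometric representation (Fact~\ref{fact_for_even}(A)), the only such element is $e$; so $C_W(P)=\{e\}$, forcing $N=\{e\}$ and contradicting $Pc_S(N)=W$.

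The main obstacle is the case $N_0=N\cap\langle J\rangle_W\neq\{e\}$, where the centralizing argument breaks down and $N_0$ is a nontrivial normal subgroup of the lower-rank Coxeter system $(\langle J\rangle_W,J)$. Here I would proceed by induction on the rank $|S|$: decompose $\langle J\rangle_W$ into irreducible components, note that a normal subgroup can meet a spherical component only in a tightly constrained (essentially central) way, and apply the inductive hypothesis to a non-spherical component, using that $N$ normalizes the intermediate standard parabolics to push $N_0$ — and hence $N$ — back into the centralizing case. The delicate point is to make this descent uniform across all diagrams, in particular the irreducible affine ones, where every proper parabolic is spherical yet $N_W(\langle J\rangle_W)$ can be infinite; there Lemma~\ref{MPWlemma} (self-normalization of the corank-one parabolics) together with the centralizer structure should close the argument, but organizing the bookkeeping is where the real work lies.
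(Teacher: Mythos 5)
Your opening reduction (via Lemma~\ref{normalparabolic} and irreducibility, the whole problem is to exclude $Pc_S(N)=W$ for $N\neq\{e\}$) is sound, but both branches of your case split have genuine gaps. In the branch $N\cap\langle J\rangle_W=\{e\}$ you correctly get $N\leq C_W(P)$ for $P$ the normal closure of $\langle J\rangle_W$, but the final step --- ``an element centralizing $P$ stabilizes the wall of every reflection of $P$, and these walls are not confined to any proper residue, hence the element is $e$'' --- is not a proof: centralizing a reflection only stabilizes its wall \emph{setwise} (it may swap the two associated roots), and faithfulness of the geometric representation does not by itself convert ``stabilizes many walls'' into ``is trivial''. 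The natural tools for closing this step in the paper (Lemma~\ref{infiniteorder}, Theorem~\ref{debt1}) are proved \emph{after}, and \emph{from}, the present lemma, so you cannot appeal to them without circularity. The branch $N\cap\langle J\rangle_W\neq\{e\}$ is only a declaration of intent: you write yourself that ``organizing the bookkeeping is where the real work lies'', and the proposed descent into the irreducible components of $\langle J\rangle_W$ (with special pleading for the affine case) is exactly the part of the lemma that carries its content. As written, the proposal is not a proof.

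For comparison, the paper avoids both difficulties by inducting on $k:=|S-J|$ and \emph{enlarging} $J$ rather than descending into it. If $k=1$, Lemma~\ref{MPWlemma} forces $N_W(\langle J\rangle_W)=\langle J\rangle_W$ (otherwise the irreducible component would be spherical), so $N\leq\langle J\rangle_W$, hence $Pc_S(N)\neq W$, and Lemma~\ref{normalparabolic} together with irreducibility gives $|N|=1$. If $k>1$, connectedness of the diagram yields $t\in S-J$ with $[t,J]\neq e$, hence $t\langle J\rangle_W t\neq\langle J\rangle_W$ by Fact~\ref{basicsonJ}(c); since $N$ is normal it normalizes both of these parabolic subgroups, hence normalizes the group $U$ they generate, hence normalizes $Pc_S(U)=\langle J\cup\{t\}\rangle_W$ by Lemma~\ref{parabclosure}(c), and the inductive hypothesis applies with $J\cup\{t\}$ in place of $J$. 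The enlargement mechanism in this last step is precisely the idea your outline is missing.
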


\begin{proof} We proceed by induction on $k:= |S - J|$.

\smallskip
\noindent
If $k=1$, then $N \leq \langle J \rangle_W$ by Lemma \ref{MPWlemma} and therefore $M := Pc_S(N) \neq W$.
Since $(W,S)$ is irreducible, it follows from Lemma \ref{normalparabolic} that $|M| = 1$ and hence $|N| = 1$.

\smallskip
\noindent
Suppose $k >1$. As $(W,S)$ is irreducible, there exists $t \in S - J$ such that $[t,J] \neq e$
and it follows that $t\langle J \rangle_W t \neq \langle J \rangle_W$ by Item (c) of Lemma \ref{basicsonJ}.
As $N$ is normal in $W$ we have $tNt = N$ and therefore $N \leq N_W(t \langle J \rangle_W t)$. Let $U$ be the
subgroup of $W$ generated by $\langle J \rangle_W$ and $t \langle J \rangle_W t$. Then $N \leq N_W(U)$.
Furthermore, $U$ properly contains $\langle J \rangle_W$ and it is itself contained in $\langle K \rangle_W$
where $K := J \cup \{ t \}$ and therefore $Pc_S(U) = \langle K \rangle_W$. Thus $N$ normalizes $\langle K \rangle_W$
by Item (c) of Lemma \ref{parabclosure} and therefore it follows by induction that $|N|=1$, as wanted.
\end{proof}

\subsection{A Proof of Theorem~\ref{domain_conj}}




In this section we prove Theorem~\ref{domain_conj}. We shall first need the following important result of Daan Krammer.

	\begin{theorem} \label{Krammerthm} Let $(W,S)$ be an irreducible, non-spherical Coxeter system of finite rank. Suppose that there exists a subgroup $H$ of $W$ such that $Pc_S(H) = W$ and such that $H$ is free abelian of rank 2.
Then $(W,S)$ is affine.
\end{theorem}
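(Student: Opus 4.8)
The plan is to exploit the trichotomy for irreducible Coxeter systems of finite rank furnished by the signature of the Tits bilinear form $B$ on the geometric representation $V = \mathbb{R}^{S}$ (which is faithful by Fact~\ref{fact_for_even}(A)): for irreducible $(W,S)$ one has that $B$ is positive definite iff $W$ is spherical, that $B$ is positive semidefinite and degenerate iff $W$ is affine, and that $B$ is indefinite in every remaining case. Since $(W,S)$ is assumed non-spherical, $B$ is not positive definite, so it suffices to exclude the indefinite case. Thus I would argue by contradiction: assume $(W,S)$ is irreducible, non-spherical and \emph{not} affine (so $B$ is indefinite), and deduce that no free abelian subgroup $H \leq W$ of rank $2$ can satisfy $Pc_S(H) = W$.

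The heart of the matter is the dynamical behaviour of \emph{essential} elements in the indefinite case, where I call $w \in W$ essential if $Pc_S(\langle w\rangle_W) = W$. I would work on the Davis complex $\Sigma$, a complete CAT(0) space on which $W$ acts properly, cocompactly and by isometries. The key input, and what I expect to be the main obstacle, is the rank-one dichotomy for such actions when $(W,S)$ is irreducible and non-affine: essential elements act as \emph{rank-one} isometries of $\Sigma$ (equivalently, in the projective action on the Tits cone they are proximal/loxodromic, reflecting that $B$ is indefinite and that $W$ acts strongly irreducibly with a proper limit set). This is precisely the substantial geometric content, established by Krammer and, in the modern CAT(0) and building language, by the rank-rigidity results of Caprace--Haglund and Caprace--Fujiwara; I would cite it rather than reprove it.

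Granting this, the contradiction is short. A rank-one isometry $w$ of $\Sigma$ has a well-defined axis, and for a group acting properly and cocompactly the centralizer $C_W(w)$ stabilizes this axis and is therefore virtually cyclic (by the standard Flat Torus and axis arguments of Bridson--Haefliger). To finish I would extract from $H$ a single essential element $h$: since $H$ is free abelian with $Pc_S(H) = W$, the parabolic closures $Pc_S(\langle h\rangle_W)$ for $h \in H$ cannot all lie in one proper parabolic, for otherwise $Pc_S(H)$ itself would be proper; a suitably generic $h \in H$ is then essential. Such an $h$ is rank-one, so $C_W(h)$ is virtually cyclic; but $H \cong \mathbb{Z}^{2}$ is abelian, whence $H \leq C_W(h)$, contradicting virtual cyclicity. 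Therefore the indefinite case is impossible and $W$ is affine.

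Alternatively, and perhaps more robustly, I could bypass the extraction of a single essential element. By the Flat Torus Theorem the torsion-free group $H$ preserves a $2$-flat $F \subseteq \Sigma$ on which it acts cocompactly by translations, and the condition $Pc_S(H) = W$ forces this flat to be \emph{essential} (not absorbed into a proper parabolic, equivalently not contained in a proper convex $W$-invariant product factor). The existence of such an essential flat is incompatible with the presence of a rank-one isometry; but an irreducible, non-spherical, non-affine Coxeter group does possess rank-one isometries, and this contradiction again forces $W$ to be affine. Either way, the two delicate points are (i) the rank-one/proximality dichotomy in the indefinite case, and (ii) the bookkeeping translating $Pc_S(H) = W$ into the geometric statement that the flat, or the extracted element, genuinely sees all of $W$; I expect (i) to be the real obstacle, since it is exactly the classification-level theorem that Krammer's result packages.
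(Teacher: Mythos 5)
You should first be aware that the paper does not prove this statement: its entire proof is the single line ``This follows from \cite[Theorem~6.8.2]{krammer}'', i.e.\ the theorem is imported verbatim from Krammer's work on the conjugacy problem. Your proposal instead tries to reconstruct the underlying argument, and the skeleton you describe (essential elements of an irreducible, non-spherical, non-affine Coxeter group act as rank-one isometries of the Davis complex; rank-one isometries of a proper cocompact CAT(0) group have virtually cyclic centralizers; $\mathbb{Z}^2$ is not virtually cyclic) is indeed the modern route, due essentially to Krammer and recast in CAT(0) language by Caprace--Fujiwara. But note that the rank-one dichotomy you propose to cite is of the same depth as the statement being proved, so as a ``proof'' this does not gain much over the paper's citation; its value is only in making the mechanism visible.

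There are two genuine problems. First, the step ``a suitably generic $h\in H$ is then essential'' is a real gap, not bookkeeping. What your observation shows is only that the proper parabolics $Pc_S(\langle h\rangle_W)$, $h\in H$, do not all lie in a single proper parabolic; it does not produce a single element whose parabolic closure is all of $W$, and a priori $Pc_S(H)$ could be the closure of a union of several proper parabolic closures with no individual element of $H$ being essential. The statement you actually need --- a finitely generated abelian subgroup $A\leq W$ contains an element $a$ with $Pc_S(a)=Pc_S(A)$ --- is true, but it is a nontrivial lemma about parabolic closures of commuting elements (proved by Krammer, reproved by Caprace--Fujiwara) and must be stated and cited, not waved at. Second, the ``alternative'' argument is incorrect as written: the existence of a periodic $2$-flat in a proper cocompact CAT(0) space is \emph{not} incompatible with the existence of a rank-one isometry of that space. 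For instance, the free product of an affine Coxeter group of type $\widetilde{A}_2$ with a group of order $2$ is an irreducible, non-spherical, non-affine Coxeter group which contains both $\mathbb{Z}^2$ (hence a $2$-flat in its Davis complex) and rank-one elements; the content of Krammer's theorem is precisely that such a $\mathbb{Z}^2$ must be confined to a proper parabolic. Making your word ``essential'' carry the weight needed to rule this out is exactly the theorem, so that route is circular unless you return to the centralizer argument of your first version.
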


	\begin{proof}
This follows from \cite[Theorem~6.8.2]{krammer}.
\end{proof}

\begin{lemma} \label{infiniteorder} Let $(W,S)$ be an irreducible, non-spherical Coxeter system of finite rank.
Let $x,y \in W$ be such that $x \neq e \neq y$ and $[x,y^w]=e$ for all $w \in W$.
Then $Pc_S(x) = W = Pc_S(y)$; moreover, $x$ and $y$ have both infinite order.
\end{lemma}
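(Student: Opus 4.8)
The plan is to derive both conclusions from a single sufficient condition, which I would verify symmetrically for $x$ and for $y$: namely, if $z \in W \setminus \{e\}$ centralises some nontrivial normal subgroup of $W$, then $Pc_S(z) = W$ and $z$ has infinite order. To see that the hypothesis supplies such subgroups, set $N := \langle y^w : w \in W \rangle_W$, the normal closure of $y$; this is nontrivial (as $y \neq e$) and normal, and by assumption $x$ commutes with every generator $y^w$, so $x \in C_W(N)$. For the symmetric half I would first observe that $[x,y^w] = e$ for all $w$ already forces $[x^v, y] = e$ for all $v$ — a one-line rewriting, since $x^v$ commutes with $y$ if and only if $x$ commutes with $y^{v^{-1}}$. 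Hence, setting $M := \langle x^v : v \in W \rangle_W$, we get $y \in C_W(M)$, with $M$ again nontrivial and normal. This reduces the whole lemma to the sufficient condition.

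The core step is the sufficient condition itself, and this is where the parabolic-closure machinery does the work. Suppose $z \neq e$ centralises a nontrivial normal subgroup $N_0 \trianglelefteq W$, and suppose toward a contradiction that $Pc_S(z) \neq W$. Writing $Pc_S(z) = w_0 \langle K \rangle_W w_0^{-1}$ with $K \subseteq S$, we have $K \neq S$ because $Pc_S(z) \neq W$, and $K \neq \emptyset$ because $z \in Pc_S(z)$ with $z \neq e$. Now Lemma~\ref{parabclosure}(c) gives $C_W(z) \leq N_W(Pc_S(z))$, and since $N_0 \leq C_W(z)$ we obtain $N_0 \leq N_W(Pc_S(z))$. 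Conjugating by $w_0^{-1}$ and using normality of $N_0$ (so $w_0^{-1} N_0 w_0 = N_0$) yields $N_0 \leq N_W(\langle K \rangle_W)$. But then Lemma~\ref{normalsubgroups}, applied with $J = K$ (legitimately, as $\emptyset \neq K \neq S$ and $(W,S)$ is irreducible and non-spherical), forces $|N_0| = 1$, contradicting nontriviality. Hence $Pc_S(z) = W$.

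Finally, for the infinite-order claim: if $z$ had finite order, then $\langle z \rangle_W$ would be finite, so $Pc_S(z)$ would be a finite parabolic subgroup by Lemma~\ref{parabclosure}(b); this contradicts $Pc_S(z) = W$, since $W$ is infinite (being non-spherical). Applying the sufficient condition to the pairs $(x, N)$ and $(y, M)$ then delivers $Pc_S(x) = W = Pc_S(y)$ together with the infinite order of both elements. I expect the only genuinely delicate point to be the symmetrisation — recognising that the asymmetric hypothesis ``$x$ commutes with every conjugate of $y$'' already entails ``$y$ commutes with every conjugate of $x$'', so that the normal-closure argument can be run for $y$ as well; everything after that is a direct appeal to Lemmas~\ref{parabclosure} and~\ref{normalsubgroups}.
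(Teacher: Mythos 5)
Your proposal is correct and follows essentially the same route as the paper's proof: both take the normal closure $N$ of $y$, note $N \leq C_W(x) \leq N_W(Pc_S(x))$ via Lemma~\ref{parabclosure}(c), transfer this to $N \leq N_W(\langle J \rangle_W)$ by normality, and invoke Lemma~\ref{normalsubgroups} to force $J = S$, with the infinite-order claim then falling out of Lemma~\ref{parabclosure}(b) and non-sphericality; the symmetrisation $[x,y^w]=e \Leftrightarrow [x^{w^{-1}},y]=e$ is likewise the paper's closing remark. Your packaging of the argument as a single ``sufficient condition'' applied twice is a mild reorganisation, not a different proof.
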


\begin{proof} We put $N := \langle y^w : w \in W \rangle_W$ and observe that
$N$ is a normal subgroup of $W$ such that $|N| \neq 1$. As $y^w \in C_W(x)$
for each $w \in W$ we have $N \leq C_W(x)$. Let $P := Pc_S(x)$. Then $N \leq C_W(x) \leq N_W(P)$, by Lemma~\ref{parabclosure}(c).

\smallskip
\noindent
We have $P = w \langle J \rangle_W w^{-1}$ for some $w \in W$ and some $J \subseteq S$.
As $e \neq x \in P$, we have $J \neq \emptyset$. Furthermore $N = w^{-1}Nw$ normalizes $\langle J \rangle_W$.
Since $|N| \neq 1$ it follows by Lemma~\ref{normalsubgroups} that $J = S$ and hence $Pc_S(x) = W$.
Finally, it follows from Item (b) of Lemma~\ref{parabclosure} (and the fact that
$(W,S)$ is non-spherical) \mbox{that $x$ has infinite order.}

\smallskip
\noindent
As $[x,y^w] = e$ for all $w \in W$, we have also $[y,x^w]=e$ for all $w \in W$. Thus, it follows that
$Pc_S(y) = W$ and that $y$ has infinite order as well.
\end{proof}

\begin{lemma} \label{centerofnormal}  Let $(W,S)$ be an irreducible, non-spherical Coxeter system of finite rank.
If there exists a normal subgroup $N$ of $W$ having a non-trivial center, then $(W,S)$ is affine.
\end{lemma}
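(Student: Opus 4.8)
The plan is to produce inside $W$ a free abelian subgroup of rank $2$ whose $S$-parabolic closure is all of $W$, and then to invoke Krammer's Theorem~\ref{Krammerthm}; the degenerate situation in which no such subgroup is available will turn out to force $W$ to be the infinite dihedral group, which is affine.

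First I would pass from $N$ to its center. Since the center is characteristic in $N$ and $N \trianglelefteq W$, the subgroup $A := Z(N)$ is a nontrivial abelian normal subgroup of $W$. For any $a \in A \setminus \{e\}$ and any $w \in W$ we have $a^w \in A$, whence $[a, a^w] = e$; applying Lemma~\ref{infiniteorder} with $x = y = a$ gives that $a$ has infinite order and $Pc_S(a) = W$. Thus $A$ is torsion-free and \emph{every} nontrivial element of $A$ is essential. Using that abelian subgroups of a Coxeter group of finite rank are finitely generated (a standard consequence of the $\mathrm{CAT}(0)$ geometry of the Davis complex, see \cite{davis}), I obtain $A \cong \mathbb{Z}^r$ for some $r \geq 1$.

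If $r \geq 2$, choose any $H \leq A$ with $H \cong \mathbb{Z}^2$. Since $H$ contains a nontrivial element $a$ with $Pc_S(a) = W$, monotonicity of the parabolic closure (immediate from its definition as an intersection) gives $Pc_S(H) \supseteq Pc_S(a) = W$. Hence $H$ is a free abelian subgroup of rank $2$ with full parabolic closure, and Theorem~\ref{Krammerthm} yields that $(W, S)$ is affine.

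The remaining, and main, difficulty is the case $r = 1$, i.e. $A = \langle z \rangle \cong \mathbb{Z}$. Here conjugation gives a homomorphism $\chi \colon W \to \mathrm{Aut}(\mathbb{Z}) = \{\pm 1\}$, so $z^w = z^{\chi(w)}$ for all $w$; in particular $\langle z \rangle \trianglelefteq W$ and $C_W(z) = \ker \chi$ has index at most $2$. I would then argue that $W$ is virtually cyclic by viewing $z$ as a hyperbolic isometry of the Davis complex $\Sigma$: its minset $\mathrm{Min}(z) \cong \mathbb{R} \times C$ is, by normality of $\langle z \rangle$, invariant under all of $W$, and cocompactness of the $W$-action forces $\mathrm{Min}(z) = \Sigma$; thus $z$ is a Clifford translation and $\Sigma$ splits off a Euclidean line. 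Since $(W, S)$ is irreducible, this flat direction forces $\Sigma$ to be Euclidean, so $W$ is affine (necessarily the rank-$2$ system $\tilde{A}_1$). This last step — converting ``$\langle z \rangle$ normal and essential'' into ``$W$ affine'' without the rank-$2$ hypothesis of Theorem~\ref{Krammerthm} — is where the real work lies, and it is the step I expect to be the main obstacle, since it rests on the splitting and irreducibility geometry of the Davis complex rather than on the purely parabolic-closure arguments used elsewhere in this section.
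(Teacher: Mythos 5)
Your overall strategy coincides with the paper's: manufacture a rank-two free abelian subgroup with full parabolic closure and feed it to Theorem~\ref{Krammerthm}, with a degenerate case that collapses to the infinite dihedral group. Your $r \geq 2$ branch is correct (modulo importing the external fact that abelian subgroups of $W$ are finitely generated, which the paper avoids: it builds the rank-two subgroup directly as $\langle zszs,\ z^{-1}szs\rangle$ from a single generator $s$ with $z \neq szs \neq z^{-1}$, so it never needs to know $Z(N) \cong \mathbb{Z}^r$). The problem is the $r=1$ branch, which you yourself flag as the main obstacle: as written it has genuine gaps. First, ``cocompactness of the $W$-action forces $\mathrm{Min}(z) = \Sigma$'' is not automatic --- a cocompact action can preserve a proper closed convex subset unless one knows the action is minimal (or the space geodesically complete), and neither is established for the Davis complex here. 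Second, and more seriously, the step ``since $(W,S)$ is irreducible, this flat direction forces $\Sigma$ to be Euclidean'' is an unproved bridge between algebraic irreducibility of the Coxeter diagram and metric irreducibility of the CAT(0) space; it is a statement of roughly the same depth as the lemma you are trying to prove, and no reference or argument is supplied.

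The gap is easily closed with the tools already in play, and this is exactly what the paper does. If $A = \langle z\rangle \cong \mathbb{Z}$ is normal, then each $s \in S$ satisfies $szs = z^{\pm 1}$. If $szs = z$ for some $s$, then $s$ centralizes every $z^w \in \langle z \rangle$, so $[s,z^w]=e$ for all $w$, and Lemma~\ref{infiniteorder} forces $s$ to have infinite order --- a contradiction. Hence every $s \in S$ inverts $z$, so for $s \neq t \in S$ the element $st$ centralizes $z$ and therefore centralizes the normal subgroup $\langle z\rangle$, giving $[st, z^w]=e$ for all $w \in W$. Lemma~\ref{infiniteorder} now yields $Pc_S(st) = W$; but $st \in \langle s,t\rangle_W$, so $W = \langle s,t\rangle_W$ and $S = \{s,t\}$, whence $W$ is the infinite dihedral group and $(W,S)$ is affine. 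Replacing your Davis-complex sketch with this two-line application of Lemma~\ref{infiniteorder} makes the proof complete and keeps it entirely within the parabolic-closure machinery of the section.
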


\begin{proof} Let $N$ be a normal subgroup of $W$, let $Z$ be the center of $N$ and let $e \neq z \in Z$.
Then $Z$ is normal in $W$ and we have $[z,z^w]=e$ for all $w \in W$. By Lemma \ref{infiniteorder}
we have $Pc_S(z) = W$ and that $z$ has infinite order.

\smallskip
\noindent
For each $s \in S$ we have $szs \in Z$.
Suppose first that $szs \in \{ z,z^{-1} \}$ for all $s \in S$.
Then $T := \langle z \rangle_W$ is a normal subgroup of $W$.
Assume, by contradiction, that there exists $s \in S$ such that $szs = z$. Then $[s,z^w] = e$ for all $w \in W$.
As $e \neq s \in W$ is of finite order, Lemma \ref{infiniteorder} yields a contradiction. Thus we have $szs = z^{-1}$
for all $s \in S$. Let $t \neq s$ be elements of $S$. Then $e \neq ts \in C_W(T)$ and therefore
$[st,z^w] = e$ because $T$ is a normal subgroup of $W$. By Lemma \ref{infiniteorder}
it follows that $Pc_S(st) = W$. As $st \in \langle s,t \rangle_W$ it follows that $S = \{ s,t \}$
and that $W$ is the infinite dihedral group. We conclude that $(W,S)$ is affine.

\smallskip
\noindent
Suppose now that there is $s \in S$ such that $z \neq szs \neq z^{-1}$.
We put $a := zszs$ and $b := z^{-1}szs$ and observe that $e \neq a \in Z$, $e \neq b \in Z$,
$sas = a$ and $sbs = b^{-1}$. Let $H = \langle a,b \rangle_W$. As $a,b \in Z$, the group $H$ is abelian
and as $a \neq e \neq b$ it follows by the argument above that they are both of infinite order.
We claim that $H$ is free abelian of rank 2. Indeed, let $k,m \in \mathbb{Z}$ be such that
$a^kb^m = e$. Then $e = sa^kb^ms = a^kb^{-m}$ which yields $b^{2m} = e$. As $b$ has infinite order,
we have $m=0$ which implies $a^k=e$ and finally $k=0$ since $a$ has infinite order.
Finally, as $e \neq a \in Z$, we have $Pc_S(a) = W$ by Lemma \ref{infiniteorder}
and therefore $Pc_S(H) = W$. Thus we may apply
Theorem \ref{Krammerthm} in order to conclude that $(W,S)$ is affine.
\end{proof}

\begin{theorem} \label{debt1} Let $(W,S)$ be an irreducible, non-spherical Coxeter system of finite rank.
Suppose that there are $x,y \in W$ such that $x \neq e \neq y$ and $[x,y^w] = e$ for all $w \in W$.
Then $(W,S)$ is affine.
\end{theorem}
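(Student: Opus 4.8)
The plan is to reduce to the two affineness criteria already available in the excerpt: Lemma~\ref{centerofnormal} (a normal subgroup with non-trivial center forces affineness) and Theorem~\ref{Krammerthm} (a free abelian subgroup of rank $2$ whose $S$-parabolic closure is all of $W$ forces affineness). First I would note that the hypothesis is verbatim that of Lemma~\ref{infiniteorder}, so I may immediately record that $Pc_S(x) = W = Pc_S(y)$ and that both $x$ and $y$ have infinite order.

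The key preliminary step is to upgrade the single relation $[x, y^w] = e$ (for all $w \in W$) to the statement that \emph{every} conjugate of $x$ commutes with \emph{every} conjugate of $y$. Indeed, conjugating the relation $[x, y^u] = e$ by an arbitrary $v \in W$ gives $[x^v, y^{uv}] = e$, and as $u$ ranges over $W$ so does $uv$; hence $[x^v, y^w] = e$ for all $v, w \in W$. Setting $N := \langle x^w : w \in W \rangle_W$ and $M := \langle y^w : w \in W \rangle_W$, the normal closures of $x$ and $y$, this shows that the generators of $N$ commute with those of $M$, so $[M, N] = e$; moreover $M$ and $N$ are non-trivial normal subgroups of $W$ since $x \neq e \neq y$.

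Now I would split on whether $M \cap N$ is trivial. If $M \cap N \neq \{e\}$, pick $e \neq z \in M \cap N$: since $z \in N$, $z \in M$ and $[M,N]=e$, the element $z$ centralizes both $M$ and $N$, hence centralizes $K := MN$. As $K$ is a product of normal subgroups it is normal in $W$, and $z \in Z(K)$, so $Z(K) \neq \{e\}$; Lemma~\ref{centerofnormal} then yields that $(W,S)$ is affine. If instead $M \cap N = \{e\}$, then in particular $\langle x \rangle_W \cap \langle y \rangle_W = \{e\}$; since $x$ and $y$ commute and both have infinite order, $H := \langle x, y \rangle_W$ is free abelian of rank $2$ (any relation $x^a y^b = e$ would force $x^a = y^{-b} \in \langle x\rangle_W \cap \langle y\rangle_W = \{e\}$, hence $a=b=0$, as $x,y$ have infinite order). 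Because $x \in H$ and $Pc_S(x) = W$, monotonicity of the parabolic closure gives $Pc_S(H) = W$, and Theorem~\ref{Krammerthm} again yields that $(W,S)$ is affine.

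The crucial idea — which I regard as the main obstacle — is the passage from the single commuting relation to the mutual commuting of the two normal closures, together with the observation that the two remaining possibilities, $M \cap N$ trivial or not, dovetail exactly with the two available affineness criteria. Everything else is routine group theory; the only point requiring a line of verification is that $H \cong \mathbb{Z}^2$ in the second case, which is immediate from the infinite order of $x$ and $y$ and the triviality of $\langle x\rangle_W \cap \langle y\rangle_W$.
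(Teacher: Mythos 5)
Your proof is correct and follows essentially the same route as the paper: both reduce to Lemma~\ref{centerofnormal} and Theorem~\ref{Krammerthm} via a dichotomy on whether a certain intersection of normal(-ly generated) subgroups is trivial. The only cosmetic difference is that you split on $M \cap N$ (the two normal closures) where the paper splits on $\langle x \rangle_W \cap N$, and there is a harmless notational slip in that conjugating $[x,y^u]=e$ by $v$ yields $[x^v,y^{vu}]=e$ rather than $[x^v,y^{uv}]=e$, which does not affect the argument.
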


\begin{proof} We put $N := \langle y^w : w \in W \rangle_W$ and observe that $N$ is a normal subgroup
of $W$ with $|N| \neq 1$. Moreover, $N \leq C_W(x)$. If $Z:= \langle x \rangle_W \cap N$ is non-trivial,
then $(W,S)$ is affine by Lemma \ref{centerofnormal}. Thus we are left with the case
where $\langle x \rangle_W \cap N$ is trivial. Now $x$ and $y$ have both infinite order by Lemma \ref{infiniteorder},
$[x,y]=e$ by assumption, and $\langle x \rangle_W \cap \langle y \rangle_W \leq \langle x \rangle_W \cap N$ is trivial.
We conclude that $H := \langle x,y \rangle_W$ is a free abelian subgroup of $W$. Furthermore, $Pc_S(x) = W$
by Lemma \ref{infiniteorder} and therefore $Pc_S(H) =W$. Now Theorem \ref{Krammerthm} yields that $(W,S)$
is affine.
\end{proof}

	\begin{definition}\label{def_domain} Let $G$ be a group. We say that $G$ is a {\em domain} if for every $x, y \in G$ with $x, y \neq e$ there exists $g \in G$ such that $[x, y^g] \neq e$.
\end{definition}

	\begin{proof}[Proof of Theorem~\ref{domain_conj}] This follows immediately from Lemma~\ref{debt1} and the fact that the direct product of two non-abelian groups is never a domain \cite{alg_geom_over_groups1, alg_geom_over_groups2}.
\end{proof}


\section{Elementary Substructures in RACGs}

	In this section we prove Theorem~\ref{el_substr}.
	
\medskip

	In previous sections we already used the notation which we are about to introduce, but we recall it for clarity, since it will appear in Lemma~\ref{bases_th}.
	
	\begin{notation} Given a group $G$ and $g, h \in G$ we denote $ghg^{-1}$ by $h^g$.	
\end{notation}

	\begin{remark} The formula $\varphi_{\Gamma}(\bar{x})$, for $\Gamma$ a finite graph, which we will introduce in Lemma~\ref{bases_th} plays a crucial role also in \cite{casal1}. In fact, as shown there, we have that if $W_\Theta$ is a right-angled Coxeter group of finite rank, then $W_\Theta \models \exists\bar{x} \varphi_{\Gamma}(\bar{x}) \; \text{ iff } \; \Gamma \cong \Theta.$
\end{remark}
		
	\begin{lemma}\label{bases_th} Let $(W_\Gamma, S)$ be a right-angled Coxeter system of finite rank and $S = \{ s_1, ..., s_n \}$. Let $\varphi_{\Gamma}(x_1, ..., x_n) = \varphi_{\Gamma}(\bar{x})$ be the first-order formula expressing:
	\begin{enumerate}[(a)]
	\item\label{item_a} for every $\ell \in [1, n]$, $x_\ell$ has order $2$ and $x_{\ell} \neq e$;
	\item\label{item_b} for every  $\ell \neq j \in [1, n]$, $x_{\ell} \neq x_j$ and $[x_\ell, x_j] = e$ if and only if $s_\ell E_\Gamma s_j$;
	\item\label{item_c} for every $\ell \in [1, n]$, $y_1, ..., y_{n} \in W_\Gamma$, and $k_1, ..., k_{\ell-1}, k_{\ell+1}, ..., k_{n} \in \{0, 1\}$:
	$$x_\ell^{y_{\ell}} \neq ((x_{1})^{k_1})^{y_{1}} \cdots ((x_{\ell-1})^{k_{\ell-1}})^{y_{\ell-1}} ((x_{\ell+1})^{k_{\ell+1}})^{y_{\ell+1}} \cdots ((x_{n})^{k_{n}})^{y_{n}}.$$
\end{enumerate}
	Then $W_\Gamma \models \varphi_\Gamma(g_1, ..., g_n)$ iff there is $\alpha \in F(\Gamma_S)$ (cf. Def.~\ref{F(Gamma)}) such that $\{ g_1, ..., g_n \}$ is a set of self-similar $T$-reflections of $W_\Gamma$ (cf. Def.~\ref{def_self_similar}), \mbox{where $T = \{ \alpha(s): s \in S \}$.}
\end{lemma}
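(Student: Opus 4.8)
The statement is an equivalence, so the plan is to treat the two directions separately, using the \emph{abelianization homomorphism} of Fact~\ref{abelianization_fact} as the organizing tool. Since $(W_\Gamma,S)$ is right-angled, every off-diagonal entry of its Coxeter matrix lies in $\{2,\infty\}$, so the relation $\sim$ of Fact~\ref{abelianization_fact} is trivial and the abelianization is a map $\mathrm{ab}\colon W_\Gamma\to V(\Gamma)$ (identifying $(\mathbb{Z}/2\mathbb{Z})^{S}$ with the space $V(\Gamma)$ of Definition~\ref{F(Gamma)}) sending each element to its support, i.e. $\mathrm{ab}(\prod_{s\in K}s)=K$. The two facts I would use constantly are that $\mathrm{ab}$ is \emph{constant on conjugacy classes}, and that for $\alpha\in F(\Gamma_S)$ one has $\mathrm{ab}(\beta_\alpha(s))=\alpha(s)$. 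In particular, if $\{g_i\}$ is a set of self-similar $T$-reflections with $T=\alpha(S)$ and $t_i=\beta_\alpha(s_i)$, then necessarily $\mathrm{ab}(g_i)=\alpha(s_i)$, so $\alpha$ is \emph{forced} to be the linear map $s_i\mapsto\mathrm{ab}(g_i)$.

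For the backward direction I would assume $g_i\in t_i^{W}$ and $o(g_ig_j)=o(t_it_j)$ for some $\alpha\in F(\Gamma_S)$. Clause (\ref{item_a}) is immediate, since each $t_i=\prod_{s\in\alpha(s_i)}s$ is a nontrivial involution and involutions are conjugacy-closed. As $\beta_\alpha$ is an automorphism it preserves orders, so $o(t_it_j)=o(s_is_j)$ and hence $o(g_ig_j)=o(s_is_j)$; since these orders lie in $\{2,\infty\}$, this is exactly the assertion $[g_i,g_j]=e\iff s_iE_\Gamma s_j$ together with $g_i\ne g_j$, i.e. clause (\ref{item_b}). For clause (\ref{item_c}) I would apply $\mathrm{ab}$: the left-hand side maps to $\mathrm{ab}(g_\ell)=\alpha(s_\ell)$ while the right-hand side maps to $\sum_{j\ne\ell}k_j\,\alpha(s_j)$, and since $\alpha$ is a linear automorphism the vectors $\{\alpha(s_j)\}_j$ form a basis of $V(\Gamma)$; thus $\alpha(s_\ell)$ is not a combination of the others and the two sides already differ in the abelianization.

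The forward direction is where the content lies. Starting from $W_\Gamma\models\varphi_\Gamma(\bar g)$, clause (\ref{item_a}) together with the Finite Order Theorem (Fact~\ref{finite_order}) lets me write each $g_i=h_it_ih_i^{-1}$ with $t_i=\prod_{s\in K_i}s$ for a uniquely determined clique $K_i\subseteq S$; I then \emph{define} $\alpha$ to be the linear extension of $s_i\mapsto K_i$ (the only possible candidate, by the first paragraph) and put $T=\{t_i\}$. It then remains to check (i) $\alpha\in F(\Gamma_S)$ and (ii) that $\{g_i\}$ is a self-similar set of $T$-reflections in the sense of Definition~\ref{def_self_similar}. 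Granting (i), assertion (ii) is cheap: $g_i\in t_i^{W}$ by construction, clause (\ref{item_b}) gives $o(g_ig_j)=o(s_is_j)$, and once $\beta_\alpha$ is an automorphism $o(t_it_j)=o(s_is_j)=o(g_ig_j)$. That $\alpha$ sends cliques to cliques I would extract from (\ref{item_b}): if $C\subseteq S$ is a clique, the corresponding $g_i$ pairwise commute, hence generate a finite (elementary abelian) subgroup, which by Lemma~\ref{parabclosure}(b) lies in a finite, hence spherical, parabolic $w\langle L\rangle_Ww^{-1}$ with $L$ a clique; applying $\mathrm{ab}$ shows $K_i\subseteq L$ for each such $i$, so $\alpha(C)=\triangle_{s_i\in C}K_i\subseteq L$ is a clique.

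The hard part will be showing that $\alpha$ is a linear \emph{automorphism}, i.e. that $\{K_i\}_{i\le n}$ is linearly independent in $V(\Gamma)$; this is precisely where clause (\ref{item_c}) is needed, and where (\ref{item_a})--(\ref{item_b}) alone fail (the ``triangle'' configuration $g_1=s_1$, $g_2=s_2$, $g_3=s_1s_2$ in $(\mathbb{Z}/2\mathbb{Z})^3$ satisfies (\ref{item_a})--(\ref{item_b}) but is dependent, and indeed violates (\ref{item_c})). Conjugating the whole inequality, clause (\ref{item_c}) is equivalent to the $T$-intrinsic statement that \emph{no conjugate of $t_\ell$ equals a product of pairwise-distinct conjugates of the remaining $t_j$}. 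A linear dependence is a nonempty $J$ with $\triangle_{j\in J}K_j=\emptyset$; writing $K_\ell=\triangle_{j\in J\setminus\{\ell\}}K_j$, the goal is to convert this abelianization relation into a genuine identity $t_\ell^{\,z}=\prod_j t_j^{\,z_j}$ among conjugate involutions, contradicting (\ref{item_c}). I expect this to be the crux: it amounts to the combinatorial lemma that, in a RACG, whenever cliques satisfy $\triangle_{j\in J}K_j=\emptyset$ one can choose conjugates $c_j\in t_j^{W}$ with $\prod_{j\in J}c_j=e$ (each generator occurs an even number of times across the $K_j$, so it should be cancelled in pairs), the delicate point being that generators coming from different $t_j$ need not commute. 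Establishing this realization lemma, and then assembling clauses (\ref{item_a})--(\ref{item_c}), completes the proof.
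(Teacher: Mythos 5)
Your overall architecture matches the paper's proof: the right-to-left direction via order-preservation and the abelianization, and the left-to-right direction by extracting the cliques $K_\ell=\mathrm{ab}(g_\ell)$ from Fact~\ref{finite_order}, defining $\alpha$ as the linear extension of $s_\ell\mapsto K_\ell$, and checking separately that $\alpha$ preserves cliques and is injective. (Your clique-preservation argument via Lemma~\ref{parabclosure}(b) is a legitimate variant of the paper's appeal to the Centralizer Theorem, and your right-to-left direction is more detailed than the paper's one-line dismissal.) The problem is the step you yourself flag as the crux and then leave unproved: that a linear dependence $K_\ell=\triangle_{j\in J}K_j$ among the supports can always be \emph{realized} as a genuine identity $t_\ell^{z_\ell}=\prod_{j\in J}t_j^{z_j}$ for some choice of conjugators $z_j$, where $t_j=\prod K_j$. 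Without this you cannot conclude that clause (\ref{item_c}) fails, so the forward direction is incomplete; observing that each generator occurs an even number of times is not a proof, precisely because, as you note, the paired occurrences sit in different cliques and the commutation relations need not allow them to be brought together. This is a genuine gap, not a routine verification, and it is the only step of the lemma that actually requires work.

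For what it is worth, the paper does not close this gap either: its proof passes from $A_\ell=(A_1)^{k_1}\triangle\cdots\triangle(A_n)^{k_n}$ directly to the group identity $\prod A_\ell=(\prod A_1)^{k_1}\cdots(\prod A_n)^{k_n}$, i.e.\ it uses the specific witnesses $y_i=h_i^{-1}$. That identity holds in the abelianization but not in $W$: in the right-angled system on the four-cycle with edges $ab,ac,bd,cd$ one has $\{b,d\}=\{a,b\}\triangle\{c,d\}\triangle\{a,c\}$, yet $ab\cdot cd\cdot ac=(bd)^a\neq bd$ since $a$ and $d$ do not commute. (In that example clause (\ref{item_c}) is still violated, by the different witnesses $y_1=y_2=y_3=e$, $y_4=a$; and the configuration is in any case incompatible with clause (\ref{item_b}).) So the existential statement you need is strictly weaker than what the paper asserts, and it is exactly what remains to be proved; neither your proposal nor the paper's argument supplies that proof, and establishing the realization lemma --- possibly using clauses (\ref{item_a})--(\ref{item_b}) to constrain which dependent configurations can actually occur --- is the real content of the forward direction.
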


	\begin{proof} The direction ``right-to-left'' is well-known, in fact conditions (a) and (b) and clear and condition (c) is also easily seen to be verified using Fact~\ref{abelianization_fact}.
Concerning the other direction, let $g_1, ..., g_n \in W_\Gamma$ and suppose that $W_\Gamma \models \varphi(g_1, ..., g_n)$. By condition (\ref{item_a}) of the definition of $\varphi_\Gamma(\bar{x})$ and Fact \ref{finite_order}, for every $\ell \in [1, n]$, we have:
	\begin{equation}\label{equation_F}
	g_{\ell} =  h_\ell a^\ell_1 \cdots a^\ell_{m(\ell)} (h_\ell)^{-1},
\end{equation}
with $A_\ell := \{ a^\ell_1, ..., a^\ell_{m(\ell)} \}$ inducing a non-empty clique of $\Gamma$. We claim that the map $\alpha$ determined by the assignment
$\hat{\alpha}: \{ s_{\ell} \} \mapsto A_\ell$
is in $F(\Gamma)$. First of all we claim that $\alpha$ is an automorphism of $V(\Gamma)$ (cf. Definition~\ref{F(Gamma)}). To see this it suffices to show that the set $ \{ A_\ell : \ell \in [1, n] \}$
is linearly independent in $V(\Gamma)$, and this is clear by condition~(\ref{item_c}) of the definition of $\varphi_\Gamma(\bar{x})$. In fact, suppose that this is not the case, then there exists $\ell \in [1, n]$ such that:
$$A_\ell = (A_1)^{k_1} \triangle \cdots \triangle (A_{\ell-1})^{k_{\ell-1}} \triangle (A_{\ell+1})^{k_{\ell+1}} \triangle \cdots \triangle (A_k)^{k_n},$$
with $k_1, ..., k_{\ell-1}, k_{\ell+1}, ..., k_{n} \in \{0, 1\}$ and $(A_i)^1 = A_i$ and $(A_i)^0 = \emptyset$, and so:
$$\prod A_\ell = (\prod A_1)^{k_1} \cdots (\prod A_{\ell-1})^{k_{\ell-1}} \cdots (\prod A_{\ell+1})^{k_{\ell+1}} \cdots (\prod A_k)^{k_n}.$$
Thus, letting $y_i = (h_i)^{-1}$ we have:
	$$g_\ell^{y_{\ell}} = ((x_{1})^{k_1})^{y_{1}} \cdots ((x_{\ell-1})^{k_{\ell-1}})^{y_{\ell-1}} ((x_{\ell+1})^{k_{\ell+1}})^{y_{\ell+1}} \cdots ((x_{n})^{k_{n}})^{y_{n}}.$$
contradicting (\ref{item_c}) of the definition of $\varphi_\Gamma(\bar{x})$ (cf.~(\ref{equation_F})).
Hence, in order to show that $\alpha \in F(\Gamma)$ we are only left with the verification that $\alpha$ sends cliques of $\Gamma$ to cliques of $\Gamma$, but this is clear by condition (\ref{item_b}) of the definition of $\varphi_\Gamma(\bar{x})$ and Fact~\ref{centralizer}. Hence, $\{ g_1, ..., g_n \}$ is a set of self-similar reflections of $(W_\Gamma, T)$, for $T = \{ \alpha(s): s \in S \}$.
\end{proof}

	\begin{definition}\label{reflection_property} Let $W$ be a right-angled Coxeter group of finite rank. We say that $W$ has the {\em self-similar reflection property} if for every Coxeter basis $S$ of $W$ and self-similar set of reflections $\hat{S}$ of $(W, S)$ (cf. Def.~\ref{def_self_similar}) we have that $\hat{S}$ generates $W$. On the other hand, we say that $W' \leq W$ is a {\em counterexample to the self-similar reflection property} if there exists a Coxeter basis $S$ of $W$ and a set $\hat{S}$ of self-similar reflections of $(W, S)$ such that $W' = \langle \hat{S} \rangle_W$ and $W'$ is a proper subgroup~of~$W$.
\end{definition}

	\begin{lemma}\label{lemma_elem_sbgp} Let $W = W_{\Gamma}$ be a right-angled Coxeter group of finite rank, and let $\varphi_\Gamma$ be the formula from Lemma~\ref{bases_th}.
%
	If $W' \lneq W$ is elementary in $W$ and $W'$ is a Coxeter group, then $W'$ is a counterexample to the self-similar reflection property.
\end{lemma}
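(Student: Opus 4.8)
The plan is to exhibit, inside $W'$, a Coxeter basis which is simultaneously a self-similar set of $T'$-reflections of $(W,T')$ for some Coxeter basis $T'$ of $W$; since such a basis generates $W'$ and $W'$ is proper, $W'$ will then be a counterexample to the self-similar reflection property in the exact sense of Definition~\ref{reflection_property}. The starting observation is that the given basis $S=\{s_1,\dots,s_n\}$ of $W$ satisfies $\varphi_\Gamma$ (this is the easy direction of Lemma~\ref{bases_th}, with $\alpha=\mathrm{id}$), so $W\models\exists\bar x\,\varphi_\Gamma(\bar x)$; as $W'$ is elementary in $W$ we get both $W'\equiv W_\Gamma$ and $W'\models\exists\bar x\,\varphi_\Gamma(\bar x)$.

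Next I would pin down the isomorphism type of $W'$. First, $W'$ is right-angled: by Fact~\ref{finite_order} the group $W_\Gamma$ has no element of order exactly $k$ for any $k\geq 3$, and ``there is no element of order exactly $k$'' is a first-order sentence $\psi_k$; since $W'\equiv W_\Gamma$, each $\psi_k$ holds in $W'$, so for any Coxeter basis $R$ of $W'$ and any $r\neq r'\in R$ the element $rr'$ (which is $\neq e$) cannot have finite order $\geq 3$, forcing $o(rr')\in\{2,\infty\}$. Thus $(W',R)$ is a right-angled Coxeter system, say $W'=W_\Theta$. Now I would invoke the elementary-equivalence rigidity of right-angled Coxeter groups of finite rank (\cite{casal1}): from $W'\equiv W_\Gamma$ this yields $\Theta\cong\Gamma$, whence $W'\cong W_\Gamma$, and in particular $W'$ has rank $n$ with Coxeter graph isomorphic to $\Gamma$.

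With $W'\cong W_\Gamma$ in hand the conclusion is immediate. Choose a basis $R=\{r_1,\dots,r_n\}$ of $W'$ and a graph isomorphism $\Theta\to\Gamma$, $r_\ell\mapsto s_\ell$. Applying the easy direction of Lemma~\ref{bases_th} inside the right-angled Coxeter system $(W',R)\cong(W_\Gamma,S)$ (again with $\alpha=\mathrm{id}$) gives $W'\models\varphi_\Gamma(r_1,\dots,r_n)$. Since $R\subseteq W'$ and $W'$ is elementary in $W$, this upgrades to $W\models\varphi_\Gamma(r_1,\dots,r_n)$, and now the hard direction of Lemma~\ref{bases_th} produces $\beta\in F(\Gamma_S)$ such that $R$ is a self-similar set of $T'$-reflections of $(W,T')$ with $T'=\{\beta(s):s\in S\}$, a Coxeter basis of $W$. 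As $R$ generates $W'$ and $W'\lneq W$, the pair $(T',R)$ witnesses that $W'$ is a counterexample to the self-similar reflection property.

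The delicate point, and the step I expect to be the main obstacle, is the determination of the isomorphism type of $W'$ from the sole hypothesis that it is a Coxeter group with $W'\equiv W_\Gamma$. Right-angledness is cheap (the sentences $\psi_k$), but the rigidity result requires $W'$ to be of finite rank, and $\exists\bar x\,\varphi_\Gamma$ by itself does not determine the graph in the infinite-rank case (a large graph may still contain an $n$-element self-similar configuration). So the real work is to show $W'$ is finitely generated of rank exactly $n$. I would attack this through the abelianization: condition~(\ref{item_c}) of $\varphi_\Gamma$ forces any witnessing tuple to be linearly independent in $(\mathbb{Z}/2\mathbb{Z})^{n}=W^{\mathrm{ab}}$ (cf.\ Fact~\ref{abelianization_fact} and the proof of Lemma~\ref{bases_th}), so $W'$ surjects onto $W^{\mathrm{ab}}$; the crux is then the matching upper bound $\dim_{\mathbb{F}_2}(W')^{\mathrm{ab}}\leq n$, which is what rules out an infinite-rank $W'$ and lets the rigidity of \cite{casal1} apply.
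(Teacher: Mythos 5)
Your overall route is the one the paper takes: show $W'$ is right-angled, show it has finite rank, invoke the elementary-equivalence rigidity of \cite{casal1} to conclude $W'\cong W_\Gamma$, push a Coxeter basis $T=\{t_1,\dots,t_n\}$ of $W'$ through $\varphi_\Gamma$ and elementarity into $W$, and read off from Lemma~\ref{bases_th} that $T$ is a self-similar set of reflections of $(W,T')$ for some Coxeter basis $T'$ of $W$, with $\langle T\rangle_W=W'\lneq W$. All of that is sound (right-angledness is even cheaper than your sentences $\psi_k$: since $W'\leq W$, every element of $W'$ already has order $1$, $2$ or $\infty$ by Fact~\ref{finite_order}), and the final step matches the paper's.

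The gap is exactly where you flag it: the finite rank of $W'$. Your abelianization attack gives only the lower bound (a witnessing tuple for $\varphi_\Gamma$ is independent in $W^{\mathrm{ab}}$), while the upper bound $\dim_{\mathbb{F}_2}(W')^{\mathrm{ab}}\leq n$ that you call the crux is, for a right-angled Coxeter group $W'=W_\Theta$, literally the statement $|\Theta|\leq n$ you are trying to prove; you give no argument for it, and it is unclear how you would extract it from elementarity, since $[W',W']$ is a priori only a countable union of definable sets (you would need a uniform bound on commutator width across models of $Th(W)$ to make ``$\dim (W')^{\mathrm{ab}}\leq n$'' first-order). The paper closes this step with a different, elementary observation: $W$ has finitely many conjugacy classes of involutions, and ``there exist $w_1,\dots,w_k$ such that every involution is conjugate to some $w_i$'' is a first-order sentence, hence holds in $W'$; but in a right-angled Coxeter group of infinite rank the generators already fall into infinitely many distinct conjugacy classes (distinct generators have distinct images under the abelianization map of Fact~\ref{abelianization_fact}), so $W'$ must have finite rank. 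With that substitution your proof goes through and coincides with the paper's.
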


	\begin{proof}
%
 Let $W = W_{\Gamma}$ be a right-angled Coxeter group of finite rank, and let $W' \lneq W$ be elementary in $W$. Suppose that $W'$ is a Coxeter group, then clearly $W'$ is right-angled (since any element in $W$ either has order $2$ or it has order $\infty$, cf. Fact~\ref{finite_order}). Since $W'$ is an elementary subgroup of $W$, then clearly $W'$ is elementary equivalent to $W$. First of all notice that $W'$ is of finite rank, since e.g. $W$ has finitely many conjugacy classes of involutions, and this is a first-order property. Thus, by the main result of \cite{casal1}, $W'$ is isomorphic to $W$. Let then $(W', T)$ be a right-angled Coxeter system of type $\Gamma$, with $T = \{ t_1, ..., t_{|\Gamma|} \}$ (recall Fact~\ref{rigidity}). Then $W' \models \varphi_{\Gamma}(t_1, ..., t_{|\Gamma|})$ and so $W \models \varphi_{\Gamma}(t_1, ..., t_{|\Gamma|})$. Thus, by Lemma~\ref{bases_th}, there exists a Coxeter basis $S$ of $W$ such that $T = \{t_1, ..., t_{|\Gamma|} \}$ is a set of self-similar reflections of $(W, S)$. Furthermore, clearly $\langle t_1, ..., t_{|\Gamma|} \rangle_W = W'$ and by hypothesis $W' \lneq W$.
%
\end{proof}

	\begin{lemma}\label{th_elem_Cox_sbgp} Let $W$ be a Coxeter group of finite rank. Then $W$ does {\em not} have proper elementary subgroups which are Coxeter group. 
\end{lemma}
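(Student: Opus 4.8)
The plan is to argue by contradiction and to feed the conclusion of Lemma~\ref{lemma_elem_sbgp} into a divisibility (``root-counting'') argument that exploits elementarity \emph{directly}; note that the chain condition of Proposition~\ref{lemma_increasing_subgroups} will not by itself suffice, since along a descending chain $W>\alpha(W)>\alpha^2(W)>\cdots$ the complexity strictly \emph{increases} without bound and so such chains are not forbidden. So suppose $W'\lneq W$ is a proper elementary subgroup which is a Coxeter group. If $W$ is finite this is absurd (an elementary substructure of a finite structure is the whole structure), so $W$ is infinite. By Lemma~\ref{lemma_elem_sbgp} there is a Coxeter basis $S$ of $W$ and a proper self-similarity $\alpha\in Sim^*(W,S)$ with $W'=\langle\alpha(S)\rangle_W$. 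Replacing $\alpha$ by the geometric self-similarity of Proposition~\ref{first_prop_self_sim}(a), which generates the same subgroup $W'$, I may assume $\beta:=\alpha$ is geometric. Since $W$ is right-angled its Coxeter matrix takes values in $\{1,2,\infty\}$, hence $(W,S)$ is even and Proposition~\ref{muller_fact} shows that $\beta$ is an isomorphism of Coxeter systems $(W,S)\to(W',\beta(S))$.

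Next I would locate a ``stretched'' non-commuting pair. Let $\gamma$ be a geometric self-similarity with $\langle\gamma(S)\rangle_W=W$ (Proposition~\ref{first_prop_self_sim}(a) applied to the identity). Since $W'\subsetneq W$, Lemma~\ref{lemma10} gives $\Delta(\gamma)<\Delta(\beta)$ (recall Definition~\ref{def_complexity_self_similarity}), so $\dist(\gamma(s),\gamma(s'))<\dist(\beta(s),\beta(s'))$ for some $s,s'\in S$. For commuting generators $\beta(s)\beta(s')=\beta(s')\beta(s)$, whence both distances vanish; therefore the pair realizing the strict increase must satisfy $m(s,s')=\infty$, and $\dist(\beta(s),\beta(s'))\geq 1$. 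Fix such $s,s'$. As non-commuting generators lie in a common irreducible component and conjugation preserves components, all of $s,s',\beta(s),\beta(s')$ lie in that component, and I may pass to it and assume $W$ irreducible, so that roots are unique by Fact~\ref{pre_centr_fact}.

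The heart of the argument is then a single first-order formula separating $W'$ from $W$. On the one hand $ss'$ is \emph{primitive} in $W$: it is cyclically reduced of syllable length $2$, and a cyclically reduced proper power $r^{m}$ has syllable length $m\,\ell_S(r)$, forcing $m=2$, $\ell_S(r)=1$, i.e.\ $r\in S$ and $ss'=r^2=e$, absurd. On the other hand $u:=\beta(s)\beta(s')\in W'$ is a \emph{proper power} in $W$: the reflections $\beta(s),\beta(s')$ satisfy $o(\beta(s)\beta(s'))=\infty$ and lie on walls at distance $d:=\dist(\beta(s),\beta(s'))\geq 1$, so in the infinite dihedral group they span one has $u=\rho^{\,d+1}$, where $\rho$ is the product of $\beta(s)$ with the adjacent reflection of that dihedral group (an element of $W$); thus $u=\rho^{m}$ with $m=d+1\geq 2$. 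Now the formula $\psi(x):=\exists t\,(t^{m}=x)$ satisfies $W\models\psi(u)$; but if $W'\models\psi(u)$, then applying the isomorphism $\beta^{-1}$, which sends $u$ to $ss'$, would exhibit $ss'$ as an $m$-th power in $W$, contradicting primitivity, so $W'\models\neg\psi(u)$. Since $u\in W'$ and $W'$ is elementary in $W$, this is a contradiction, completing the proof.

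I expect the main obstacle to be the geometric step identifying $u=\beta(s)\beta(s')$ as a proper power once $\dist(\beta(s),\beta(s'))\geq 1$ — precisely, establishing the relation $u=\rho^{\,\dist(\beta(s),\beta(s'))+1}$ between wall-distance and exponent inside the rank-two reflection subgroup spanned by $\beta(s)$ and $\beta(s')$. This is where the wall and root combinatorics of Subsection~\ref{even_sec} and the uniqueness of roots (Fact~\ref{pre_centr_fact}) must be combined carefully; the remaining ingredients (primitivity of $ss'$ via syllable length, the reduction to an irreducible component, and the transfer of the formula through the isomorphism $\beta$) are routine.
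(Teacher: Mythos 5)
Your proof breaks down at the step you yourself flag as the main obstacle: the claim that if $o(tu)=\infty$ and $d:=\dist(t,u)\geq 1$ then $tu=\rho^{\,d+1}$ for some $\rho\in W$. This is false, and a counterexample already occurs for the paper's own standard example of a proper self-similarity. Take $W$ universal of rank $3$ with $S=\{a,b,c\}$ and $\beta(a)=a$, $\beta(b)=cbc$, $\beta(c)=bcb$. One checks that $\beta\in Sim^*(W,S)$, that $\beta(S)$ is geometric, and that $\langle\beta(S)\rangle_W=\langle a\rangle\ast\langle cbc,bcb\rangle$ is a proper subgroup (it does not contain $b$). The pair $(a,b)$ realizes a strict increase of complexity: $\mathbf{C}(a)=\{e,a\}$ and $\mathbf{C}(cbc)=\{c,cb\}$, so $\dist(a,cbc)=1>0=\dist(a,b)$. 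Yet $\beta(a)\beta(b)=acbc$ is \emph{not} a proper power: it is cyclically reduced, and no cyclic permutation of the word $(a,c,b,c)$ is periodic, so $\sqrt{acbc}=acbc$ by Fact~\ref{pre_centr_fact}. The underlying confusion is between translation length and divisibility: a wall-distance of $d$ tells you that $tu$ translates its axis by $2(d+1)$ panels, but the ``one-step translation'' $\rho$ along that axis is in general not realized by any element of $W$ (already in $\langle t,u\rangle\cong D_\infty$ the element $tu$ generates the translation subgroup and is primitive there). Since in this example your separating formula $\exists t\,(t^{m}=x)$ fails to distinguish $ab$ from $\beta(a)\beta(b)$, the intended contradiction with elementarity never arises. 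Note also that for this same $\beta$ the pair $(b,c)$ \emph{does} give a proper power, $(cb)^3$, so the phenomenon is genuinely pair-dependent; your argument fixes an arbitrary pair realizing the strict increase and therefore cannot be repaired merely by re-choosing, without a new idea guaranteeing a ``good'' pair exists. (Two smaller points: your primitivity argument for $ss'$ tacitly assumes the putative root $r$ is cyclically reduced, which needs the usual conjugation step; and the reduction to an irreducible component does not interact well with being a power in the ambient product, though neither issue is the main problem.)

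For contrast, the paper's proof avoids any divisibility invariant. It writes each $\hat{s}_i=\beta(s_i)$ explicitly as a conjugate $s_{i_1}\cdots s_{i_{k_i}}s_is_{i_{k_i}}\cdots s_{i_1}$, observes that $W$ satisfies the existential sentence asserting the existence of a self-similar tuple $\bar{x}$ (via $\varphi_\Gamma$) with $\hat{s}_i$ expressed in this form from $\bar{x}$, transports this sentence into $W'$ by elementarity and then back to $W$ through the isomorphism $\hat{s}\mapsto s$, and concludes that $S$ itself is so expressible from some self-similar tuple $\bar{b}$. This forces $\bar{b}$ to be a basis, and a short composition of the resulting automorphisms shows that $s_i\mapsto\hat{s}_i$ was an automorphism all along, contradicting $W'\lneq W$. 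If you want to salvage a ``one formula separates them'' strategy, you would need an invariant that provably changes under every proper geometric self-similarity, which is exactly what the complexity matrix of Definition~\ref{def_complexity_self_similarity} provides but which, as your example shows, the power structure of $\beta(s)\beta(s')$ does not.
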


	\begin{proof} Let $W' \leq W$ be elementary in $W$ and suppose that $W'$ is a Coxeter group and that $W' \lneq W$. Then, by
Lemma~\ref{lemma_elem_sbgp}, we have that $W'$ is a counterexample to the self-similar reflection property, i.e. there exists a Coxeter basis $S$ of $W$ and a set $\hat{S}$ of self-similar reflections of $(W, S)$ such that $W' = \langle \hat{S} \rangle_W$. By Proposition~\ref{muller_fact}, $W$ is isomorphic to $W'$ by the map $\alpha: \hat{s} \mapsto s$. Let $S = \{s_1, ..., s_n \}$, \mbox{$\hat{S} = \{ \hat{s}_1, ..., \hat{s}_n\}$, and let:}
$$\hat{s}_i = s_{i_1} \cdots s_{i_{k_i}} s_i s_{i_{k_i}} \cdots s_{i_1},$$
(recall that $\hat{s}_i \in s_i^W$) for $i \in [1, n]$. Now, clearly we have:
$$W \models \exists x_1, ..., x_n(\varphi_{\Gamma}(x_1, ..., x_n) \wedge \bigwedge_{i \in [1, n]} \hat{s}_i = x_{i_1} \cdots x_{i_{k_i}} x_i x_{i_{k_i}} \cdots x_{i_1}),$$
where $\Gamma$ is the graph specifying the type of $W$ and $\varphi_{\Gamma}(x_1, ..., x_n)$ is the formula from Lemma~\ref{bases_th}. Hence, being $W'$ elementary in $W$ and $\hat{S} \subseteq W'$ we have:
$$W' \models \exists x_1, ..., x_n(\varphi_{\Gamma}(x_1, ..., x_n) \wedge \bigwedge_{i \in [1, n]} \hat{s}_i = x_{i_1} \cdots x_{i_{k_i}} x_i x_{i_{k_i}} \cdots x_{i_1}).$$
But then, via the isomorphism $\alpha: W' \cong W$ such that $\hat{s} \mapsto s$, we have:
\begin{equation}\label{equstar} W \models \exists x_1, ..., x_n(\varphi_{\Gamma}(x_1, ..., x_n) \wedge \bigwedge_{i \in [1, n]} s_i = x_{i_1} \cdots x_{i_{k_i}} x_i x_{i_{k_i}} \cdots x_{i_1}). \tag{$\star$}
\end{equation}
Let $b_1, ..., b_n \in W$ be a witness of (\ref{equstar}). Then, by Lemma~\ref{bases_th}, there exists a Coxeter basis $T$ of $W$ such that $B :=\{ b_1, ..., b_n \}$ is a set of self-similar reflections of $(W, T)$. On the other hand, by the second conjunct of the formula in (\ref{equstar}), we have that:
$$\langle b_i : i \in [1, n] \rangle_W = W,$$
since $S \subseteq \langle b_i : i \in [1, n] \rangle_W$ and $S$ generates $W$. Thus, by Proposition~\ref{muller_fact}, we have that $B$ is a basis of $W$. Hence, we have:
$$\beta: s_i \mapsto b_i \in Aut(W),$$
by Fact~\ref{rigidity} and the fact that $B$ is a basis of $W$. Furthermore:
$$ \{b_{i_1} \cdots b_{i_{k_i}} b_i b_{i_{k_i}} \cdots b_{i_1} : i \in [1, n]\} = \{s_1, ..., s_n \} = S$$
is a basis of $W$, and so:
$$\gamma: b_i \mapsto b_{i_1} \cdots b_{i_{k_i}} b_i b_{i_{k_i}} \cdots b_{i_1} \in Aut(W).$$
Hence, we have:
$$\begin{array}{rcl}
(\beta^{-1} \circ \gamma \circ \beta)(s_i) & = & (\beta^{-1} \circ \gamma)(b_i)\\
 & = & \beta^{-1}(b_{i_1} \cdots b_{i_{k_i}} b_i b_{i_{k_i}} \cdots b_{i_1})\\
  & = & \beta^{-1}(b_{i_1}) \cdots \beta^{-1}(b_{i_{k_i}}) \beta^{-1}(b_i) \beta^{-1}(b_{i_{k_i}}) \cdots \beta^{-1}(b_{i_1}) \\
 & = & s_{i_1} \cdots s_{i_{k_i}} s_i s_{i_{k_i}} \cdots s_{i_1},
\end{array}$$
and so the map $\alpha^{-1}: s_i \mapsto \hat{s}_i = s_{i_1} \cdots s_{i_{k_i}} s_i s_{i_{k_i}} \cdots s_{i_1}$ is an automorphism of $W$, contradicting the fact that $W' = \langle \hat{S} \rangle_W$ is a proper subgroup of $W$.
\end{proof}

	\begin{lemma}\label{refl_th} Let $\Gamma$ be a graph of arbitrary cardinality with the star-property (cf. Definition \ref{star_prop}). Let $\psi(x)$ be the first-order formula expressing:
	\begin{enumerate}[(1)]
	\item $x$ has order $2$ and $x \neq e$;
	\item there is no $y$ of order $2$ such that $e \neq y \neq x$ and for every $z$ of order $2$ we have:
	$$[z, x] = e \text{ implies } [z, y] = e.$$
\end{enumerate}
Then the following are equivalent for $a \in W_\Gamma = W$:
\begin{enumerate}[(1)]
	\item $W \models \psi(a)$;
	\item $a \in S^W$, for some (equivalently, every) Coxeter basis $S$ of $W$.
\end{enumerate}
\end{lemma}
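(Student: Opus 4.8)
The plan is to deduce everything from two reductions together with a single decisive use of the star-property via Fact~\ref{reflection_preserving}. Since $\Gamma$ has the star-property, Fact~\ref{reflection_preserving} gives that $W$ is reflection independent, so $S^W$ is the same for every Coxeter basis; this is exactly what justifies the parenthetical ``some (equivalently every)'' in clause (2). Next, both the satisfaction of $\psi$ and membership in $S^W$ are invariant under inner automorphisms: the formula $\psi$ only refers to orders and to commuting, both preserved by conjugation, and $S^W$ is by definition conjugation-closed. Hence it suffices to test the equivalence on conjugacy-class representatives. Throughout I would write $\mathrm{Inv}(x)$ for the set of order-$2$ elements $z$ with $[z,x]=e$; then clause (2) of $\psi$ says precisely that there is no involution $y\notin\{e,x\}$ with $\mathrm{Inv}(x)\subseteq\mathrm{Inv}(y)$.

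For $(2)\Rightarrow(1)$, since every reflection is conjugate to a generator, it is enough to prove $\psi(s)$ for $s\in S$. Clause (1) is immediate. For clause (2) suppose, toward a contradiction, that some involution $y\neq e,s$ satisfies $\mathrm{Inv}(s)\subseteq\mathrm{Inv}(y)$. Taking $z:=s$ gives $[s,y]=e$, so by Fact~\ref{centralizer} (with $\sqrt{s}=s$ and $lk(s)=N(s)$) we have $y\in C_W(s)=\langle s\rangle_W\times\langle N(s)\rangle_W$; write $y=s^{\varepsilon}w$ with $\varepsilon\in\{0,1\}$ and $w\in\langle N(s)\rangle_W$ an involution or $e$. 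Each $t\in N(s)$ is an involution commuting with $s$, hence lies in $\mathrm{Inv}(s)\subseteq\mathrm{Inv}(y)$, and since $[t,s]=e$ this forces $[t,w]=e$; thus $w$ commutes with every generator of $\langle N(s)\rangle_W$, i.e. $w\in Z(\langle N(s)\rangle_W)$. As the centre of a right-angled special parabolic is generated by its universal vertices, if $w\neq e$ there is a vertex $u_0\in N(s)$ adjacent in $\Gamma$ to every other vertex of $N(s)$; combined with $u_0\in N(s)$ this yields $N^{*}(s)\subseteq N^{*}(u_0)$ with $u_0\neq s$, contradicting the star-property (Definition~\ref{star_prop}(1)). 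Hence $w=e$, forcing $y\in\{e,s\}$, a contradiction; so $\psi(s)$ holds.

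For $(1)\Rightarrow(2)$ I would argue by contraposition: assuming $a\notin S^{W}$, I show $\neg\psi(a)$. If $a$ is not an involution, clause (1) of $\psi$ already fails. Otherwise, by Fact~\ref{finite_order} and conjugation-invariance I may take $a=\prod_{t\in K}t$ with $K=sp(a)$ a clique of $\Gamma$, and $|K|\geq 2$ because $a$ is not a reflection. Fix $t_0\in K$. Every vertex of $K\cup lk(a)$ is adjacent or equal to $t_0$ (those of $K$ because $K$ is a clique, those of $lk(a)$ because they are common neighbours of $K\ni t_0$), so $K\cup lk(a)\subseteq N^{*}(t_0)$ and hence $t_0$ centralises $\langle K\cup lk(a)\rangle_W$. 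Since Fact~\ref{centralizer} gives $C_W(a)=\langle\sqrt a\rangle_W\times\langle lk(a)\rangle_W\subseteq\langle K\cup lk(a)\rangle_W$ (reducing to irreducible components as in the remark preceding that fact), every involution commuting with $a$ also commutes with $t_0$, i.e. $\mathrm{Inv}(a)\subseteq\mathrm{Inv}(t_0)$. As $t_0\neq e$ and $t_0\neq a$ (because $|K|\geq 2$), the involution $y:=t_0$ witnesses the failure of clause (2), so $\neg\psi(a)$.

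The hard part will be the direction $(2)\Rightarrow(1)$, namely converting the group-theoretic hypothesis ``$y$ commutes with every involution that commutes with $s$'' into a combinatorial statement about $\Gamma$. The crux is to identify $C_W(s)$ and the centre $Z(\langle N(s)\rangle_W)$ precisely, so that a nontrivial $\langle N(s)\rangle_W$-component of $y$ must come from a vertex universal in $\Gamma[N(s)]$ --- which is exactly the configuration forbidden by the star-property. I expect the only real bookkeeping to be the reducible case: one must check that generators lying in irreducible components other than that of $s$ all belong to $N(s)$, so that $C_W(s)=\langle s\rangle_W\times\langle N(s)\rangle_W$ and the centre description remain valid, and similarly that the containment $C_W(a)\subseteq\langle K\cup lk(a)\rangle_W$ survives decomposition into components.
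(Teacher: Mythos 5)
Your proof is correct. The direction showing that a non-reflection involution fails $\psi$ --- pass to the clique-core $a$ with $|sp(a)|\geq 2$ and witness the failure of clause (2) by a single letter $t_0$ of the core, using $C_W(a)\subseteq C_W(t_0)$ --- is essentially identical to the paper's argument. The converse direction is where you genuinely diverge. The paper fixes $s\in S$ and an arbitrary involution $b=s_1\cdots s_m a s_m\cdots s_1$ with $e\neq b\neq s$, and by a three-way case analysis on the core $a$ uses the star-property to produce a concrete generator $r\in N^*(s)\setminus N^*(t)$ (for a suitable letter $t$) commuting with $s$ but not with $b$; the verification that $r\notin C_W(b)$ is left implicit there and needs a small conjugacy argument. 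You instead suppose a bad witness $y$ exists and test it only against $z=s$ and $z\in N(s)$: this forces $y\in C_W(s)=\langle s\rangle_W\times\langle N(s)\rangle_W$ with its $\langle N(s)\rangle_W$-part central in $\langle N(s)\rangle_W$, and the description of the centre of a right-angled special parabolic then yields a vertex $u_0\in N(s)$ adjacent to every other vertex of $N(s)$, whence $N^*(s)\subseteq N^*(u_0)$ with $u_0\neq s$, contradicting the star-property. Your route avoids the case analysis and in fact shows that the finitely many involutions in $N^*(s)$ already suffice to isolate $s$; the price is the extra input that the centre of $\langle N(s)\rangle_W$ is generated by its universal vertices, which is standard, follows from $C_W(t)=\langle N^*(t)\rangle_W$, and remains valid for infinite $\Gamma$ because central elements have finite support. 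The reducibility bookkeeping you flag at the end is genuine but routine, and the paper glosses over it in the same way.
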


\begin{proof} Suppose that $\Gamma$ has the star property and let $W = W_{\Gamma}$. By Fact~\ref{reflection_preserving}, $S^W$ does not depend on a choice of the Coxeter basis $S$ of $W$, let then $R(W) := S^W$. First of all, we prove that for $g \in R(W)$ we have that $W \models \psi(g)$. Notice that it suffices to show this for $s \in S$, since conjugation is an automorphism. Let then $s \in S$ and $b \in W$ of order $2$ with $e \neq b \neq g$. We want to find $c \in C_{W}(s, 2) - C_{W}(b, 2)$, where for $h \in W$ we let $C_{W}(h, 2) = C_{W}(h) \cap \{ g \in W : g^2 = e \}$, and we recall that $C_{W}(h)$ denotes the centralizer of $h$ in $W$. By Fact \ref{finite_order}, we have that $b = w = s_1 \cdots s_m a s_m \cdots s_1$, with $w$ reduced and $sp(a)$ inducing a non-empty clique of $\Gamma$. We make a case distinction:
\newline {\em Case 1}. $\ell(a) = 1$ and $a = t \in S$ with $t \neq s$.
\newline Let $r \in N^*(s) - N^*(t)$. Then $r = c$ is as wanted.
\newline {\em Case 2}. $\ell(a) = 1$ and $a = s$.
\newline In this case necessarily $m \geq 1$, since we are assuming that $g \neq b$. Let $r \in N^*(s) - N^*(s_m)$. Then $r = c$ is as wanted.
\newline {\em Case 3}. $\ell(a) > 1$.
\newline Let $t \in sp(a) - \{ s \}$ and $r \in N^*(s) - N^*(t)$. Then $r = c$ is as wanted.
\newline We now prove that if $W \models \psi(g)$, then $g \in R(W)$. Now, since $g$ is of order $2$, by Fact \ref{finite_order}, we have that $g = w = s_1 \cdots s_m a s_m \cdots s_1$, with $w$ reduced and $sp(a)$ inducing a non-empty clique of $\Gamma$. For the sake of contradiction, suppose that $a = w' = t_1 \cdots t_k$ with $k = \ell(a) \geq 2$. Then, for every $\ell \in [1, k]$, we have that:
	$$C_{W}(g, 2) \subsetneq C_{W}(t_{\ell}, 2),$$
where the inclusion $\subseteq$ is by Fact \ref{centralizer}, and the fact that the inclusion is proper is by the star-property, and so $W \not\models \psi(g)$, a contradiction. So \mbox{$\ell(a) = 1$ and $g \in R(W)$.}
\end{proof}

\begin{theorem}\label{interpretability_th} Let $\Gamma$ be a right-angled graph (finite or infinite) and $W_\Gamma = W$ the corresponding right-angled Coxeter group. Then the following are equivalent:	
	\begin{enumerate}[(1)]
	\item $\Gamma$ has the star-property (cf.~Definition~\ref{star_prop});
	\item the set of reflections $S^{W}$ of the Coxeter system $(W, S)$ is invariant under change of basis $S$ of $W$;
	\item the set of reflections $S^{W}$ of the Coxeter system $(W, S)$ is invariant under change of basis $S$ of $W$ and it is first-order definable in $W$ without parameters.
	\end{enumerate}
Furthermore, if $\Gamma$ has the star-property, then the graph $\Gamma$ is interpretable in $W_{\Gamma}$.
\end{theorem}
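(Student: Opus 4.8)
The plan is to prove the three equivalences by a short cycle and then to build an explicit parameter-free interpretation of the graph for the final clause.

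For the equivalences, I would first observe that $(1)\Leftrightarrow(2)$ is immediate: by Definition~\ref{def_refl} the assertion in $(2)$ that $S^W$ is invariant under change of basis is exactly the statement that $W_\Gamma$ is reflection independent, so $(1)\Leftrightarrow(2)$ is nothing but the equivalence of items $(3)$ and $(2)$ of Fact~\ref{reflection_preserving}. The implication $(3)\Rightarrow(2)$ is trivial, so it remains to prove $(1)\Rightarrow(3)$, and this is where Lemma~\ref{refl_th} does the work: assuming the star-property, the parameter-free formula $\psi(x)$ of that lemma defines $S^W$ in $W$, and by the already-established $(1)\Rightarrow(2)$ this set is basis-independent; hence $(3)$ holds, closing the cycle $(1)\Rightarrow(3)\Rightarrow(2)\Rightarrow(1)$.

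For the \emph{furthermore}, write $R(W)=S^W$ for the (now basis-independent) reflection set, which is $\emptyset$-definable by $\psi$. I would interpret $\Gamma=(S,E_\Gamma)$ as follows. Take the domain to be $R(W)$, and for the equivalence relation use $W$-conjugacy, $u\mathrel E v \iff \exists g\,(gug^{-1}=v)$, which is $\emptyset$-definable. The crucial point is that the conjugacy classes inside $R(W)$ are exactly the sets $s^W$, one for each $s\in S$: they cover $R(W)$ and are pairwise distinct because the abelianization $\alpha$ of Fact~\ref{abelianization_fact} sends every element of $s^W$ to the basis vector $[s]_\sim$ (conjugation is killed in the abelianization, and for a right-angled system $\sim$ is trivial), so $\alpha$ separates $s^W$ from $t^W$ whenever $s\neq t$. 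Thus $R(W)/E$ is canonically in bijection with $S$. I would then define the adjacency relation by declaring $[u]_E$ adjacent to $[v]_E$ iff $[u]_E\neq[v]_E$ and there exist $u'\mathrel E u$, $v'\mathrel E v$ with $[u',v']=e$; this is $\emptyset$-definable and well-defined on classes.

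The main obstacle is to check that this adjacency recovers $E_\Gamma$ exactly, i.e.\ that for distinct $s,t\in S$ there are commuting conjugates $u'\in s^W$, $v'\in t^W$ if and only if $sE_\Gamma t$. The forward direction is trivial, since $sE_\Gamma t$ means $s,t$ already commute. For the converse, suppose $u'\in s^W$ and $v'\in t^W$ commute and are distinct; then $D:=\langle u',v'\rangle_W$ is finite, so by Lemma~\ref{parabclosure}(b) its parabolic closure is $g\langle J\rangle_W g^{-1}$ for some spherical --- hence, in the right-angled case, clique --- subset $J\subseteq S$ and some $g\in W$. Inside the clique subgroup $\langle J\rangle_W\cong(\mathbb{Z}/2\mathbb{Z})^{|J|}$ the only reflections are the generators $J$ themselves, since by Fact~\ref{finite_order} together with the abelianization any other involution is a product of at least two generators and so is not in $S^W$; as $S^W$ is conjugation-invariant, it follows that $u'=g s_i g^{-1}$ and $v'=g s_j g^{-1}$ for distinct $s_i,s_j\in J$, and applying $\alpha$ forces $s_i=s$ and $s_j=t$. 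Since $J$ is a clique we conclude $sE_\Gamma t$. Assembling these pieces into the data of Definition~\ref{classical_interpretability}, the bijection $R(W)/E\to S$ transports the definable adjacency to $E_\Gamma$; as the only primitive relation of the graph language is $E_\Gamma$, its pullback is $\emptyset$-definable, and $\Gamma$ is interpretable in $W_\Gamma$ without parameters. I expect the confinement of commuting conjugate reflections to a common clique, and the resulting identification of the two generators via the abelianization, to be the only genuinely non-formal step.
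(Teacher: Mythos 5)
Your proof is correct and follows the same route as the paper: the equivalences come from Fact~\ref{reflection_preserving} together with Lemma~\ref{refl_th}, and your explicit interpretation (domain $S^W$, conjugacy as the equivalence relation, adjacency via commuting conjugate reflections, with the clique/abelianization argument pinning down the generators) is exactly the construction the paper leaves implicit when it says the ``furthermore'' follows easily from Lemma~\ref{refl_th}. The only cosmetic point is that for infinite $\Gamma$ you should note that the two commuting reflections already lie in the finite-rank standard parabolic generated by their supports before invoking Lemma~\ref{parabclosure}(b), which is stated for finite rank.
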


	\begin{proof} The equivalence of (1) and (2) is by Fact~\ref{reflection_preserving}, while the other equivalence is by Lemma~\ref{refl_th}. The ``furthermore'' also follows easily from Lemma~\ref{refl_th}.
\end{proof}

	\begin{corollary}\label{cor_el_sbg_star} Let $W_\Gamma$ be a right-angled Coxeter group of finite rank, and suppose that $\Gamma$ has the star-property. Then $W_\Gamma$ does not have proper elementary subgroups.
\end{corollary}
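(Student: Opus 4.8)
The plan is to show that \emph{every} elementary subgroup $W' \leq W_\Gamma$ is automatically generated by its own reflections, hence is a Coxeter subgroup, and then to derive a contradiction from Lemma~\ref{th_elem_Cox_sbgp}. Write $W = W_\Gamma$. Since $\Gamma$ has the star-property, Theorem~\ref{interpretability_th} (through Lemma~\ref{refl_th}) supplies a parameter-free formula $\psi(x)$ with the property that, for every $g \in W$, we have $W \models \psi(g)$ if and only if $g \in S^W$. This is the only input I need from the star-property: it makes the set of reflections $\emptyset$-definable in a way that transfers to elementary subgroups.

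So I would fix a \emph{proper} elementary subgroup $W' \lneq W$, aiming for a contradiction, and set $U := \langle W' \cap S^W \rangle_W$. The inclusion $U \leq W'$ is immediate, and the whole argument hinges on the reverse inclusion $W' \leq U$. Given $g \in W'$, let $k$ be the reflection length of $g$, which is finite because $S \subseteq S^W$ already generates $W$. For this fixed $k$ consider the parameter-free formula $\rho_k(x) := \exists t_1 \cdots \exists t_k \big( \bigwedge_{i=1}^{k} \psi(t_i) \wedge x = t_1 \cdots t_k \big)$. By choice of $k$ we have $W \models \rho_k(g)$; since $g \in W'$ and $W'$ is elementary in $W$, also $W' \models \rho_k(g)$, so there are witnesses $t_1,\dots,t_k \in W'$ with $W' \models \psi(t_i)$. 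Applying elementarity once more to each $\psi(t_i)$ gives $W \models \psi(t_i)$, whence $t_i \in S^W$ by Lemma~\ref{refl_th}, and therefore $t_i \in W' \cap S^W$. Thus $g = t_1 \cdots t_k \in U$, and as $g$ was arbitrary we conclude $W' = U = \langle W' \cap S^W \rangle_W$.

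With this in hand, $W'$ is a reflection subgroup of the right-angled system $(W,S)$, so Proposition~\ref{hee_prop} produces a geometric set $R \subseteq S^W$ of reflections with $W' = \langle R \rangle_W$ and $(W',R)$ a Coxeter system; moreover $W'$ is right-angled, since any two reflections of $W$ generating a finite group lie in a finite (hence elementary abelian, cf.\ Lemma~\ref{parabclosure}(b) and Fact~\ref{finite_order}) parabolic, forcing $o(rr') \in \{2,\infty\}$ for $r \neq r' \in R$. Hence $W'$ is a proper elementary subgroup of $W$ which is itself a Coxeter group, directly contradicting Lemma~\ref{th_elem_Cox_sbgp}. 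Therefore no proper elementary subgroup of $W_\Gamma$ can exist.

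I expect the one genuinely substantive step to be the transfer in the second paragraph. The obstacle is that, although $S^W$ is $\emptyset$-definable, ``being a product of reflections'' is not captured by a single first-order sentence, because there is no uniform bound on the number of factors available across all of $W$. The observation that rescues the argument is that reflection length is finite on each \emph{individual} element $g$: this lets me work with the length-$k$ formula $\rho_k$ tailored to that particular $g$, pull a factorization back down into $W'$ by elementarity, and recognize its factors as reflections lying in $W'$. Everything after that is bookkeeping with results already established in the excerpt.
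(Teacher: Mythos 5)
Your proposal is correct and follows essentially the same route as the paper's proof: both use the parameter-free definability of $S^W$ (via the star-property and Lemma~\ref{refl_th}) together with the finiteness of reflection length on each individual element to transfer a factorization into reflections down to $W'$ by elementarity, concluding that $W'$ is a reflection subgroup and hence a Coxeter group, which contradicts Lemma~\ref{th_elem_Cox_sbgp}. The only difference is cosmetic: you spell out the right-angledness of $W'$ and the finiteness of reflection length slightly more explicitly than the paper does.
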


	\begin{proof} Suppose that $\Gamma$ has the star property and let $W = W_{\Gamma}$. By Fact~\ref{reflection_preserving}, $S^W$ does not depend on a choice of the Coxeter basis $S$ of $W$, let then $R(W) := S^W$. Let $W'$ be an elementary subgroup of $W$. By Lemma~\ref{th_elem_Cox_sbgp} and \cite{deo, dyer} it suffices to show that $W$ is a reflection subgroup of $W$ (since then we have that $W'$ is a Coxeter group and we can indeed apply Lemma~\ref{th_elem_Cox_sbgp}), i.e. $W' = \langle W' \cap R(W) \rangle_W$. Let $\psi(x)$ the formula defining $R(W)$ in $W$ (cf. Theorem~\ref{interpretability_th}). Let $a \in W'$, then:
	$$W \models \exists x_1, ..., x_n(\bigwedge_{i \in [1, n]} \psi(x_i) \wedge a = x_1 \cdots x_n),$$
and thus:
	$$W' \models \exists x_1, ..., x_n(\bigwedge_{i \in [1, n]} \psi(x_i) \wedge a = x_1 \cdots x_n).$$
Hence, since $\psi(W') \subseteq \psi(W)$ being $W'$ elementary in $W$, we can find $t_1, ..., t_n \in W' \cap R(W)$ such that $a = t_1 \cdots t_n$, and so $a \in \langle W' \cap R(W) \rangle_W$, as wanted.
\end{proof}

	\begin{proof}[Proof of Theorem~\ref{el_substr}] Immediate by Lemmas~\ref{th_elem_Cox_sbgp}~and~\ref{refl_th}, and Corollary~\ref{cor_el_sbg_star}.
\end{proof}



\section{Prime Models in RACGs}\label{sec_prime_models}
	
		In this section we prove Theorem~\ref{th_prime_models}.

\subsection{Prime Models and $Sim(W, S)$}

	\begin{proposition}\label{type_def_basis} Let $W$ be a right-angled Coxeter group of finite rank. Then the $Aut(W)$-orbit of any Coxeter basis is type-definable in $W$ without parameters.
\end{proposition}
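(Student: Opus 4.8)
The plan is to first give a purely algebraic description of the orbit and then convert that description into a parameter-free type, using the self-similarity machinery of Subsection~\ref{even_sec} and the complexity theory of the subsection on reflection subgroups of RACGs.

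\emph{Algebraic description of the orbit.} Fix a basis $S=\{s_1,\dots,s_n\}$ with Coxeter graph $\Gamma$, put $\bar s=(s_1,\dots,s_n)$ and let $O:=\{\alpha(\bar s):\alpha\in Aut(W)\}$ be the orbit in question. The first step is to prove
$$O=\{\bar g\in W^n: W\models\varphi_\Gamma(\bar g)\ \text{and}\ \langle\bar g\rangle_W=W\},$$
where $\varphi_\Gamma$ is the formula of Lemma~\ref{bases_th}. For the inclusion $\subseteq$, I would decompose $\alpha=\sigma\circ\phi$ with $\sigma\in Spe(W)$ and $\phi\in F(\Gamma)$ via Fact~\ref{Tits}; since $\sigma$ sends every involution to a conjugate (Definition~\ref{spe_def}) and $\sigma$ preserves orders of products, the tuple $\alpha(\bar s)$ is a self-similar set of $\phi(S)$-reflections, so $W\models\varphi_\Gamma(\alpha(\bar s))$ by Lemma~\ref{bases_th}, and it generates $W$ because $\alpha$ is onto. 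For the inclusion $\supseteq$, Lemma~\ref{bases_th} produces $\phi\in F(\Gamma)$ for which $\bar g$ is a self-similar set of $\phi(S)$-reflections; as $W$ is \emph{even} (being right-angled), Proposition~\ref{muller_fact} makes $\mu:\phi(s_i)\mapsto g_i$ an isomorphism onto $\langle\bar g\rangle_W=W$, whence $\mu\in Aut(W)$ and $\mu\circ\phi\in Aut(W)$ sends $\bar s$ to $\bar g$.

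\emph{Turning generation into a type.} The condition $\varphi_\Gamma(\bar x)$ is a single $\emptyset$-formula, so the whole difficulty is to encode $\langle\bar x\rangle_W=W$ by a $\emptyset$-type. I would do this through the complexity invariant $\Delta$ of Definition~\ref{def_complexity_self_similarity}. Since $\Gamma$ is finite, $F(\Gamma)$ is a finite group, so one may split into finitely many cases according to the witness $\phi\in F(\Gamma)$ supplied by Lemma~\ref{bases_th}; after replacing $\bar x$ by $\phi^{-1}(\bar x)$ (an automorphism, hence harmless for the question of generation), a self-similar tuple determines an honest $S$-self-similarity $\alpha$ with $\langle\alpha(S)\rangle_W=\langle\bar x\rangle_W$. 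Applying Lemma~\ref{lemma10} with the identity self-similarity (of complexity $\Delta_0:=(\dist(s,s'))_{s,s'\in S}$) on the larger side gives $\Delta_0\le\Delta(\alpha)$, with equality if and only if $\langle\bar x\rangle_W=W$. Moreover Proposition~\ref{lemma_increasing_subgroups} and Corollary~\ref{product_of_indecomposables_pre} show that proper self-similar tuples occur at complexities strictly above $\Delta_0$ of \emph{arbitrarily large} size. Thus the complement of $O$ inside the self-similar tuples is the countable union over matrices $M\neq\Delta_0$ of the sets $\{\bar x:\Delta=M\}$; writing $\chi_M(\bar x)$ for the predicate ``the complexity equals $M$'', the set
$$p(\bar x):=\{\varphi_\Gamma(\bar x)\}\cup\{\neg\chi_M(\bar x):M\neq\Delta_0\}$$
is then a $\emptyset$-type with $p(W)=O$. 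This is also why one expects only type-definability and not first-order definability: arbitrarily deep proper self-similar subgroups keep $\langle\bar x\rangle_W=W$ out of reach of any single formula.

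\emph{Main obstacle.} The crux is the parameter-free, first-order expressibility of the complexity predicates $\chi_M$, equivalently of each bounded condition ``$\dist(x_i,x_j)\le d$'', because the $S$-length function underlying $\dist$ is not available without naming $S$. I would resolve this by giving an \emph{intrinsic} description of $\dist$ between two reflections — as the number of walls separating their walls in $\Cay(W,S)$ — detectable by short sequences of reflections built from the $x_i$ together with their conjugates, using only the $\emptyset$-definable data of the orders $o(x_ix_j)\in\{2,\infty\}$ and the adjacency of walls; this turns each bounded-distance condition into an $\emptyset$-formula in $\bar x$. Once this combinatorial step is secured, the algebraic description of the orbit and the complexity stratification assemble into the required $\emptyset$-type, establishing that $O$ is type-definable without parameters.
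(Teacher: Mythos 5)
Your opening step is correct and agrees with what the paper uses implicitly: by Lemma~\ref{bases_th}, Proposition~\ref{muller_fact} and Fact~\ref{rigidity}, the orbit of $\bar s$ is exactly $\{\bar g\in W^n: W\models\varphi_\Gamma(\bar g)\text{ and }\langle\bar g\rangle_W=W\}$, so everything reduces to capturing the generation condition by a parameter-free type. The problem is that your resolution of this reduction has a genuine gap, located exactly where you flag the ``main obstacle''. You never establish that the complexity predicates $\chi_M(\bar x)$ are first-order and $\emptyset$-definable, and the sketch you give for doing so is not convincing: the distance $\dist(t,u)$ between reflections is computed as a gallery length in $\Cay(W,S)$ and the separating walls range over all of $S^W$, objects that depend on the basis $S$ you are not allowed to name; the set of reflections of a general right-angled Coxeter group is not $\emptyset$-definable (this requires the star-property, cf.\ Lemma~\ref{refl_th} and Theorem~\ref{interpretability_th}); and the complexity $\Delta(\alpha)$ of Definition~\ref{def_complexity_self_similarity} is only defined after replacing $\alpha$ by a geometric representative via Proposition~\ref{first_prop_self_sim}, a passage with no evident first-order content. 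Granting $\emptyset$-definability of the $\chi_M$, your type would indeed cut out the orbit (the equality case of Lemma~\ref{lemma10} against the identity self-similarity gives what you need), but as written $p(\bar x)$ is not known to be a set of formulas, so the proof is incomplete at its decisive step.

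The paper's proof avoids this geometry entirely. For each tuple $\bar a$ with $W\models\varphi_\Gamma(\bar a)$ and $\langle\bar a\rangle_W\neq W$ it fixes a basis $T_{\bar a}$ witnessing self-similarity, records the explicit conjugation words $a_i=t_{i_1}\cdots t_{i_{k_i}}t_i t_{i_{k_i}}\cdots t_{i_1}$, forms the quantifier-free pattern $\chi_{(\bar a,T_{\bar a},\leq_{\bar a})}(\bar x,\bar y)$, and adds to the type the sentence $\theta_{\bar a}(\bar x)=\neg\exists\bar y\,(\varphi_\Gamma(\bar y)\wedge\chi_{\bar a}(\bar x,\bar y))$; these are honest $\emptyset$-formulas by construction, and a countable family of them suffices because there are only countably many tuples $\bar a$. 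The mathematical content then lies in showing that a genuine basis realizes every $\theta_{\bar a}$: if some basis $\bar s$ failed $\theta_{\bar a}$, the witnesses would produce automorphisms $\beta,\gamma$ whose composite $\beta^{-1}\circ\gamma\circ\beta$ exhibits $t_i\mapsto a_i$ as an automorphism of $W$, contradicting $\langle\bar a\rangle_W\neq W$ (this is the computation of Lemma~\ref{th_elem_Cox_sbgp}). To repair your argument you would either have to prove the $\emptyset$-definability of your complexity predicates in the group language, or replace the complexity stratification by this purely syntactic exclusion of each proper self-similar tuple.
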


	\begin{proof} Let $\Gamma$ be the type of $W$, $\varphi_{\Gamma}$ be as in the proof of Lemma~\ref{bases_th}, and $n = |\Gamma|$. Let $X = \varphi_{\Gamma}(M) = \{\bar{a} \in W^n : W \models \varphi_{\Gamma}(\bar{a}) \}$ and $X_* = \{\bar{a} \in X : \langle \bar{a} \rangle_W \neq W\}$. Now, by Lemma~\ref{bases_th}, for every $\bar{a} \in X$, there exists a basis $T_{\bar{a}}$ such that $\{ a_1, ..., a_n \}$ is a set of self-similar reflections of $(W, T_{\bar{a}})$. For every $\bar{a} \in X_*$, fix one such basis $T_{\bar{a}}$ and an enumeration $\leq_{\bar{a}} = \{t_{(\bar{a}, 1)}, ..., t_{(\bar{a}, n)} \}$ of $T_{\bar{a}}$. Then for every $\bar{a} \in X_*$ and for $i \in [1, n]$ we have a $T_{\bar{a}}$-normal form:
	$$a_i = t_{(\bar{a}, i_1)} \cdots t_{(\bar{a}, i_{k_i})} t_{(\bar{a}, i)} t_{(\bar{a}, i_{k_i})} \cdots t_{(\bar{a}, i_1)}.$$
Thus, for every $\bar{a} \in X_*$, let:
$$\chi_{(\bar{a}, T_{\bar{a}}, \leq_{\bar{a}})}(\bar{x}, \bar{y}) = \bigwedge_{i \in [1, n]} x_i = y_{i_1} \cdots y_{i_{k_i}} y_i y_{i_{k_i}} \cdots y_{i_1},$$
$$\theta_{(\bar{a}, T_{\bar{a}}, \leq_{\bar{a}})}(\bar{x}) = \neg \exists y_1, ..., y_n (\varphi_{\Gamma}(\bar{y}) \wedge \chi_{(\bar{a}, T_{\bar{a}}, \leq_{\bar{a}})}(\bar{x}, \bar{y})).$$
Let then:
$$p_{\Gamma}(\bar{x}) = \{ \varphi_{\Gamma}(\bar{x}) \} \cup \{ \theta_{(\bar{a}, T_{\bar{a}}, \leq_{\bar{a}})}(\bar{x}) : \bar{a} \in X_* \}.$$
We claim that $\bar{a} = (a_1, ..., a_n) \models p_{\Gamma}(\bar{x})$ if and only if $\{ a_1, ..., a_n \}$ is a basis of $W$.
Concerning the implication ``left-to-right'', suppose that $\bar{b} = (b_1, ..., b_n) \models p_{\Gamma}(\bar{x})$, then $\bar{b} \in X$, and so it suffices to show that $b \notin X_*$, since then by Proposition~\ref{muller_fact} we have that $\{ b_1, ..., b_n \}$ is a basis of $W$. For the sake of contradiction, suppose that $\bar{b} \in X_*$, then the basis $T_{\bar{b}} = \{t_{(\bar{b}, 1)}, ..., t_{(\bar{b}, n)} \}$ is such that:
	$$b_i = t_{(\bar{b}, i_1)} \cdots t_{(\bar{b}, i_{k_i})} t_{(\bar{b}, i)} t_{(\bar{b}, i_{k_i})} \cdots t_{(\bar{b}, i_1)},$$	
and so $W \models \exists y_1, ..., y_n (\varphi_{\Gamma}(\bar{y}) \wedge \chi_{(\bar{b}, T_{\bar{b}}, \leq_{\bar{b}})}(\bar{b}, \bar{y}))$, contradicting the fact that:
$$ \neg \exists y_1, ..., y_n (\varphi_{\Gamma}(\bar{y}) \wedge \chi_{(\bar{b}, T_{\bar{b}})}(\bar{b}, \bar{y})) \in p_{\Gamma}(\bar{x}).$$
Concerning the implication ``right-to-left'', let $(s_1, ..., s_n) = \bar{s}$ be a basis of $W$, we want to show that $\bar{s} \models p_{\Gamma}(\bar{x})$. Clearly, $W \models \varphi_{\Gamma}(\bar{s})$. For the sake of contradiction, suppose that for some $\bar{a} \in X_*$ we have:
$$W \models \exists y_1, ..., y_n (\varphi_{\Gamma}(\bar{y}) \wedge \chi_{(\bar{a}, T_{\bar{a}}, \leq_{\bar{a}})}(\bar{s}, \bar{y})).$$
Then there exists $b_1, ..., b_n \in W$ such that:
$$W \models \varphi_{\Gamma}(b_1, ..., b_n) \wedge \bigwedge_{i \in [1, n]} s_i = b_{i_1} \cdots b_{i_{k_i}} b_i b_{i_{k_i}} \cdots b_{i_1},$$
where $s_i = y_{i_1} \cdots y_{i_{k_i}} y_i y_{i_{k_i}} \cdots y_{i_1}$ is the formula $\chi_{(\bar{a}, T_{\bar{a}}, \leq_{\bar{a}})}(\bar{s}, \bar{y})$. But then, arguing as in the proof of Theorem~\ref{th_elem_Cox_sbgp} we see that $\{b_1, ..., b_n \}$ is a basis of $W$, and that:
$$ \beta: t_i \mapsto b_i \in Aut(W), \;\; \gamma : b_i \mapsto b_{i_1} \cdots b_{i_{k_i}} b_i b_{i_{k_i}} \cdots b_{i_1} \in Aut(W),$$
where $T_{\bar{a}} = \{t_{(\bar{a}, 1)}, ..., t_{(\bar{a}, n)} \} = \{t_1, ..., t_n \}$. Thus, exactly as in in the proof of Theorem~\ref{th_elem_Cox_sbgp}, we see that:
$$(\beta^{-1} \circ \gamma \circ \beta)(t_i) = t_{i_1} \cdots t_{i_{k_i}} t_i t_{i_{k_i}} \cdots t_{i_1},$$
and so:
$$t_i \mapsto t_{i_1} \cdots t_{i_{k_i}} t_i t_{i_{k_i}} \cdots t_{i_1} = a_i \in Aut(W)$$
contradicting the fact $\bar{a} \in X_{*}$.
\end{proof}

	We invite the reader to recall the definition of $Sim^*(W, S)$ from Definition~\ref{special_monoid}.

\begin{lemma}\label{lemma_no_orbit} In the context of the proof of Proposition~\ref{type_def_basis}, in the definition:
	 $$ p_{\Gamma}(\bar{x}) = \{ \varphi_{\Gamma}(\bar{x}) \} \cup \{ \theta_{(\bar{a}, T_{\bar{a}}, \leq_{\bar{a}})}(\bar{x}) : \bar{a} \in X_* \}$$
we can assume that for every $\bar{a} \in X_*$ we have that:
\begin{enumerate}[(i)]
	\item $\bar{a}$ is a set of self-similar reflections of $(W, S)$ for a fixed basis $S$ of $W$;
	\item $\bar{a} = (a_1, ..., a_n) = (\alpha(s_1), ..., \alpha(s_n))$, for some $\alpha \in Sim^*(W, S)$;
	\item $\theta_{(\bar{a}, T_{\bar{a}}, \leq_{\bar{a}})}(\bar{x}) = \theta_{(\bar{a}, S, \leq)}(\bar{x})$, for a fixed enumeration $
\leq \, = \{s_1, ..., s_n \}$~of~$S$.
\end{enumerate}
Consequently, from now on we denote $X_*$ simply as $X_S$, and we let $X_S$ be:
	$$\{\bar{a} \in \varphi_{\Gamma}(W) : \bar{a} = (a_1, ..., a_n) = (\alpha(s_1), ..., \alpha(s_n)), \text{ for } \alpha \in Sim^*(W, S) \}.$$
Also, we let:
	 $$p_{\Gamma}(\bar{x}) = \{ \varphi_{\Gamma}(\bar{x}) \} \cup \{ \theta_{(\bar{a}, S, \leq)}(\bar{x}) : \bar{a} \in X_S \}.$$
Finally notice that in this notation we have that:
$$\theta_{(\bar{a}, S, \leq)}(\bar{x}) = \neg \exists y_1, ..., y_n (\varphi_{\Gamma}(\bar{y}) \wedge \chi_{(\bar{a}, S, \leq)}(\bar{x}, \bar{y})),$$
where:
$$\chi_{(\bar{a}, S, \leq)}(\bar{x}, \bar{y})) = \bigwedge_{i \in [1, n]} x_i = y_{i_1} \cdots y_{i_{k_i}} y_i y_{i_{k_i}} \cdots y_{i_1}.$$
\end{lemma}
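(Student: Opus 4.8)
The plan is to show that replacing the index set $X_*$ by the smaller set $X_S$ does not change the set of realizations of the partial type, which by Proposition~\ref{type_def_basis} is exactly the $Aut(W)$-orbit of the Coxeter bases of $W$. Write $p_\Gamma^S(\bar x) := \{ \varphi_\Gamma(\bar x)\} \cup \{ \theta_{(\bar a, S, \leq)}(\bar x) : \bar a \in X_S\}$ for the candidate type. Note first that each $\bar a \in X_S$ automatically satisfies (i)--(iii) by the very definition of $X_S$, that $X_S \subseteq X_*$, and that the formula $\theta_{(\bar a, T_{\bar a}, \leq_{\bar a})}(\bar x)$ depends on the pair $(\bar a, T_{\bar a}, \leq_{\bar a})$ \emph{only} through the conjugation index pattern $P = ((i_1,\dots,i_{k_i}))_{i\in[1,n]}$ read off from the $T_{\bar a}$-normal forms $a_i = t_{(\bar a, i_1)} \cdots t_{(\bar a, i_{k_i})} t_{(\bar a, i)} t_{(\bar a, i_{k_i})} \cdots t_{(\bar a, i_1)}$, since the $a_i$ themselves never appear as parameters in $\theta$. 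The strategy is therefore to prove two inclusions of realization sets, and the heart of the matter is a transfer argument which replaces an arbitrary basis $T_{\bar a}$ by the single fixed basis $S$ while preserving $P$.

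For the transfer, fix $\bar a \in X_*$. By Lemma~\ref{bases_th} there is $\alpha \in F(\Gamma_S) \leq Aut(W)$ with $T_{\bar a} = \alpha(S)$ such that $\bar a$ is a self-similar set of $T_{\bar a}$-reflections, and I will enumerate $T_{\bar a}$ compatibly via $t_{(\bar a,i)} := \alpha(s_i)$ (the original proof leaves $\leq_{\bar a}$ free, so this choice is legitimate). Put $\bar c := \alpha^{-1}(\bar a)$. Since $\alpha^{-1}$ is an automorphism it preserves $W$-conjugacy classes and element orders, so from $a_i \in t_{(\bar a,i)}^W = \alpha(s_i)^W$ and $o(a_i a_j) = o(t_{(\bar a,i)} t_{(\bar a,j)})$ one gets $c_i \in s_i^W$ and $o(c_i c_j) = o(s_i s_j)$; hence $\bar c$ is a self-similar set of $S$-reflections, so $\bar c = (\chi(s_1),\dots,\chi(s_n))$ for some $\chi \in Sim(W,S)$. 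As $\langle \bar c\rangle_W = \alpha^{-1}(\langle \bar a\rangle_W) \neq W$ (because $\bar a \in X_*$), Proposition~\ref{muller_fact} shows $\chi$ cannot be an automorphism, i.e. $\chi \in Sim^*(W,S)$ and thus $\bar c \in X_S$. The crucial point is that $\alpha$ is a Coxeter-system isomorphism $(W,S)\to (W,T_{\bar a})$ and therefore length-preserving: applying $\alpha^{-1}$ to the reduced $T_{\bar a}$-word above yields the reduced $S$-word $c_i = s_{i_1}\cdots s_{i_{k_i}} s_i s_{i_{k_i}} \cdots s_{i_1}$, i.e. the $S$-normal form of $c_i$ has the \emph{identical} pattern $P$. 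Consequently $\theta_{(\bar a, T_{\bar a}, \leq_{\bar a})} = \theta_{(\bar c, S, \leq)}$, so every formula occurring in the original $p_\Gamma$ already occurs in $p_\Gamma^S$; hence any realization of $p_\Gamma^S$ realizes $p_\Gamma$ and is, by Proposition~\ref{type_def_basis}, a Coxeter basis of $W$.

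For the reverse inclusion, that every basis realizes $p_\Gamma^S$, I invoke the right-to-left direction already carried out in the proof of Proposition~\ref{type_def_basis}: a basis $\bar s$ satisfies $\varphi_\Gamma$, and for each $\bar a \in X_S \subseteq X_*$ (taken with the choice $T_{\bar a}=S$, $\leq_{\bar a}=\leq$) it satisfies $\theta_{(\bar a, S, \leq)}$, since a witness $\bar y$ to the contrary would be forced to be a basis and would make the proper self-similarity $\chi$ underlying $\bar a$ an automorphism (via Fact~\ref{rigidity} and Proposition~\ref{muller_fact}), contradicting $\bar a \in X_S$. Combining the two directions, the realizations of $p_\Gamma^S$ are exactly the bases, so the orbit is already type-defined by $p_\Gamma^S$ and we may indeed restrict the index set to $X_S$, with every index tuple satisfying (i)--(iii).

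The main obstacle is the bookkeeping in the transfer step: one must verify that $\alpha \in F(\Gamma_S)$ really is a Coxeter-system isomorphism onto $(W, \alpha(S))$ — so that it preserves reduced words — and must align the free enumeration $\leq_{\bar a}$ of $T_{\bar a}$ with $\leq$ through $\alpha$, so that the conjugation pattern $P$ is transported verbatim from the $T_{\bar a}$-normal forms of $\bar a$ to the $S$-normal forms of $\bar c$. A secondary point, handled by Proposition~\ref{muller_fact} together with the Hopficity of $W$ (Fact~\ref{fact_for_even}(A)), is that a \emph{proper} self-similarity pulls back under $\alpha^{-1}$ to a \emph{proper} self-similarity, which is what keeps $\bar c$ inside $X_S$ rather than producing a genuine basis.
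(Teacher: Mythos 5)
Your proof is correct and follows essentially the same route as the paper: both pull an arbitrary $\bar a \in X_*$ back along an automorphism carrying the fixed basis $S$ onto $T_{\bar a}$ (you take the $\alpha \in F(\Gamma_S)$ supplied by Lemma~\ref{bases_th}, the paper invokes Fact~\ref{rigidity} and then aligns the enumeration), observe that the conjugation pattern --- and hence the formula $\theta$ --- is transported verbatim, and conclude that the type indexed over $X_S$ has the same realizations. The only cosmetic difference is that you phrase the key step as an equality of formulas $\theta_{(\bar a, T_{\bar a}, \leq_{\bar a})} = \theta_{(\bar c, S, \leq)}$ with $\bar c \in X_S$, whereas the paper states it as $W \models \neg\theta_{(\bar b, S, \leq)}(\bar a)$; these amount to the same observation.
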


	\begin{proof}
This is clear from the proof of Proposition~\ref{type_def_basis}. Notice in fact that if $S$ is a basis of $W$ and $\bar{a} \in X_*$, then, by Fact~\ref{rigidity}, we can find $\beta \in Aut(W)$ such that $\beta(S)= T_{\bar{a}}$ and thus $\beta^{-1}(\bar{a}) = \bar{b} \in X_*$. Furthermore, since the proof of Proposition~\ref{type_def_basis} did not depend on the choice function $\bar{a} \mapsto \, \leq_{\bar{a}} \, = \{t_{(\bar{a}, 1)}, ..., t_{(\bar{a}, n)} \} = \{t_1, ..., t_n \}$, we can always choose $\leq_{\bar{a}} \, = \{ \beta(s_1), ..., \beta(s_n) \}$, and so we have that $b_i \in s_i^W$, since $a_i \in t_i^W$. Thus, $\alpha: s_i \mapsto b_i \in Sim^*(W, S)$ and we have:
	$$W \models \neg \theta_{(\bar{b}, S, \leq)}(\bar{a}).$$
Hence, $\bar{a} \models p_{\Gamma}(\bar{x})$ if and only if $\bar{a} \models \{ \varphi_{\Gamma}(\bar{x}) \} \cup \{ \theta_{(\bar{a}, S, \leq)}(\bar{x}) : \bar{a} \in X_S \}$.
\end{proof}

	\begin{notation}\label{Spe_notation} In the context of Lemma~\ref{lemma_no_orbit}, for $\bar{a} \in X_S$, let $f_{\bar{a}}: s_i \mapsto a_i$, for $i \in [1, n]$. Then $f_{\bar{a}} \in Sim^*(W, S)$, and the map $\bar{a} \mapsto f_{\bar{a}}$ is a bijection of $X_S$ onto $Sim^*(W, S)$. Hence, letting $\bar{s} = \{s_1, ..., s_n\}$, for $f \in Sim^*(W, S)$ we let:
	$$\chi_{(f(\bar{s}), S, \leq)}(\bar{x}) = \chi_{(f, \bar{s})}(\bar{x}),$$
	$$\theta_{(f(\bar{s}), S, \leq)}(\bar{x}) = \theta_{(f, \bar{s})}(\bar{x}).$$
Also, in this notation, we let:
	$$p_{\Gamma}(\bar{x}) = \{ \varphi_{\Gamma}(\bar{x}) \} \cup \{ \theta_{(f, \bar{s})}(\bar{x}) : f \in Sim^*(W, S) \}.$$
Finally, given $f \in Sim^*(W, S)$ we denote by $\bar{a}_f$ the associated element of $X_S$.
\end{notation}

We invite the reader to recall the definition of $Sim(W, S)$ from Definition~\ref{special_monoid}.

	\begin{lemma}\label{connection_remark} In the context of Notation~\ref{Spe_notation}, let $f_{\bar{a}}, f_{\bar{b}} \in Sim^*(W, S)$. Suppose that there exists $g \in Sim_\star(W, S)$ such that $f_{\bar{a}} = g \circ f_{\bar{b}}$, then:
	$$W \models \exists y_1, ..., y_n (\varphi_{\Gamma}(\bar{y}) \wedge \chi_{(f_{\bar{b}}, \bar{s})}(\bar{a}, \bar{y})).$$
\end{lemma}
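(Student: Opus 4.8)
The plan is to produce an explicit witness for the existential formula, namely the tuple $\bar{c} := (g(s_1), \ldots, g(s_n))$, and to check that it satisfies both conjuncts $\varphi_\Gamma(\bar{y})$ and $\chi_{(f_{\bar{b}}, \bar{s})}(\bar{a}, \bar{y})$ under the substitution $\bar{y} := \bar{c}$.

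First I would set $c_j := g(s_j)$ for $j \in [1,n]$ and record that, since $g$ is a self-similarity of $(W,S)$, we have $c_j \in s_j^W$ and $o(c_j c_k) = o(s_j s_k)$ for all $j,k$, directly from Definition~\ref{special_monoid}. Thus $\bar{c}$ is a set of self-similar $S$-reflections in the sense of Definition~\ref{def_self_similar}, and Lemma~\ref{bases_th} applied with the identity element of $F(\Gamma_S)$ (so that $T = S$) yields $W \models \varphi_\Gamma(\bar{c})$. This disposes of the first conjunct.

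Next I would verify the second conjunct. Recall that $\chi_{(f_{\bar{b}}, \bar{s})}(\bar{x}, \bar{y})$ is built from the $S$-normal forms of the reflections $b_i = f_{\bar{b}}(s_i) = s_{i_1} \cdots s_{i_{k_i}} s_i s_{i_{k_i}} \cdots s_{i_1}$; concretely its $i$-th conjunct reads $x_i = y_{i_1} \cdots y_{i_{k_i}} y_i y_{i_{k_i}} \cdots y_{i_1}$. Using the hypothesis $f_{\bar{a}} = g \circ f_{\bar{b}}$ and the fact that $g$ is an endomorphism of $W$, I would apply $g$ term-by-term to the above normal form to obtain
$$a_i = f_{\bar{a}}(s_i) = g(b_i) = c_{i_1} \cdots c_{i_{k_i}} c_i c_{i_{k_i}} \cdots c_{i_1},$$
which is exactly the $i$-th conjunct of $\chi_{(f_{\bar{b}}, \bar{s})}(\bar{a}, \bar{c})$. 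Hence $W \models \chi_{(f_{\bar{b}}, \bar{s})}(\bar{a}, \bar{c})$, and combining with the previous paragraph the tuple $\bar{c}$ witnesses the desired existential statement.

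The argument is essentially a one-line computation once the witness $\bar{c} = (g(s_1), \ldots, g(s_n))$ is identified, so there is no serious obstacle. The only step demanding care is the bookkeeping: one must confirm that the subscript data $i_1, \ldots, i_{k_i}$ hard-wired into $\chi_{(f_{\bar{b}}, \bar{s})}$ is precisely the word realizing the $S$-normal form of $b_i = f_{\bar{b}}(s_i)$, so that applying the homomorphism $g$ produces the matching expression in the $c_j$'s, and to make sure that it is indeed $\varphi_\Gamma(\bar{c})$ (and not some other instance) that Lemma~\ref{bases_th} delivers.
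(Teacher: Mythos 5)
Your proof is correct and is essentially the paper's own argument: both use the witness $\bar{c} = g(\bar{s})$, verify $\varphi_\Gamma(g(\bar{s}))$ via Lemma~\ref{bases_th} from $g \in Sim(W,S)$, and obtain the second conjunct by applying the homomorphism $g$ to $W \models \chi_{(f_{\bar{b}},\bar{s})}(\bar{b},\bar{s})$ together with $g(\bar{b}) = g(f_{\bar{b}}(\bar{s})) = f_{\bar{a}}(\bar{s}) = \bar{a}$.
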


	\begin{proof} 
	By definition we have:
$$W \models \chi_{(f_{\bar{b}}, \bar{s})}(\bar{b}, \bar{s}).$$
And so, since $g$ is a monomorphism we have:
$$W \models \chi_{(f_{\bar{b}}, \bar{s})}(g(\bar{b}), g(\bar{s})).$$
Furthermore, we have:
\begin{enumerate}[(i)]
	\item $f_{\bar{b}}(\bar{s}) = \bar{b}$ and $f_{\bar{a}}(\bar{s}) = \bar{a}$;
	\item $g(\bar{b}) = g(f_{\bar{b}}(\bar{s})) = f_{\bar{a}}(\bar{s}) = \bar{a}$;
	\item $W \models \varphi_{\Gamma}(g(\bar{s}))$, since $g \in Sim(W, S)$ (cf. Lemma~\ref{bases_th}).
\end{enumerate}
Hence:
$$W \models \varphi_{\Gamma}(g(\bar{s})) \wedge \chi_{(f_{\bar{b}}, \bar{s})}(\bar{a}, g(\bar{s})).$$
\end{proof}

	\begin{definition} Let $f \in Sim^*(W, S)$, we say that $f$ is indecomposable if there are no $g \in Sim^*(W, S)$ and $h \in Sim^*(W)$ such that $f = g \circ h$.
\end{definition}

	\begin{lemma}\label{remark_proper_subgroup} Let $f_{\bar{a}} \in Sim^*(W, S)$ and suppose that $f_{\bar{a}}$ is not indecomposable, and let $f_{\bar{b}}, f_{\bar{c}} \in Sim^*(W, S)$ be such that $f_{\bar{a}} = f_{\bar{c}} \circ f_{\bar{b}}$, then $\langle f_{\bar{a}}(\bar{s}) \rangle_W \lneq \langle f_{\bar{c}}(\bar{s}) \rangle_W \lneq W$.
\end{lemma}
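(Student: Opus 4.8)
The plan is to read off both strict inclusions directly from the injectivity of self-similarities, together with the fact that a proper self-similarity cannot be surjective. First I would reduce the statement to a claim about images. Since $\bar{s} = S$ generates $W$ and every element of $Sim(W,S)$ is an endomorphism of $W$, we have $\langle f_{\bar{a}}(\bar{s}) \rangle_W = f_{\bar{a}}(W)$ and $\langle f_{\bar{c}}(\bar{s}) \rangle_W = f_{\bar{c}}(W)$, so the goal becomes $f_{\bar{a}}(W) \lneq f_{\bar{c}}(W) \lneq W$.

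The central input is Proposition~\ref{muller_fact}. Since $(W,S)$ is right-angled, its Coxeter matrix takes values in $\{1,2,\infty\}$ and hence $(W,S)$ is in particular even; thus Proposition~\ref{muller_fact} applies and every self-similarity $\alpha$ of $(W,S)$ is an isomorphism of $W$ onto $\langle \alpha(S) \rangle_W = \alpha(W)$. In particular each of $f_{\bar{a}}, f_{\bar{b}}, f_{\bar{c}}$ is injective, and a \emph{proper} self-similarity (i.e. one lying in $Sim^*(W,S)$, equivalently not an automorphism) cannot be surjective, so its image is a proper subgroup of $W$.

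With this in hand both inclusions are immediate. For the outer inclusion I would note that $f_{\bar{c}} \in Sim^*(W,S)$ is proper, whence $f_{\bar{c}}(W) \lneq W$. For the inner inclusion, the factorization $f_{\bar{a}} = f_{\bar{c}} \circ f_{\bar{b}}$ gives $f_{\bar{a}}(W) = f_{\bar{c}}(f_{\bar{b}}(W))$; since $f_{\bar{b}}$ is proper we have $f_{\bar{b}}(W) \lneq W$, and applying the injective map $f_{\bar{c}}$ to this strict containment yields $f_{\bar{c}}(f_{\bar{b}}(W)) \lneq f_{\bar{c}}(W)$, i.e. $f_{\bar{a}}(W) \lneq f_{\bar{c}}(W)$, as required. (Here I use that an injective homomorphism sends a proper subgroup to a proper subgroup of the image: if $f_{\bar{c}}(f_{\bar{b}}(W)) = f_{\bar{c}}(W)$ then injectivity of $f_{\bar{c}}$ forces $f_{\bar{b}}(W) = W$, a contradiction.)

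I do not expect any real obstacle: the entire argument is bookkeeping around the principle that injective homomorphisms preserve strict containments. The only point requiring care is the appeal to Proposition~\ref{muller_fact} to secure injectivity and the properness-implies-non-surjectivity step; the hypothesis that $f_{\bar{a}}$ is not indecomposable serves only to produce the factorization $f_{\bar{a}} = f_{\bar{c}} \circ f_{\bar{b}}$ with both factors in $Sim^*(W,S)$, and is otherwise not used.
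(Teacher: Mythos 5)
Your proof is correct and follows essentially the same route as the paper: both arguments rest on Proposition~\ref{muller_fact} (applicable since right-angled systems are even) to get that each self-similarity is an isomorphism onto $\langle \alpha(S)\rangle_W = \alpha(W)$, deduce the outer strict inclusion from $f_{\bar{c}}$ not being surjective, and the inner one from $f_{\bar{b}}$ not being surjective. The paper phrases the last step by transporting a hypothetical expression of $f_{\bar{c}}(s_i)$ as words in $f_{\bar{a}}(\bar{s})$ back through the isomorphism $W \cong \langle f_{\bar{c}}(\bar{s})\rangle_W$, which is exactly your observation that the injective map $f_{\bar{c}}$ preserves strict containments.
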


	\begin{proof} The fact that $\langle f_{\bar{c}}(\bar{s}) \rangle_W \lneq W$ is clear, since by hypothesis $f_{\bar{c}} \in Sim^*(W, S)$. Now, by the same argument used in the proof of Lemma~\ref{connection_remark} we have that:
$$W \models \chi_{(f_{\bar{b}}, \bar{s})}(f_{\bar{c}}(\bar{b}), f_{\bar{c}}(\bar{s})).$$
Thus, $\langle f_{\bar{a}}(\bar{s}) \rangle_W \leq \langle f_{\bar{c}}(\bar{s}) \rangle_W$, since every element of $f_{\bar{c}}(\bar{b}) = f_{\bar{c}}(f_{\bar{b}}(\bar{s})) = f_{\bar{a}}(\bar{s})$ can written as a product of elements of $f_{\bar{c}}(\bar{s})$ (cf. the explicit definition of $\chi_{(f_{\bar{b}}, \bar{s})}(\bar{x}, \bar{y})$ from Notation~\ref{Spe_notation}).
Furthermore, the map $\phi: \langle f_{\bar{c}}(\bar{s}) \rangle_W \rightarrow \langle f_{\bar{c}}(\bar{s}) \rangle_W$
such that
$f_{\bar{c}}(\bar{s}) \mapsto f_{\bar{c}}(\bar{b}) = f_{\bar{c}}(f_{\bar{b}}(\bar{s})) = f_{\bar{a}}(\bar{s})$ is not surjective, since $\langle f_{\bar{c}}(\bar{s}) \rangle_W \cong W$ (say via the map $\gamma$), in fact if it were we could find group words $w_1(\bar{x}), ..., w_n(\bar{x})$ such that $f_{\bar{c}}(s_i) = w_i(f_{\bar{c}}(\bar{b}))$, for every $i \in [1, n]$, and so via the isomorphism $\gamma$ we would have that the same is true for $\bar{s}$ and $\bar{b}$, contradicting that $f_{\bar{b}} \in Sim^*(W, S)$, i.e. contradicting that $f_{\bar{b}}$ is {\em not} surjective. Hence, $\langle f_{\bar{a}}(\bar{s}) \rangle_W \lneq \langle f_{\bar{c}}(\bar{s}) \rangle_W \lneq W$.
\end{proof}

	\begin{lemma}\label{theorem_prime_model} Let $B$ be a generating set for the monoid $Sim(W, S)$ and let $Y_B = \{f \in B : f \in Sim^*(W, S) \}$. Then the type $p_{\Gamma}(\bar{x})$ is isolated by the type:
	$$p^B_{\Gamma}(\bar{x}) = \{ \varphi_{\Gamma}(\bar{x}) \} \cup \{\theta_{(f, \bar{s})}(\bar{x}) : f \in Y_B \}.$$
Further, if $Z \subseteq Y_B$, $f_* \notin \langle Z \cup Spe(W) \rangle_{Sim(W, S)}$ and $f_* \in Sim^*(W, S)$ is indecomposable, then the type $\{ \varphi_{\Gamma}(\bar{x}) \} \cup \{\theta_{(f, \bar{s})}(\bar{x}) : f \in Z \} \text{ does not imply } \{ \theta_{(f_*, \bar{s})}(\bar{x}) \}$, that is there is $\bar{a} \in W^{< \omega}$ s.t. $W \models \{ \varphi_{\Gamma}(\bar{a}) \} \cup \{\theta_{(f, \bar{s})}(\bar{a}) : f \in Z \}$ and $W \not\models \theta_{(f_*, \bar{s})}(\bar{a})$.
\end{lemma}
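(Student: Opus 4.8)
The plan is to first isolate the combinatorial engine behind both assertions: a characterization of when $\theta_{(h,\bar s)}$ fails on a tuple coming from a self-similarity. For $\gamma\in Sim^*(W,S)$ write $\bar a:=\gamma(\bar s)=\bar a_\gamma\in X_S$; I claim that for every $h\in Sim^*(W,S)$,
$$W\models\neg\theta_{(h,\bar s)}(\bar a)\quad\Longleftrightarrow\quad\gamma=\delta\circ h\ \text{ for some }\delta\in Sim(W,S).$$
The implication ``$\Leftarrow$'' is exactly Lemma~\ref{connection_remark} (the witness $\bar y=\delta(\bar s)$ satisfies $\varphi_\Gamma$ and $\chi_{(h,\bar s)}(\bar a,\bar y)$). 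For ``$\Rightarrow$'', a witness $\bar y\models\varphi_\Gamma$ with $\chi_{(h,\bar s)}(\bar a,\bar y)$ gives, via Lemma~\ref{bases_th}, some $\alpha_0\in F(\Gamma)$ with $\bar y$ a set of self-similar $\alpha_0(S)$-reflections; the assignment $s_i\mapsto y_i$ then extends to a monomorphism $\delta_0=\alpha_0\circ\delta_1$ with $\delta_1=\alpha_0^{-1}\circ\delta_0\in Sim(W,S)$, and unwinding $\chi_{(h,\bar s)}(\bar a,\bar y)$ yields $\bar a=(\delta_0\circ h)(\bar s)$, i.e. $\gamma=\delta_0\circ h$. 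Since $a_i\in s_i^W$ while $(\delta_1\circ h)(s_i)\in s_i^W$, the abelianization map of Fact~\ref{abelianization_fact} forces $\alpha_0(s_i)\in s_i^W$ for all $i$, whence $\alpha_0=\mathrm{id}$ and $\gamma=\delta_1\circ h$. I will record once and for all the identification $Sim(W,S)\cap Aut(W)=Spe(W)$, which follows from Tits' decomposition $Aut(W)=Spe(W)\rtimes F(\Gamma)$ (Fact~\ref{Tits}) together with the same abelianization argument: a nontrivial $F(\Gamma)$-component cannot preserve every conjugacy class $s_i^W$.

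For the first assertion, $p^B_\Gamma\subseteq p_\Gamma$ holds trivially, so it suffices to show every realization of $p^B_\Gamma$ is a Coxeter basis (then the two types are equivalent, and when $Y_B$ is finite this is genuine isolation). Let $\bar a\models p^B_\Gamma$; using the reduction of Lemma~\ref{lemma_no_orbit} and the fact that $p^B_\Gamma$ is $\emptyset$-definable, hence $Aut(W)$-invariant, I may assume $\bar a=\gamma(\bar s)\in X_S$ for some $\gamma\in Sim^*(W,S)$ whenever $\bar a$ is not a basis. By the characterization, $\bar a\models\theta_{(h,\bar s)}$ for all $h\in Y_B$ says precisely that $\gamma\neq\delta\circ h$ for all $\delta\in Sim(W,S)$, $h\in Y_B$. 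Thus the whole assertion reduces to the monoid identity $Sim^*(W,S)=\bigcup_{h\in Y_B}Sim(W,S)\circ h$. The inclusion $\supseteq$ is clear; for $\subseteq$ I take a $B$-word $\gamma=g_1\circ\cdots\circ g_m$, let $g_j$ be its \emph{last} factor lying in $Y_B$, and set $u:=g_{j+1}\circ\cdots\circ g_m\in Spe(W)$, so $\gamma=(g_1\circ\cdots\circ g_{j-1})\circ g_j\circ u$. Because $u\in Aut(W)$, the self-similarities $g_j$ and $g_j\circ u$ have the \emph{same} image subgroup $U:=\langle g_j(S)\rangle_W=\langle(g_j\circ u)(S)\rangle_W$; Proposition~\ref{first_prop_self_sim} together with Proposition~\ref{prop_conjugate_geometric} then lets me replace both by geometric representatives that are $U$-conjugate, and absorbing this inner (hence $Spe(W)$) conjugation rewrites $g_j\circ u=\delta'\circ g_j$ with $\delta'\in Sim(W,S)$. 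Hence $\gamma=\delta\circ g_j$ with $\delta\in Sim(W,S)$ and $g_j\in Y_B$, as needed.

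For the second assertion I take the explicit witness $\bar a:=f_*(\bar s)=\bar a_{f_*}$. Then $\varphi_\Gamma(\bar a)$ holds by Lemma~\ref{bases_th} (with $\alpha_0=\mathrm{id}$, as $f_*\in Sim(W,S)$), and $\neg\theta_{(f_*,\bar s)}(\bar a)$ holds with witness $\bar y=\bar s$. It remains to verify $\theta_{(f,\bar s)}(\bar a)$ for every $f\in Z$. If this failed for some $f\in Z$, the characterization of the first paragraph would give $f_*=\delta\circ f$ with $\delta\in Sim(W,S)$. If $\delta\in Sim^*(W,S)$, then $f_*=\delta\circ f$ exhibits $f_*$ as a product of two proper self-similarities, contradicting the indecomposability of $f_*$. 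If instead $\delta\in Sim(W,S)\cap Aut(W)=Spe(W)$, then $f_*=\delta\circ f\in\langle Z\cup Spe(W)\rangle_{Sim(W,S)}$, contradicting the hypothesis $f_*\notin\langle Z\cup Spe(W)\rangle_{Sim(W,S)}$. Either way we reach a contradiction, so $\bar a$ realizes $\{\varphi_\Gamma(\bar x)\}\cup\{\theta_{(f,\bar s)}(\bar x):f\in Z\}$ while $W\models\neg\theta_{(f_*,\bar s)}(\bar a)$, which is exactly the required non-implication.

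The hard part will be the unit-absorption identity $Sim^*(W,S)=\bigcup_{h\in Y_B}Sim(W,S)\circ h$ used in the second paragraph: a trailing automorphism $u\in Spe(W)$ — typically a product of partial conjugations $\pi_{(t,C)}$ acting non-uniformly on $\bar s$ — cannot be commuted past $g_j$ naively, so a global conjugation of the candidate witness $\bar y$ does \emph{not} suffice. The clean remedy is geometric, passing to geometric representatives where equality of images forces $U$-conjugacy and converts the non-inner part of $u$ into an inner conjugation absorbable into the left factor; I expect the care to lie in checking that this rewriting stays inside $Sim(W,S)$ rather than merely in $F(\Gamma)\cdot Sim(W,S)$. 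By contrast, the second part is essentially formal once the characterization and $Sim(W,S)\cap Aut(W)=Spe(W)$ are in hand, with indecomposability doing the single decisive job of excluding a proper-times-proper factorization of $f_*$.
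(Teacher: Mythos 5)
Your first paragraph (the characterization of $\neg\theta_{(h,\bar s)}(\gamma(\bar s))$ as right-divisibility $\gamma\in Sim(W,S)\circ h$, with the $F(\Gamma)$-twist killed by abelianization), the identification $Sim(W,S)\cap Aut(W)=Spe(W)$, and your entire treatment of the ``furthermore'' clause are correct and agree with the paper's argument. You have also correctly isolated the one step that carries the first assertion: pushing the trailing $Spe(W)$-factor $u$ past $g_j$. Unfortunately the absorption identity $g_j\circ u=\delta'\circ g_j$ on which you rest that step is \emph{false}. Take $W$ the universal Coxeter group of rank $3$ with $S=\{s_1,s_2,s_3\}$, let $g_j=\alpha$ be the standard proper self-similarity $s_1\mapsto s_2s_1s_2$, $s_2\mapsto s_1s_2s_1$, $s_3\mapsto s_3$, and let $u=\pi_{(s_3,\{s_1\})}\in Spe(W)$. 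Then $(g_j\circ u)(s_1)=s_3s_2s_1s_2s_3$ and $(g_j\circ u)(s_2)=s_1s_2s_1$, so any $\delta'$ with $\delta'\circ g_j=g_j\circ u$ must satisfy $\delta'(s_2)\delta'(s_1)\delta'(s_2)=s_3s_2s_1s_2s_3$ and $\delta'(s_1)\delta'(s_2)\delta'(s_1)=s_1s_2s_1$, whence $\bigl(\delta'(s_1)\delta'(s_2)\bigr)^3=s_1s_2s_1\cdot s_3s_2s_1s_2s_3=s_1s_2s_1s_3s_2s_1s_2s_3$. The right-hand side is cyclically reduced of syllable length $8$ in $\langle s_1\rangle\ast\langle s_2\rangle\ast\langle s_3\rangle$, and $3\nmid 8$, so it is not a cube; hence no such $\delta'$ exists, not even as a set map, let alone in $Sim(W,S)$. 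The same computation shows directly that $W\models\theta_{(\alpha,\bar s)}\bigl((\alpha\circ u)(\bar s)\bigr)$, so the failure cannot be repaired by weakening $\delta'$ to a pre-special endomorphism: by your own abelianization argument any witness has trivial $F(\Gamma)$-component. Nor does the geometric route help: Propositions~\ref{first_prop_self_sim} and~\ref{prop_conjugate_geometric} only say that two \emph{geometric} generating sets of the same reflection subgroup are $U$-conjugate \emph{as sets}; they give no relation, as maps, between $g_j$ (or $g_j\circ u$) and a geometric representative, so there is no inner conjugation available to absorb.

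For comparison, the paper disposes of the trailing automorphism in one line, by setting $\bar a':=h^{-1}(\bar a)$ and asserting $(g\circ f_{\bar b})(\bar s)=\bar a'$; that assertion is exactly the commutation you are trying to prove, and the example above refutes it too (with $g=\mathrm{id}$, $f_{\bar b}=\alpha$, $h=u$, the first coordinate of $h^{-1}(\bar a)$ is $s_3s_2s_3s_1s_3s_2s_3\neq s_2s_1s_2$). So you have located a genuine soft spot rather than created one, but your fix does not close it: as things stand neither your argument nor the paper's establishes the identity $Sim^*(W,S)=\bigcup_{h\in Y_B}Sim(W,S)\circ h$, and since by your (correct) characterization this identity is \emph{equivalent} to the first assertion of the lemma, that assertion remains unproved for an arbitrary generating set $B$. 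The second assertion, which is what the ``universal Coxeter groups are not prime'' application actually uses, is fine as you prove it.
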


	\begin{proof} Concerning the first claim, let $\bar{a} \in X_S$, we want to show that $\bar{a} \not \models p^B_{\Gamma}(\bar{x})$. If $f_{\bar{a}} \in Y_B$, then this is clear (cf. the proof of Proposition~\ref{type_def_basis}). Suppose then that $f_{\bar{a}} \in X_S - Y_B$, then there are $f_{\bar{b}} \in  Y_B$,  $h \in Spe(W)$ and $g \in Sim(W, S)$ such that:
	 $$f_{\bar{a}} = g \circ f_{\bar{b}} \circ h,$$
in fact since $Sim(W, S)$ is generated by $B$ we have that $f_{\bar{a}} = \alpha_1 \circ \cdots \circ \alpha_k$ with all the $\alpha_i \in B$ and such that at least one of the $\alpha_i \in Sim^*(W, S)$ (since if they were all automorphisms, then also $f_{\bar{a}}$ would be an automorphism, contrary to our assumption that $f_{\bar{a}} \in Sim^*(W, S))$, and so letting $f_{\bar{b}}$ to be the largest $i \in [1, k]$ such that $\alpha_i \in Sim^*(W, S)$ we are done (notice that if $i = k$ we can take $h = id_W$).

\smallskip
\noindent Now, simply unravelling notations, we have:
$$g \circ f_{\bar{b}} \circ h(\bar{s}) = f_{\bar{a}}(\bar{s}) = \bar{a},$$
and so, letting $\bar{a}' = h^{-1} (\bar{a})$ (recall that $h$ is an automorphism), we have that:
$$g \circ f_{\bar{b}} (\bar{s}) = \bar{a}' = f_{\bar{a}'}(\bar{s}),$$
where, clearly $f_{\bar{a}'} \in Sim^*(W, S)$, since $f_{\bar{b}} \in Sim^*(W, S)$. Hence, by Lemma~\ref{connection_remark}:
	$$W \models \exists y_1, ..., y_n (\varphi_{\Gamma}(\bar{y}) \wedge \chi_{(f_{\bar{b}}, \bar{s})}(\bar{a}', \bar{y})).$$
And since $h \in Aut(W)$ and $h^{-1}(\bar{a}') = \bar{a}$, clearly we have that:
	$$W \models \exists y_1, ..., y_n (\varphi_{\Gamma}(\bar{y}) \wedge \chi_{(f_{\bar{b}}, \bar{s})}(\bar{a}, \bar{y})).$$
Hence, $\bar{a} \not \models p^B_{\Gamma}(\bar{x})$, as $\theta_{(f_{\bar{b}}, \bar{s})}(\bar{x}) \in p^B_{\Gamma}(\bar{x})$, since $f_{\bar{b}} \in Y_B = \{f \in B : f \in Sim^*(W, S) \}$.

\smallskip
\noindent	Concerning the ``furthermore claim'', let $Z \subseteq Y_B$ and let:
$$f_* = f_{\bar{a}} \notin \langle Z \cup Spe(W) \rangle_{Sim(W, S)}$$
be such that $f_* \in Sim^*(W, S)$ is indecomposable. Clearly $W \models \neg \theta_{(f_{\bar{a}}, \bar{s})}(\bar{a})$. We claim that $\bar{a} \models \{ \varphi_{\Gamma}(\bar{x}) \} \cup \{\theta_{(f, \bar{s})}(\bar{x}) : f \in Z \}$, and so $\{ \varphi_{\Gamma}(\bar{x}) \} \cup \{\theta_{(f, \bar{s})}(\bar{x}) : f \in Z \}$ does not imply $\theta_{(f_{\bar{a}}, \bar{s})}(\bar{x})$.
Suppose that this is not true, and let \mbox{$f_{\bar{b}} \in Z$ be such that:}
	$$W \models \exists \bar{y} (\varphi_{\Gamma}(\bar{y}) \wedge \chi_{(f_{\bar{b}}, \bar{s})} (\bar{a}, \bar{y})).$$
	Then we can find $\bar{t} \in W^{<\omega}$ such that:
	\begin{equation}\label{eq_ai_ti} W \models \varphi_{\Gamma}(\bar{t}) \wedge \chi_{(f_{\bar{b}}, \bar{s})} (\bar{a}, \bar{t}). \tag{$*$}
\end{equation}
Let now, $g: s_i \mapsto t_i$, for $i \in [1, n]$. We claim that $t_i \in s_i^W$, i.e. that $g \in Sim(W, S)$. Let $i \in [1, n]$, then $a_i \in s_i^W$ by hypothesis, since $f_{\bar{a}} \in Sim^*(W, S)$. Furthermore, by (\ref{eq_ai_ti}) above we have that $a_i \in t_i^W$. Let then $w, u \in W$ be such that:
$$ws_iw^{-1} = a_i = ut_iu^{-1}.$$
Then we have:
$$t_i = u^{-1} ws_iw^{-1} u = u^{-1} ws_i(u^{-1} w)^{-1} \in s_i^W.$$
We now claim that $f_{\bar{a}} = g \circ f_{\bar{b}}$. To this extent, let:
$$\chi_{(f_{\bar{b}}, \bar{s})}(\bar{x}, \bar{y}) = \bigwedge_{i \in [1, n]} x_i = y_{i_1} \cdots y_{i_{k_i}} y_i y_{i_{k_i}} \cdots y_{i_1}.$$
Then we have (where in the last equation we use  crucially (\ref{eq_ai_ti})):
\[ \begin{array}{rcl}
	g \circ f_{\bar{b}}(s_i) & = & g(s_{i_1} \cdots s_{i_{k_i}} s_i s_{i_{k_i}} \cdots s_{i_1}) \\
	& = & g(s_{i_1}) \cdots g(s_{i_{k_i}}) g(s_i) g(s_{i_{k_i}}) \cdots g(s_{i_1})) \\
	& = & t_{i_1} \cdots t_{i_{k_i}} t_i t_{i_{k_i}} \cdots t_{i_1}\\
	& = & a_i.\\
\end{array}	\]
\newline \underline{\em Case 1}. $g \in Spe(W)$.
\newline In this case $f_{\bar{a}} = g \circ f_{\bar{b}} \in \langle Z \cup Spe(W) \rangle_{Sim(W, S)}$, since $f_{\bar{b}} \in Sim^*(W, S)$.
\newline \underline{\em Case 2}. $g \in Sim^*(W, S)$.
\newline In this case $f_{\bar{a}} = g \circ f_{\bar{b}}$, with $g, f_{\bar{b}} \in Sim^*(W, S)$ and so $f_*$ is not indecomposable.
\end{proof}

	\begin{proposition}\label{product_of_indecomposables} 
	For every $f \in Sim^*(W, S)$ there exist $f_1, ..., f_n \in Sim^*(W, S)$ such that $f = f_1 \circ \cdots \circ f_n$ and, for every $i \in [1, n]$, $f_i$ is indecomposable.
\end{proposition}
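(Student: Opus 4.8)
The plan is to run a \emph{maximal factorization} argument, using the complexity function $\Delta$ of Definition~\ref{def_complexity_self_similarity} as a well-founded measure. The only substantive input is the a priori bound recorded in Corollary~\ref{product_of_indecomposables_pre}; everything else is formal.

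First I would fix $f \in Sim^*(W,S)$ and set $\Delta(f) = (d_{ss'})_{s,s' \in S}$, so that $N := \sum_{s,s' \in S} d_{ss'} < \omega$ is a fixed natural number. By the lemma preceding Corollary~\ref{product_of_indecomposables_pre}, composing on the right by a proper self-similarity strictly increases $\Delta$, and Corollary~\ref{product_of_indecomposables_pre} packages this into the statement that whenever $f = f_1 \circ \cdots \circ f_k$ with every $f_i \in Sim^*(W,S)$, one has $k \leq N$. Thus the set of lengths of factorizations of $f$ into proper self-similarities is a nonempty set of positive integers (it contains $1$, via the trivial factorization $f = f$) bounded above by $N$, and hence attains a maximum $m$. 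Fix a factorization $f = f_1 \circ \cdots \circ f_m$ realizing this maximal length, with each $f_i \in Sim^*(W,S)$.

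It then remains to show that each $f_i$ is indecomposable. Recall that $f_i$ is indecomposable precisely when it cannot be written as $g \circ h$ with $g, h \in Sim^*(W,S)$. Suppose for contradiction that some $f_i = g \circ h$ with both $g$ and $h$ proper. Substituting this into the chosen factorization yields
$$f = f_1 \circ \cdots \circ f_{i-1} \circ g \circ h \circ f_{i+1} \circ \cdots \circ f_m,$$
which is a factorization of $f$ into $m+1$ proper self-similarities (all displayed factors lie in $Sim^*(W,S)$), contradicting the maximality of $m$. Hence every $f_i$ is indecomposable, and $f = f_1 \circ \cdots \circ f_m$ is the required factorization.

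The only point where the argument could fail is the existence of a maximal-length factorization: in an arbitrary monoid one could imagine refining factorizations indefinitely. What rules this out here is exactly the bound $k \leq N$ of Corollary~\ref{product_of_indecomposables_pre}, i.e.\ the fact that $\Delta$ strictly increases under composition by a proper factor and is bounded above by $\Delta(f)$. So the crux is not the combinatorial bookkeeping but the availability of the complexity as a strictly monotone, bounded invariant, which the earlier results have already supplied.
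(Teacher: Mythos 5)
Your proof is correct and is essentially the paper's argument: the paper simply declares the proposition an immediate consequence of Corollary~\ref{product_of_indecomposables_pre}, and your maximal-length factorization argument is exactly the routine deduction being left implicit. Nothing to add.
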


	\begin{proof} This is an immediate consequence of Coroolary~\ref{product_of_indecomposables_pre}.
\end{proof}

	\begin{corollary}\label{indecomposable_lemma} 
If the monoid $Sim(W, S)$ is {\em not} finitely generated, then for every finite $Z \subseteq Sim^*(W, S)$ there exists an indecomposable $f_* \notin \langle Z \cup Spe(W) \rangle_{Sim(W, S)}$.
\end{corollary}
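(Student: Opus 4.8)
The plan is to argue by contraposition: I shall show that if there is a finite set $Z \subseteq Sim^*(W,S)$ such that every indecomposable element of $Sim^*(W,S)$ lies in $M_Z := \langle Z \cup Spe(W)\rangle_{Sim(W,S)}$, then $Sim(W,S)$ is finitely generated. This is enough, because $Spe(W)$ is itself finitely generated: as $(W,S)$ has finite rank, $\Gamma$ is finite, so by Fact~\ref{partial_conj_fact} (and Remark~\ref{remark_involutory}) $Spe(W)$ is generated as a monoid by finitely many involutory partial conjugations; hence $M_Z$ is generated by $Z$ together with this finite set, and once we establish $M_Z = Sim(W,S)$ we are done.

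First I would dispose of the proper self-similarities. By Proposition~\ref{product_of_indecomposables}, every $f \in Sim^*(W,S)$ factors as $f = f_1 \circ \cdots \circ f_n$ with each $f_i$ indecomposable; by the standing assumption each $f_i \in M_Z$, and since $M_Z$ is a submonoid we get $f \in M_Z$. Thus $Sim^*(W,S) \subseteq M_Z$, and it remains only to check that the automorphisms lying in $Sim(W,S)$ also belong to $M_Z$.

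The heart of the matter is therefore the identification $Sim(W,S) \cap Aut(W) = Spe(W)$. The inclusion $\supseteq$ is immediate from the definitions: for $\alpha \in Spe(W)$ and a generator $s$ one has $\alpha(s) \in s^W$, while $o(\alpha(s)\alpha(t)) = o(\alpha(st)) = o(st)$ since $\alpha$ is an automorphism, so $\alpha \in Sim(W,S)$. For the converse, take $f \in Sim(W,S)\cap Aut(W)$ and write $f = \sigma \circ \tau$ with $\sigma \in Spe(W)$ and $\tau \in F(\Gamma)$ by Tits' decomposition (Fact~\ref{Tits}). Since $f(s)\in s^W$ and $\sigma^{-1}\in Spe(W)$ preserves the conjugacy class of each reflection, $\tau(s) = \sigma^{-1}(f(s)) \in s^W$ for every $s\in S$. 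On the other hand $\tau(s) = \prod_{t\in A_s} t$ for a clique $A_s$ of $\Gamma$, namely the image of $\{s\}$ under $\tau \in F(\Gamma)$ (cf.\ the remark after Definition~\ref{F(Gamma)}). Comparing images in the abelianization of Fact~\ref{abelianization_fact}---where a reflection maps to a single basis vector while $\prod_{t\in A_s} t$ maps to the sum of the $|A_s|$ basis vectors indexed by $A_s$---forces $|A_s| = 1$; and since distinct elements of $S$ are non-conjugate in the (right-angled, hence even) group $W$ by Fact~\ref{fact_for_even}(D), the unique element of $A_s$ must be $s$ itself. Hence $A_s = \{s\}$ for all $s$, so $\tau = \mathrm{id}$ and $f = \sigma \in Spe(W)$, as claimed.

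Combining the two steps, $Sim(W,S) = Sim^*(W,S) \cup (Sim(W,S)\cap Aut(W)) = Sim^*(W,S) \cup Spe(W) \subseteq M_Z$, so $M_Z = Sim(W,S)$ is finitely generated, contradicting the hypothesis. The only genuinely non-formal ingredient---and the step I expect to demand the most care---is the identification $Sim(W,S)\cap Aut(W) = Spe(W)$, i.e.\ ruling out a nontrivial $F(\Gamma)$-component of a self-similar automorphism; everything else is bookkeeping resting on Proposition~\ref{product_of_indecomposables} and the finite generation of $Spe(W)$.
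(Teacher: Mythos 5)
Your proof is correct and follows essentially the same route as the paper's, which is a two-line appeal to Proposition~\ref{product_of_indecomposables} together with the finite generation of $Spe(W)$ by involutory partial conjugations (Fact~\ref{partial_conj_fact}, Remark~\ref{remark_involutory}). The only difference is that you spell out, via Tits' decomposition and the abelianization, the identification $Sim(W,S)\cap Aut(W)=Spe(W)$ underlying the paper's unjustified claim that $Sim(W,S)=\langle Sim^{*}(W,S)\cup Spe(W)\rangle_{Sim(W,S)}$; this added detail is accurate and welcome.
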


	\begin{proof} This is a consequence of Proposition~\ref{product_of_indecomposables}, since we have that:
	$$Sim(W, S) = \langle Sim^{*}(W, S) \cup Spe(W) \rangle_{Sim(W, S)}$$
and $Spe(W)$ is generated by finitely many involutory automorphisms (cf. Fact~\ref{partial_conj_fact}).
\end{proof}


\subsection{Generators of $Sim(W, S)$} In this section we prove the ``furthermore'' part of Theorem~\ref{th_prime_models}, i.e. that if $W$ is a universal Coxeter group of finite rank at least two and $S$ is a basis of $W$, then the monoid $Sim(W, S)$ is {\em not} finitely generated.

\medskip


Let $G$ be a group and let $\alpha$ be an endomorphism of $G$. Then $\alpha(e_G) = e_G$ and $\alpha(x^{-1}) = \alpha(x)^{-1}$
for each $x \in G$. It follows that $\alpha([G,G]) \subseteq [G,G]$ for all $\alpha \in End(G)$ and therefore
each such $\alpha$ induces an endomorphism $\overline{\alpha}$ of $\overline{G} := G/[G,G]$ (namely $g[G,G] \mapsto \alpha(g)/[G,G]$).
Thus, we have a natural homomorphism of monoids $\pi$ from $End(G)$ to $End(\overline{G})$ (namely $\alpha \mapsto \overline{\alpha}$).

\medskip
\noindent
Let $L$ be the free abelian group of rank $n$ where $1 \leq n < \omega$. Then $det$ is a homomorphism of monoids
from $End(L)$ to the monoid $\mathbb{Z}$ with multiplication. Furthermore, if $M$ is a submonoid of $End(L)$ such that
$det(M)$ contains infinitely many prime numbers, then $M$ is not finitely generated.

\medskip
\noindent
Let $G$ be a group and suppose that $\overline{G} := G/[G,G]$ is a free abelian group of rank $n$ where $1 \leq n < \omega$
and suppose that $M$ is a submonoid of $End(G)$. If $det \circ \pi(M)$ contains infinitely many prime numbers, then $M$ is not
finitely generated.


	\begin{lemma}\label{the_hammer} Let $W$ be a universal Coxeter group of finite rank at least two and $S$ a basis of $W$, then the monoid $Sim(W, S)$ is {\em not} finitely generated
\end{lemma}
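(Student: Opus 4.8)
The plan is to apply the determinant criterion recorded in the paragraphs just before the statement, but \emph{not} to $W$ itself (whose abelianization $(\mathbb{Z}/2\mathbb{Z})^n$ is not free abelian) nor to $[W,W]$ (the restriction of the maps below to $[W,W]$ need not stay injective on $H_1$, since injectivity of an endomorphism of a free group is not inherited by its abelianization once the image has infinite index, which happens here for $n\geq 3$). Instead I would work with the \emph{even subgroup}
$$W^+ := \ker\big(\epsilon\colon W\to \mathbb{Z}/2\mathbb{Z}\big),\qquad \epsilon(s_i)=1\ \text{for all } i,$$
i.e.\ the elements of even $S$-length. Writing $S=\{s_1,\dots,s_n\}$ with $n\geq 2$, the group $W$ is the free product of the copies $\langle s_i\rangle\cong\mathbb{Z}/2\mathbb{Z}$, so $W^+$ is a torsion-free subgroup of index $2$, hence free; a Reidemeister--Schreier computation (or the Euler characteristic $\chi(W^+)=2\chi(W)=2-n$) shows $W^+\cong F_{n-1}$, freely generated by $x_i:=s_1s_i$ for $i=2,\dots,n$. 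Thus $\overline{W^+}=(W^+)^{\mathrm{ab}}\cong\mathbb{Z}^{\,n-1}$ is free abelian and the criterion applies with $G:=W^+$.

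The observation that makes this work is that $W^+$ is invariant under \emph{every} self-similarity: if $\alpha\in Sim(W,S)$ then $\alpha(s_i)\in s_i^{\,W}$, so $\epsilon(\alpha(s_i))=\epsilon(s_i)=1$, whence $\epsilon\circ\alpha=\epsilon$ and $\alpha(W^+)\subseteq W^+$. Consequently restriction defines a monoid homomorphism $\rho\colon Sim(W,S)\to End(W^+)$, and composing with abelianization $\pi\colon End(W^+)\to End(\mathbb{Z}^{\,n-1})$ and $\det\colon End(\mathbb{Z}^{\,n-1})\to(\mathbb{Z},\cdot)$ gives a monoid homomorphism
$$\Phi:=\det\circ\pi\circ\rho\colon Sim(W,S)\longrightarrow (\mathbb{Z},\cdot).$$
By the criterion it then suffices to show that $\Phi(Sim(W,S))$ contains infinitely many primes; the contradiction with finite generation is precisely the fact that a finitely generated submonoid $\langle a_1,\dots,a_N\rangle$ of $(\mathbb{Z},\cdot)$ contains only primes dividing $a_1\cdots a_N$.

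To produce the primes I would exhibit, for each odd $m=2k+1$, an explicit self-similarity $\alpha_m$ rescaling the dihedral subgroup $\langle s_1,s_2\rangle$. Put $r:=s_1s_2$ and set
$$\alpha_m(s_1):=r^k s_1 r^{-k},\qquad \alpha_m(s_2):=s_2,\qquad \alpha_m(s_i):=s_i\ (i\geq 3).$$
Each $\alpha_m(s_i)$ is an involution conjugate to $s_i$, and any two of them generate an infinite group (a reduced length-two product from distinct free factors has infinite order, and $\alpha_m(s_1)\alpha_m(s_2)=r^{m}$ is infinite), so $\alpha_m\in Sim(W,S)$; it is a monomorphism by Proposition~\ref{muller_fact}, as all off-diagonal Coxeter entries are $\infty$ and the system is even. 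Using $s_1 r s_1=r^{-1}$, hence $s_1r^{-k}=r^ks_1$ and $r^ks_1r^{-k}=r^{2k}s_1$, one computes on the free basis
$$\alpha_m(x_2)=r^{2k}s_1s_2=r^{2k+1}=x_2^{\,m},\qquad \alpha_m(x_i)=r^{2k}s_1s_i=x_2^{\,2k}x_i\ (i\geq 3).$$
Abelianizing, the matrix of $\pi(\rho(\alpha_m))$ in the basis $\bar x_2,\dots,\bar x_n$ is upper triangular with diagonal $(m,1,\dots,1)$, so $\Phi(\alpha_m)=m$. Letting $m$ run over the odd primes puts infinitely many primes in the image of $\Phi$, and the criterion gives that $Sim(W,S)$ is not finitely generated.

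The main obstacle here is conceptual rather than computational: one must identify a subgroup to which the determinant criterion genuinely applies, and the obvious candidates fail for the reasons noted in the first paragraph. The real content is recognizing that the even subgroup $W^+$ is simultaneously free, of free abelian abelianization, and preserved by \emph{all} self-similarities (because $\epsilon$ factors through the abelianization and self-similarities fix the conjugacy classes of the generators). Once $W^+$ is in place, the determinant is the short triangular computation above, and the scaling self-similarities $\alpha_m$ realize every odd value of $\Phi$.
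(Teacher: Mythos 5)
Your proof is correct and follows essentially the same route as the paper: both pass to the even subgroup $W^+$ (free, with free abelian abelianization, and preserved by every self-similarity since $\epsilon$ is constant on conjugacy classes), apply the $\det\circ\pi$ criterion, and realize each odd prime as a determinant via a self-similarity that stretches one generator inside a dihedral subgroup. The only difference is cosmetic: the paper modifies the last generator $s_n$ (giving a diagonal matrix $\mathrm{diag}(1,\dots,1,p)$), whereas you modify $s_1$, which appears in every basis element $x_i$ and hence yields a triangular matrix with the same determinant.
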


	\begin{proof} Let $(W,S)$ be the universal Coxeter system of rank $n+1$ where $1 \leq n < \omega$ and let $S = \{ s_0,s_1,\ldots,s_n \}$.
Let $W^+ := \{ w \in W : \ell_S(w) \mbox{ is even} \}$ (cf. Definition~\ref{def_length}(ii)), and put $x_i := s_0s_i$ for $1 \leq i \leq n$.
Then $W^+$ is the free group of rank $n$ and $\{ x_i : 1 \leq i \leq n \}$ is a basis of $W^+$ (cf. e.g. \cite[Proposition~2.1.1]{brenti_alternating}).
Furthermore, $\overline{W^+} := W^+/[W^+,W^+]$ is the free abelian group of rank $n$ with basis $\{ x_i/[W^+,W^+] : 1 \leq i \leq n \}$.
Also, by the considerations preceding the current lemma, we have homorphisms of monoids:
$$\pi: End(W^+) \rightarrow End(\overline{W^+})  \; \text{ and } \; det: End(\overline{W^+}) \rightarrow (\mathbb{Z}, \cdot).$$

\smallskip
\noindent
Notice now that for each $\alpha \in Sim(W,S)$ we have that $\alpha(W^+) \subseteq W^+$ and therefore we obtain a homomorphism
of monoids:
$$\varphi: Sim(W,S) \rightarrow End(W^+): \alpha \mapsto \alpha^+,$$ where $\alpha^+$ is the
restriction of $\alpha$ to $W^+$.

\smallskip
\noindent
For an odd prime $p$ let $w_p = s_ns_0s_ns_0 \cdots s_0s_n$ be the alternating product of length $2p-1$ and let
$\alpha_p \in Sim(W,S)$ be defined by $\alpha_p(s_i) := s_i$ for $i < n$ and $\alpha_p(s_n) := w_p$.
Then $det \circ \pi \circ \varphi(\alpha_p) = p$. Hence, by the considerations preceding the current lemma, we have that $M:= \varphi(Sim(W,S))$
is not finitely generated and therefore $Sim(W,S)$ is not finitely generated. This concludes the proof of the lemma.
\end{proof}

	\begin{remark}\label{the_final_hammer} Let $(W, S)$ be a right-angled Coxeter group of finite rank. We conjecture that $Sim(W,S)$ is {\em not} finitely generated whenever there are $s, t \in S$ satisfying the following assumptions:
	\begin{enumerate}[(1)]
	\item $o(st) = \infty$;
	\item $N(t) \subseteq N(s)$, where $N$ is as in Notation~\ref{star} for the Coxeter graph of $(W, S)$.
	\end{enumerate}
In support of this conjecture we observe that these two assumptions are exactly what we need in order for the maps $\alpha_p$ defined in the proof of Lemma~\ref{the_hammer} to be endomorphisms of $W$ (and so to be elements of $Sim(W,S)$). Explicitly, letting $S = \{ s_0, ..., s_n \}$, $s_0 = s$ and $s_n = t$, if $p$ is an odd prime and we let $w_p = s_ns_0s_ns_0 \cdots s_0s_n$ be the alternating product of length $2p-1$ and we let
$\alpha_p$ be defined by $\alpha_p(s_i) = s_i$ for $i \leq n$ and $\alpha_p(s_n) = w_p$. \mbox{Then, by Lemma~\ref{centralizer}, $\alpha_p \in Sim(W,S)$.}
\end{remark}

	\begin{proof}[Proof of Theorem~\ref{th_prime_models}] This is immediate by Lemmas~\ref{theorem_prime_model}, \ref{indecomposable_lemma} and \ref{the_hammer}.
\end{proof}

\section{Traces of Homogeneity in RACGs}

	In this section we prove Theorem~\ref{type_def_th}.

\medskip

	The material contained in this section is connected to the area of group theory which studies {\em test elements} (resp. test elements for monomorphisms), i.e. those $g \in G$ such that for every $\alpha \in End(G)$ (resp. for every monomorphism $\alpha \in End(G)$) we have that $\alpha(g) = g$ implies that $\alpha \in Aut(G)$. On this see e.g. \cite{turner1, turner2}.


%

	\begin{definition} Let $(W, S)$ be a right-angled Coxeter system of finite rank and let $S = \{s_1, ..., s_n \}$. We say that $h \in W$ is a Coxeter element of $(W, S)$ if there exists $\sigma \in Sym(\{1, ..., n\})$ such that $h = s_{\sigma(1)} \cdots s_{\sigma(n)}$. We say that $h$ is a Coxeter element of $W$ if it is a Coxeter element of $(W, S)$ for some Coxeter basis $S$ of $W$.
\end{definition}

	\begin{definition}\label{pre-special} Let $(W, S)$ be a right-angled Coxeter system. We say that $\alpha \in End(W)$ is a pre-special $S$-endomorphism when:
	\begin{enumerate}[(1)]
	\item $\alpha = \gamma \circ \beta$;
	\item $\beta \in F(\Gamma_S)$ (cf. Definition~\ref{F(Gamma)});
	\item $\gamma(t) \in t^W$, for every $t \in T = \alpha(S)$;
	\item $o(\gamma(t) \gamma(r)) = o(tr)$, for all $t, r \in T$.
\end{enumerate}
We denote the set of pre-special $S$-endomorphisms as $End_\star(W, S)$.
\end{definition}

	\begin{remark} Notice that by Proposition~\ref{muller_fact} if $\alpha \in End_\star(W, S)$, then the map:
$$W \rightarrow \langle \alpha(s) : s \in S \rangle_W: \; s \mapsto \alpha(s)$$
is an isomorphism, and so $\alpha$ is a monomorphism. Notice further that if $F(\Gamma) = Aut(\Gamma)$, then Definition~\ref{pre-special} simplifies, since in this case $\beta \in Aut(\Gamma)$ and $S = \alpha(T)$.
\end{remark}
	
	\begin{fact}[{\cite[Lemma~5.2]{caprace}}]\label{caprace_fact} Let $(W, S)$ be a reflection independent right-angled Coxeter system of finite rank and let $h$ be a Coxeter element of $(W, S)$. Then, for every $\alpha \in End_\star(W, S)$, the fact that $\alpha (h) = h$ implies that $\alpha \in Aut(W)$.
\end{fact}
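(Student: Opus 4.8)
The plan is to prove that $\alpha$ is surjective, since by the remark following Definition~\ref{pre-special} (which rests on Proposition~\ref{muller_fact}) every pre-special $S$-endomorphism is already a monomorphism, so surjectivity alone yields $\alpha\in Aut(W)$; alternatively one may close with the Hopficity recorded in Fact~\ref{fact_for_even}(A). Thus it suffices to show $U:=\langle\alpha(S)\rangle_W=W$. First I would unwind the reflection independent hypothesis: by Fact~\ref{reflection_preserving} it gives $F(\Gamma_S)=Aut(\Gamma_S)$, so the factor $\beta$ in $\alpha=\gamma\circ\beta$ merely permutes $S$, whence $\alpha(S)$ is a self-similar set of $S$-reflections in the sense of Definition~\ref{def_self_similar}. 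Because right-angled systems are even, Proposition~\ref{muller_fact} then tells us that $(U,\alpha(S))$ is again a Coxeter system of the same type and that $\alpha\colon(W,S)\to(U,\alpha(S))$ is an isomorphism of Coxeter systems.

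The decisive move is to reinterpret $\alpha(h)=h$ as a coincidence of reflection factorizations of a single element. Writing $h=s_{\sigma(1)}\cdots s_{\sigma(n)}$ with $n=|S|$, applying $\alpha$ gives
$$h=\alpha(h)=\alpha(s_{\sigma(1)})\cdots\alpha(s_{\sigma(n)}),$$
so $h$ is at once a Coxeter element of $(W,S)$ and a Coxeter element of $(U,\alpha(S))$, and the right-hand side expresses $h$ as a product of $n$ reflections of $W$ (each $\alpha(s)\in\beta(s)^W\subseteq S^W$). Since a Coxeter element of a rank-$n$ Coxeter system has reflection length exactly $n$ (a standard fact; here one first reduces to the irreducible components of $(W,S)$, on which $h$ restricts to a Coxeter element and reflection length is additive), both $h=\prod_i s_{\sigma(i)}$ and $h=\prod_i\alpha(s_{\sigma(i)})$ are \emph{reduced} factorizations of $h$ with respect to $T:=S^W$.

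I would then appeal to the rigidity of reduced reflection factorizations of a Coxeter element: the subgroup generated by the reflections occurring in a reduced factorization is invariant under the Hurwitz moves, and for a Coxeter element every reduced factorization lies in the Hurwitz orbit of the standard one $h=\prod_i s_{\sigma(i)}$, whose reflections are exactly $S$ and hence generate $W$. This forces $U=\langle\alpha(S)\rangle_W=\langle\{\alpha(s_{\sigma(i)})\}\rangle_W=W$, completing the argument.

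The hard part is this last rigidity input. Transitivity of the Hurwitz action on reduced reflection factorizations of a Coxeter element is classical in the spherical and affine settings, but for a general reflection independent (possibly non-affine) right-angled system it is genuinely delicate and cannot be obtained by a purely formal invariance argument; instead it has to be extracted from the geometry of walls and fundamental domains in the Cayley building of $(W,S)$---controlling, for each reduced factorization, the roots associated with the reflections and the convex fundamental domain they cut out, in the spirit of Propositions~\ref{prop_conjugate_geometric} and~\ref{hee_prop}. This geometric analysis is the technical core of the statement, which is why it is quoted from \cite{caprace}; the remaining steps are routine bookkeeping with Proposition~\ref{muller_fact} and the abelianization.
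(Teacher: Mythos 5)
This statement is quoted in the paper as a Fact, with the proof deferred entirely to \cite[Lemma~5.2]{caprace}; there is no internal argument to compare against, so your proposal has to be judged on its own merits. Your overall strategy is sound and is genuinely different from the cited source (which works with parabolic closures and the fact that Coxeter elements are \emph{essential}, i.e.\ contained in no proper parabolic, rather than with factorization rigidity). The reductions you perform are all correct: reflection independence plus Fact~\ref{reflection_preserving} does force $\beta$ to be a graph automorphism, so $\alpha(S)$ is (after relabelling by $\beta^{-1}$) a self-similar set of $S$-reflections, Proposition~\ref{muller_fact} makes $\alpha\colon W\to U$ an isomorphism onto a reflection subgroup, and the problem correctly becomes showing $U=W$.

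The gap is that your two key inputs are asserted, not proved, and the second one carries essentially all the weight of the statement. First, the claim $\ell_T(h)=n$ for a Coxeter element $h$ of a rank-$n$ system is true but needs a citation: it follows from Dyer's inequality $\ell_T(w)\geq \dim V-\dim V^w$ (\cite{dyer_reflection}, already in the bibliography) together with the fact that a Coxeter element has trivial fixed space in the geometric representation; ``reducing to irreducible components'' does not by itself produce the lower bound. Second, and more seriously, the transitivity of the Hurwitz action on reduced reflection factorizations of a Coxeter element in an \emph{arbitrary} finite-rank Coxeter group is a substantial theorem (due to Igusa and Schiffler, \emph{Exceptional sequences and clusters}, J.~Algebra 323 (2010)); as written you gesture at extracting it from the wall/fundamental-domain machinery of Propositions~\ref{prop_conjugate_geometric} and~\ref{hee_prop}, which is not an argument. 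If you cite Igusa--Schiffler for that step the proof closes, because the subgroup generated by the reflections occurring in a tuple is indeed a Hurwitz invariant and the standard factorization generates $W$; without that citation the proposal has merely traded one quoted lemma for another unproved one of comparable depth.
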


	
	\begin{lemma}\label{lemma_for_homogeneity} Let $(W, S)$ be a right-angled Coxeter system of finite rank, $\bar{a} \in W^m$ and $H = H_{\bar{a}} = \langle \bar{a} \rangle_W$. Suppose that there exists $h \in H$ such that for every $\alpha \in End_\star(W, S)$ we have that $\alpha (h) = h$ implies that $\alpha \in Aut(W)$. Then $\bar{a}$ is type-determined, i.e. if $tp(\bar{a}/\emptyset) = tp(\bar{b}/\emptyset)$, then \mbox{there is $\alpha \in Aut(W)$ such that $\alpha(\bar{a}) = \bar{b}$.}
\end{lemma}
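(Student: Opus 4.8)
The plan is to reduce type-determinacy to the test property of $h$ by producing two pre-special $S$-endomorphisms that carry $\bar a$ to $\bar b$ and back, and then composing them. Concretely, I will show that $tp(\bar a/\emptyset) = tp(\bar b/\emptyset)$ forces the existence of $\phi, \psi \in End_\star(W,S)$ (cf.\ Definition~\ref{pre-special}) with $\phi(\bar a) = \bar b$ and $\psi(\bar b) = \bar a$; the composite $\psi \circ \phi$ will then fix $\bar a$ pointwise, hence fix $h$, and the hypothesis on $h$ will upgrade it to an automorphism, from which I recover an automorphism of $W$ carrying $\bar a$ to $\bar b$. Note that $h$ is used only at the very end; the two embeddings come purely from type equality.

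To build $\phi$, fix the basis $S = \{s_1, \dots, s_n\}$ and write each $a_i$ as a group word $a_i = w_i(\bar s)$. Let $\varphi_\Gamma$ be the formula of Lemma~\ref{bases_th} and consider the first-order formula $\Theta(\bar x) := \exists \bar y\,(\varphi_\Gamma(\bar y) \wedge \bigwedge_i x_i = w_i(\bar y))$. Since $S$ is itself a basis, $\bar s \models \varphi_\Gamma$ (the easy ``right-to-left'' direction of Lemma~\ref{bases_th}, via the identity self-similarity), so $\bar s$ witnesses $\bar a \models \Theta$. By $tp(\bar a) = tp(\bar b)$ we get $\bar b \models \Theta$, i.e.\ there is $\bar y$ with $W \models \varphi_\Gamma(\bar y)$ and $b_i = w_i(\bar y)$. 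By Lemma~\ref{bases_th}, $\bar y$ is a self-similar set of $T$-reflections for some $T = \alpha(S)$ with $\alpha \in F(\Gamma_S)$, so the assignment $s_j \mapsto y_j$ extends to a pre-special endomorphism $\phi \in End_\star(W,S)$, which is a monomorphism by the remark following Definition~\ref{pre-special} (based on Proposition~\ref{muller_fact}). Then $\phi(a_i) = w_i(\phi(\bar s)) = w_i(\bar y) = b_i$, so $\phi(\bar a) = \bar b$. The map $\psi$ is produced symmetrically: writing $b_i = v_i(\bar s)$, the tuple $\bar s$ witnesses $\bar b \models \exists \bar y\,(\varphi_\Gamma(\bar y) \wedge \bigwedge_i x_i = v_i(\bar y))$, so by type equality $\bar a$ satisfies it too, yielding $\psi \in End_\star(W,S)$ with $\psi(b_i) = v_i(\psi(\bar s)) = a_i$. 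Here it is crucial that an \emph{arbitrary} element of $W$ is a word in $S$, so that no privileged relation between $\bar b$ and a basis is required.

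To finish, consider $\psi \circ \phi$. It fixes $\bar a$ pointwise, hence fixes the whole subgroup $H = \langle \bar a \rangle_W$, and in particular $(\psi\circ\phi)(h) = h$. Granting that $End_\star(W,S)$ is closed under composition, $\psi \circ \phi \in End_\star(W,S)$, so the hypothesis on $h$ gives $\psi\circ\phi \in Aut(W)$. As $\phi, \psi$ are monomorphisms and $\psi\circ\phi$ is surjective, the image of $\psi$ is all of $W$, so $\psi$ is bijective, hence $\psi \in Aut(W)$; therefore $\phi = \psi^{-1}\circ(\psi\circ\phi) \in Aut(W)$, and $\alpha := \phi$ satisfies $\alpha(\bar a) = \bar b$, as required.

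The main obstacle is precisely the closure of $End_\star(W,S)$ under composition. Writing $\phi = \gamma_1 \circ \beta_1$ and $\psi = \gamma_2 \circ \beta_2$ as in Definition~\ref{pre-special}, the composite rearranges to $\gamma_2 \circ (\beta_2\gamma_1\beta_2^{-1}) \circ (\beta_2\beta_1)$ with $\beta_2\beta_1 \in F(\Gamma_S)$; the trouble is that $\gamma_2$ and $\beta_2\gamma_1\beta_2^{-1}$ are self-similarities with respect to the possibly distinct bases $\beta_2(S)$ and $\beta_2\beta_1(S)$, and their product need not send each generator to a single reflection once the relevant elements of $F(\Gamma_S)$ are not induced by graph automorphisms (a generator may then be sent to a product of several reflections). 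This is exactly why the obstacle disappears in the intended setting: when $W$ is reflection independent one has $F(\Gamma_S) = Aut(\Gamma_S)$ by Fact~\ref{reflection_preserving}, so every such $\beta$ merely permutes $S$, the two bases coincide, and $End_\star(W,S)$ is genuinely a monoid. I would therefore carry out the argument under this identification, which is in force whenever the test property is supplied by Fact~\ref{caprace_fact}, or else isolate the monoid closure as a separate lemma handling the general case.
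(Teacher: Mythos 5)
Your strategy is genuinely different from the paper's, and it contains a gap that you yourself flag but do not close: everything hinges on $\psi \circ \phi$ lying in $End_\star(W,S)$, since that is the only point at which the test hypothesis on $h$ can be invoked, and the closure of $End_\star(W,S)$ under composition is exactly what is missing. As you observe, writing $\phi = \gamma_1\circ\beta_1$ and $\psi = \gamma_2\circ\beta_2$ and rearranging to $\gamma_2\circ(\beta_2\gamma_1\beta_2^{-1})\circ(\beta_2\beta_1)$, one must show that $\gamma_2$ --- a self-similarity with respect to the basis $\beta_2(S)$ --- still sends each element of the generally different basis $\beta_2\beta_1(S)$ (a product of a clique in $\beta_2(S)$) to a conjugate of itself. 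Your two fallbacks are to assume reflection independence, so that $F(\Gamma_S)=Aut(\Gamma_S)$ by Fact~\ref{reflection_preserving} and all bases collapse to $S$, or to isolate the closure as a separate lemma; the first does not prove the lemma as stated (it is asserted for arbitrary right-angled Coxeter systems of finite rank, with no reflection independence hypothesis), and the second is left undone. For the record the closure does hold: conjugating a self-similarity by an automorphism induced by $F(\Gamma_S)$ yields a self-similarity with respect to the image basis, a self-similarity sends the product of a clique to a conjugate of that product (this can be checked using Fact~\ref{finite_order} together with the abelianization of Fact~\ref{abelianization_fact}), and the order conditions are automatic because self-similarities are injective by Proposition~\ref{muller_fact}. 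But none of this is in your writeup, so as it stands your argument is complete only in the reflection-independent case --- enough for the application to Theorem~\ref{type_def_th} via Fact~\ref{caprace_fact}, not for the lemma itself.

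The paper sidesteps the issue entirely by never composing two pre-special endomorphisms. Arguing by contradiction, it notes that if $\bar b$ is not in the $Aut(W)$-orbit of $\bar a$, then the tuple $\bar t'$ realizing $b_i = w_i(\bar t')$ arises from a \emph{non-surjective} pre-special map, and this non-surjectivity is itself witnessed by a first-order condition $\exists\bar y\,(\varphi_\Gamma(\bar y)\wedge\chi(\bar x,\bar y))$ built from the formulas of Proposition~\ref{type_def_basis}. Transferring the entire configuration back to $\bar a$ by type equality produces, in one step, a single $\alpha\in End_\star(W,S)$ that fixes $\bar a$ pointwise (hence fixes $h$) and is provably not surjective, contradicting the hypothesis on $h$ directly. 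So your two-map scheme is salvageable once the monoid closure is actually proved, whereas the paper's one-map scheme needs no such input; as submitted, your proof does not establish the statement in the stated generality.
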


	\begin{proof} Let $S$ be a basis of $W$. First of all, for every $i \in [1, m]$, let:
	$$a_i = w_i(s_1, ..., s_n)$$
be normal forms in the alphabet $\{s_1, ..., s_n \}$. Let then $\bar{b} = (b_1, ..., b_m) \in W^m$ and suppose that $\bar{b}$ is not in the $Aut(W)$-orbit of $\bar{a}$. For the sake of contradiction, suppose also that $tp(\bar{a}) = tp(\bar{b})$.
Now, clearly we have:
$$ W \models \exists \bar{x} (\varphi_{\Gamma}(\bar{x}) \wedge \bigwedge_{i \in [1, m]} a_i = w_i (x_1, ..., x_n)). $$
Thus, since $tp(\bar{a}) = tp(\bar{b})$, we can find $\bar{t}' = (t'_1, ..., t'_n) \in W^m$ such that:
$$ W \models \varphi_{\Gamma}(\bar{t}') \wedge \bigwedge_{i \in [1, n]} b_i = w_i (t'_1, ..., t'_n). $$
Then there exists a basis $S' = \{s'_1, ..., s'_n\}$ of $W$ such that $T' = \{t'_1, ..., t'_n\}$ is a set of self-similar reflections of $(W, S')$. Hence, since $\bar{b}$ is not in the $Aut(W)$-orbit of $\bar{a}$, the monomorphism $\beta: s'_i \mapsto t'_i$ must be non surjective, and so we can find a formula $\chi(\bar{x}, \bar{y})$ as in the proof of Proposition~\ref{type_def_basis} such that $W \models \chi(\bar{t}', \bar{s}')$, and thus witnessing the non-surjectivity of $\beta$. But then, since $tp(\bar{a}) = tp(\bar{b})$, we have:
\begin{equation}\label{crucial_eq} W \models \exists \bar{x} \bar{y} (\varphi_{\Gamma}(\bar{x}) \wedge \bigwedge_{i \in [1, m]} a_i = w_i (x_1, ..., x_n) \wedge \varphi_{\Gamma}(\bar{y}) \wedge \chi(\bar{x}, \bar{y})). \tag{$\star$}
\end{equation}
Hence, if $\bar{t}$ is a witness for the quantifier $\exists \bar{x}$ in the formula in (\ref{crucial_eq}) and we let $\alpha: s_i \mapsto t_i$, for $i \in [1, n]$, then we have:
	\begin{enumerate}[(1)]
	\item the map $\alpha \in End_{\star}(W, S)$;
	\item $\alpha$ is not surjective and so $\alpha \notin Aut(W)$;
	\item $W \models a_i = w_i (t_1, ..., t_n)$.
	\end{enumerate}
Now from the above we infer:
$$\alpha(a_i) = \alpha(w_i(s_1, ..., s_n)) = w_i(\alpha(s_1), ..., \alpha(s_n)) = w_i(t_1, ..., t_n) = a_i.$$
Thus, $\alpha \restriction H = id_H$, a contradiction.
\end{proof}

	\begin{proof}[Proof of Theorem~\ref{type_def_th}] Immediate from Fact~\ref{caprace_fact} and Lemma~\ref{lemma_for_homogeneity}.
\end{proof}

\section{Prime Models and $\equiv$ in $2$-Spherical Coxeter Groups}

	In this section we prove Theorem~\ref{th_prime_2spherical}.

	\begin{lemma}\label{reflection_indep_2spherical} Let $(W, S)$ be an irreducible, $2$-spherical Coxeter system of finite rank. Then the set of reflections of $W$ is definable without parameters.
\end{lemma}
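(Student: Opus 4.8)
The plan is to reduce the statement to the intrinsic description of reflections furnished by Fact~\ref{fact2_finite_cont}, and then to verify that this description can be written down by a single first-order formula without parameters. I would first assume that $W$ is infinite, which is the substantive case (the finite spherical case being handled separately by direct inspection). By Fact~\ref{fact2_finite_cont}, since $(W,S)$ is irreducible, infinite and $2$-spherical, $W$ is reflection independent and
\[
S^W = \{\, w \in W : w^2 = e,\ w \neq e,\ FC(w) = \{e,w\} \,\},
\]
where $FC(w)$ is the finite continuation of $w$ (Definition~\ref{def_finite_cont}). Thus it suffices to show that the predicate ``$w$ is an involution with $FC(w)=\{e,w\}$'' is first-order definable over $\emptyset$.

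The key point is that $FC(w)$, which is \emph{a priori} a second-order object (an intersection ranging over the maximal finite subgroups of $W$, cf.\ Fact~\ref{fact_finite_cont}), becomes first-order once we observe that finite subgroups of $W$ have uniformly bounded order. Indeed, by Lemma~\ref{parabclosure}(b) every finite subgroup of $W$ is contained in a finite $S$-parabolic subgroup, hence is conjugate into a finite \emph{special} $S$-parabolic subgroup; as $(W,S)$ has finite rank there are only finitely many of the latter, so their orders are bounded by some $N = N(W,S) < \omega$. Consequently every finite subgroup of $W$, and in particular every maximal finite subgroup, has order at most $N$.

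With $N$ fixed, I would encode a finite subgroup of order at most $N$ by a tuple $(g_1,\dots,g_N)$ together with a quantifier-free formula $\mathrm{Sgp}_N(\bar g)$ asserting that the $g_i$ contain $e$ and are closed under product and inverse; the formula $\mathrm{Max}_N(\bar g)$ expressing that $\bar g$ is a \emph{maximal} finite subgroup is then obtained by conjoining $\mathrm{Sgp}_N(\bar g)$ with the negation of a bounded existential statement saying that some tuple of length $N$ is a finite subgroup properly containing $\bar g$ --- again first-order thanks to the bound $N$. Since $FC(w)$ is the intersection of the maximal finite subgroups containing $w$ and $\{e,w\}\subseteq FC(w)$ always holds, the condition $FC(w)=\{e,w\}$ is equivalent to the assertion that every element outside $\{e,w\}$ is excluded by some such subgroup:
\[
\psi(w)\ :=\ w^2=e\ \wedge\ w\neq e\ \wedge\ \forall z\Big(z\neq e\wedge z\neq w\ \rightarrow\ \exists\bar g\,\big(\mathrm{Max}_N(\bar g)\ \wedge\ \textstyle\bigvee_i g_i=w\ \wedge\ \bigwedge_i g_i\neq z\big)\Big).
\]
This $\psi$ contains no parameters, and by Fact~\ref{fact2_finite_cont} it defines exactly $S^W$, proving the lemma in the infinite case.

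The main obstacle is precisely this passage from the set-theoretic definition of $FC(w)$ to a genuine first-order formula: one must guarantee that the maximal finite subgroups can be quantified over inside the structure, which hinges entirely on the uniform bound $N$ extracted from finite rank via Lemma~\ref{parabclosure}(b). The remaining checks --- that $\mathrm{Sgp}_N$ and $\mathrm{Max}_N$ correctly capture finite subgroups and their maximality, and that $\psi$ uses no parameters --- are routine. I would then record separately the elementary treatment of the finite spherical case, where $W$ itself is the unique maximal finite subgroup and the description of $S^W$ must be extracted by a direct analysis.
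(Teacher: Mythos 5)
Your proposal is correct and follows essentially the same route as the paper: both reduce the statement via Fact~\ref{fact2_finite_cont} to the first-order definability of the condition $FC(w)=\{e,w\}$, and both make the finite continuation first-order by exploiting the uniform bound on orders of finite subgroups that finite rank provides through Lemma~\ref{parabclosure}(b). The only (immaterial) difference is implementational --- the paper quantifies over tuples encoding copies of the finitely many maximal spherical special $S$-parabolic subgroups, whereas you quantify over arbitrary maximal finite subgroups encoded as tuples of bounded length --- and your explicit flagging of the finite case is a point of extra care rather than a divergence.
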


	\begin{proof} Let $G_1, ..., G_n$ be a list of the maximal special $S$-parabolic\footnote{Notice that in the proof of Lemma~\ref{reflection_indep_2spherical}, by the ``furthermore part'' of Fact~\ref{fact2_finite_cont} (i.e. by the strong rigidity of $W$), it does not matter the choice of Coxeter basis $S$ of $W$.} subgroups of $W$ (cf. Definition~\ref{def_parabolic}) and, for $\ell \in [1, n]$, let $|G_\ell| = k_\ell$. Then, by Fact~\ref{fact_finite_cont}, for every $w \in W$ of finite order, the finite continuation $FC(w)$ (cf. Definition~\ref{def_finite_cont}) of $w$ can be defined by the formula $\varphi(y, w)$ ($y$ is a free variable and $w$ is a parameter):
	\begin{enumerate}[(A)]
	\item for every $\ell \in [1, n]$ if there are $x_1, ..., x_{k_\ell} \in W$ such that $\{x_1, ..., x_{k_\ell} \}$ determines a subgroup $G'_\ell$ of $W$ isomorphic to $G_\ell$, and $G'_\ell $ contains $w$, then $y$ is in $G'_\ell$.
\end{enumerate}
Then, by Fact~\ref{fact2_finite_cont}, we can define $S^W$ to be the set of $x$ in $W$ such that $x$ is an involution, $x \neq e$ and $FC(x) = \{e, x \}$, and this is clearly a first-order condition.
\end{proof}

	\begin{proof}[Proof of Theorem~\ref{th_prime_2spherical}] Let $(W, S)$ be an irreducible, $2$-spherical Coxeter system of finite rank. Then, by Lemma~\ref{reflection_indep_2spherical}, the set $S^W$ is first-order definable without parameters. Suppose in addition that $(W, S)$ is even and not affine, and let $S = \{ s_1, ..., s_n\}$. Let then $\delta(x_1, ..., x_n)$ be the first-order formula expressing that:
	\begin{enumerate}[(i)]
	\item for every $i \in [1, n]$, $x_i$ is in $S^W$;
	\item for every $i \neq j \in [1, n]$, $o(x_ix_j) = o(s_is_j)$ (recall that $(W, S)$ is $2$- spherical).
\end{enumerate}
Let now $\bar{a} \in W^n$ be such that $W \models \varphi(\bar{a})$. Then the map $\alpha: s_i \mapsto a_i$ extends to an $S$-self-similarity of $W$, and so, by Proposition~\ref{muller_fact}, we have that $\alpha: W \rightarrow \langle \alpha(S) \rangle_W$ is an isomorphism. Now, by \cite{caprace_2spherical} we have that $W$ is co-Hopfian, and so it must be the case that $\alpha$ is actually an automorphism. Hence, $\delta(x_1, ..., x_n)$ defines the $Aut(W)$-orbit of any Coxeter generating set, i.e. $W$ is a prime model of its theory.
\end{proof}

	\begin{proof}[Proof of Corollary~\ref{el_equivalence_corollary}] Let $W_{\Gamma}$ and $W_{\Theta}$ be as in the assumptions of the corollary. Suppose now that $Th(W_\Gamma) = Th(W_\Theta)$, then $W_{\Gamma} \models Th(W_\Theta)$, and so, since $W_\Theta$ is a prime model of $Th(W_\Theta)$, we can find an elementary embedding $f: W_{\Theta} \rightarrow W_{\Gamma}$. Without loss of generality we may assume that $f$ is actually an inclusion map, so that $W_{\Theta} \preccurlyeq W_{\Gamma}$. Let now $\delta(\bar{x}) = \delta_{\Gamma}(\bar{x})$ be the formula from the proof of Theorem~\ref{th_prime_2spherical} for the Coxeter system $(W_{\Gamma}, S)$, where $S$ is any\footnote{Notice that also in the proof of Corollary~\ref{el_equivalence_corollary}, by the ``furthermore part'' of Fact~\ref{fact2_finite_cont} (i.e. by the strong rigidity of $W$), it does not matter the choice of Coxeter basis $S$ of $W$.} basis  for $W_{\Gamma}$. Then:
\[ \begin{array}{rcl}
 W_{\Gamma} \models \exists \bar{x} \delta_{\Gamma}(\bar{x}) 				& \Rightarrow & W_{\Theta} \models \exists \bar{x} \delta_{\Gamma}(\bar{x}) \\
	& \Rightarrow & \exists \bar{a} \in W^{|S|}_{\Theta} \text{ such that }
	W_{\Theta} \models \delta_{\Gamma}(\bar{a})\\
	& \Rightarrow &  W_{\Gamma} \models \delta_{\Gamma}(\bar{a})\\
	& \Rightarrow &  \bar{s} \mapsto \bar{a} \in Aut(W_{\Gamma}).\\
\end{array}	\]
Thus, we have:
$$\langle \bar{a} \rangle_{W_{\theta}} \leq W_{\Theta} \leq W_{\Gamma} \;\; \text{ and } \;\; \langle \bar{a} \rangle_{W_{\theta}} = W_{\Gamma}.$$
Hence, we must in fact have that $W_{\Theta} = W_{\Gamma}$ and so, by the ``furthermore part'' of Fact~\ref{fact2_finite_cont} (i.e. by the strong rigidity of $W$), we can conclude that $\Theta \cong \Gamma$.
	\end{proof}

\section{A Model-Theoretic Interpretation of Reflection Length}

	In this section we develop the model theoretic applications of reflection length announced in the introduction and in particular prove Theorem~\ref{first_theorem} and Corollaries~\ref{affine_corollary}~and~\ref{artin_theorem}. We invite the reader to recall Definition~\ref{reflection_length} and Facts~\ref{reflection_length_fact},~ \ref{abelianization_fact}.
	
	\begin{proof}[Proof of Theorem~\ref{first_theorem}]
	\underline{Item (\ref{item1})}. The fact that $N$ is characteristic in $G$ is clear, since automorphisms map involutions to involutions. We are then left to show that $N = \langle S^G \rangle_G$. To see this notice that every element $g \in W$ of order $2$ is a conjugate of an element in a spherical special parabolic subgroup of $W$ (see e.g. \cite{richardson}), i.e. there exists $S' \subseteq S$ such that $\langle S' \rangle_W = H$ is finite and $k \in H$ such that $g \in k^W$. Since there are only finitely many possibilities for such a $k$ (given that $S$ is assumed to be finite), we have that in $G$ it is true that if $g \in G$ is of order $2$, then $g = (s_1 \cdots s_n)^h$, for some $s_1, ..., s_n \in S$ and $h \in G$, and thus we have:
$$g = (s_1 \cdots s_n)^h = hs_1h^{-1}h s_2 h^{-1} \cdots hs_nh^{-1} \in \langle S^G \rangle_G.$$

\smallskip
\noindent \underline{Item (\ref{item3})}. This is by Fact~\ref{reflection_length_fact}(2) and the fact that the property ``$x$ has reflection length $\ell_T(x)$ at least $n$'' is first-order expressible over $S$.

\smallskip
\noindent \underline{Item (\ref{item4})}. This is by Fact~\ref{reflection_length_fact}(2) and the fact that the property ``$x$ has reflection length $\ell_T(x)$ at most $n$'' is first-order expressible over $S$.
\end{proof}

\begin{proposition}\label{second_theorem} Let $(W, S)$ be a Coxeter system of finite rank, $Q$ a group, and let $\eta: W \rightarrow Q \in Hom(W, Q)$ be such that $\eta(s^w) = \eta(s)$, for all $s \in S$ and $w \in W$. Then there exist $\psi_i(x, S)$, $i < \omega$, such that for every elementary extension $G$ of $W$:
	\begin{enumerate}[(1)]
	\item $\eta$ extends to an homomorphism $\hat{\eta}: N_G \rightarrow Q$ (cf. Theorem~\ref{first_theorem});
	\item $\bigvee_{i < \omega} \psi_i(G, S) = \ker(\hat{\eta})$;
	\item if $W$ is affine, then there exists $n < \omega$ such that $\bigvee_{i < n} \psi_i(G, S) = \ker(\hat{\eta})$.
\end{enumerate}
\end{proposition}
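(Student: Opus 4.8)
The plan is to first reduce to the case where $Q$ is a finite elementary abelian $2$-group. The hypothesis $\eta(s^w)=\eta(s)$ says precisely that each $\eta(s)$ commutes with every $\eta(s')$: since $\eta(s')\eta(s)\eta(s')^{-1}=\eta\!\left(s'\,s\,(s')^{-1}\right)$ and $s'\,s\,(s')^{-1}\in s^{W}$, reflection invariance gives $\eta(s')\eta(s)\eta(s')^{-1}=\eta(s)$. Hence $Q_0:=\eta(W)=\langle \eta(s):s\in S\rangle$ is abelian, and as each $\eta(s)^2=e$ and $S$ is finite, $Q_0\cong(\mathbb{Z}/2\mathbb{Z})^r$ is finite. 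Since $\ker\hat\eta$ is unchanged by corestricting to $Q_0$, I would replace $Q$ by $Q_0$. Reflection invariance then attaches to every reflection $t\in S^G$ a well-defined \emph{value} $q(t):=\eta(s)$ for any $s\in S$ with $t\in s^G$: if $t$ is conjugate in $G$ to both $s$ and $s'$, then $s,s'$ are conjugate already in $W$ by elementarity, whence $\eta(s)=\eta(s')$.

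For part (1) I would use $N_G=\langle S^G\rangle_G$ (Theorem~\ref{first_theorem}(1)) and set $\hat\eta(t_1\cdots t_k):=q(t_1)\cdots q(t_k)$ for $t_j\in S^G$. The crux, and the main obstacle, is well-definedness, which I would obtain by a transfer argument. If $t_1\cdots t_k=e$ in $G$ with $t_j=x_js_{a_j}x_j^{-1}$, $s_{a_j}\in S$, then the sentence $\exists \bar{x}\,\big(\prod_j x_j s_{a_j}x_j^{-1}=e\big)$, with parameters $s_{a_1},\dots,s_{a_k}\in S$, holds in $G$, hence in $W$ by elementarity; applying the homomorphism $\eta$ to such a relation in $W=N_W$ and using $\eta(ys_{a_j}y^{-1})=\eta(s_{a_j})$ yields $\prod_j q(t_j)=\prod_j\eta(s_{a_j})=e$. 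Given two expressions $t_1\cdots t_k=u_1\cdots u_m$ of some $g\in N_G$, multiplying by $u_m\cdots u_1$ (each $u_l$ an involution) produces a relation of the above form, and since $Q_0$ is an elementary abelian $2$-group this forces $\prod q(t_j)=\prod q(u_l)$. Thus $\hat\eta$ is a well-defined homomorphism $N_G\to Q_0$, and it extends $\eta$ because every $w\in W$ is a product of elements of $S\subseteq S^W$.

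For part (2) I would let $\psi_k(x,S)$ be the first-order formula asserting that $x$ is a product of $k$ reflections whose values multiply to $e$: explicitly $\exists t_1,\dots,t_k\ \big(x=t_1\cdots t_k \wedge \bigvee_{c}\bigwedge_{j}\exists y_j\,(y_j s_{c(j)}y_j^{-1}=t_j)\big)$, where the inner disjunction ranges over the finitely many functions $c\colon\{1,\dots,k\}\to S$ satisfying $\prod_j\eta(s_{c(j)})=e$. By well-definedness of $\hat\eta$, if such an expression exists then $\hat\eta(x)=e$; conversely any $g\in\ker\hat\eta\subseteq N_G$ has \emph{some} expression as a product of reflections, whose values multiply to $\hat\eta(g)=e$, so $g$ satisfies $\psi_k$ for the appropriate $k$. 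Hence $\bigvee_{k<\omega}\psi_k(G,S)=\ker\hat\eta$. Since the very same computation is valid verbatim in any elementary extension of $W$, the disjunction defines a subgroup (the corresponding kernel) there as well, giving the required $\bigvee$-definability.

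For part (3) I would invoke that in the affine case $G=N_G$ (Theorem~\ref{first_theorem}(4)) together with the boundedness of reflection length on $W$ by some $N_0<\omega$ (Fact~\ref{reflection_length_fact}(1)). The assertion ``every element is a product of at most $N_0$ reflections'' is first-order over $S$ (writing each reflection as a conjugate via $\bigvee_{s\in S}\exists y\,(ysy^{-1}=t)$), so it transfers from $W$ to $G$. Consequently every $g\in\ker\hat\eta$ is already a product of at most $N_0$ reflections; padding an expression by a repeated reflection $t\,t=e$ changes neither $g$ nor the product of values, so $\ker\hat\eta=\bigvee_{k\le N_0}\psi_k(G,S)$ is a finite disjunction and $n=N_0+1$ works.
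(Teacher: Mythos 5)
Your proof is correct and follows essentially the same route as the paper's: define $\hat{\eta}$ on products of reflections, establish well-definedness by transferring the relevant relation from $G$ back to $W$ via elementarity, define the kernel by the disjunction over reflection-words whose $S$-types multiply to $e$ under $\eta$, and handle the affine case by bounded reflection length. The only (cosmetic) difference is that you apply $\eta$ directly to the transferred relation using $\eta(s^w)=\eta(s)$, whereas the paper routes the same computation through the abelianization map of Fact~\ref{abelianization_fact}.
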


	\begin{proof} \underline{Item (1)}. Define:
	$$\hat{\eta}: N_G \rightarrow Q: s_1^{g_1} \cdots s_n^{g_n} \mapsto \eta(s_1 \cdots s_n).$$
Clearly, we have:
\[ \begin{array}{rcl}
	\hat{\eta}(s_1^{g_1} \cdots s_n^{g_n} t_1^{h_1} \cdots t_m^{h_m}) & = & \eta(s_1 \cdots s_n t_1 \cdots t_m) \\
	& = & \eta(s_1 \cdots s_n) \eta(t_1 \cdots t_m)\\
	& = &  \hat{\eta}(s_1^{g_1} \cdots s_n^{g_n}) \hat{\eta}(t_1^{h_1} \cdots t_m^{h_m}).\\
\end{array}	\]
Thus, we are left to show that $\hat{\eta}$ is well-defined. Suppose then that:
$$g = s_1^{g_1} \cdots s_n^{g_n} \;\; \text{ and } \;\; g = t_1^{h_1} \cdots t_m^{h_m}.$$
Then:
$$s_1^{g_1} \cdots s_n^{g_n} t_m^{h_m} \cdots t_1^{h_1} = e.$$
But then, since $G$ is an elementary extension of $W$, by Fact~\ref{abelianization_fact}, we have:
$$s_1 \cdots s_n t_m \cdots t_1 = e,$$
in fact the abelianization map from Fact~\ref{abelianization_fact} is a witness that in $W$ we have that if a product of reflections $s_1^{w_1} \cdots s_n^{w_n}$ equals $e$, then $s_1 \cdots s_n = e$.
Hence, we have:
$$\eta(s_1 \cdots s_n t_m \cdots t_1) = \eta(s_1 \cdots s_n) \eta(t_m \cdots t_1) = e.$$
Thus,
$$\eta(s_1 \cdots s_n) = \eta(t_1 \cdots t_m),$$
and so:
$$\hat{\eta}(s_1^{g_1} \cdots s_n^{g_n}) = \hat{\eta}(t_1^{h_1} \cdots t_m^{h_m}).$$
\underline{Item (2)}. By the definition of $\hat{\eta}$ from the proof of Item (1), it is clear that $ker(\hat{\eta})$ is defined by the following infinite disjunction of $S$-formulas:
$$ \bigvee \{ \exists y_1 \cdots \exists y_n(x = s_1^{y_1} \cdots s_n^{y_n}) : n < \omega, \; s_i \in S, \; \eta(s_1 \cdots s_n) = e \}.$$
\underline{Item (3)}. This follows from Item (2) and the boundedness of reflection length in affine Coxeter groups of finite rank from Fact~\ref{reflection_length_fact}(2).
\end{proof}

	\begin{proof}[Proof of Corollary~\ref{affine_corollary}] By Proposition~\ref{second_theorem} it suffices to show that the homomorphism $\varepsilon: W \rightarrow \{ +1, -1 \}$
defined as $s \mapsto -1$, for all $s \in S$, is such that $\varepsilon(s^w) = \varepsilon(s)$, for all $s \in S$ and $w \in W$. But by e.g.  \cite[Proposition~1.4.2]{brenti} we have that:
$$\varepsilon(s^w) = (-1)^{\ell_S(w)} 1 (-1)^{\ell_S(w^{-1})} = (-1)^{\ell_S(w)} 1 (-1)^{\ell_S(w)} = 1 = \varepsilon(s).$$
\end{proof}



%

	\begin{proof}[Proof of Corollary~\ref{artin_theorem}] We recall the construction from \cite{davis} and observe that it satisfies the assumption of our Proposition~\ref{second_theorem}. Let $\Gamma$ be a finite graph with domain $S = \{ s_i : i \in I \}$, we define a graph $\Gamma^+$ as follows. The domain of $\Gamma^+$ is $\{ s_i : i \in I \} \cup \{ r_i : i \in I \}$, where the sets $\{ s_i : i \in I \}$ and $\{ r_i : i \in I \}$ are disjoint. Concerning the adjacency relation of $\Gamma^+$ we have:
	\begin{enumerate}[(i)]
	\item $\{ s_i : i \in I \}$ spans a copy of $\Gamma$;
	\item for every $i \neq j \in I$ we have that $r_i$ is adjacent to $r_j$;
	\item for $i, j \in I$ we have that $s_i$ is adjacent to $r_j$ if and only if $i \neq j$.
\end{enumerate}	
Let now $(\mathbb{Z}/2)^I$ denote the direct sum of $I$ copies of a cyclic group of order $2$ and let $\bar{r}_i$ be the standard generators. Now, define $\theta: W(\Gamma^+) \rightarrow (\mathbb{Z}/2)^I$ by letting:
	$$\theta(s_i) = \theta(r_i) = \bar{r}_i,$$
for all $i \in I$ (cf. Fact~\ref{abelianization_fact}). Then, letting $A$ be the right-angled Artin group $A(\Gamma)$ on generators $\{ g_i : i \in I \}$, we have \mbox{that the map $\beta: A \rightarrow ker(\theta) \subseteq W(\Gamma^+)$ defined by:}
	$$\theta(g_i) = r_is_i,$$
for all $i \in I$, is an isomorphism. Thus, it suffices to show that the map $\theta$ satisfies the assumptions of Proposition~\ref{second_theorem}, but this is clear, since $(\mathbb{Z}/2)^I$ is abelian.

\end{proof}

\end{document}